\documentclass[12pt]{amsart}
\usepackage[utf8]{inputenc}
\usepackage{amssymb}
\usepackage{hyperref}
\usepackage[final]{showkeys} 

\input xy
\xyoption{all}

\theoremstyle{definition}
\newtheorem{mydef}{Definition}[section]
\newtheorem{lem}[mydef]{Lemma}
\newtheorem{thm}[mydef]{Theorem}
\newtheorem{conjecture}[mydef]{Conjecture}
\newtheorem{cor}[mydef]{Corollary}
\newtheorem{claim}[mydef]{Claim}
\newtheorem{question}[mydef]{Question}
\newtheorem{hypothesis}[mydef]{Hypothesis}
\newtheorem{prop}[mydef]{Proposition}
\newtheorem{defin}[mydef]{Definition}

\newtheorem{remark}[mydef]{Remark}
\newtheorem{notation}[mydef]{Notation}
\newtheorem{fact}[mydef]{Fact}

\newcommand{\fct}[2]{{}^{#1}#2}



\newcommand{\ba}{\bar{a}}
\newcommand{\bb}{\bar{b}}

\newcommand{\Ksatpp}[2]{{#1}^{#2\text{-sat}}}
\newcommand{\Ksatp}[1]{\Ksatpp{\K}{#1}}



\newcommand{\cf}[1]{\text{cf} (#1)}
\newcommand{\seq}[1]{\langle #1 \rangle}
\newcommand{\rest}{\upharpoonright}

\newcommand{\s}{\mathfrak{s}}

\newcommand{\ts}{\mathfrak{t}}

\newcommand{\overunder}[3]{\underset{#2}{\overset{#3}{#1}}}
\newcommand{\perpp}[2]{\overunder{\perp}{#1}{#2}}
\newcommand{\wkperpp}[1]{\perpp{\text{wk}}{#1}}
\newcommand{\wkperp}{\wkperpp{}}




\newcommand{\Ktuqp}[1]{\K_{#1}^{3, \text{uq}}}
\newcommand{\Ktuq}{\Ktuqp{}}

\newcommand{\K}{\mathbf{K}}

\newcommand{\leap}[1]{\le_{#1}}
\newcommand{\ltap}[1]{<_{#1}}
\newcommand{\gtap}[1]{>_{#1}}
\newcommand{\geap}[1]{\ge_{#1}}

\newcommand{\lta}{\ltap{\K}}
\newcommand{\lea}{\leap{\K}}
\newcommand{\gta}{\gtap{\K}}
\newcommand{\gea}{\geap{\K}}





\newbox\noforkbox \newdimen\forklinewidth
\forklinewidth=0.3pt \setbox0\hbox{$\textstyle\smile$}
\setbox1\hbox to \wd0{\hfil\vrule width \forklinewidth depth-2pt
 height 10pt \hfil}
\wd1=0 cm \setbox\noforkbox\hbox{\lower 2pt\box1\lower
2pt\box0\relax}

\def\unionstick{\mathop{\copy\noforkbox}\limits}
\newcommand{\nf}{\unionstick}

\setbox0\hbox{$\textstyle\smile$}
\setbox1\hbox to \wd0{\hfil{\sl /\/}\hfil} \setbox2\hbox to
\wd0{\hfil\vrule height 10pt depth -2pt width
               \forklinewidth\hfil}
\wd1=0 cm \wd2=0 cm
\newbox\doesforkbox
\setbox\doesforkbox\hbox{\lower 0pt\box1 \lower
2pt\box2\lower2pt\box0\relax}
\def\nunionstick{\mathop{\copy\doesforkbox}\limits}
\newcommand{\nnf}{\nunionstick}

\newcommand{\nfs}[4]{#2 \nf_{#1}^{#4} #3}
\newcommand{\nnfs}[4]{#2 \nnf_{#1}^{#4} #3}

\def\1nf{\unionstick^{(1)}}

\def\2nf{\unionstick^{(2)}}
\def\3nf{\unionstick^{(3)}}


\newcommand{\gtp}{\text{gtp}}

\newcommand{\gS}{\text{gS}}

\newcommand{\Sbs}{\gS^\text{bs}}

\newcommand{\hanf}[1]{h (#1)}

\newcommand{\ehanf}[1]{\beth_{\left(2^{#1}\right)^+}}


\newcommand{\Ll}{\mathbb{L}}

\newcommand{\WGCH}{\text{WGCH}}

\newcommand{\F}{\mathcal{F}}
\newcommand{\LS}{\text{LS}}







\newcommand{\textpow}[2]{\text{#1}^{#2}}
\newcommand{\textplus}[1]{\textpow{#1}{+}}


\newcommand{\succp}{\textplus{successful}}


\title[Categoricity in a good frame]{Downward categoricity from a successor inside a good frame}
\date{\today\\
AMS 2010 Subject Classification: Primary 03C48. Secondary: 03C45, 03C52, 03C55, 03C75, 03E55.}
\keywords{Abstract elementary classes; Categoricity; Good frames; Classification theory; Tameness; Orthogonality}

\parindent 0pt
\parskip 5pt

\setcounter{tocdepth}{1}

\author{Sebastien Vasey}
\email{sebv@cmu.edu}
\urladdr{http://math.cmu.edu/\textasciitilde svasey/}
\address{Department of Mathematical Sciences, Carnegie Mellon University, Pittsburgh, Pennsylvania, USA}
\thanks{This material is based upon work done while the second author was supported by the Swiss National Science Foundation under Grant No.\ 155136.}

\begin{document}

\begin{abstract}
  In the setting of abstract elementary classes (AECs) with amalgamation, Shelah has proven a downward categoricity transfer from categoricity in a successor and Grossberg and VanDieren have established an upward transfer assuming in addition a locality property for Galois types that they called tameness.
  
  We further investigate categoricity transfers in tame AECs. We use orthogonality calculus to prove a downward transfer from categoricity in a successor in AECs that have a good frame (a forking-like notion for types of singletons) on an interval of cardinals:

  \begin{thm}\label{main-technical-thm-abstract}
    Let $\K$ be an AEC and let $\LS (\K) \le \lambda < \theta$ be cardinals. If $\K$ has a type-full good $[\lambda, \theta]$-frame and $\K$ is categorical in both $\lambda$ and $\theta^+$, then $\K$ is categorical in all $\mu \in [\lambda, \theta]$.
  \end{thm}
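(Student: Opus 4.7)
The plan is to transfer categoricity downward from $\theta^+$ to the interval $[\lambda,\theta]$ using the orthogonality calculus that the good $[\lambda,\theta]$-frame makes available, with categoricity in $\lambda$ serving as the base of an induction.

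First I would exploit categoricity in $\theta^+$ to conclude that the (unique) model of size $\theta^+$ is saturated, and then filter it as a chain of submodels of size $\theta$ to deduce that there is a saturated model in $\theta$. Since $\K$ admits a type-full good $[\lambda,\theta]$-frame, the standard frame axioms give that $\K$ is stable in every $\mu\in[\lambda,\theta]$, that saturated models of each such size exist and are unique up to isomorphism, and that unions of $\leq$-increasing chains of saturated models in these sizes are again saturated. Moreover, the frame supplies a well-behaved notion of non-forking, hence of orthogonality of basic types, on which the rest of the argument turns.

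Next I would show, by induction on $\mu\in[\lambda,\theta]$, that $\K$ is categorical in $\mu$. The base case $\mu=\lambda$ is given. For the inductive step the goal reduces, via uniqueness of saturated models in $\mu$, to showing that \emph{every} model of size $\mu$ is saturated. The idea is contrapositive: if some $M$ of size $\mu$ failed to be saturated, one would extract a basic type over $M$ that is omitted. Using the good-frame machinery (in particular, the tameness implicit in having a good frame on a whole interval) one would build from this omitted type a pair of orthogonal basic types whose orthogonality persists after non-forking extensions up to $\theta$. But categoricity in $\theta^+$ forces a unidimensionality-like property of the frame at $\theta$ — essentially because the saturated model in $\theta$ cannot simultaneously realize two orthogonal types densely and be unique — contradicting the existence of such orthogonal basic types. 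The limit cardinal steps of the induction should follow from the preservation of saturation under unions of chains noted above.

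The main obstacle I anticipate is the \emph{downward} transfer of unidimensionality (equivalently, of saturation of the model in each $\mu$) from $\theta$ to arbitrary $\mu\in[\lambda,\theta]$. This is where the orthogonality calculus has to do delicate work: one needs to propagate orthogonality of basic types across cardinals, which requires a careful use of tameness inside the good frame together with a precise formulation of what categoricity in $\theta^+$ yields at $\theta$. Keeping track of ``basic'' versus arbitrary types, and handling the passage through limit cardinals without losing saturation, are the technical points where I expect the real difficulty to lie.
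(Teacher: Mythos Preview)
Your proposal is essentially correct and follows the same route as the paper. The paper packages your contrapositive step as a formal notion of \emph{unidimensionality}: it proves that categoricity in $\mu^+$ is equivalent to $\mu$-unidimensionality of the frame (no pair of orthogonal types over any model of size $\mu$), and then shows that $\theta$-unidimensionality implies $\mu_0$-unidimensionality for every $\mu_0\in[\lambda,\theta)$ by exactly the mechanism you describe---pushing a hypothetical orthogonal pair at $\mu_0$ up to $\theta$ via nonforking extensions and Lemma~\ref{perp-nf-1}. The induction on $\mu$ and the limit-cardinal step are handled just as you outline.

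The one place where you are vaguer than the paper is the passage ``from an omitted type to a pair of orthogonal types.'' The paper does not argue this directly from non-saturation; instead it proves both directions of the equivalence (categoricity in $\mu^+$) $\Leftrightarrow$ ($\mu$-unidimensional) separately. The harder direction (non-categoricity $\Rightarrow$ orthogonal pair) goes through \emph{minimal} types and a case split on whether the minimal type is realized in the non-saturated model, while the converse needs the existence of \emph{weak domination triples} (a local domination notion built from $1$-forking) to show that an orthogonal pair actually lets you omit a type in a proper extension. You should expect these two ingredients---density and behaviour of minimal types, and existence of domination triples over limit models---to be where the real work hides; the rest of your sketch matches the paper's argument closely.
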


  We deduce improvements on the threshold of several categoricity transfers that do not mention frames. For example, the threshold in Shelah's transfer can be improved from $\beth_{\beth_{\left(2^{\LS (\K)}\right)^+}}$ to $\beth_{\left(2^{\LS (\K)}\right)^+}$ assuming that the AEC is $\LS (\K)$-tame. The successor hypothesis can also be removed from Shelah's result by assuming in addition either that the AEC has primes over sets of the form $M \cup \{a\}$ or (using an unpublished claim of Shelah) that the weak generalized continuum hypothesis holds.
\end{abstract}

\maketitle

\tableofcontents

\section{Introduction}

\subsection{Motivation and history}

In his two volume book \cite{shelahaecbook, shelahaecbook2} on classification theory for abstract elementary classes (AECs), Shelah introduces the notion of a \emph{good $\lambda$-frame} \cite[II.2.1]{shelahaecbook}. Roughly, a good $\lambda$-frame is a local notion of independence for types of length one over models of size $\lambda$. The independence notion satisfies basic properties of forking in a superstable first-order theory. Good frames are the central concept of the book. In Chapter II and III, Shelah discusses the following three questions regarding frames:

\begin{question}\label{frame-q}
\begin{enumerate} \
\item\label{frame-q-1} Given an AEC $\K$, when does there exist a good $\lambda$-frame $\s$ whose underlying AEC $\K_{\s}$ is $\K_\lambda$ (or some subclass of saturated models in $\K_\lambda$)?
\item\label{frame-q-2} Given a good $\lambda$-frame, under what conditions can it be extended to a good $\lambda^+$-frame?
\item\label{frame-q-3} Once one has a good frame, how can one prove categoricity transfers?
\end{enumerate}
\end{question}

Shelah's answers (see for example II.3.7, III.1, and III.2 in \cite{shelahaecbook}) involve a mix of set-theoretic hypotheses (such as the weak generalized continuum hypothesis: $2^{\theta} < 2^{\theta^+}$ for all cardinals $\theta$) and strong local model-theoretic hypotheses (such as few models in $\lambda^{++}$). While Shelah's approach is very powerful (for example in \cite[Chapter IV]{shelahaecbook}, Shelah proves the eventual categoricity conjecture in AECs with amalgamation assuming some set-theoretic hypotheses, see more below), most of his results do not hold in ZFC.

An alternate approach is to make \emph{global} model-theoretic assumptions. In \cite{tamenessone}, Grossberg and VanDieren introduced \emph{tameness}, a locality property which says that Galois types are determined by their small restrictions. In \cite{ext-frame-jml}, Boney showed that in an AEC which is $\lambda$-tame for types of length two and has amalgamation, a good $\lambda$-frame can be extended to all models of size at least $\lambda$ (we call the resulting object a good $(\ge \lambda)$-frame, and similarly define good $[\lambda, \theta]$-frame for $\theta > \lambda$ a cardinal). In \cite{tame-frames-revisited-v5}, tameness for types of length two was improved to tameness for types of length one. In particular, the answer to Question \ref{frame-q}.(\ref{frame-q-2}) is always positive in tame AECs with amalgamation. As for existence (Question \ref{frame-q}.(\ref{frame-q-1})), we showed in \cite{ss-tame-jsl} how to build good frames in tame AECs with amalgamation assuming categoricity in a cardinal of high-enough cofinality. Further improvements were made in \cite{indep-aec-apal, bv-sat-v3, vv-symmetry-transfer-v3}. This gives answers to Questions \ref{frame-q}.(\ref{frame-q-1}),(\ref{frame-q-2}) in tame AECs with amalgamation:

\begin{fact}\label{existence-extension-tame}
  Let $\K$ be an AEC with amalgamation and let $\lambda \ge \LS (\K)$ be such that $\K$ is $\lambda$-tame.
  
  \begin{enumerate}
  \item\cite[Corollary 6.9]{tame-frames-revisited-v5} If there is a good $\lambda$-frame $\s$ with $\K_{\s} = \K_\lambda$, then $\s$ can be extended to a good $(\ge \lambda)$-frame (with underlying class $\K$).
  \item\cite[Corollary 6.14]{vv-symmetry-transfer-v3} If $\K$ has no maximal models and is categorical in some $\mu > \lambda$, then there is a type-full good $\lambda^+$-frame with underlying class the Galois saturated models of $\K$ of size $\lambda^+$.
  \end{enumerate}
\end{fact}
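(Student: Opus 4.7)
The plan is to handle the two parts of the fact separately, since they involve different constructions.

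For (1), I would define a candidate independence relation $\nf$ on $\K_{\ge \lambda}$ by localization to $\lambda$: a type $p \in \gS(M)$ does not fork over $M_0 \lea M$ iff there exists a size-$\lambda$ model $N_0 \lea M_0$ such that for every $N$ with $N_0 \lea N \lea M$ and $\|N\| = \lambda$, the restriction $p \rest N$ does not fork over $N_0$ in the given good $\lambda$-frame $\s$. I would then verify the good $(\ge \lambda)$-frame axioms in turn. Monotonicity, invariance, and continuity transfer from $\s$ by inspection. The substantive axioms are uniqueness, extension, local character, symmetry, and stability. For \emph{uniqueness}, two nonforking extensions of a type to a larger model agree on every size-$\lambda$ submodel by uniqueness in $\s$, and hence agree globally by $\lambda$-tameness. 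For \emph{extension}, given $p \in \gS(M_0)$ and $M_0 \lea M$, one builds the extension by amalgamation along an increasing chain of size-$\lambda$ intermediates, applying extension in $\s$ at each step and using uniqueness to glue. For \emph{stability}, counting Galois types over a model of size $\mu \ge \lambda$ reduces via $\lambda$-tameness to counting their size-$\lambda$ restrictions, so stability in $\s$ transfers up. \emph{Local character} follows from a chain argument on size-$\lambda$ submodels and local character in $\s$. \emph{Symmetry} is the most delicate axiom: the argument in \cite{tame-frames-revisited-v5} deduces symmetry at $\mu \ge \lambda$ from symmetry at $\lambda$ in $\s$ together with $\lambda$-tameness for length-$1$ types via a careful analysis of witness elements (the earlier argument of Boney needed tameness for length-$2$ types).

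For (2), I would first use the hypotheses (amalgamation, $\lambda$-tameness, no maximal models, categoricity in some $\mu > \lambda$) to derive stability in $\lambda$ and the existence of $\lambda^+$-saturated models of size $\lambda^+$, so that the subclass of $\lambda^+$-Galois-saturated models of size $\lambda^+$ is nonempty and forms an AEC in $\lambda^+$ with amalgamation and no maximal models. Nonforking would then be defined via $\lambda$-nonsplitting over a small saturated submodel, as in the standard Shelah-Vasey approach: $p \in \gS(M)$ does not fork over $M_0 \lea M$ iff $p$ does not $\lambda$-split over some saturated submodel of $M_0$ of size $\lambda$. The good-frame axioms are verified much as in (1), with the essential new ingredient being the superstability-like behavior (local character of nonsplitting along long chains and symmetry) needed for local character of the frame; this is extracted from categoricity in $\mu$ using tameness, as developed across the references.

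The main obstacle in both parts is the verification of symmetry and local character, where $\lambda$-tameness plays the essential role of converting information at $\lambda$ into information at every $\mu \ge \lambda$. In part (2), a further delicate point is that the frame must be \emph{type-full}, meaning every nonalgebraic type belongs to its domain of ``basic'' types; establishing this requires combining stability with the no-maximal-models hypothesis to show that every nonalgebraic type over a saturated model has a nonforking extension to any larger saturated model, rather than merely the minimal types that appear in Shelah's original frames.
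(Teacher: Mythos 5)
This statement is quoted in the paper as a black box (it is a ``Fact'' cited from \cite[Corollary 6.9]{tame-frames-revisited-v5} and \cite[Corollary 6.14]{vv-symmetry-transfer-v3}); the paper itself contains no proof, so the only meaningful comparison is with the arguments of those references. Your outline does track them faithfully: for (1), the localized definition of nonforking (witnessed by a $\lambda$-sized base), uniqueness via $\lambda$-tameness, extension via coherent chains, and the observation that symmetry is the delicate axiom (reduced from length-$2$ to length-$1$ tameness in \cite{tame-frames-revisited-v5}) is exactly the route taken there; for (2), the chain categoricity $\Rightarrow$ $\lambda$-superstability (Shelah--Villaveces, using no maximal models) $\Rightarrow$ symmetry and upward superstability via tameness $\Rightarrow$ a nonsplitting-based type-full good $\lambda^+$-frame on the saturated models is the route of \cite{vv-symmetry-transfer-v3}. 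So as a roadmap the proposal is essentially the same approach the paper relies on, with the caveat that the genuinely hard steps (symmetry and local character transfer, and the fact that the $\lambda^+$-saturated models of size $\lambda^+$ are closed under the relevant unions, which needs VanDieren's chain-saturation theorem and hence symmetry) are named rather than proved --- which is acceptable only because those are precisely the cited results.

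One explicit justification you give would not survive scrutiny: stability in \emph{every} $\mu \ge \lambda$ does not follow by ``counting size-$\lambda$ restrictions via tameness.'' Tameness plus stability in $\lambda$ only bounds $|\gS(M)|$ for $\|M\| = \mu$ by something like $\mu^{\lambda} \cdot 2^{\lambda}$ (even using local character and uniqueness of the extended frame, a type is determined by a $\lambda$-sized base together with its restriction there), which yields stability only in cardinals $\mu$ with $\mu^{\lambda} = \mu$. The full stability transfer needed for a good $(\ge\lambda)$-frame uses the superstability coming from the frame (no long splitting/forking chains) and a genuine inductive argument, as in \cite[Theorem 5.6]{ss-tame-jsl} and \cite{tame-frames-revisited-v5}. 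Two smaller imprecisions: in (2) the nonsplitting base must be one over which the domain is universal (automatic here only because the domain is $\lambda^+$-saturated, so this should be said), and type-fullness is not a separate property to be ``established'' --- one declares all nonalgebraic types basic and the burden is exactly that extension, uniqueness, local character and symmetry hold for all of them.
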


\subsection{Categoricity in good frames}

In this paper, we study Question \ref{frame-q}.(\ref{frame-q-3}) in the global setting: assuming the existence of a good frame together with some global model-theoretic properties, what can we say about the categoricity spectrum? From the two results above, it is natural to assume that we are already working inside a type-full good $(\ge \lambda)$-frame (this implies properties such as $\lambda$-tameness and amalgamation). It is then known how to transfer categoricity with the additional assumption that the class has primes over sets of the form $M \cup \{a\}$. This has been used to prove Shelah's eventual categoricity conjecture for universal classes \cite{ap-universal-v9, categ-universal-2-v2}.

\begin{defin}[III.3.2 in \cite{shelahaecbook}]\label{prime-def}
  An AEC $\K$ \emph{has primes} if for any nonalgebraic Galois type $p \in \gS (M)$ there exists a triple $(a, M, N)$ such that $p = \gtp (a / M; N)$ and for every $N' \in \K$, $a' \in |N'|$, such that $p = \gtp (a' / M; N')$, there exists $f: N \xrightarrow[M]{} N'$ with $f (a) = a'$.
\end{defin}

\begin{fact}[Theorem 2.16 in \cite{categ-primes-v3}]
  Assume that there is a type-full good $[\lambda, \theta)$-frame on the AEC $\K$. Assume that $\K$ has primes and is categorical in $\lambda$. If $\K$ is categorical in \emph{some} $\mu \in (\lambda, \theta]$, then $\K$ is categorical in \emph{all} $\mu' \in (\lambda, \theta]$.
\end{fact}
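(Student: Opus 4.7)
The plan is to reduce categoricity in each $\mu' \in (\lambda, \theta]$ to showing that every model of size $\mu'$ is a limit model in the sense of the good frame, since uniqueness of limit models in a good $[\lambda, \theta)$-frame then delivers categoricity at $\mu'$ automatically. The two categoricity hypotheses play complementary roles: categoricity in $\lambda$ pins down the unique $\lambda$-sized base model as universal over itself, while categoricity in some $\mu \in (\lambda, \theta]$ ensures that the unique model of size $\mu$ coincides with the canonical saturated (limit) model of that size.

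First, I would set up the canonical objects provided by the good $[\lambda, \theta)$-frame: the unique $\lambda$-model $M_\lambda$ (by $\lambda$-categoricity), the class of basic types $\Sbs$, and, for each $\mu' \in (\lambda, \theta]$, a saturated model $M^{\ast}_{\mu'}$ built as a continuous increasing union of $\lambda$-sized universal extensions of length $\mu'$. Existence of nonforking extensions in the frame makes this construction go through, and the good frame's internal uniqueness-of-limits ensures $M^{\ast}_{\mu'}$ is well-defined up to isomorphism.

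Second, and this is the main content, I would use primes together with $\lambda$-categoricity to show that \emph{every} model $N$ of size $\mu' \in (\lambda, \theta]$ is in fact a limit model. Given an arbitrary filtration $N = \bigcup_{i < \mu'} N_i$ by models of size $\lambda$, the strategy is to interpose, between $N_i$ and $N_{i+1}$, a sequence of prime one-element extensions realizing a prescribed enumeration of types in $\Sbs$ over the previously built submodels. Since all the $N_i$ are pairwise isomorphic by $\lambda$-categoricity, and since the primes property makes each such one-element extension canonical (unique up to isomorphism over $N_i$ given the type), careful bookkeeping over a $\mu'$-enumeration of pairs $(j, p)$ with $p \in \Sbs(N_j)$ ensures that every basic type over every small submodel is realized in $N$, making $N$ a limit model of size $\mu'$.

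Finally, combining the previous step with uniqueness of limit models of size $\mu'$ in the good frame, and with categoricity in $\mu$ (which forces the unique model there to coincide with $M^{\ast}_{\mu}$), yields categoricity in every $\mu' \in (\lambda, \theta]$. The main obstacle is the second step: carrying out the bookkeeping that rewrites an arbitrary filtration as a chain of prime extensions exhausting all basic types, while keeping the resulting chain coherent. Primes is essential here to make the inserted extensions canonical, and $\lambda$-categoricity is what allows these canonical one-element extensions to be aligned across the filtration, since each $N_i$ may be identified with $M_\lambda$.
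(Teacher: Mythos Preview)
This statement is quoted as a Fact from \cite{categ-primes-v3} and is not proved in the present paper, so there is no proof here to compare against directly. The argument in the cited source proceeds via orthogonality calculus: categoricity in some $\mu\in(\lambda,\theta]$ yields a form of unidimensionality (no two nonalgebraic types are orthogonal), and primes are used to \emph{define} orthogonality and to show that failure of categoricity at some $\mu'$ produces a pair of orthogonal types, contradicting unidimensionality. The categoricity-in-$\mu$ hypothesis is thus converted into a structural property that transfers across cardinals.

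Your proposal has a genuine gap at the second step. You want to show that an \emph{arbitrary} $N\in\K_{\mu'}$ is a limit model by taking a filtration $N=\bigcup_i N_i$ and ``interposing'' prime one-element extensions realizing basic types. But interposing such extensions does not alter $N$; it builds some other model. The fact that each $N_i$ is isomorphic to $M_\lambda$ and that prime extensions are canonical tells you nothing about which types $N$ itself realizes over $N_i$. If your step 2 worked as stated, it would use only categoricity in $\lambda$ and primes, and would prove categoricity in every $\mu'\in(\lambda,\theta]$ outright, making the hypothesis ``categorical in some $\mu\in(\lambda,\theta]$'' superfluous. That hypothesis is essential precisely because an arbitrary $N$ need \emph{not} be saturated without it; it is what rules out the existence of Vaughtian-pair-like configurations (equivalently, orthogonal pairs of types) that would otherwise allow non-saturated models to exist. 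Your step 3 invokes categoricity in $\mu$ only to identify the unique $\mu$-model with $M^\ast_\mu$, which is immediate and does no work at other cardinals.

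To repair the argument you would need to extract from categoricity in $\mu$ a property that transfers: for instance, that for every $M\in\K_\lambda$ and every nonalgebraic $p\in\gS(M)$, the prime triple $(a,M,N)$ over $p$ has $N$ universal over $M$ (equivalently, no type is orthogonal to $p$). This is exactly where orthogonality and unidimensionality enter.
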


What if we do not assume existence of primes? The main result of this paper is a downward categoricity transfer for global good frames categorical in a successor:

\textbf{Theorem \ref{good-categ-transfer}.}
Assume that there is a type-full good $[\lambda, \theta]$-frame on the AEC $\K$. Assume that $\K$ is categorical in $\lambda$. If $\K$ is categorical in $\theta^+$, then $\K$ is categorical in \emph{all} $\mu \in [\lambda, \theta]$.

The proof of Theorem \ref{good-categ-transfer} develops orthogonality calculus in this setup (versions of some of our results on orthogonality have been independently derived by Villaveces and Zambrano \cite{viza}). We were heavily inspired from Shelah's development of orthogonality calculus in successful good $\lambda$-frames \cite[Section III.6]{shelahaecbook}, and use it to define a notion of unidimensionality similar to what is defined in \cite[Section III.2]{shelahaecbook}. We show unidimensionality in $\lambda$ is equivalent to categoricity in $\lambda^+$ and use orthogonality calculus to transfer unidimensionality across cardinals. While we work in a more global setup than Shelah's, we do \emph{not} assume that the good frames we work with are successful \cite[Definition III.1.1]{shelahaecbook}, so we do \emph{not} assume that the forking relation is defined for types of models (it is only defined for types of elements). To get around this difficulty, we use the theory of \emph{independent sequences} introduced by Shelah for good $\lambda$-frames in \cite[Section III.5]{shelahaecbook} and developed in \cite{tame-frames-revisited-v5} for global good frames. 

\subsection{Hypotheses of the main theorem}

Let us discuss the hypotheses of Theorem \ref{good-categ-transfer}. We are assuming that the good frame is \emph{type-full}: the basic types are all the nonalgebraic types. This is a natural assumption to make if we are only interested in tame AECs: by Fact \ref{existence-extension-tame}, type-full good frames exist under natural conditions there. Moreover by \cite[Remark 3.10]{gv-superstability-v3}, if a tame AECs has a good frame, then it has a type-full one (possibly with a different class of models). We do not know if the type-full assumption is necessary; our argument uses it when dealing with minimal types (we do not know in general whether minimal types are basic; if this is the case for the frame we are working with then it is not necessary to assume that it is type-full).

What about categoricity in $\lambda$? This is assumed in order to have some starting degree of saturation (namely all the models of size $\lambda$ are limit models, see Definition \ref{limit-def}). We do not see it as a strong assumption: in applications, we will take the AEC to be a class of $\lambda$-saturated models, where this automatically holds. Still, we do not know if it is necessary.

Another natural question is whether one really needs to assume the existence of a global good frame at all. The Hart-Shelah example \cite{hs-example, bk-hs} shows that it is \emph{not true} that any AEC $\K$ categorical in $\LS (\K)$ and in a successor $\lambda > \LS (\K)$ is categorical everywhere (even if $\K$ has amalgamation). One strengthening of Theorem \ref{good-categ-transfer} assumes only that we have a good frame for \emph{saturated} models. Appendix \ref{shrinking-appendix} states this precisely and outlines a proof. Consequently, most of the result stated above hold assuming only \emph{weak tameness} instead of tameness: that is, only types over saturated models are required to be determined by their small restrictions. In Appendix \ref{more-weak-tameness} we mention in which results of the paper tameness can be replaced by weak tameness.

\subsection{Application: lowering the bounds in Shelah's transfer}

In the second part of this paper, we give several applications of Theorem \ref{good-categ-transfer} to Shelah's eventual categoricity conjecture, the central test problem in the classification theory of non-elementary classes (see the introduction of \cite{ap-universal-v9} for a history):

\begin{conjecture}[Conjecture N.4.2 in \cite{shelahaecbook}]
  An AEC that is categorical in a high-enough cardinal is categorical on a tail of cardinals.
\end{conjecture}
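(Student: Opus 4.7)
The plan is to follow the orthogonality calculus strategy that Shelah uses in \cite[Section III.6]{shelahaecbook} to derive categoricity transfers from a successful good $\lambda$-frame, but to adapt it to the global tame setting where successfulness is not available. First I would develop the basic machinery of orthogonality for the type-full good $[\lambda, \theta]$-frame: define $p \perp q$ for $p, q \in \gS^{\bs}(M)$, show it is preserved under extensions and realizations (so orthogonality is well-defined between types up to conjugacy), and establish the key symmetry and transitivity properties. Since the frame is \emph{not} assumed successful, forking for types of models is unavailable; here I would lean on independent sequences in the sense of \cite[Section III.5]{shelahaecbook} and their global development in \cite{tame-frames-revisited-v5} as a replacement for nonforking amalgamation of models.

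Next I would introduce a notion of \emph{unidimensionality at $\mu$} (for $\mu \in [\lambda, \theta]$) mimicking \cite[Section III.2]{shelahaecbook}: roughly, any two nonalgebraic types over a saturated model of size $\mu$ are nonorthogonal, equivalently no basic type is orthogonal to every other basic type. The first bridge to prove is that, given the frame together with categoricity in $\lambda$ (so all models of size $\mu$ are sufficiently saturated / limit models), unidimensionality at $\mu$ is \emph{equivalent} to categoricity in $\mu^+$. The ``categoricity $\Rightarrow$ unidimensionality'' direction comes from a standard prime-model / omitting types argument: two orthogonal types let one build two non-isomorphic models in $\mu^+$ by realizing one and omitting the other. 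The converse uses unidimensionality to show that any two models in $\mu^+$ realize the same types and are thus saturated and isomorphic.

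The heart of the proof is the \emph{transfer} of unidimensionality across cardinals in $[\lambda, \theta]$. Using $\mu$-tameness that is built into the good $[\lambda,\theta]$-frame, together with the behavior of nonforking extension, I would show that $p \perp q$ is preserved when one moves $p$ and $q$ to their nonforking extensions of any larger size $\le \theta$, and conversely that nonorthogonality descends. Combined with the equivalence above, this will imply that unidimensionality at one level forces unidimensionality at all other levels in $[\lambda, \theta]$. Categoricity in $\theta^+$ then gives unidimensionality at $\theta$, which propagates downward to every $\mu \in [\lambda, \theta]$ and yields categoricity in each $\mu^+$; a separate argument handling the limit case (using that saturated models of size $\mu$ are unique and the frame gives categoricity of saturated models) will then cover all $\mu \in [\lambda, \theta]$.

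The main obstacle I expect is the orthogonality transfer without a successful frame: in Shelah's setup, nonforking amalgamation of models and uniqueness triples make the descent/ascent of (non)orthogonality comparatively clean, but here one has to replace these tools by independent sequences of singletons and by weak uniqueness statements derived from tameness. In particular, showing that nonorthogonality is preserved \emph{downward} along nonforking restrictions seems to require careful use of extension and local character of the frame combined with the fact that, in the categorical-in-$\lambda$ setting, the relevant base models can be arranged to be saturated. Handling this step correctly, while making sure all appeals to ``forking for models'' are legitimately replaced by independent sequences of elements, is where I would spend most of the effort.
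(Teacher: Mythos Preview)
The statement you are attempting to prove is Shelah's eventual categoricity \emph{conjecture}, which is an open problem. The paper does not contain a proof of it; it merely states the conjecture as motivation and then proves various partial results under substantial additional hypotheses (existence of a type-full good $[\lambda,\theta]$-frame, categoricity in $\lambda$, categoricity in a successor $\theta^+$, tameness, amalgamation, primes, or WGCH together with an unpublished claim of Shelah). There is therefore no ``paper's own proof'' to compare against.

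Your proposal is in fact a reasonable outline of the proof of Theorem~\ref{good-categ-transfer} (the main technical theorem of the paper), and in that capacity it tracks the paper's approach closely: orthogonality via independent sequences in lieu of forking for models, unidimensionality $\Leftrightarrow$ categoricity in the next successor, and transfer of unidimensionality across cardinals. But Theorem~\ref{good-categ-transfer} is not the conjecture. The gap is that you have silently imported hypotheses the conjecture does not give you: an arbitrary AEC categorical in a single high-enough cardinal need not carry a good frame at all, the categoricity cardinal need not be a successor, and there is no assumed base categoricity cardinal $\lambda$ below it. Bridging from the bare conjecture to the frame setting is precisely where the paper invokes tameness, amalgamation, the Shelah--Villaveces theorem, symmetry transfers, and (for the limit case) either primes or WGCH plus Claim~\ref{claim-xxx}---and even with all of that machinery the conjecture remains unproven in full generality. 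Your sketch does not address any of these reductions, so as a proof of the conjecture it has a fundamental missing step at the very beginning.
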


For an AEC $\K$ we will call \emph{Shelah's categoricity conjecture for $\K$} the statement that if $\K$ is categorical in \emph{some} $\lambda \ge \ehanf{\LS (\K)}$, then $\K$ is categorical in \emph{all} $\lambda' \ge \ehanf{\LS (\K)}$ (that is, we explicitly require the ``high-enough'' threshold to be the first Hanf number).

Shelah \cite{sh394} has proven a \emph{downward} categoricity transfer from a successor in AECs with amalgamation where the threshold is the second Hanf number. Complementing it, Grossberg and VanDieren have established an \emph{upward} transfer assuming tameness:

\begin{fact}[\cite{tamenesstwo, tamenessthree}]\label{gv-upward}
  Let $\K$ be a $\LS (\K)$-tame AEC with amalgamation and arbitrarily large models. Let $\lambda > \LS (\K)^+$ be a successor cardinal. If $\K$ is categorical in $\lambda$, then $\K$ is categorical in all $\lambda' \ge \lambda$.
\end{fact}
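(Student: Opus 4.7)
The plan is to show that in every cardinal $\lambda' \ge \lambda$, every model in $\K_{\lambda'}$ is Galois-saturated; combined with the standard back-and-forth uniqueness of saturated models this immediately yields categoricity. Both ingredients rest on a global superstability property extracted from categoricity plus $\LS(\K)$-tameness. To begin, write $\lambda = \mu^+$ with $\mu \ge \LS(\K)^+$ and use the classical categoricity-in-a-successor argument (amalgamation together with an Ehrenfeucht-Mostowski construction) to conclude that the unique model of size $\lambda$ is $\mu^+$-saturated. Categoricity in $\lambda$ plus amalgamation gives Galois stability in all cardinals below $\lambda$, and $\LS(\K)$-tameness propagates stability to every cardinal $\ge \LS(\K)$.

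Next, combining saturation of the $\lambda$-sized model with a Morley-style counting argument on splitting, I would derive a superstability-like conclusion: there are no $\mu$-splitting chains of length $\mu^+$ over any $M \in \K_\mu$, for every $\mu \ge \LS(\K)$. From superstability and tameness, the next major task is to establish symmetry of $\mu$-non-splitting and use it to prove that any two $(\mu, \theta)$-limit models over a common base (for any limit $\theta < \mu^+$) are isomorphic over that base. In a superstable setting, a saturated model of size $\mu^+$ is a $(\mu, \cf{\mu^+})$-limit over any $M_0 \in \K_\mu$, so uniqueness of limit models across cofinalities allows one to identify arbitrary presentations of models of size $\mu^+$ with the saturated one.

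With these tools the upward induction on $\lambda' \ge \lambda$ is straightforward. For the successor step, construct a saturated model of size $(\lambda')^+$ as a directed union of a chain of saturated models of size $\lambda'$, available by induction and superstability. Apply tameness together with the uniqueness of limit models to conclude that every $N \in \K_{(\lambda')^+}$ is saturated, yielding categoricity in $(\lambda')^+$ by back-and-forth. At a limit $\lambda'$, every model in $\K_{\lambda'}$ is a union of saturated submodels from smaller cardinals, and superstability forces the union to be saturated, again giving categoricity.

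The principal obstacle is the symmetry-of-non-splitting step needed for uniqueness of limit models across cofinalities. Without symmetry one obtains isomorphism only of $(\mu, \theta)$-limits for a single $\theta$, which is insufficient: the ambient model of size $\mu^+$ under consideration may present itself as a limit whose cofinality does not match $\cf{\mu^+}$. Proving symmetry of non-splitting in tame superstable AECs is precisely the technically delicate heart of \cite{tamenesstwo, tamenessthree}, and any attempt to reprove Fact \ref{gv-upward} must reckon with this difficulty.
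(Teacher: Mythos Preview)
The paper does not give a self-contained proof of Fact~\ref{gv-upward}: it is cited as a black box from \cite{tamenesstwo, tamenessthree}. The paper does, however, supply an \emph{alternate} proof in the appendix (Corollary~\ref{gv-alternate}), and that argument is entirely different from yours. There one first builds a type-full good $(\ge \lambda_0)$-frame (via Facts~\ref{good-frame-weak-tameness} and~\ref{frame-upward-transfer}), then uses the equivalence between categoricity in $\mu^+$ and $\mu$-unidimensionality (Theorem~\ref{unidim-categ}), and finally transfers unidimensionality upward using orthogonality calculus (Lemma~\ref{unidim-upward}). No direct saturation-by-induction argument, no symmetry of splitting, and no uniqueness-of-limit-models step appears.

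Your outline is a plausible modern route, but it contains a historical error and a genuine gap. First, the claim that ``proving symmetry of non-splitting \ldots\ is precisely the technically delicate heart of \cite{tamenesstwo, tamenessthree}'' is not correct: the original Grossberg--VanDieren argument does not go through symmetry or uniqueness of limit models across cofinalities. Its core is the theory of \emph{minimal types} and the transfer of \emph{no Vaughtian pairs} (see Fact~\ref{no-vp-fact} and Fact~\ref{upward-transfer-2} in the present paper, both taken from \cite{tamenessthree}). Symmetry for non-splitting is later work of VanDieren alone (cf.\ Definition~\ref{sym defn} and the references around Fact~\ref{tame-sym-ss}). Second, your successor step --- ``apply tameness together with the uniqueness of limit models to conclude that every $N \in \K_{(\lambda')^+}$ is saturated'' --- is the crux and is not justified: uniqueness of limit models tells you that limit models are saturated, but you must still show that an \emph{arbitrary} $N$ of size $(\lambda')^+$ is a limit model (equivalently, realizes every type over every submodel of size $\lambda'$). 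In the original argument this is exactly where the no-Vaughtian-pair machinery is used; in your sketch nothing plays that role.
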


Grossberg and VanDieren concluded that Shelah's \emph{eventual} categoricity conjecture from a successor holds in tame AECs with amalgamation. Baldwin \cite[Problem D.1.(5)]{baldwinbook09} has asked whether the threshold in Shelah's downward transfer can be lowered to the first Hanf number. The answer is not known, but we show here that tameness is the only obstacle: assuming $\LS (\K)$-tameness, the threshold becomes the first Hanf number, and so using Fact \ref{gv-upward}, we obtain Shelah's categoricity conjecture from a successor in tame AECs with amalgamation:

\textbf{Corollary \ref{main-thm}.}
  Let $\K$ be a $\LS (\K)$-tame AEC with amalgamation and arbitrarily large models. If $\K$ is categorical in \emph{some successor} $\lambda > \LS (\K)^+$, then $\K$ is categorical in \emph{all} $\lambda' \ge \min (\lambda, \beth_{\left(2^{\LS (\K)}\right)^+})$.

This can be seen as a generalization (see \cite{tamelc-jsl}) of the corresponding result of Makkai and Shelah \cite{makkaishelah} for classes of models of an $\Ll_{\kappa, \omega}$-theory, $\kappa$ a strongly compact cardinal. It is a central open question whether tameness follows from categoricity in AECs with amalgamation (see \cite[Conjecture 1.5]{tamenessthree}).

We can use Theorem \ref{good-categ-transfer} to give alternate proofs of Shelah's downward transfer \cite{sh394} (see Corollary \ref{sh394-alternate} in the appendix) and for the Grossberg-VanDieren upward transfer (see Corollary \ref{gv-alternate} in the appendix). We also prove a local categoricity transfer that does not mention frames:

\textbf{Corollary \ref{main-cor}.}
  Let $\K$ be a $\LS (\K)$-tame AEC with amalgamation and arbitrarily large models. Let\footnote{On the case $\lambda_0 = \LS (\K)$, see Remark \ref{main-cor-rmk}.} $\LS (\K) < \lambda_0 < \lambda$. If $\lambda$ is a successor cardinal and $\K$ is categorical in both $\lambda_0$ and $\lambda$, then $\K$ is categorical in all $\lambda' \in [\lambda_0, \lambda]$

\begin{remark}
  We believe that the methods of \cite{sh394} are not sufficient to prove Corollary \ref{main-cor} (indeed, Shelah uses models of set theory to prove the transfer of ``no Vaughtian pairs'', and hence uses that the starting categoricity cardinal is above the Hanf number, see $(\ast)_9$ in the proof of \cite[Theorem II.2.7]{sh394}, or \cite[Theorem 14.12]{baldwinbook09}). However we noticed after posting a first draft of this paper that they \emph{are} enough to improve the threshold cardinal of \cite{sh394} from $\beth_{\beth_{\left(2^{\LS (\K)}\right)^+}}$ to $\beth_{\left(2^{\LS (\K)}\right)^+}$. We sketch the details in Section \ref{shelah-sec}.
\end{remark}

\subsection{Application: categoricity in a limit, using primes}

Beyond categoricity in a successor, we can appeal to Theorem \ref{good-categ-transfer} to give improvements on the threshold of our previous categoricity transfer in tame AECs with amalgamation and primes \cite{categ-primes-v3}: there the threshold was the second Hanf number (see Fact \ref{prime-fact}) and here we show that the first Hanf number suffices:

\textbf{Corollary \ref{improved-prime-categ}.}
    Let $\K$ be a $\LS (\K)$-tame AEC with amalgamation and arbitrarily large models. Assume that $\K$ has primes. If $\K$ is categorical in \emph{some} $\lambda > \LS (\K)$, then $\K$ is categorical in \emph{all} $\lambda' \ge \min (\lambda, \beth_{\left(2^{\LS (\K)}\right)^+})$.

\begin{remark}
  Compared to Fact \ref{gv-upward} and Corollary \ref{main-thm}, the case $\lambda = \LS (\K)^+$ is allowed. We can also allow $\lambda = \LS (\K)^+$ if we assume the weak generalized continuum hypothesis instead of the existence of primes, see Corollary \ref{abstract-thm-3-proof}.
\end{remark}

\subsection{Categoricity in a limit, using WGCH}

Finally, a natural question is how to deal with categoricity in a limit cardinal \emph{without} assuming the existence of prime models. In \cite[Theorem IV.7.12]{shelahaecbook}, Shelah claims assuming the weak generalized continuum hypothesis that if $\K$ is an AEC with amalgamation\footnote{Shelah only assumes some instances of amalgamation and no maximal models at specific cardinals, see the discussion in Section \ref{categ-limit-sec}.}, then\footnote{Shelah gives a stronger, erroneous statement (it contradicts Morley's categoricity theorem) but this is what his proof gives.} categoricity in some $\lambda \ge \ehanf{\aleph_{\LS (\K)^+}}$ implies categoricity in all $\lambda' \ge \ehanf{\aleph_{\LS (\K)^+}}$. Shelah's argument relies on an unpublished claim (whose proof should appear in \cite{sh842}), as well as PCF theory and long constructions of linear orders from \cite[Sections IV.5,IV.6]{shelahaecbook}. We have not fully verified it. In \cite{indep-aec-apal}, we gave a way to work around the use of PCF theory and the construction of linear orders (though still using Shelah's unpublished claim) by using the locality assumption of full tameness and shortness (a stronger assumption than tameness introduced by Will Boney in his Ph.D.\ thesis, see \cite[Definition 3.3]{tamelc-jsl}).

In Section \ref{categ-limit-sec}, we give an exposition of Shelah's proof that does not use PCF or the construction of linear orders. This uses a recent result of VanDieren and the author \cite[Corollary 7.4]{vv-symmetry-transfer-v3}, showing that a model at a high-enough categoricity cardinal must have some degree of saturation (regardless of the cofinality of the cardinal). We deduce (using the aforementioned unpublished claim of Shelah) that Shelah's eventual categoricity conjecture is consistent assuming the existence of a proper class of \emph{measurable} cardinals (this was implicit in \cite[Chapter IV]{shelahaecbook} but we give the details). Furthermore we give an explicit upper bound on the categoricity threshold (see Theorem \ref{categ-measurable}). This partially answers \cite[Question 6.14.(1)]{sh702}.

Using Theorem \ref{good-categ-transfer}, we also give an improvement on the categoricity threshold of $\ehanf{\aleph_{\LS (\K)^+}}$  if the AEC is tame:

\textbf{Corollary \ref{abstract-thm-3-proof}.}
  Assume the weak generalized continuum hypothesis and an unpublished claim of Shelah (Claim \ref{claim-xxx}). Let $\K$ be a $\LS (\K)$-tame AEC with amalgamation and arbitrarily large models. If $\K$ is categorical in \emph{some} $\lambda > \LS (\K)$, then $\K$ is categorical in \emph{all} $\lambda' \ge \min (\lambda, \beth_{\left(2^{\LS (\K)}\right)^+})$.

Moreover, we give two ZFC consequences of a lemma in Shelah's proof (which obtains weak tameness from categoricity in certain cardinals below the Hanf number): an improvement on the Hanf number for constructing good frames (Theorem \ref{good-frame-improvement}) and a nontrivial restriction on the categoricity spectrum \emph{below} the Hanf number of an AEC with amalgamation and no maximal models (Theorem \ref{categ-below-hanf}).

For clarity, we emphasize once again that Corollary \ref{abstract-thm-3-proof} is due to Shelah when the threshold is $\ehanf{\aleph_{\LS (\K)^+}}$ (and then tameness is not needed). The main contribution of Section \ref{categ-limit-sec} is a clear outline of Shelah's proof that avoids several of his harder arguments.

In conclusion, the aim of the second part of this paper is to clarify the status of Shelah's eventual categoricity conjecture by simplifying existing proofs and improving several thresholds. We give a table summarizing the known results on the conjecture in Section \ref{summary-sec}.

\subsection{Notes and acknowledgments}

After the initial circulation of this paper (in October 2015), we showed \cite{categ-saturated-v2} that if $\K$ is an AEC with amalgamation and no maximal models that is categorical in $\lambda > \LS (\K)$ then the model of categoricity $\lambda$ is always saturated. Thus several of the threshold cardinals in Sections \ref{weak-tameness-sec}, \ref{shelah-sec}, or \ref{categ-limit-sec} can be improved. In particular in  the last two entry of the first column of Table \ref{summary-table} in Section \ref{summary-sec}, the cardinal $\beth_{\beth_{H_1}}$ can be replaced by $\beth_{H_1}$.

The background required to read this paper is a solid knowledge of AECs (at minimum Baldwin's book \cite{baldwinbook09}) together with some familiarity with good frames (e.g.\ the first four sections of \cite[Chapter II]{shelahaecbook}). As mentioned before, the paper has two parts: The first gives a proof of the main Theorem (Theorem \ref{good-categ-transfer}), and the second gives applications. If one is willing to take Theorem \ref{good-categ-transfer} as a black box, the second part can be read independently from the first part. The first part relies on \cite{tame-frames-revisited-v5} and the second relies on several other recent preprints (e.g.\ \cite{ss-tame-jsl, categ-primes-v3, vv-symmetry-transfer-v3}, as well as on parts of \cite[Chapter IV]{shelahaecbook} (we only use results for which Shelah gives a full proof). We have tried to state all background facts as black boxes that can be used with little understanding of the underlying machinery.

We warn the reader: at the beginning of most sections, we state a \emph{global} hypothesis which applies to any result stated in the section.

This paper was written while working on a Ph.D.\ thesis under the direction of Rami Grossberg at Carnegie Mellon University and I would like to thank Professor Grossberg for his guidance and assistance in my research in general and in this work specifically. I thank John Baldwin and Monica VanDieren for helpful feedback on an earlier draft of this paper. I also thank Will Boney for a conversation on Shelah's omitting type theorem (see Section \ref{shelah-sec}). Finally, I thank the referee for comments that helped improve the presentation of this paper.

\part{The main theorem}

\section{Background}

We assume that the reader is familiar with the definition of an AEC and notions such as amalgamation, joint embedding, Galois types, and Ehrenfeucht-Mostowski models (see for example \cite{baldwinbook09}). The notation we use is standard and is described in details at the beginning of \cite{sv-infinitary-stability-afml}. For example, we write $\gtp (\bb / M; N)$ for the Galois type of $\bb$ over $M$, as computed in $N$. Everywhere in this paper and unless mentioned otherwise, we are working inside a fixed AEC $\K$.

\subsection{Good frames}

In \cite[Definition II.2.1]{shelahaecbook}\footnote{The definition here is simpler and more general than the original: We will \emph{not} use Shelah's axiom (B) requiring the existence of a superlimit model of size $\lambda$. Several papers (e.g.\ \cite{jrsh875}) define good frames without this assumption.}, Shelah introduces \emph{good frames}, a local notion of independence for AECs. This is the central concept of his book and has seen several other applications, such as a proof of Shelah's eventual categoricity conjecture for universal classes \cite{ap-universal-v9, categ-universal-2-v2}. A \emph{good $\lambda$-frame} is a triple $\s = (\K_\lambda, \nf, \Sbs)$ where:

\begin{enumerate}
  \item $\K$ is a nonempty AEC which has amalgamation in $\lambda$, joint embedding in $\lambda$, no maximal models in $\lambda$, and is stable in $\lambda$.
  \item For each $M \in \K_\lambda$, $\Sbs (M)$ (called the set of \emph{basic types} over $M$) is a set of nonalgebraic Galois types over $M$ satisfying (among others) the \emph{density property}: if $M \lta N$ are in $\K_\lambda$, there exists $a \in |N| \backslash |M|$ such that $\gtp (a / M; N) \in \Sbs (M)$.
  \item $\nf$ is an (abstract) independence relation on types of length one over models in $\K_\lambda$ satisfying the basic properties of first-order forking in a superstable theory: invariance, monotonicity, extension, uniqueness, transitivity, local character, and symmetry (see \cite[Definition II.2.1]{shelahaecbook}).
\end{enumerate}

As in \cite[Definition II.6.35]{shelahaecbook}, we say that a good $\lambda$-frame $\s$ is \emph{type-full} if for each $M \in \K_\lambda$, $\Sbs (M)$ consists of \emph{all} the nonalgebraic types over $M$. We focus on type-full good frames in this paper and hence just write $\s = (\K_\lambda, \nf)$. For notational simplicity, we extend forking to algebraic types by specifying that algebraic types do not fork over their domain. Given a type-full good $\mu$-frame $\s = (\K_\lambda, \nf)$ and $M_0 \lea M$ both in $\K_\lambda$, we say that a nonalgebraic type $p \in \gS (M)$ \emph{does not $\s$-fork over $M_0$} if it does not fork over $M_0$ according to the abstract independence relation $\nf$ of $\s$. When $\s$ is clear from context, we omit it and just say that \emph{$p$ does not fork over $M_0$.} We write $\K_{\s}$ for the underlying class (containing only models of size $\lambda$) of $\s$. We say that a good $\lambda$-frame $\s$ is \emph{on $\K_\lambda$} if $\K_{\s} = \K_\lambda$. We might also just say that $\s$ is \emph{on $\K$}.

We more generally look at frames where the forking relation works over larger models. For $\F = [\lambda, \theta)$ an interval with $\theta \ge \lambda$ a cardinal or $\infty$, we define a \emph{type-full good $\F$-frame} similarly to a type-full good $\lambda$-frame but require forking to be defined over models in $\K_{\F}$ (similarly, the good properties hold of the class $\K_{\F}$, e.g.\ $\K$ is stable in every $\mu \in \F$). See \cite[Definition 2.21]{ss-tame-jsl} for a precise definition. For a type-full good $\F$-frame $\s = (\K_{\F}, \nf)$ and $\K'$ a subclass of $\K_{\F}$, we define the restriction $\s \rest \K'$ of $\s$ to $\K'$ in the natural way (see \cite[Notation 3.17.(2)]{indep-aec-apal}).

At one point in the paper (Section \ref{domination-sec}) we will look at (not necessarily good) frames defined over types longer than one element. This was first defined in \cite[Definition 3.1]{tame-frames-revisited-v5} but we use \cite[Definition 3.1]{indep-aec-apal}. We require in addition that it satisfies the base monotonicity property and that the underlying class is an AEC.
  
\begin{defin}\label{pre-frame-def}
  A type-full pre-$(\le \lambda, \lambda)$-frame is a pair $\ts := (K_\lambda, \nf)$, where $\K$ is an AEC with amalgamation in $\lambda$ and $\nf$ is a relation on types of length at most $\lambda$ over models in $\K_{\lambda}$ satisfying invariance and monotonicity (including base monotonicity, see \cite[Definition 3.12.(4)]{indep-aec-apal}).

  For a type-full good $\lambda$-frame $\s$, we say that \emph{$\ts$ extends $\s$} if they have the same underlying class and forking in $\s$ and $\ts$ coincide.
\end{defin}

  \subsection{Saturated and limit models}

  We will make heavy use of limit models (see \cite{gvv-mlq} for history and motivation). Here we give a global definition, where we permit the limit model and the base to have different sizes.  

  \begin{defin}\label{limit-def}
    Let $M_0 \lea M$ be models in $\K_{\ge \LS (\K)}$. $M$ is \emph{limit over $M_0$} if there exists a limit ordinal $\delta$ and a strictly increasing continuous sequence $\seq{N_i : i \le \delta}$ such that:

  \begin{enumerate}
    \item $N_0 = M_0$.
    \item $N_\delta = M$.
    \item For all $i < \delta$, $N_{i + 1}$ is universal over $N_i$ (that is, for any $N \in \K_{\|N_i\|}$ with $N_0 \lea N$, there exists $f: N \xrightarrow[N_i]{} N_{i + 1}$).
  \end{enumerate}

  We say that $M$ is \emph{limit} if it is limit over some $M' \lea M$.
\end{defin}

\begin{defin}\label{ksat-def}
  Assume that $\K$ has amalgamation. 

  \begin{enumerate}
    \item For $\lambda > \LS (\K)$, $\Ksatp{\lambda}$ is the class of $\lambda$-saturated (in the sense of Galois types) models in $\K_{\ge \lambda}$. We order it with the strong substructure relation inherited from $\K$.
    \item We also define $\Ksatp{\LS (\K)}$ to be the class of models $M \in \K_{\ge \LS (\K)}$ such that for all $A \subseteq |M|$ with $|A| \le \LS (\K)$, there exists a limit model $M_0 \lea M$ with $M_0 \in \K_{\LS (\K)}$ and $A \subseteq |M_0|$. We order $\Ksatp{\LS (\K)}$ with the strong substructure relation inherited from $\K$.
  \end{enumerate}
\end{defin}
\begin{remark}
  If $\K$ has amalgamation and is stable in $\LS (\K)$, then $\Ksatp{\LS (\K)}_{\LS (\K)}$ is the class of limit models in $\K_{\LS (\K)}$.
\end{remark}

We will repeatedly use the uniqueness of limit models inside a good frame, first proven by Shelah in \cite[Claim II.4.8]{shelahaecbook} (see also \cite[Theorem 9.2]{ext-frame-jml}).

\begin{fact}\label{uq-limit}
  Let $\s$ be a type-full good $\lambda$-frame.
  
  Let $M_0, M_1, M_2 \in \K_\s$. 

  \begin{enumerate}
    \item If $M_1$ and $M_2$ are limit models, then $M_1 \cong M_2$.
    \item If in addition $M_1$ and $M_2$ are both limit over $M_0$, then $M_1 \cong_{M_0} M_2$.
  \end{enumerate}
\end{fact}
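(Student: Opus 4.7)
My plan is to reduce (1) to (2), and to prove (2) in three steps: uniqueness of limit models of a fixed cofinality over a common base, a cofinality-thinning observation, and a cross-cofinality bridge built from the forking calculus of $\s$.

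\emph{Reduction of (1) to (2).} Suppose $M_k$ is limit over some $N_k \lea M_k$ for $k = 1,2$. Since $\s$ supplies amalgamation, joint embedding, and no maximal models in $\lambda$, I can amalgamate $N_1$ and $N_2$ and embed the corresponding limit resolutions into a common extension in $\K_\s$; along the way I use universality of successive terms in each resolution to absorb the amalgamation maps. After renaming, both $M_1$ and $M_2$ are limit over one common model $M_0'$, and (2) gives $M_1 \cong_{M_0'} M_2$, hence $M_1 \cong M_2$.

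\emph{The three steps of (2).} Fix resolutions $\seq{N_i^k : i \le \delta_k}$ witnessing that $M_k$ is limit over $M_0$ of length $\delta_k$. Step one is a routine back-and-forth between two $(\lambda,\omega)$-limit resolutions over $M_0$, using universality of $N_{i+1}^k$ over $N_i^k$ at each stage to extend the partial isomorphism; this shows uniqueness over $M_0$ of $(\lambda,\omega)$-limits. Step two is cofinality thinning: for any limit $\delta < \lambda^+$ and a cofinal sequence $\seq{i_\alpha : \alpha < \cf{\delta}}$, the subchain $\seq{N_{i_\alpha}^k}$ is still a resolution, and universality of $N_{i_{\alpha+1}}^k$ over $N_{i_\alpha}^k$ is inherited from universality of $N_{i_\alpha+1}^k$ over $N_{i_\alpha}^k$ because $N_{i_\alpha+1}^k \lea N_{i_{\alpha+1}}^k$. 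So each $M_k$ is $(\lambda, \cf{\delta_k})$-limit over $M_0$, and it suffices to compare regular cofinalities.

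\emph{The main obstacle.} The hard step is bridging distinct regular cofinalities $\kappa_1, \kappa_2 < \lambda^+$: showing a $(\lambda,\kappa_1)$-limit and a $(\lambda,\kappa_2)$-limit over $M_0$ are isomorphic over $M_0$. My plan is a back-and-forth between the two chains inside a large amalgam $M^\ast \in \K_\s$: to transport an element $a$ from $N_{i+1}^1$ into the second resolution, use local character of $\nf$ to find $j < \kappa_2$ such that $\gtp(a/N_j^2; M^\ast)$ does not fork over the image of some $N_{i'}^1$, then use stability in $\lambda$ and uniqueness of nonforking extensions to realize the required type inside $N_{j+1}^2$ (which is universal over $N_j^2$); symmetry of $\nf$ runs the dual direction, and continuity of the resolutions handles limit stages. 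This is exactly the step where invariance, monotonicity, extension, uniqueness, local character, and symmetry of $\nf$ all intervene at once, and where the good-frame (essentially superstability) hypothesis does real work — without it, limit models of different cofinalities need not be isomorphic over the base.
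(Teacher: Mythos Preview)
The paper does not give its own proof of this fact: it is quoted as a black box with citations to \cite[Claim II.4.8]{shelahaecbook} and \cite[Theorem 9.2]{ext-frame-jml}. So the comparison is really to the standard literature proofs. Your overall architecture---reduce (1) to (2) via joint embedding and universality, handle equal cofinalities by a straightforward back-and-forth, thin to regular cofinalities, and then bridge distinct cofinalities using the forking calculus---matches that architecture.

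The gap is in your cross-cofinality step. As written, the back-and-forth is element-by-element inside a common amalgam $M^\ast$, and you invoke local character to find $j$ with $\gtp(a/N_j^2;M^\ast)$ not forking over ``the image of some $N_{i'}^1$.'' But $N_{i'}^1$ is not in general a $\lea$-submodel of $N_j^2$ inside $M^\ast$, so this nonforking statement is not well-posed, and even if you repair it, realizing the correct type inside $N_{j+1}^2$ does not by itself extend a partial isomorphism over $M_0$ between the two towers. The actual arguments (Shelah's, or the tower approach of Grossberg--VanDieren--Villaveces, or Boney's exposition) work model-by-model rather than element-by-element: one interleaves the two resolutions into a single increasing continuous chain, using extension and uniqueness of nonforking to control how each $N_i^k$ sits over the previous stage, and uses local character at limit stages of the \emph{shorter} cofinality to show that the partial isomorphisms cohere. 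Symmetry is not the mechanism that ``runs the dual direction''---the asymmetry between $\kappa_1$ and $\kappa_2$ is handled by always working toward the longer chain and using local character on the short side. Your sketch identifies the right ingredients but does not assemble them into a working construction; as stated it would not go through.
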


For global frames, we can combine this with a result of VanDieren \cite{vandieren-chainsat-apal} to obtain that limit models are saturated and closed under unions:

\begin{fact}\label{satfact}
  Let $\s$ be a type-full good $[\lambda, \theta)$-frame on the AEC $\K$. Let $\mu \in [\lambda, \theta)$.

      \begin{enumerate}
      \item $M \in \Ksatp{\mu}_\mu$ if and only if $M$ is limit.
      \item If $\mu > \lambda$, then $\Ksatp{\mu}_{[\mu, \theta)}$ is the initial segment of an AEC with Löwenheim-Skolem-Tarski number $\mu$.
      \end{enumerate}
\end{fact}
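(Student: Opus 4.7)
The plan is to derive both parts from the good frame's stability and no-maximal-models properties, combined with the uniqueness of limit models (Fact \ref{uq-limit}) and VanDieren's theorem on preservation of saturation under unions of chains (\cite{vandieren-chainsat-apal}).

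For (1), the case $\mu = \lambda = \LS(\K)$ is the remark preceding the statement. For $\mu > \LS(\K)$, I would first show that a limit model $M = \bigcup_{i < \delta} N_i$ of size $\mu$ is $\mu$-saturated: given $p \in \gS(M_0)$ with $M_0 \lea M$ and $\|M_0\| < \mu$, locate some $i < \delta$ with $M_0 \lea N_i$, use stability in $\mu$ (supplied by the frame) and amalgamation to find $N' \gea N_i$ of size $\mu$ realizing an extension of $p$, and then invoke the universal property of $N_{i+1}$ over $N_i$ to embed $N'$ into $N_{i+1}$, yielding a realization of $p$ inside $M$. The singular $\mu$ case requires reducing to a cofinal chain of $\mu$-saturated models, for which VanDieren's theorem is the key input. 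Conversely, given any $\mu$-saturated $M$ of size $\mu$, stability in $\mu$ and no maximal models let us construct a limit model of size $\mu$, which by the previous direction is $\mu$-saturated. By the uniqueness of saturated models of the same size, $M$ is isomorphic to this limit model, hence itself limit.

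For (2), coherence, closure under isomorphism, and the Tarski--Vaught chain axioms for $\Ksatp{\mu}_{[\mu, \theta)}$ are inherited from $\K$; the two nontrivial points are the Löwenheim-Skolem-Tarski property at $\mu$ and closure under unions of $\lea$-increasing chains. The former follows by a standard back-and-forth: iteratively absorb realizations of types over small submodels of $N$ to produce a $\mu$-sized $\lea$-submodel $M \lea N$ containing the prescribed set $A$ and isomorphic to the unique saturated model of size $\mu$ provided by part (1); this shows $M \in \Ksatp{\mu}$. The latter is precisely VanDieren's chain-saturation theorem applied at each $\mu' \in [\mu, \theta)$, whose hypotheses (symmetry, local character, continuity of nonforking) are supplied by the frame axioms.

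The main obstacle will be the closure of $\Ksatp{\mu}$ under unions of $\lea$-increasing chains, and, correspondingly, the singular-$\mu$ case of the backward direction in (1). These are not formal consequences of the good frame axioms: they rely on VanDieren's delicate analysis of reduced towers as an external input. Once that theorem is accepted as a black box, the remainder reduces to routine bookkeeping with amalgamation and stability supplied by $\s$.
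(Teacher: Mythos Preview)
Your proposal is correct and follows essentially the same approach as the paper: both parts reduce to the uniqueness of limit models (Fact \ref{uq-limit}) together with VanDieren's chain-saturation theorem \cite{vandieren-chainsat-apal}, after observing that a type-full good $\mu$-frame yields $\mu$-superstability (Fact \ref{ss-good-frame}). The paper's proof is terser---for part (1) it simply invokes uniqueness of limit models and uniqueness of saturated models without isolating the singular-$\mu$ case---but the substance is the same; one small slip in your write-up is that the obstacle you flag at the end is in the \emph{forward} direction of (1) (limit $\Rightarrow$ saturated), not the backward one, and your direct realization argument only handles chains with $\cf(\delta) \ge \mu$, so uniqueness of limit models is still needed to cover arbitrary $\delta$.
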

\begin{proof} \
  \begin{enumerate}
  \item This is trivial if $\lambda = \LS (\K)$, so assume that $\lambda > \LS (\K)$. If $M$ is limit, then by uniqueness of limit models, $M$ is saturated. Conversely if $M$ is saturated, then it must be unique, hence isomorphic to a limit model.
  \item By uniqueness of limit models (Fact \ref{uq-limit}) and \cite[Theorem 1]{vandieren-chainsat-apal}. Note that there a condition called $\mu$-superstability (see Definition \ref{ss assm}) rather than the existence of a good $\mu$-frame is used. However the existence of a type-full good $\mu$-frame implies $\mu$-superstability (see Fact \ref{ss-good-frame}).
  \end{enumerate}
\end{proof}

We will use Facts \ref{uq-limit} and \ref{satfact} freely.

\section{Domination and uniqueness triples}\label{domination-sec}

In this section, we assume:

\begin{hypothesis}\label{domination-hyp}
  $\s = (\K_\lambda, \nf)$ is a type-full good $\lambda$-frame.
\end{hypothesis}
\begin{remark}
  The results of this section can be adapted to non-type-full good frames, but we assume type-fullness anyway for notational convenience.
\end{remark}

Our aim (for the next sections) is to develop some orthogonality calculus as in \cite[Section III.6]{shelahaecbook}. There Shelah works in a good $\lambda$-frame that is \emph{successful} (see \cite[Definition III.1.1]{shelahaecbook}). Note that by \cite[Claim III.9.6]{shelahaecbook} such a good frame can be extended to a type-full one. Thus the framework of this section is more general (see \cite{counterexample-frame-v2} for an example of a non-successful type-full frame).

One of the main components of the definition of successful is the existence property for uniqueness triples (see \cite[Definition III.1.1]{shelahaecbook}, \cite[Definition 4.1.(5)]{jrsh875}, or here Definition \ref{weakly-succ-def}). It was shown in \cite[Lemma 11.7]{indep-aec-apal} that this property is equivalent to a version of domination assuming the existence of a global independence relation. Using an argument of Makkai and Shelah \cite[Proposition 4.22]{makkaishelah}, one can see \cite[Lemma 11.12]{indep-aec-apal} that this version of domination satisfies a natural existence property. We give a slight improvement on this result here by working only locally in $\lambda$ (i.e.\ using limit models rather than saturated ones).

Crucial in this section is the uniqueness of limit models (Fact \ref{uq-limit}). A consequence is the following conjugation property \cite[Claim 1.21]{shelahaecbook}. It is stated there for $M, N$ superlimit but the proof goes through if $M$ and $N$ are limit models.

\begin{fact}[The conjugation property]\label{conj-prop}
  If $M \lea N$ are limit models in $\K_{\lambda}$ and $p, q \in \gS (N)$ do not fork over $M$, then there exists $f: N \cong M$ so that $f (p) = p \rest M$ and $f (q) = q \rest M$.
\end{fact}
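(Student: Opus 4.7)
My plan is to reduce the conjugation property to the uniqueness of limit models over a common base (Fact \ref{uq-limit}.(2)). Concretely, I would locate a small submodel $M_0 \lea M$ in $\K_\lambda$ such that (i) $M$ is limit over $M_0$, (ii) $N$ is limit over $M_0$, and (iii) neither $p$ nor $q$ forks over $M_0$. Once $M_0$ is in hand, Fact \ref{uq-limit}.(2) gives an isomorphism $f : N \cong_{M_0} M$ fixing $M_0$ pointwise, and the uniqueness property of non-forking extensions in $\s$ will force $f(p) = p \rest M$ and $f(q) = q \rest M$.

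To produce $M_0$, I would fix a strictly increasing continuous chain $\seq{M_i : i \le \delta}$ witnessing that $M$ is limit, with each $M_{i+1}$ universal over $M_i$ and $M_\delta = M$. By local character of the good frame $\s$, there is $i_0 < \delta$ such that both $p$ and $q$ do not fork over $M_{i_0}$; take $M_0 := M_{i_0}$. Then $M_0 \in \K_\lambda$, the tail $\seq{M_i : i_0 \le i \le \delta}$ witnesses that $M$ is limit over $M_0$, and (iii) holds by choice of $i_0$. For (ii), note that by Fact \ref{satfact}.(1) applied with $\mu = \lambda$ (or directly, if $\lambda = \LS(\K)$, by unwinding the definition of $\Ksatp{\LS(\K)}$), the model $N$ is $\lambda$-saturated of size $\lambda$, hence $\lambda$-model-homogeneous. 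A limit model $L$ over $M_0$ exists in $\K_\lambda$ by the extension property inside $\s$, and $L$ is itself $\lambda$-saturated by Fact \ref{satfact}.(1); so by model-homogeneity of $N$ over $M_0$ there is an isomorphism $L \cong_{M_0} N$, transporting the limit chain of $L$ over $M_0$ into a limit chain of $N$ over $M_0$.

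With $f : N \cong_{M_0} M$ in hand, the verification that $f$ conjugates $p$ and $q$ to their restrictions is a short calculation: by invariance, $f(p)$ is a type over $M$ which does not fork over $f(M_0) = M_0$; by monotonicity, $p \rest M$ is also a type over $M$ which does not fork over $M_0$; and since $f$ fixes $M_0$ pointwise,
\[
f(p) \rest M_0 \;=\; f(p \rest M_0) \;=\; p \rest M_0 \;=\; (p \rest M) \rest M_0.
\]
Uniqueness of non-forking extensions in $\s$ then yields $f(p) = p \rest M$, and the same argument applied to $q$ gives $f(q) = q \rest M$.

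The main obstacle I anticipate is step (ii): showing that the ambient saturated model $N$ is limit over the specific small submodel $M_0 \lea N$, rather than just being limit in the abstract sense. This is exactly the point where the structure of saturated models inside a good frame—uniqueness of limit models (Fact \ref{uq-limit}.(1)) together with the existence of limit extensions over any base in $\K_\lambda$—needs to be leveraged to turn the global saturation of $N$ into the specific chain structure over $M_0$ that Fact \ref{uq-limit}.(2) requires as input.
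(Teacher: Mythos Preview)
Your overall strategy---locate $M_0\lea M$ over which $p,q$ do not fork, show both $M$ and $N$ are limit over $M_0$, then invoke Fact~\ref{uq-limit}.(2) and uniqueness of nonforking extensions---is the standard argument (the paper itself gives no proof, only citing \cite[Claim III.1.21]{shelahaecbook}). Steps (i) and (iii) are correct as you wrote them. The gap is in your justification of step~(ii), which you yourself flag as the main obstacle.

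The inference ``$N$ is $\lambda$-saturated of size $\lambda$, hence $\lambda$-model-homogeneous, hence $L\cong_{M_0} N$'' does not work as written: $\lambda$-model-homogeneity (as derived from $\lambda$-saturation) concerns substructures of size \emph{strictly less than} $\lambda$, whereas $\|M_0\|=\lambda$. The back-and-forth behind uniqueness of saturated models handles one small submodel at a time and cannot simply take $\id_{M_0}$ as an already-completed partial isomorphism. When $\lambda>\LS(\K)$ you can repair this by resolving $M_0=\bigcup_{\alpha<\lambda} M_0^\alpha$ with each $M_0^\alpha\in\K_{<\lambda}$ and weaving this resolution into the back-and-forth, so that only a small piece of $M_0$ is fixed at each finite stage and $\id_{M_0}$ emerges in the limit. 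But when $\lambda=\LS(\K)$ there are no submodels of size $<\lambda$ to resolve into; invoking Fact~\ref{satfact}.(1) in that case just restates ``$N$ is limit'' via Definition~\ref{ksat-def}.(2) and gives you no homogeneity tool whatsoever. So your parenthetical handling of the case $\lambda=\LS(\K)$ does not actually go through.

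The uniform argument covering all $\lambda$ (including $\lambda=\LS(\K)$) runs the back-and-forth entirely inside $\K_\lambda$, replacing saturation by the ``universal-over-predecessor'' property of successor stages in a $(\lambda,\lambda)$-limit chain: one maintains that each partial isomorphism has domain and range sitting inside fixed stages of the respective chains, so that at every step the target model contains a universal extension of the current image and the extension can be carried out. This bookkeeping is what the paper is pointing to when it says Shelah's proof ``goes through if $M$ and $N$ are limit models.''
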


The next definition is modeled on \cite[Definition 11.5]{indep-aec-apal}:

\begin{defin}\label{dom-triple-def}
  Let $\ts$ be a pre-$(\le \lambda, \lambda)$-frame extending $\s$  (see Definition \ref{pre-frame-def}, we write $\nf$ for the nonforking relation of both $\s$ and $\ts$).
  The triple $(a, M, N)$ is a \emph{domination triple for $\ts$} if $M, N \in K_\lambda$, $M \lea N$, $a \in |N| \backslash |M|$ and for any $N' \gea N$ and $M' \lea N$ with $M \lea M'$ and $M', N' \in K_\lambda$, if $\nfs{M}{a}{M'}{N'}$, then $\nfs{M}{N}{M'}{N'}$.
\end{defin}
\begin{remark}
  In this paper, we will take $\ts$ to be 1-forking (Definition \ref{1-forking-def}), so the reader who wants a concrete example may substitute it for $\ts$ throughout this section.
\end{remark}

Domination triples are related to Shelah's uniqueness triples (see \cite[Definition III.1.1]{shelahaecbook} or \cite[Definition 4.1.(5)]{jrsh875}) by the following result (this will not be used outside of this section):

\begin{fact}[Lemma 11.7 in \cite{indep-aec-apal}]\label{uq-triple-fact}
Let $\ts$ be a pre-$(\le \lambda, \lambda)$-frame extending $\s$. If $\ts$ has the uniqueness property, then any domination triple for $\ts$ is a uniqueness triple for $\s$.
\end{fact}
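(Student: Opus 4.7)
\textbf{Proof plan for Fact \ref{uq-triple-fact}.}
The plan is to unpack both the conclusion and the hypotheses and show that once domination lets us lift nonforking from the single element $a$ to (an enumeration of) all of $N$, the uniqueness property of $\ts$ for long types does exactly what is required of a uniqueness triple. Recall that $(a, M, N)$ being a uniqueness triple for $\s$ means: whenever $M \lea M'$ is in $K_\lambda$ and we have two amalgams $h_i : N \to N_i'$ over $M$ of $N$ and $M'$ (with $M' \lea N_i'$ in $K_\lambda$) such that $\gtp(h_i(a) / M'; N_i')$ does not $\s$-fork over $M$ for $i = 1, 2$, then the two amalgams are equivalent, i.e.\ there are $N^\ast \in \K_\lambda$ and $\K$-embeddings $f_i : N_i' \to N^\ast$ over $M'$ with $f_1 \circ h_1 = f_2 \circ h_2$ on $N$.

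Fix such data. First I would feed $a$ into domination. Since $(a, M, N)$ is a domination triple for $\ts$ and the definition is plainly invariant under $\K$-embeddings fixing $M$, we may apply it inside $N_i'$ (after renaming via $h_i$) to conclude that $\gtp(h_i(N) / M'; N_i')$, viewed as a type of length $\lambda$ on any fixed enumeration $\bar b$ of $N$, does not $\ts$-fork over $M$, for $i = 1, 2$. This is the heart of the argument: it converts information about the single element $a$ into information about all of $N$.

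Second, I would check that the two long types $\gtp(h_i(\bar b) / M'; N_i')$ have the same restriction to $M$. This is immediate from the definition of Galois types: the common witness is $N$ itself, with the identity on $\bar b$ and the $\K$-embeddings $h_i : N \to N_i'$ over $M$. Combined with the previous step and the uniqueness property of $\ts$ (nonforking extensions to $M'$ of a type over $M$ are unique among length-$\le \lambda$ types), we get
\[
\gtp(h_1(\bar b) / M'; N_1') = \gtp(h_2(\bar b) / M'; N_2').
\]

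Finally, unpacking equality of Galois types yields exactly the amalgam witness we need: there is $N^\ast \in K_\lambda$ and $\K$-embeddings $f_i : N_i' \to N^\ast$ over $M'$ with $f_1(h_1(\bar b)) = f_2(h_2(\bar b))$, i.e.\ $f_1 \circ h_1 = f_2 \circ h_2$ on $N$, which is the desired equivalence of amalgams. The main obstacle I anticipate is bookkeeping: making sure the enumeration $\bar b$ of $N$ is used consistently, that ``uniqueness property of $\ts$'' really applies to types of length $\lambda$ (which it does by hypothesis, since $\ts$ is a pre-$(\le \lambda, \lambda)$-frame extending $\s$), and that all the amalgams produced stay inside $K_\lambda$ so that we remain in the regime where $\ts$ and $\s$ are defined—none of which are deep issues, just careful verifications.
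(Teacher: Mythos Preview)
Your argument is correct and is essentially the standard proof of this result. Note that the paper does not give its own proof here: the statement is quoted as a Fact from \cite{indep-aec-apal}, so there is nothing to compare against beyond observing that your argument matches the intended one. One small omission: the definition of uniqueness triple (Definition~\ref{weakly-succ-def}) also requires \emph{existence} of a nonforking amalgam, not just uniqueness; this follows immediately from the extension property of the good frame $\s$, so you may want to add a sentence to that effect.
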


We now want to show the existence property for domination triples: For any type $p \in \gS (M)$, there exists a domination triple $(a, M, N)$ with $p = \gtp (a / M; N)$. We manage to do it when $M$ is a limit model. The proof is a local version of \cite[Lemma 11.12]{indep-aec-apal} (which adapted \cite[Proposition 4.22]{makkaishelah}). We will consider the following local character properties that $\ts$ may have:

\begin{defin}\label{star-def}
  Let $\ts$ be a pre-$(\le \lambda, \lambda)$-frame extending $\s$. Let $\kappa \ge 2$ be a cardinal.

  \begin{enumerate}
    \item We say that \emph{$\ts$ satisfies local character for $(<\kappa)$-length types over $(\lambda, \lambda^+)$-limits} if whenever $\seq{M_i : i < \lambda^+}$ is increasing in $\K_{\lambda}$ with $M_{i + 1}$ universal over $M_i$ for all $i < \lambda^+$ and $p \in \gS^{<\kappa} (\bigcup_{i < \lambda^+} M_i)$, there exists $i < \lambda^+$ such that for any $j \in [i, \lambda^+)$, $p \rest M_j$ does not fork over $M_i$.
    \item We say that \emph{$\ts$ reflects down} if whenever $\seq{M_i : i < \lambda^+}$, $\seq{N_i : i < \lambda^+}$ are increasing continuous in $\K_{\lambda}$ so that for all $i < \lambda^+$, $M_i \lea N_i$, $M_{i + 1}$ is universal over $M_i$, and $N_{i + 1}$ is universal over $N_i$, then there exists $i < \lambda^+$ such that $\nfs{M_i}{N_i}{M_{i + 1}}{N_{i + 1}}$.
    \item\cite[Definition 3.12.(9)]{indep-aec-apal} For $\kappa \ge 2$, we say that \emph{$\ts$ has the left $(<\kappa)$-witness property} if for any three models $M_0 \lea M \lea N$ in $\K_{\lambda}$ and any $A \subseteq |N|$, $\nfs{M_0}{A}{M}{N}$ holds if and only if $\nfs{M_0}{A_0}{M}{N}$ for all $A_0 \subseteq A$ with $|A_0| < \kappa$.
  \end{enumerate}
\end{defin}

Note that the witness property implies some amount of local character:

\begin{lem}\label{witness-lc}
  Let $\kappa \ge 2$. Let $\ts$ be a pre-$(\le \lambda, \lambda)$-frame extending $\s$. If $\lambda = \lambda^{<\kappa}$, $\ts$ satisfies local character for $(<\kappa)$-length types over $(\lambda, \lambda^+)$-limits, and $\ts$ has the $(<\kappa)$-witness property, then $\ts$ satisfies local character for $(<\lambda^+)$-length types over $(\lambda, \lambda^+)$-limits.
\end{lem}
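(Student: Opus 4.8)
The plan is to deduce the $(<\lambda^+)$-length case from the $(<\kappa)$-length case by applying the hypothesis separately to every sub-tuple of length $<\kappa$ and then gluing the conclusions together with the witness property; the role of the cardinal arithmetic assumption $\lambda = \lambda^{<\kappa}$ is to keep the family of such sub-tuples of size at most $\lambda$, so that the regularity of $\lambda^+$ lets us take a supremum of the resulting indices. Concretely, fix an increasing sequence $\seq{M_i : i < \lambda^+}$ in $\K_\lambda$ with $M_{i+1}$ universal over $M_i$ for all $i < \lambda^+$, set $M_{\lambda^+} := \bigcup_{i < \lambda^+} M_i$, and let $p \in \gS^{<\lambda^+}(M_{\lambda^+})$. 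Write $p = \gtp(\ba / M_{\lambda^+}; N)$ with $\ba = \seq{a_\eta : \eta < \beta}$, $\beta < \lambda^+$ (so $\beta \le \lambda$), and $N \gea M_{\lambda^+}$.

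First I would, for each $u \subseteq \beta$ with $|u| < \kappa$, apply local character for $(<\kappa)$-length types over $(\lambda, \lambda^+)$-limits to the type $\gtp(\ba \rest u / M_{\lambda^+}; N) \in \gS^{<\kappa}(M_{\lambda^+})$, obtaining $i_u < \lambda^+$ with $\gtp(\ba \rest u / M_j)$ not forking over $M_{i_u}$ for all $j \in [i_u, \lambda^+)$. Since $\beta \le \lambda$, the number of such $u$ is at most $\lambda^{<\kappa} = \lambda$, so by regularity of $\lambda^+$ the ordinal $i^* := \sup\{\, i_u : u \subseteq \beta,\ |u| < \kappa \,\}$ is $< \lambda^+$; I claim $i^*$ works. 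Fix $j \in [i^*, \lambda^+)$ and by L\"owenheim-Skolem choose $N^* \in \K_\lambda$ with $M_j \lea N^* \lea N$ and $\ran(\ba) \subseteq |N^*|$ (possible since $\|M_j\| + |\beta| = \lambda$). For each $u \subseteq \beta$ with $|u| < \kappa$ we have $i_u \le i^* \le j$, so $\gtp(\ba \rest u / M_j)$ does not fork over $M_{i_u}$, hence by base monotonicity (part of the definition of a pre-$(\le \lambda, \lambda)$-frame) not over $M_{i^*}$; that is, $\nfs{M_{i^*}}{\ba \rest u}{M_j}{N^*}$. Since every $A_0 \subseteq \ran(\ba)$ with $|A_0| < \kappa$ is of this form, the $(<\kappa)$-witness property of $\ts$ applied to the chain $M_{i^*} \lea M_j \lea N^*$ with $A := \ran(\ba)$ yields $\nfs{M_{i^*}}{\ba}{M_j}{N^*}$, i.e.\ $p \rest M_j$ does not fork over $M_{i^*}$. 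As $j \in [i^*, \lambda^+)$ was arbitrary, $i^*$ witnesses local character for $(<\lambda^+)$-length types over $(\lambda, \lambda^+)$-limits for $p$.

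I do not anticipate a serious obstacle: this is the standard ``witness properties upgrade local character'' argument (compare the corresponding step in \cite{indep-aec-apal}), and the only points needing care are bookkeeping --- invoking $\lambda = \lambda^{<\kappa}$ precisely to bound the number of sub-tuples by $\lambda$ so that the supremum stays below the regular cardinal $\lambda^+$, and passing from the possibly large ambient model $N$ to a witnessing model $N^* \in \K_\lambda$ before using the witness property, which is only stated for triples inside $\K_\lambda$.
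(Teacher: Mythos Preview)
Your proposal is correct and follows essentially the same approach as the paper: apply the $(<\kappa)$-length local character to each small sub-tuple, use $\lambda = \lambda^{<\kappa}$ and regularity of $\lambda^+$ to take a supremum $i^*$, then invoke the witness property together with base monotonicity to conclude. If anything you are slightly more careful than the paper, which leaves implicit both the appeal to base monotonicity and the passage to an ambient model $N^* \in \K_\lambda$ before applying the witness property; one minor quibble is that ``$\beta < \lambda^+$ (so $\beta \le \lambda$)'' should read $|\beta| \le \lambda$, but this is exactly what your counting argument uses.
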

\begin{proof}
  Let $\seq{M_i : i < \lambda^+}$ be increasing in $\K_{\lambda}$, with $M_{i + 1}$ universal over $M_i$ for all $i < \lambda^+$. Write $M_{\lambda^+} := \bigcup_{i < \lambda^+} M_i$. Let $p \in \gS^{\alpha} (M_{\lambda^+})$ with $\alpha < \lambda^+$. Say $p = \gtp (\ba / M_{\lambda^+}; N)$. For each $I \subseteq \alpha$ with $|I| < \kappa$, we will write $p^I$ for $\gtp (\ba \rest I / M_{\lambda^+}; N)$.

  Directly from the local character assumption on $\ts$, we have that for each $I \subseteq \alpha$ with $|I| < \kappa$, there exists $i_I < \lambda^+$ so that $p^I \rest M_j$ does not fork over $M_{i_I}$ for all $j \ge i_I$.

  Now let $i := \sup_{I \subseteq \alpha, |I| < \kappa} i_k$. Since $\lambda = \lambda^{<\kappa}$, $i < \lambda^+$. Using the witness property, we get that $p \rest M_j$ does not fork over $M_i$ for all $j \ge i$, as desired.
\end{proof}

Moreover local character together with the witness property imply that $\ts$ reflects down:

\begin{lem}\label{seq-lc}
  Let $\ts$ be a pre-$(\le \lambda, \lambda)$-frame extending $\s$. If $\ts$ has local character for $(<\lambda^+)$-length types over $(\lambda, \lambda^+)$-limits and has the left $(<\kappa)$-witness property for some regular $\kappa \le \lambda$, then $\ts$ reflects down.
\end{lem}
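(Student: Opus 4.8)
The plan is to argue by contradiction. Suppose $\seq{M_i : i < \lambda^+}$ and $\seq{N_i : i < \lambda^+}$ witness that $\ts$ fails to reflect down, so that $\nnfs{M_i}{N_i}{M_{i + 1}}{N_{i + 1}}$ for every $i < \lambda^+$; write $M_{\lambda^+} := \bigcup_{i < \lambda^+} M_i$ and $N_{\lambda^+} := \bigcup_{i < \lambda^+} N_i$, so that $M_{\lambda^+} \lea N_{\lambda^+}$. The goal is to contradict local character for $(<\lambda^+)$-length types run along the chain $\seq{M_i : i < \lambda^+}$. The difficulty is that the forking we are handed at stage $i$ concerns the model $N_i$, whose universe grows with $i$, so no single type of length $<\lambda^+$ over $M_{\lambda^+}$ obviously records all of it; the left $(<\kappa)$-witness property and Fodor's lemma are precisely the tools that get around this.

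First I would extract small witnesses: for each $i < \lambda^+$, applying the left $(<\kappa)$-witness property to $\nnfs{M_i}{N_i}{M_{i + 1}}{N_{i + 1}}$ gives $A_i \subseteq |N_i|$ with $|A_i| < \kappa$ and $\nnfs{M_i}{A_i}{M_{i + 1}}{N_{i + 1}}$. Since $\kappa$ is regular with $\kappa \le \lambda < \lambda^+$, the set $S := \{i < \lambda^+ : \cf{i} = \kappa\}$ is stationary, and for each $i \in S$ continuity of the chain gives $N_i = \bigcup_{j < i} N_j$, whence (as $|A_i| < \kappa = \cf{i}$) there is $f(i) < i$ with $A_i \subseteq |N_{f(i)}|$. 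The map $f$ is regressive on the stationary set $S$, so Fodor's lemma provides a stationary $S^\ast \subseteq S$ and a fixed $j^\ast < \lambda^+$ with $A_i \subseteq |N_{j^\ast}|$ for all $i \in S^\ast$.

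Next I would collapse everything into one type over $M_{\lambda^+}$. Enumerate $|N_{j^\ast}|$ as a sequence $\bc$ of length $\lambda$ and set $p := \gtp(\bc / M_{\lambda^+}; N_{\lambda^+})$, a type of length $\lambda < \lambda^+$. Applying the hypothesis of local character for $(<\lambda^+)$-length types over $(\lambda, \lambda^+)$-limits to $\seq{M_i : i < \lambda^+}$ (whose successor stages are universal, as required) and to $p$ produces $i_0 < \lambda^+$ with $p \rest M_j$ nonforking over $M_{i_0}$ for all $j \in [i_0, \lambda^+)$. Choosing $i \in S^\ast$ with $i > i_0$ (possible since $S^\ast$ is unbounded), we get that $p \rest M_{i + 1}$ does not fork over $M_{i_0}$, hence over $M_i$ by base monotonicity, i.e.\ $\nfs{M_i}{\bc}{M_{i + 1}}{N_{\lambda^+}}$. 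By left monotonicity ($A_i \subseteq |N_{j^\ast}|$) and then restriction of the ambient model to $N_{i + 1}$ — legitimate because $j^\ast < i$ forces $A_i \subseteq |N_{j^\ast}| \subseteq |N_{i + 1}|$ and $M_{i + 1} \lea N_{i + 1} \lea N_{\lambda^+}$ — we obtain $\nfs{M_i}{A_i}{M_{i + 1}}{N_{i + 1}}$, contradicting the choice of $A_i$.

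I expect the crux to be the middle step: realizing that one should first cut each $N_i$ down to a set of size $<\kappa$ using the witness property, then pay for Fodor's lemma to make these small sets land inside a single $\lambda$-sized model $N_{j^\ast}$, so that one length-$\lambda$ type over $M_{\lambda^+}$ carries enough information to invoke local character. Everything else — base monotonicity, left monotonicity, and the routine restriction of the ambient model, all available by monotonicity of $\nf$ from Definition \ref{pre-frame-def} — is bookkeeping.
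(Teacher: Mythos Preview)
Your proof is correct, but it is organized differently from the paper's. The paper argues directly rather than by contradiction: for each $i < \lambda^+$ it applies local character to the type $\gtp(N_i / M_{\lambda^+})$ to obtain $j_i < \lambda^+$ with $\nfs{M_{j_i}}{N_i}{M_j}{N_j}$ for all $j \ge j_i$; it then takes $i^\ast$ to be a closure point of $i \mapsto j_i$ with $\cf{i^\ast} = \kappa$, and verifies $\nfs{M_{i^\ast}}{N_{i^\ast}}{M_{i^\ast+1}}{N_{i^\ast+1}}$ using the witness property (any $A_0 \subseteq |N_{i^\ast}|$ of size $<\kappa$ sits in some $N_i$ with $i < i^\ast$, and then $j_i < i^\ast$ finishes it). So the paper applies local character $\lambda^+$ times and the witness property once, while you apply the witness property $\lambda^+$ times and local character once; correspondingly, the paper uses a club closure-point argument where you use Fodor. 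Your route is perfectly valid and arguably buys a cleaner single invocation of the local character hypothesis.

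One cosmetic point: writing $\nfs{M_i}{\bc}{M_{i+1}}{N_{\lambda^+}}$ is not literally meaningful, since $\ts$ is a pre-$(\le\lambda,\lambda)$-frame and the ambient model must lie in $\K_\lambda$. You immediately fix this by restricting to $N_{i+1}$, which is legitimate because $\bc \subseteq |N_{j^\ast}| \subseteq |N_{i+1}|$ (here $j^\ast < i$ is automatic for $i \in S^\ast$, since Fodor gave $j^\ast = f(i) < i$); it would be cleaner to land in $N_{i+1}$ from the start and never mention $N_{\lambda^+}$ as an ambient model.
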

\begin{proof}
  Fix $\seq{M_i : i < \lambda^+}$, $\seq{N_i : i < \lambda^+}$ as in the definition of reflecting down. By local character for $\ts$, for each $i < \lambda^+$, there exists $j_i < \lambda^+$ such that $\nfs{M_{j_i}}{N_i}{M_j}{N_j}$ for all $j \ge j_i$. Let $i^\ast < \lambda^+$ be such that $\cf{i^\ast} = \kappa$ and $j_i < i^\ast$ for all $i < i^\ast$. Then it is easy to check using the left $(<\kappa)$-witness property that $\nfs{M_{i^\ast}}{N_{i^\ast}}{M_j}{N_j}$ for all $j \ge i^\ast$, which is as needed.
\end{proof}

We have arrived to the existence property for domination triples. For the convenience of the reader, we restate Hypothesis \ref{domination-hyp}

\begin{thm}\label{weak-dom-existence-0}
  Let $\s$ be a type-full good $\lambda$-frame on $\K$ and let $\ts$ be a pre-$(\le \lambda, \lambda)$-frame extending $\s$. Assume that $\ts$ reflects down.

  Let $M \in K_\lambda$ be a limit model. For each nonalgebraic $p \in \gS (M)$, there exists a domination triple $(a, M, N)$ for $\ts$ such that $p = \gtp (a / M; N)$.
\end{thm}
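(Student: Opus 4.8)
The plan is to build the dominating triple $(a, M, N)$ by a back-and-forth / transfinite construction of length $\lambda^+$, producing increasing continuous chains $\seq{M_i : i < \lambda^+}$ and $\seq{N_i : i < \lambda^+}$ in $\K_\lambda$ with $M_0 = M$, $a \in |N_0| \setminus |M_0|$ realizing $p$, each $M_{i+1}$ universal over $M_i$, each $N_{i+1}$ universal over $N_i$, and $M_i \lea N_i$ for all $i$. The model $M$ is limit, so we may absorb a limit chain witnessing that into the construction and keep $M_i$ limit over $M_0$ (hence over each other) throughout. At stage $i$ we diagonalize against all potential counterexamples to the domination conclusion: enumerate (with $\lambda^+$ bookkeeping, using $\lambda = \lambda^{<\lambda}$-type arithmetic implicit in working inside a $\lambda$-frame, or more carefully just that there are $\le \lambda^+$ relevant pairs along the chain) the possible $(M', N')$ with $M \lea M' \lea N'$ of size $\lambda$ and $\gtp(a/M'; N')$ not forking over $M$ but $\gtp(N_i/M'; N')$ forking over $M$; whenever such a configuration appears we amalgamate it into $N_{i+1}$, making the bad pair $(M', N')$ sit below $(M_{i+1}, N_{i+1})$ in the chain. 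Finally set $M_\infty := \bigcup_i M_i$, $N_\infty := \bigcup_i N_i$, apply the hypothesis that $\ts$ reflects down to the two chains to get some $i^\ast < \lambda^+$ with $\nfs{M_{i^\ast}}{N_{i^\ast}}{M_{i^\ast+1}}{N_{i^\ast+1}}$, and then argue that $(a, M_{i^\ast}, N_{i^\ast})$ — and after transporting back via an isomorphism over $M$ (using uniqueness of limit models, Fact \ref{uq-limit}, and the conjugation property, Fact \ref{conj-prop}), also $(a, M, N)$ for a suitable copy $N$ — is a domination triple.

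**Key steps, in order.**

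(1) Set up the chains with $M$ limit, $a$ realizing $p$ in $N_0 \gea$ some model, and the universality/limit conditions maintained at successors and unions. (2) Fix bookkeeping so that every triple $(M', N')$ appearing cofinally often in (an enlargement of) the chains that could witness a failure of domination is ``caught'' at some successor stage $i+1$: i.e.\ we arrange $M' \lea M_{i+1}$, $N' \lea N_{i+1}$, with the relevant forking data preserved. (3) At union stages nothing to do by continuity. (4) Apply ``reflects down'' to $\seq{M_i}$, $\seq{N_i}$ to extract $i^\ast$ with $\nfs{M_{i^\ast}}{N_{i^\ast}}{M_{i^\ast+1}}{N_{i^\ast+1}}$. (5) Verify the domination property for $(a, M_{i^\ast}, N_{i^\ast})$: given $M_{i^\ast} \lea M'$, $N_{i^\ast} \lea N'$ with $\nfs{M_{i^\ast}}{a}{M'}{N'}$, by the catching step this configuration (or an isomorphic one over $M_{i^\ast}$) was already handled in the construction against $M_{i^\ast+1}$, and combined with monotonicity, transitivity, and the nonforking $\nfs{M_{i^\ast}}{N_{i^\ast}}{M_{i^\ast+1}}{N_{i^\ast+1}}$ we conclude $\nfs{M_{i^\ast}}{N_{i^\ast}}{M'}{N'}$. (6) Transport back to base $M$: since $M_{i^\ast}$ is limit over $M$ and $\gtp(a/M_{i^\ast}; N_{i^\ast})$ does not fork over $M$ (arrange this, or use conjugation), there is $f : M_{i^\ast} \cong M$ fixing $M$ with $f(\gtp(a/M_{i^\ast})) = \gtp(a/M)$; pull the triple back along $f$ to get the desired $(a, M, N)$ with $p = \gtp(a/M; N)$.

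**Main obstacle.**

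The delicate point is step (5) together with the bookkeeping in step (2): one must argue that it suffices to ``catch'' each bad pair $(M', N')$ only up to isomorphism over the relevant base and only against the immediate successor $M_{i^\ast+1}$ in the chain rather than against $N_\infty$, and then use invariance and the monotonicity/transitivity axioms of the frame to propagate the conclusion to the genuine $(M', N')$. Handling the extension and uniqueness axioms correctly when amalgamating a freshly-appearing $(M', N')$ into the chain — ensuring the forking relation $\nfs{M}{a}{M'}{N'}$ really does transfer into the constructed $N_{i+1}$ — is exactly where the structure of $\ts$ as a pre-$(\le\lambda,\lambda)$-frame and the hypothesis ``$\ts$ reflects down'' are used, and it is the part that requires genuine care rather than routine chasing; the rest (chain constructions, union steps, conjugation transport) is standard for anyone comfortable with good frames. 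This mirrors \cite[Lemma 11.12]{indep-aec-apal} and \cite[Proposition 4.22]{makkaishelah}, the difference here being purely that we stay local in $\lambda$ by using limit models and the ``reflects down'' reformulation in place of a global independence relation.
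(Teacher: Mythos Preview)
Your proposal has the right ingredients (two increasing chains of length $\lambda^+$, the extension property for forking, reflects down, and conjugation to transport back to $M$) but the logical structure of the argument is wrong, and the bookkeeping step (2) cannot be carried out as described.

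First, the counting: there are up to $2^{\lambda}$ isomorphism types of pairs $(M',N')$ over a fixed $(M_i,N_i)$, so you cannot enumerate them in a $\lambda^+$-length construction. Neither $\lambda = \lambda^{<\lambda}$ nor ``only $\lambda^+$ relevant pairs along the chain'' is available here; the domination property quantifies over \emph{all} $(M',N')$ extending $(M,N)$ in $\K_\lambda$, not just those sitting inside your chain. Second, even granting the enumeration, your step (5) does not go through. ``Catching'' a bad pair at some stage $j$ means arranging $\nnfs{M_j}{N_j}{M_{j+1}}{N_{j+1}}$; this gives you no information about $\nfs{M_{i^\ast}}{N_{i^\ast}}{M'}{N'}$ for an arbitrary $(M',N')$ over the single stage $i^\ast$ that reflects down hands you. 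Embedding $M'$ into $M_{i^\ast+1}$ by universality does not help, because there is no reason the extension to $N'$ lands inside $M_{i^\ast+1}$ on the $M'$-part.

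The paper's argument is much simpler and is by contradiction. Assume no domination triple represents $p$; by conjugation (Fact \ref{conj-prop}) the same failure propagates to the nonforking extension of $p$ over any limit $M' \gea M$. Now build the two chains greedily: at stage $i$, since $(a,M_i,N_i)$ is not a domination triple, pick \emph{one} witness $(M_i',N_i')$ with $\nfs{M_i}{a}{M_i'}{N_i'}$ but $\nnfs{M_i}{N_i}{M_i'}{N_i'}$, and use extension to absorb it into $(M_{i+1},N_{i+1})$ while keeping $\nfs{M_i}{a}{M_{i+1}}{N_{i+1}}$. This produces chains with $\nnfs{M_i}{N_i}{M_{i+1}}{N_{i+1}}$ at \emph{every} $i<\lambda^+$, which directly contradicts reflects down. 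No bookkeeping is needed; reflects down is used in the contrapositive direction from what you wrote. Your step (6) is correct but unnecessary in this formulation, since the contradiction already yields a domination triple over $M$ itself.
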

\begin{proof}
  Assume not.

  \paragraph{\underline{Claim}} For any limit $M' \in \K_{\lambda}$ with $M' \gea M$, if $q \in \gS (M')$ is the nonforking extension of $p$, then there is no domination triple $(b, M', N')$ such that $q = \gtp (b / M'; N')$. 

  \paragraph{\underline{Proof of claim}} By the conjugation property (Fact \ref{conj-prop}), there exists $f: M' \cong M$ such that $f (q) = p$. Now use that domination triples are invariant under isomorphisms. $\dagger_{\text{Claim}}$
  
  We construct $a$, $\seq{M_i : i < \lambda^+}$, $\seq{N_i : i < \lambda^+}$ increasing continuous such that for all $i < \lambda^+$:

  \begin{enumerate}
    \item $M_0 = M$.
    \item $M_i \lea N_i$ are both in $\K_\lambda$.
    \item $M_{i + 1}$ is limit over $M_i$ and $N_{i + 1}$ is limit over $N_i$.
    \item $\gtp (a / M_i; N_i)$ is the nonforking extension of $p$. In particular, $\nfs{M_0}{a}{M_i}{N_i}$.
    \item $\nnfs{M_i}{N_i}{M_{i + 1}}{N_{i + 1}}$.
  \end{enumerate}

  This is enough, since then we get a contradiction to $\ts$ reflecting down. This is possible: If $i = 0$, let $N_0 \in \K_\lambda$ and $a \in |N_0|$ be such that $p = \gtp (a / M; N_0)$. At limits, take unions. Now assume everything up to $i$ has been constructed. By the claim, $(a, M_i, N_i)$ cannot be a domination triple. This means there exists $M_i' \gea M_i$ and $N_i' \gea N_i$ all in $\K_\lambda$ such that $\nfs{M_i}{a}{M_i'}{N_i'}$ but $\nnfs{M_i}{N_i}{M_i'}{N_i'}$. By the extension property of forking, pick $M_{i + 1} \in \K_\lambda$ limit over $M_i$ containing $M_i'$ and $N_{i + 1} \gea N_i'$ such that $N_{i + 1}$ is limit over $N_i$ and $\nfs{M_i}{a}{M_{i + 1}}{N_{i + 1}}$. 
\end{proof}

The next corollary will not be used in the rest of this paper. It improves on \cite[Theorem 11.13]{indep-aec-apal} by working exclusively in $\lambda$ (so there is no need to assume the existence of a good frame below $\lambda$). 

\begin{cor}\label{weakly-succ-cor}
  Assume that $\K$ is categorical in $\lambda$. If there exists a pre-$(\le \lambda, \lambda)$-frame $\ts$ extending $\s$, reflecting down, and satisfying uniqueness, then $\s$ has the existence property for uniqueness triples (i.e.\ it is weakly successful, see \cite[Definition III.1.1]{shelahaecbook} or Definition \ref{weakly-succ-def}).
\end{cor}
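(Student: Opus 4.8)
The plan is to assemble the two preceding results. First I would invoke Theorem \ref{weak-dom-existence-0}: since $\ts$ extends $\s$ and reflects down, for every limit model $M \in \K_\lambda$ and every nonalgebraic $p \in \gS (M)$ there is a domination triple $(a, M, N)$ for $\ts$ with $p = \gtp (a / M; N)$. Next, since $\ts$ satisfies uniqueness, Fact \ref{uq-triple-fact} says that this domination triple is automatically a uniqueness triple for $\s$. So over limit bases, $\s$ already has the existence property for uniqueness triples.

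The remaining point is to remove the word ``limit''. Here the categoricity hypothesis enters: because $\s$ is a good $\lambda$-frame, $\K_\lambda$ is nonempty, has amalgamation and no maximal models, and is stable in $\lambda$, so limit models of size $\lambda$ exist; categoricity in $\lambda$ then forces the unique model of size $\lambda$ to be limit, hence every $M \in \K_\lambda$ is (isomorphic to) a limit model. Combining this with the previous paragraph, for any $M \in \K_\lambda$ and any nonalgebraic $p \in \gS (M)$ there is a uniqueness triple $(a, M, N)$ for $\s$ with $p = \gtp (a / M; N)$. Since $\s$ is type-full, the basic types are exactly the nonalgebraic ones, so this is precisely the existence property for uniqueness triples, i.e.\ $\s$ is weakly successful.

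I do not expect a genuine obstacle here: the corollary is essentially a repackaging of Theorem \ref{weak-dom-existence-0} and Fact \ref{uq-triple-fact}. The only place where a little care is needed is the observation that Theorem \ref{weak-dom-existence-0} produces domination triples only over limit bases, so categoricity in $\lambda$ is used exactly to upgrade this to all models of size $\lambda$; without it one would obtain only a ``local'' existence property over limit models, which, as the surrounding discussion indicates, is frequently all that is actually used.
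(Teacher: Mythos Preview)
Your proposal is correct and matches the paper's approach exactly: the paper's proof is the two-line ``By Theorem \ref{weak-dom-existence-0}, $\ts$ has the existence property for domination triples. By Fact \ref{uq-triple-fact}, any domination triple is a uniqueness triple.'' You have simply made explicit the role of categoricity in $\lambda$ (ensuring every model in $\K_\lambda$ is limit so that Theorem \ref{weak-dom-existence-0} applies to all of them), which the paper leaves implicit.
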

\begin{proof}
  By Theorem \ref{weak-dom-existence-0}, $\ts$ has the existence property for domination triples. By Fact \ref{uq-triple-fact}, any domination triple is a uniqueness triple.
\end{proof}

\begin{question}
  Is the converse true? Namely if $\K$ is categorical in $\lambda$ and $\s$ is a weakly successful good $\lambda$-frame on $\K$, does there exist a pre-$(\le \lambda, \lambda)$-frame $\ts$ that extends $\s$, satisfies uniqueness, and reflects down?  
\end{question}

It is known (see \cite[Section II.6]{shelahaecbook}  and \cite[Section 12]{indep-aec-apal}) that weakly successful good $\lambda$-frame can be extended to pre-$(\le \lambda, \lambda)$-frame with several good properties, including uniqueness, but it is not clear that the pre-frame reflects down.

We finish with a slight improvement on the construction of a weakly successful good frame from \cite[Appendix A]{ap-universal-v9} (which improved the threshold cardinals of \cite{indep-aec-apal}). Since the result is not needed for the rest of the paper, we only sketch the proof and quote freely. At this point, we drop Hypothesis \ref{domination-hyp}. 

\begin{cor}
  Let $\K$ be an AEC and let $\lambda > \LS (\K)$. Let $\kappa \le \LS (\K)$ be an infinite cardinal. Assume that $\LS (\K) = \LS (\K)^{<\kappa}$ and $\lambda = \lambda^{<\kappa}$. Assume that $\K$ is $\LS (\K)$-tame for all types of length less than $\kappa$ over saturated models of size $\lambda$, and $\K$ is $(<\kappa)$-type short for all types of length at most $\lambda$ over saturated models of size $\lambda$ (see \cite[Definition 3.3]{tamelc-jsl} and \cite[Definition 2.22]{sv-infinitary-stability-afml}).

  If $\K_{[\LS (\K), \lambda]}$ has amalgamation, $\K$ is stable in $\LS (\K)$, and $\s$ is a good $\lambda$-frame on $\Ksatp{\lambda}_\lambda$, then $\s$ is weakly successful.
\end{cor}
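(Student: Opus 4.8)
The plan is to reduce the statement to Corollary \ref{weakly-succ-cor}. Its hypotheses ask for two things: that the underlying AEC be categorical in $\lambda$, and that there be a pre-$(\le \lambda, \lambda)$-frame $\ts$ extending $\s$ that satisfies uniqueness and reflects down. The first is immediate here: the underlying class of $\s$ is $\Ksatp{\lambda}_\lambda$, it is nonempty by hypothesis, and any two $\lambda$-saturated models of size $\lambda$ are isomorphic by the usual back-and-forth argument (using amalgamation in $\K_{[\LS (\K), \lambda]}$). So the whole content is to build the pre-frame $\ts$; once it is in hand, Corollary \ref{weakly-succ-cor} gives that $\s$ is weakly successful.

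For $\ts$ I would take the canonical independence relation on types of length at most $\lambda$ over the saturated models of size $\lambda$ that is built from the locality hypotheses --- concretely the ``$(<\kappa)$-coheir'' (equivalently, a suitable $(<\kappa)$-nonsplitting) relation, as developed in \cite{sv-infinitary-stability-afml} and used in \cite[Appendix A]{ap-universal-v9} (see also \cite{indep-aec-apal}). Using $\LS (\K)$-tameness for types of length $<\kappa$, $(<\kappa)$-type shortness for types of length $\le \lambda$, stability in $\LS (\K)$, and amalgamation in $\K_{[\LS (\K), \lambda]}$, this machinery yields a pre-$(\le \lambda, \lambda)$-frame $\ts$ on $\Ksatp{\lambda}_\lambda$ that (a) extends $\s$ (its restriction to length-one types agrees with $\s$-forking, since in a good frame forking of length-one types is the unique independence notion with the basic properties and tameness forces the two to coincide), (b) satisfies uniqueness (this is the step that genuinely consumes $(<\kappa)$-type shortness and tameness), (c) has the left $(<\kappa)$-witness property, and (d) satisfies local character for $(<\kappa)$-length types over $(\lambda, \lambda^+)$-limits --- for length-one types this is the local character axiom of the good frame $\s$ (note $\cf{\lambda^+} = \lambda^+ \ge \kappa$, so unions of chains of length $\lambda^+$ are covered), and the general length-$(<\kappa)$ case follows from the length-one case together with the witness property.

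It then remains to verify that $\ts$ reflects down, and here I would simply chain the two lemmas already proved. Since $\lambda = \lambda^{<\kappa}$, $\ts$ has the $(<\kappa)$-witness property, and $\ts$ has local character for $(<\kappa)$-length types over $(\lambda, \lambda^+)$-limits, Lemma \ref{witness-lc} gives local character for $(<\lambda^+)$-length types over $(\lambda, \lambda^+)$-limits. Next, set $\kappa' := \kappa$ if $\kappa$ is regular and $\kappa' := \kappa^+$ otherwise; then $\kappa'$ is regular, $\kappa' \le \LS (\K)^+ \le \lambda$, and $\ts$ has the left $(<\kappa')$-witness property (the witness property only weakens as the bound grows). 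Lemma \ref{seq-lc}, applied with this $\kappa'$, now shows that $\ts$ reflects down. Corollary \ref{weakly-succ-cor} then yields that $\s$ is weakly successful.

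The genuinely substantive step is the construction of $\ts$ --- above all, verifying the uniqueness property of the $(<\kappa)$-coheir relation, which is exactly where the full strength of type shortness and tameness is needed; but this is precisely the content of the cited work, so the proof should quote it freely. The only new ingredient over \cite[Appendix A]{ap-universal-v9} is routing everything through the local criterion for reflecting down (Lemmas \ref{witness-lc} and \ref{seq-lc}) and then through Corollary \ref{weakly-succ-cor}, which is what lets us avoid assuming the existence of a good frame below $\lambda$.
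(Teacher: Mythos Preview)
Your proposal is correct and matches the paper's approach: both reduce to Corollary \ref{weakly-succ-cor} by building a pre-$(\le \lambda,\lambda)$-frame $\ts$ from the tameness/shortness hypotheses (the paper defines it concretely via $\LS(\K)$-splitting, citing \cite[Lemma A.14]{ap-universal-v9} for extension of $\s$, uniqueness, and local character), then check the left $(<\kappa^+)$-witness property and apply Lemma \ref{seq-lc} to get that $\ts$ reflects down. The only minor difference is that the paper extracts the $(<\lambda^+)$-length local character directly from the cited lemma, so it does not need to pass through Lemma \ref{witness-lc} as you do.
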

\begin{proof}[Proof sketch]
  We use Corollary \ref{weakly-succ-cor} with the pre-$(\le \lambda, \lambda)$-frame $\ts$ on $\K_\lambda$ where $\ts$-forking is defined as follows: For $M_0 \lea M$ both in $\K_\lambda$, $p \in \gS^{\le \lambda} (M)$ does not $\ts$-fork over $M_0$ if for every $I \subseteq \ell (p)$ with $|I| < \kappa$, there exists $M_0' \in \K_{\LS (\K)}$ with $M_0' \lea M_0$ such that $p^I$ does not $\LS (\K)$-split over $M_0'$. We want to show that $\ts$ extends $\s$, $\ts$ has uniqueness, and $\ts$ reflects down.

  Following the proof of \cite[Lemma A.14]{ap-universal-v9}, we get that $\ts$ extends $\s$, has local character for $(<\lambda^+)$-length types over $(\lambda, \lambda^+)$-limits, and has uniqueness. Also, $\ts$ clearly has the left $(<\kappa^+)$-witness property and by assumption $\kappa^+ \le \LS (\K)^+ \le \lambda$. Thus by Lemma \ref{seq-lc} (where $\kappa$ there is $\kappa^+$ here), $\ts$ reflects down, as desired.
\end{proof}

\section{Local orthogonality}

\begin{hypothesis}
  $\s = (\K_\lambda, \nf)$ is a type-full good $\lambda$-frame.
\end{hypothesis}

The next definition is what we will use for the $\ts$ of the previous section. One can see it as a replacement for a notion of forking for types over models, when such a notion is not available. It already plays a role in \cite{makkaishelah} (see Lemma 4.17 there) and \cite{bv-sat-v3} (see Definition 3.10 there). A similar notion is called ``smooth independence'' in \cite{viza}.

\begin{defin}\label{1-forking-def}
  For $M \in \K_{\lambda}$ and $p \in \gS^{<\infty} (M)$, we say that $p$ \emph{does not 1-$\s$-fork} over $M_0$ if $M_0 \lea M$ and for $I \subseteq \ell (p)$ with $|I| = 1$, we have that $p^I$ does not $\s$-fork over $M_0$. We see 1-forking as inducing a pre-$(\le \lambda, \lambda)$-frame (Definition \ref{pre-frame-def}).
\end{defin}

\begin{notation}
  We write $\nfs{M_0}{[A]^1}{M}{N}$ if for some (any) enumeration $\ba$ of $A$, $\gtp (\ba / M; N)$ does not $1$-$\s$-fork over $M_0$. That is, $\nfs{M_0}{a}{M}{N}$ for all $a \in A$.
\end{notation}
\begin{remark}[Disjointness]\label{disj-rmk}
  Because nonforking extensions of nonalgebraic types are nonalgebraic, if $\nfs{M_0}{[A]^1}{M}{N}$, then $|M| \cap A \subseteq |M_0|$.
\end{remark}

\begin{defin}
  $(a, M, N)$ is a \emph{weak domination triple in $\s$} if it is a domination triple (Definition \ref{dom-triple-def}) for the pre-frame induced by 1-forking. That is, $M, N \in K_\lambda$, $M \lea N$, $a \in |N| \backslash |M|$ and for any $N' \gea N$ and $M' \lea N$ with $M \lea M'$ and $M', N' \in K_\lambda$, if $\nfs{M}{a}{M'}{N'}$, then $\nfs{M}{[N]^1}{M'}{N'}$.
\end{defin}

From the results of the previous section, we deduce the existence property for weak domination triples:

\begin{lem}\label{lc-lem}
1-forking reflects down (see Definition \ref{star-def}).
\end{lem}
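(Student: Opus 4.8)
The plan is to reduce the statement, via Lemmas \ref{witness-lc} and \ref{seq-lc}, to the local character axiom of the good frame $\s$. First, 1-forking has the left $(<\aleph_0)$-witness property — indeed even the left $(<2)$-witness property — essentially by definition: for $M_0 \lea M \lea N$ in $\K_\lambda$ and $A \subseteq |N|$, the relation $\nfs{M_0}{A}{M}{N}$ for the pre-frame induced by 1-forking holds exactly when $\gtp (a / M; N)$ does not $\s$-fork over $M_0$ for every $a \in A$, which is the same as $\nfs{M_0}{A_0}{M}{N}$ for every one-element (hence every finite) $A_0 \subseteq A$. Since $\aleph_0$ is regular and $\aleph_0 \le \lambda$, Lemma \ref{seq-lc} reduces the proof to showing that 1-forking has local character for $(<\lambda^+)$-length types over $(\lambda, \lambda^+)$-limits. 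Now apply Lemma \ref{witness-lc} with $\kappa = 2$: we have $\lambda = \lambda^{<2}$ trivially and the $(<2)$-witness property was just observed, so it is enough to prove local character for $(<2)$-length types over $(\lambda, \lambda^+)$-limits, i.e., for types of length one. On length-one types 1-$\s$-forking coincides with $\s$-forking, so the only genuine content is that $\s$ itself satisfies local character for length-one types over $(\lambda, \lambda^+)$-limits (a mild form of $\lambda$-superstability).

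To prove this last statement, let $\seq{M_i : i < \lambda^+}$ be increasing in $\K_\lambda$ with $M_{i+1}$ universal over $M_i$, set $M := \bigcup_{i < \lambda^+} M_i$, and fix $p \in \gS^1 (M)$; I must find $i < \lambda^+$ such that $p \rest M_j$ does not $\s$-fork over $M_i$ for all $j \in [i, \lambda^+)$. I would first replace $\seq{M_i : i < \lambda^+}$ by the continuous chain obtained by keeping $M_i$ for $i$ a successor and putting $\bigcup_{j < i} M_j$ for $i$ a limit; this only shrinks the models at limit stages, does not change $M$, and a short monotonicity/base-monotonicity argument shows any witness $i$ for the new chain yields one (its successor) for the original chain — so we may assume the chain is continuous. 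If $p$ is realized in some $M_{j_0}$ the conclusion is immediate (take $i = j_0$, using the conventions on algebraic types), so assume $p$ is nonalgebraic. Then each restriction $p \rest M_j$ is nonalgebraic as well: in an AEC with amalgamation a restriction of a nonalgebraic Galois type stays nonalgebraic, since if some $b \in |M_j|$ realized $p \rest M_j$ then amalgamating a realization of $p$ with $b$ over $M_j$ would force that realization to equal $b$, so $p$ would be realized in $M$. In particular, by type-fullness, every $p \rest M_j$ is basic.

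Finally, for each limit ordinal $j < \lambda^+$ the chain $\seq{M_i : i \le j}$ is increasing continuous in $\K_\lambda$, so the local character axiom of $\s$ applied to the basic type $p \rest M_j$ gives some $g(j) < j$ with $p \rest M_j$ not $\s$-forking over $M_{g(j)}$. Since $g$ is regressive on the stationary set of limit ordinals below $\lambda^+$, Fodor's lemma provides a stationary $S \subseteq \lambda^+$ and $i^\ast < \lambda^+$ with $g(j) = i^\ast$ for all $j \in S$; then $p \rest M_j$ does not $\s$-fork over $M_{i^\ast}$ for $j \in S$, and for arbitrary $j' \in [i^\ast, \lambda^+)$, choosing $j \in S$ with $j \ge j'$ (possible as $S$ is unbounded) and restricting $p \rest M_j$ to $M_{j'}$, monotonicity of $\s$-forking yields that $p \rest M_{j'}$ does not $\s$-fork over $M_{i^\ast}$. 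So $i = i^\ast$ works. I expect the main (minor) obstacles to be the reduction to a continuous chain and the careful handling of algebraic versus nonalgebraic restrictions so that the local character axiom of $\s$ is applicable; there should be no substantial difficulty beyond invoking Lemmas \ref{witness-lc}, \ref{seq-lc}, and Fodor's lemma.
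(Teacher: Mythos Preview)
Your proposal is correct and follows essentially the same route as the paper: reduce via Lemmas \ref{witness-lc} and \ref{seq-lc} (using the $(<2)$-witness property) to local character for length-one types over $(\lambda,\lambda^+)$-limits, then derive this from the local character axiom of $\s$ via Fodor's lemma. You are simply more explicit than the paper about two minor points it glosses over---passing to a continuous chain and checking that restrictions stay nonalgebraic (the latter is in fact unnecessary, since by the paper's convention algebraic types trivially do not fork over their domain).
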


\begin{proof}
  It is clear that 1-forking has the left $(<2)$-witness property (Definition \ref{star-def}) so by Lemmas \ref{witness-lc} and \ref{seq-lc}, it is enough to show that 1-forking has local character for $(<2)$-length types over $(\lambda, \lambda^+)$-limits. This follows from the local character property of $\s$ (see \cite[Claim II.2.11]{shelahaecbook}). In details:
  
  \paragraph{\underline{Claim}} Let $\seq{M_i : i < \lambda^+}$ be increasing continuous in $\K_\lambda$. Let $p \in \gS (\bigcup_{i < \lambda^+})$. There exists $i < \lambda^+$ so that $p \rest M_j$ does not fork over $M_i$ for all $j \ge i$.

  \paragraph{\underline{Proof of claim}} By local character (in $\s$), for each limit $j < \lambda^+$, there exists $\gamma_j < j$ so that $p \rest M_j$ does not fork over $M_{\gamma_j}$. By Fodor's lemma, there exists $i < \lambda^+$ such that for unboundedly many $j < \lambda^+$, $\gamma_j = i$. By monotonicity of forking, $i$ is as desired.
\end{proof}

\begin{thm}\label{weak-dom-existence}
  Let $M \in K_\lambda$ be a limit model. For each nonalgebraic $p \in \gS (M)$, there exists a weak domination triple $(a, M, N)$ such that $p = \gtp (a / M; N)$.
\end{thm}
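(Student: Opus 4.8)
The plan is to observe that the statement is an immediate instance of Theorem \ref{weak-dom-existence-0}. By definition, a weak domination triple in $\s$ is precisely a domination triple (Definition \ref{dom-triple-def}) for the pre-$(\le \lambda, \lambda)$-frame $\ts$ induced by $1$-forking (Definition \ref{1-forking-def}). So the entire content of the theorem reduces to checking that this particular $\ts$ satisfies the two hypotheses of Theorem \ref{weak-dom-existence-0}: that $\ts$ is a pre-$(\le \lambda, \lambda)$-frame extending $\s$, and that $\ts$ reflects down.

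For the first hypothesis: on types of length one, not $1$-$\s$-forking over $M_0$ coincides by definition with not $\s$-forking over $M_0$ (the only one-element $I \subseteq \ell(p)$ gives $p^I = p$), so $\ts$ extends $\s$; invariance, monotonicity and base monotonicity for $\ts$ follow by applying the corresponding properties of $\s$ to each length-one restriction of the relevant type, and the underlying class is just $\K_\lambda$, which is an AEC. The second hypothesis — that $\ts$ reflects down — is exactly Lemma \ref{lc-lem}. Hence Theorem \ref{weak-dom-existence-0} applies with this choice of $\ts$: for the given limit model $M \in \K_\lambda$ and nonalgebraic $p \in \gS(M)$, it produces a domination triple $(a, M, N)$ for $\ts$ with $p = \gtp(a/M; N)$, and by the identification above this is precisely a weak domination triple in $\s$.

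I do not expect any real obstacle at this stage: the substantive work has already been carried out in the earlier results the proof invokes — the Makkai--Shelah-style construction of a non-reflecting tower in the proof of Theorem \ref{weak-dom-existence-0} (via the conjugation property and the claim that the nonforking extension of $p$ also admits no domination triple), and the reduction in Lemma \ref{lc-lem} of ``$1$-forking reflects down'' to local character of $\s$ together with Fodor's lemma and the trivial left $(<2)$-witness property. The only point requiring a modicum of care is confirming that $1$-forking genuinely qualifies as a pre-$(\le \lambda, \lambda)$-frame extending $\s$ in the exact sense of Definitions \ref{pre-frame-def} and \ref{1-forking-def}, which is a routine unwinding of definitions.
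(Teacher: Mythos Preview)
Your proposal is correct and matches the paper's own proof, which simply cites Lemma \ref{lc-lem} and Theorem \ref{weak-dom-existence-0}. The additional verification you spell out (that $1$-forking is a pre-$(\le \lambda, \lambda)$-frame extending $\s$) is routine and is exactly the content implicit in the paper's one-line citation.
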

\begin{proof}
  By Lemma \ref{lc-lem} and Theorem \ref{weak-dom-existence-0}.
\end{proof}

We now give a definition of orthogonality in terms of independent sequences. 

\begin{defin}[Independent sequence, III.5.2 in \cite{shelahaecbook}]
Let $\alpha$ be an ordinal.

  \begin{enumerate}
  \item $\seq{a_i:i < \alpha} \smallfrown \seq{M_i : i \leq \alpha}$ is said to be \emph{independent in $(M, M', N)$} when:

    \begin{enumerate}
    \item $(M_i)_{i \leq \alpha}$ is increasing continuous in $\K_\lambda$.
    \item $M \lea M' \lea M_0$ and $M, M' \in \K_\lambda$.
    \item $M_\alpha \lea N$ and $N \in \K_\lambda$.
    \item For every $i < \alpha$ , $\nfs{M}{a_i}{M_i}{M_{i+1}}$.
    \end{enumerate}

    $\seq{a_i : i < \alpha} \smallfrown \seq{M_i: i \leq \alpha}$ is said to be \emph{independent over $M$} when it is independent in $(M, M_0, M_\alpha)$.
  \item $\ba := \seq{a_i : i < \alpha}$ is said to be \emph{independent in $(M, M_0, N)$} when $M \lea M_0 \lea N$, $\bar{a} \in \fct{\alpha}{|N|}$, and for some $\seq{M_i : i \leq \alpha}$ and a model $N^+$ such that $M_{\alpha} \lea N^+$, $N \lea N^+$, and $\seq{a_i : i < \alpha} \smallfrown \seq{M_i : i \leq \alpha}$ is independent over $M$. When $M = M_0$, we omit it and just say that $\ba$ is independent in $(M, N)$.
  \end{enumerate}
\end{defin}
\begin{remark}
We will use the definition above when $\alpha = 2$. In this case, we have that $\seq{ab}$ is independent in $(M, N)$ if and only if $\nfs{M}{a}{b}{N}$ (technically, the right hand side of the $\nf$ relation must be a model but we can remedy this by extending the nonforking relation in the natural way, as in the definition of the minimal closure in \cite[Definition 3.4]{bgkv-apal}). 
\end{remark}

\begin{defin}\label{perp-def}
  Let $M \in \K_\lambda$ and let $p, q \in \gS (M)$ be nonalgebraic. We say that $p$ is \emph{weakly orthogonal to $q$} and write $p \wkperpp{\s} q$ (or just $p \wkperp q$ if $\s$ is clear from context) if for all $N \in \K_\lambda$ with $N \gea M$ and all $a, b \in |N|$ such that $\gtp (a / M; N) = p$ and $\gtp (b / M; N) = q$, $\seq{a b}$ is independent in $(M, N)$.

  We say that \emph{$p$ is orthogonal to $q$} (written $p \perpp{}{\s} q$, or just $p \perp q$ if $\s$ is clear from context) if for every $N \in \K_\lambda$ with $N \gea M$, $p' \wkperp q'$, where $p'$, $q'$ are the nonforking extensions to $N$ of $p$ and $q$ respectively.
\end{defin}
\begin{remark}
  Definition \ref{perp-def} is equivalent to Shelah's (\cite[Definition III.6.2]{shelahaecbook}), see \cite[Claim III.6.4.(2)]{shelahaecbook} assuming that $\s$ is \emph{successful}. By a similar proof (and assuming that $\K_{\s}$ has primes), it is also equivalent to the definition in terms of primes in \cite[Definition 2.2]{categ-primes-v3}.
\end{remark}

We will use the following consequence of symmetry:

\begin{fact}[Theorem 4.2 in \cite{jasi}]\label{sym-indep}
  For any $M_0 \lea M \lea N$ all in $\K_\lambda$, if $a, b \in |N| \backslash |M_0|$, then $\seq{ab}$ is independent in $(M_0, M, N)$ if and only if $\seq{ba}$ is independent in $(M_0, M, N)$.
\end{fact}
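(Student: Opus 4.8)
The plan is to isolate, inside the relation ``$\seq{ab}$ is independent in $(M_0, M, N)$'', the part that is manifestly symmetric in $a, b$ and the part that is governed by the symmetry property of the good frame $\s$. Since forking in $\s$ is defined only for types of singletons, the symmetric part has to be expressed through the types of $a$ and of $b$ separately. The key step is the following characterization: for $M_0 \lea M \lea N$ in $\K_\lambda$ and $a, b \in |N| \setminus |M_0|$, the sequence $\seq{ab}$ is independent in $(M_0, M, N)$ if and only if (i) $\gtp (a / M; N)$ does not fork over $M_0$, (ii) $\gtp (b / M; N)$ does not fork over $M_0$, and (iii) $\seq{ab}$ is independent in $(M, N)$ (i.e.\ in the triple with base and middle model both equal to $M$).

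Granting this characterization, the Fact follows at once: (i) and (ii) taken together are unchanged when $a$ and $b$ are swapped, while (iii) is, by the Remark following the definition of independent sequence, equivalent to $\nfs{M}{a}{b}{N}$ in the minimal-closure extension of $\nf$ to sets, and this relation is symmetric in $a$ and $b$ by the symmetry property of $\s$ (equivalently, by symmetry of the minimal closure, cf.\ \cite{bgkv-apal}). Thus the conjunction of (i), (ii), (iii) is symmetric in $a$ and $b$, which is exactly the statement.

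To prove the characterization I would unfold the definition. For the forward direction, let $\seq{ab} \smallfrown \seq{P_i : i \le 2}$ (together with an ambient $N^+ \gea N$ containing $P_2$) witness that $\seq{ab}$ is independent in $(M_0, M, N)$, so $M \lea P_0$, $a \in |P_1|$, $b \in |P_2|$, $\nfs{M_0}{a}{P_0}{P_1}$, and $\nfs{M_0}{b}{P_1}{P_2}$; then (i) and (ii) follow by monotonicity (restricting the domain of these nonforking statements down to $M$), and (iii) follows by raising the base in both statements from $M_0$ to $M$ (monotonicity again), so that the same sequence $\seq{P_i}$ now witnesses independence of $\seq{ab}$ over $M$. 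For the converse, given (i)--(iii), fix $M' \in \K_\lambda$ with $M \lea M' \lea N'$ for some $N' \gea N$, $a \in |M'|$, and $\nfs{M}{b}{M'}{N'}$ (as supplied by (iii)); then $\seq{M, M', N'}$ is a witnessing sequence for $\seq{ab}$ in $(M_0, M, N)$, the only nontrivial point being that $\gtp (b / M'; N')$ does not fork over $M_0$ — which follows from (ii) by transitivity of $\nf$ — while $\gtp (a / M; M')$ does not fork over $M_0$ by (i). Since $\s$ is type-full, ``basic'' just means ``nonalgebraic'', and the Disjointness Remark (Remark \ref{disj-rmk}) ensures every type that occurs is nonalgebraic, so there are no leftover side conditions. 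I expect the genuine obstacle to be precisely this bookkeeping — unpacking the nested definition of an independent sequence, tracking which models must contain $a$, $b$, or $M$, and reconciling it with the convention on forking of algebraic types — rather than the symmetry manipulation itself, which is brief; note finally that the argument is reversible throughout, which is why one obtains an ``if and only if''.
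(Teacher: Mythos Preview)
The paper does not give a proof of this statement: it is quoted as a black box from \cite{jasi} (Theorem 4.2 there), so there is no ``paper's own proof'' to compare against. Your argument is nonetheless sound. The decomposition into (i)--(iii) is correct, the forward and converse directions go through exactly as you describe (with transitivity of $\nf$ doing the real work in the converse), and the symmetry of (iii) is indeed the symmetry axiom of the good frame unwound through the minimal closure. One minor remark: your invocation of Remark \ref{disj-rmk} to guarantee nonalgebraicity is unnecessary, since the paper's convention that algebraic types do not fork over their domain already makes all the nonforking statements well-defined even when $a$ or $b$ happens to lie in $M$; the degenerate cases then collapse trivially and your characterization still holds.
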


\begin{lem}\label{perp-fund}
  Let $M \in \K_\lambda$. Let $p, q \in \gS (M)$ be nonalgebraic. 

  \begin{enumerate}
  \item\label{perp-fund-1} If $M$ is limit, then $p \perp q$ if and only if $p \wkperp q$.
  \item\label{perp-fund-2} $p \wkperp q$ if and only if $q \wkperp p$.
  \item\label{perp-fund-3} If $p \wkperp q$, then whenever $(a, M, N)$ is a weak domination triple representing $q$, $p$ is omitted in $N$. In particular, if $M$ is limit, there exists $N \in \K_\lambda$ with $M \lta N$ so that $N$ realizes $q$ but $p$ is omitted in $N$.
  \end{enumerate}
\end{lem}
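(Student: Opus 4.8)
The plan is to establish the three clauses in the order (\ref{perp-fund-2}), (\ref{perp-fund-3}), (\ref{perp-fund-1}), since the later parts build on the earlier ones. Clause (\ref{perp-fund-2}) is a direct consequence of the symmetry of independent sequences. Suppose $p \wkperp q$, and let $N \gea M$ with $a, b \in |N|$ realizing $q$ and $p$ over $M$ respectively. Applying Definition~\ref{perp-def} with first coordinate $b$ (a realization of $p$) and second coordinate $a$ (a realization of $q$), $\seq{ba}$ is independent in $(M, N)$; since $p$ and $q$ are nonalgebraic, $a, b \notin |M|$, so Fact~\ref{sym-indep} (with base $M$) gives that $\seq{ab}$ is independent in $(M, N)$. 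As $N, a, b$ were arbitrary, $q \wkperp p$; the converse is symmetric.

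For clause (\ref{perp-fund-3}), suppose $p \wkperp q$ and let $(a, M, N)$ be a weak domination triple with $q = \gtp(a/M; N)$. Assume toward a contradiction that some $b \in |N|$ realizes $p$ over $M$; as $p$ is nonalgebraic, $b \in |N| \backslash |M|$. By Definition~\ref{perp-def} (with $b$ realizing $p$ and $a$ realizing $q$) together with Fact~\ref{sym-indep}, $\seq{ab}$ is independent in $(M, N)$, which (extending $\nf$ via the minimal closure, as in the remark following the definition of independent sequences) unwinds to $\nfs{M}{a}{N_b}{N}$ for some $N_b \in \K_\lambda$ with $M \cup \{b\} \subseteq |N_b|$ and $N_b \lea N$. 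Now apply the weak domination property of $(a, M, N)$ with $M' := N_b$ and $N' := N$ (note $M \lea N_b \lea N$): from $\nfs{M}{a}{N_b}{N}$ we obtain $\nfs{M}{[N]^1}{N_b}{N}$, i.e.\ $\nfs{M}{c}{N_b}{N}$ for every $c \in |N|$. Taking $c := b \in |N_b| \subseteq |N|$ gives $\nfs{M}{[\{b\}]^1}{N_b}{N}$, so by disjointness (Remark~\ref{disj-rmk}) $b \in |N_b| \cap \{b\} \subseteq |M|$, contradicting $b \notin |M|$. Hence $p$ is omitted in $N$. For the ``in particular'': if $M$ is limit, Theorem~\ref{weak-dom-existence} yields a weak domination triple $(a, M, N)$ representing $q$; since $q$ is nonalgebraic $a \in |N| \backslash |M|$, so $M \lta N$, $N$ realizes $q$ (via $a$), and by the first part $p$ is omitted in $N$.

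For clause (\ref{perp-fund-1}), the implication $p \perp q \Rightarrow p \wkperp q$ is immediate by taking $N = M$ in Definition~\ref{perp-def}. For the converse, assume $p \wkperp q$ with $M$ limit, fix $N \gea M$ with nonforking extensions $p_N, q_N \in \gS(N)$, and let $N' \gea N$ with $a, b \in |N'|$ realizing $p_N, q_N$ over $N$; we must show $\seq{ab}$ is independent in $(N, N')$, i.e.\ $\nfs{N}{a}{b}{N'}$. From the restrictions $\gtp(a/M; N') = p$ and $\gtp(b/M; N') = q$, Definition~\ref{perp-def} gives $\seq{ab}$ independent in $(M, N')$, i.e.\ $\nfs{M}{a}{b}{N'}$; moreover $\nfs{M}{a}{N}{N'}$ and $\nfs{M}{b}{N}{N'}$ since $p_N, q_N$ do not fork over $M$. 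The task is then to raise the base from $M$ to $N$. To do this I would extend $N$ to a limit model $N^\ast \gea N$ and invoke the conjugation property (Fact~\ref{conj-prop}): as $M$ and $N^\ast$ are both limit and the nonforking extensions $p^\ast, q^\ast$ of $p, q$ to $N^\ast$ do not fork over $M$, there is $f : N^\ast \cong M$ with $f(p^\ast) = p$ and $f(q^\ast) = q$, whence $p^\ast \wkperp q^\ast$ by invariance of weak orthogonality; one then transfers $p^\ast \wkperp q^\ast$ back down to $N$ by analyzing the configuration obtained by amalgamating $N'$ and $N^\ast$ over $N$ and rewriting it through independent sequences, symmetry (Fact~\ref{sym-indep}), transitivity and monotonicity of forking.

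I expect this last descent to be the main obstacle. Weak orthogonality is not preserved under arbitrary restriction of the base, so one cannot simply read off $p_N \wkperp q_N$ from $p^\ast \wkperp q^\ast$: the difficulty is that after amalgamating over $N$, the given realizations $a, b$ need not realize the nonforking extensions $p^\ast, q^\ast$ over $N^\ast$, and correcting this via the extension property of $\s$ perturbs the joint type $\gtp(ab/N; \cdot)$ one is trying to control. Carrying this out carefully — presumably by producing $a$ and $b$ along a single independent sequence over $M$ whose intermediate model contains $N$, and then descending via base monotonicity together with Fact~\ref{sym-indep} — is where the real work lies; this is also the only place where limit-ness of $M$, hence the availability of the conjugation property, is genuinely used. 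Clauses (\ref{perp-fund-2}) and (\ref{perp-fund-3}), by contrast, are bookkeeping once one has Fact~\ref{sym-indep}, the weak domination property, and disjointness.
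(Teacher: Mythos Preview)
Your treatment of parts (\ref{perp-fund-2}) and (\ref{perp-fund-3}) is correct and matches the paper, with one small repair needed in (\ref{perp-fund-3}). When you unwind ``$\seq{ab}$ independent in $(M,N)$'' (after symmetry), the witnessing model containing $b$ --- your $N_b$ --- is only guaranteed to sit inside some $N^+ \gea N$, not inside $N$ itself; the definition of independent sequence produces $M \lea N_b \lea N^+$ and $N \lea N^+$, together with $\nfs{M}{a}{N_b}{N^+}$. So you cannot take $N' := N$ when invoking weak domination. Taking instead $N' := N^+$ fixes this: domination gives $\nfs{M}{[N]^1}{N_b}{N^+}$, and since $b \in |N| \cap |N_b|$, disjointness (Remark~\ref{disj-rmk}) yields $b \in |M|$, the desired contradiction. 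This is exactly the paper's argument, which phrases it contrapositively (take $b$ in an arbitrary extension $N' \gea N$ realizing $p$, and conclude $b \notin |N|$).

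For part (\ref{perp-fund-1}) you have the right tool --- the conjugation property --- and you correctly locate the only nontrivial step. The paper's own proof is not self-contained here: it cites Fact~\ref{conj-prop} and refers to the proof of \cite[Lemma~2.6]{categ-primes-v3} for the remaining computation. Your extended discussion of amalgamating $N'$ and $N^\ast$ over $N$ and then repairing realizations is more elaborate than what is needed. A cleaner route: since $\gtp(a/N)$ and $\gtp(b/N)$ do not fork over $M$, transitivity and base monotonicity of $\s$-forking show that $\seq{ab}$ is independent in $(N,N,\hat N)$ if and only if it is independent in $(M,N,\hat N)$; this keeps the base at $M$ throughout, so the conjugation isomorphism $f:N^\ast \cong M$ (with $N^\ast$ a limit extension of $N$) transports the entire configuration rather than just the two types, and no separate ``descent'' through a perturbed joint type is required. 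Your instinct that this is the only place where limit-ness of $M$ is genuinely used is exactly right.
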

\begin{proof} \
  \begin{enumerate}
    \item By the conjugation property (Fact \ref{conj-prop}). See the proof of \cite[Lemma 2.6]{categ-primes-v3}.
    \item By Fact \ref{sym-indep}.
    \item Let $N' \in \K_\lambda$ be such that $N \lea N'$ and let $b \in |N'|$ realize $p$. We have that $\seq{ab}$ is independent in $(M, N')$. Therefore there exists $M', N'' \in \K_\lambda$ so that $N \lea N''$, $M \lea M' \lea N''$, $b \in |M'|$, and $\nfs{M}{a}{M'}{N''}$. By domination, $\nfs{M}{[N]^1}{M'}{N''}$, so by disjointness (Remark \ref{disj-rmk}), $b \notin |N|$. The last sentence follows from the existence property for weak domination triple (Theorem \ref{weak-dom-existence}).
  \end{enumerate}
\end{proof}

\section{Unidimensionality}

\begin{hypothesis}\label{unidim-hyp}
  $\s = (\K_\lambda, \nf)$ is a type-full good $\lambda$-frame on $\K$ and $\K$ is categorical in $\lambda$.
\end{hypothesis}

In this section we give a definition of unidimensionality similar to the ones in \cite[Definition V.2.2]{shelahfobook} or \cite[Section III.2]{shelahaecbook}. We show that $\s$ is unidimensional if and only if $\K$ is categorical in $\lambda^+$ (this uses categoricity in $\lambda$). In the next section, we will show how to transfer unidimensionality across cardinals, hence getting the promised categoricity transfer. In \cite[Section III.2]{shelahaecbook}, Shelah gives several different definitions of unidimensionality and also shows (see \cite[III.2.3, III.2.9]{shelahaecbook}) that the so-called ``weak-unidimensionality'' is equivalent to categoricity in $\lambda^+$ (hence our definition is equivalent to Shelah's weak unidimensionality) but it is unclear how to transfer weak-unidimensionality across cardinals without assuming that the frame is successful.

Note that the hypothesis of categoricity in $\lambda$ implies that the model of size $\lambda$ is limit, hence weak orthogonality and orthogonality coincide, see Lemma \ref{perp-fund}.

Rather than defining what it means to be unidimensional, we find it clearer to define what it means to \emph{not} be unidimensional:

\begin{defin}\label{unidim-def}
  $\s$ is \emph{unidimensional} if the following is \emph{false}: for every $M \in \K_\lambda$ and every nonalgebraic $p \in \gS (M)$, there exists $M' \in \K_{\lambda}$ with $M' \gea M$ and nonalgebraic $p', q \in \gS (M')$ so that $p'$ extends $p$ and $p' \perp q$.
\end{defin}

We first give an equivalent definition using minimal types:

\begin{defin}
  For $M \in \K_\lambda$, a type $p \in \gS (M)$ is \emph{minimal} if for every $M' \in \K_\lambda$ with $M \lea M'$, $p$ has a unique nonalgebraic extension to $\gS (M')$.
\end{defin}
\begin{remark}
  Since we are working inside a good frame, any nonalgebraic type will have at least one nonalgebraic extension (the nonforking one). The nontrivial part of the definition is its uniqueness.
\end{remark}
\begin{remark}\label{min-fork}
  If $M \lea N$ are both in $\K_\lambda$ and $p \in \gS (N)$ is nonalgebraic such that $p \rest M$ is minimal, then $p$ does not fork over $M$ (because the nonforking extension of $p \rest M$ has to be $p$).
\end{remark}

By the proof of $(\ast)_5$ in \cite[Theorem II.2.7]{sh394}:

\begin{fact}[Density of minimal types]
  For any $M \in \K_\lambda$ and nonalgebraic $p \in \gS (M)$, there exists $M' \in \K_\lambda$ and $p' \in \K_\lambda$ such that $M \lea M'$, $p'$ extends $p$, and $p'$ is minimal.
\end{fact}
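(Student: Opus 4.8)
The plan is to prove this by contradiction, exploiting local character of forking in the good $\lambda$-frame: if no nonalgebraic type extending $p$ over a model in $\K_\lambda$ is minimal, I will build an increasing chain of such types that properly forks at every successor step, and no chain of length $\omega$ can do that. So assume no minimal type extends $p$, and construct $\seq{M_i : i \le \omega}$ increasing (continuous, so $M_\omega = \bigcup_{i < \omega} M_i$) in $\K_\lambda$, together with an increasing chain of nonalgebraic types $p_i \in \gS (M_i)$, such that $M_0 = M$, $p_0 = p$, and $p_{i + 1}$ forks over $M_i$ for all $i < \omega$.

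For the successor step, observe that $p_i$ extends $p$, so by assumption $p_i$ is not minimal: there is $M_{i + 1} \gea M_i$ in $\K_\lambda$ admitting at least two distinct nonalgebraic extensions of $p_i$. Since the nonforking extension of a nonalgebraic (equivalently, basic) type is again nonalgebraic and is the \emph{unique} nonforking extension, among these $\ge 2$ nonalgebraic extensions at least one forks over $M_i$; take $p_{i + 1}$ to be such a one (at $i = \omega$ nothing new is constructed beyond the union). The chain $\seq{p_i : i < \omega}$ is increasing, so it has a least upper bound $p_\omega \in \gS (M_\omega)$ — the union of an increasing chain of Galois types along a chain of $\lambda$-sized models is realized in an amalgam and hence is a well-defined Galois type over $M_\omega \in \K_\lambda$. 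Now local character for $\s$ (\cite[Claim II.2.11]{shelahaecbook}) yields $i < \omega$ with $p_\omega$ not forking over $M_i$, whence by monotonicity $p_{i+1} = p_\omega \rest M_{i + 1}$ does not fork over $M_i$, contradicting the choice of $p_{i + 1}$. Therefore the assumption fails, and some nonalgebraic $p' \in \gS (M')$ with $M \lea M'$ in $\K_\lambda$ and $p'$ extending $p$ is minimal.

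Equivalently, and perhaps more transparently: run the construction; it must halt at some finite stage $i$, meaning $p_i$ has no forking nonalgebraic extension over any larger model in $\K_\lambda$. But $p_i$ always has \emph{some} nonalgebraic extension to each larger model (its nonforking extension), and that extension is unique, so $p_i$ has a unique nonalgebraic extension to every $M' \gea M_i$ in $\K_\lambda$, i.e.\ $p_i$ is minimal and extends $p$. The one step requiring care is the successor step — extracting a genuinely \emph{forking} nonalgebraic extension from non-minimality — which is precisely where the uniqueness axiom of the frame is used, together with nonalgebraicity of nonforking extensions; everything else (continuity and size of the chain, well-definedness of $p_\omega$, the single invocation of local character) is routine. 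Note that categoricity in $\lambda$ is not actually needed for this fact: it follows from the good $\lambda$-frame axioms alone, the argument being a frame-theoretic rendering of the tree argument in the proof of \cite[Theorem II.2.7]{sh394}.
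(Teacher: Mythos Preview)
Your proof is correct and is essentially the argument the paper defers to: the paper simply cites the proof of $(\ast)_5$ in \cite[Theorem II.2.7]{sh394}, and what you have written is precisely the frame-theoretic rendering of that chain/tree argument, using local character and uniqueness of nonforking extensions in place of the splitting-and-stability version in \cite{sh394}. Your observation that categoricity in $\lambda$ is not needed here is also correct; only the good $\lambda$-frame axioms are used.
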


\begin{lem}\label{technical-multidim-equiv}
  The following are equivalent:

  \begin{enumerate}
    \item\label{equiv-1} $\s$ is not unidimensional.
    \item\label{equiv-2} For every $M \in \K_{\lambda}$ and every minimal $p \in \gS (M)$, there exists $M' \in \K_\lambda$ with $M \lea M'$ and $p', q \in \gS (M')$ nonalgebraic so that $p'$ extends $p$ and $p' \perp q$.
    \item\label{equiv-3} For every $M \in \K_{\lambda}$ and every minimal $p \in \gS (M)$, there exists a nonalgebraic $q \in \gS (M)$ with $p \perp q$.
  \end{enumerate}
\end{lem}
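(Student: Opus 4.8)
The plan is to establish the cycle $(\ref{equiv-1}) \Rightarrow (\ref{equiv-2})$, $(\ref{equiv-2}) \Rightarrow (\ref{equiv-1})$, $(\ref{equiv-3}) \Rightarrow (\ref{equiv-2})$, and $(\ref{equiv-2}) \Rightarrow (\ref{equiv-3})$; the first three are short and essentially formal, and $(\ref{equiv-2}) \Rightarrow (\ref{equiv-3})$ carries all the content. For $(\ref{equiv-1}) \Rightarrow (\ref{equiv-2})$, Definition \ref{unidim-def} quantifies over \emph{all} nonalgebraic types, so specializing it to minimal ones gives $(\ref{equiv-2})$ at once. For $(\ref{equiv-2}) \Rightarrow (\ref{equiv-1})$, given $M \in \K_\lambda$ and a nonalgebraic $p \in \gS (M)$, use density of minimal types to find $M_1 \gea M$ and a minimal $p_1 \in \gS (M_1)$ extending $p$; applying $(\ref{equiv-2})$ to $(M_1, p_1)$ yields $M' \gea M_1$ and nonalgebraic $p', q \in \gS (M')$ with $p'$ extending $p_1$ (hence $p$) and $p' \perp q$, which is exactly the failure of unidimensionality. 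And $(\ref{equiv-3}) \Rightarrow (\ref{equiv-2})$ is trivial: take $M' = M$, $p' = p$, and the $q$ supplied by $(\ref{equiv-3})$.

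For $(\ref{equiv-2}) \Rightarrow (\ref{equiv-3})$, fix $M \in \K_\lambda$ and a minimal $p \in \gS (M)$; by Hypothesis \ref{unidim-hyp} (categoricity in $\lambda$), $M$ is a limit model, so by Lemma \ref{perp-fund}.(\ref{perp-fund-1}) weak orthogonality and orthogonality coincide over $M$. Apply $(\ref{equiv-2})$ to obtain $M' \gea M$ and nonalgebraic $p', q \in \gS (M')$ with $p'$ extending $p$ and $p' \perp q$. Since $p'$ is a nonalgebraic extension of the minimal type $p$ it is the \emph{unique} one, so $p'$ does not fork over $M$ (Remark \ref{min-fork}) and $p'$ is itself minimal. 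We may further assume $M'$ is a limit model: replace $M'$ by an extension that is limit over it and $p', q$ by their nonforking extensions --- orthogonality transfers up nonforking extensions directly from Definition \ref{perp-def}, and $p'$ stays the minimal nonforking extension of $p$ by transitivity. Now set $q_0 := q \rest M$, which is nonalgebraic, and let $\hat q \in \gS (M')$ be the nonforking extension of $q_0$ to $M'$. The goal then reduces to showing $p' \perp \hat q$: granting this, $p'$ and $\hat q$ are types over the limit model $M'$ that do not fork over $M$, so the conjugation property (Fact \ref{conj-prop}) provides $f : M' \cong M$ with $f (p') = p$ and $f (\hat q) = q_0$, and isomorphism-invariance of orthogonality gives $p \perp q_0$, as required by $(\ref{equiv-3})$.

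The heart of the argument --- and the step I expect to be the main obstacle --- is thus the implication $p' \perp q \Rightarrow p' \perp \hat q$ (equivalently, since $M'$ is limit, $p' \wkperp q \Rightarrow p' \wkperp \hat q$): one must see that, for the \emph{minimal} type $p'$, (weak) orthogonality to a type over $M'$ depends only on that type's restriction to $M$, even though $q$ may fork over $M$ while $\hat q$ does not. I would prove this using the ``omitting'' characterization of Lemma \ref{perp-fund}.(\ref{perp-fund-3}): from $p' \wkperp q$ and a weak domination triple for $q$ (which exists since $M'$ is limit, by Theorem \ref{weak-dom-existence}) one obtains a model over $M'$ realizing $q$ in which $p'$ is omitted; then, using minimality of $p'$ --- which forces the unique nonalgebraic extension of $p'$ over any larger base to be nonforking --- together with symmetry of independent pairs (Fact \ref{sym-indep}) and transitivity and extension for $\s$-forking, one builds an independence witness for $p' \wkperp \hat q$, essentially running the proof of Lemma \ref{perp-fund}.(\ref{perp-fund-3}) in reverse. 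The delicate bookkeeping is controlling where realizations of $p'$ (and of $p$) occur when amalgamating these models over $M$; I would arrange the intermediate models as Löwenheim-Skolem submodels of a $q_0$-realizing model that omits $p$, so that minimality of $p$ applies and pins down the relevant nonforking extensions.
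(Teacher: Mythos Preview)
Your treatment of $(\ref{equiv-1}) \Leftrightarrow (\ref{equiv-2})$ and $(\ref{equiv-3}) \Rightarrow (\ref{equiv-2})$ is correct and matches the paper. The divergence is in $(\ref{equiv-2}) \Rightarrow (\ref{equiv-3})$, where you have made the problem substantially harder than it is.

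You fix $M$, a minimal $p \in \gS(M)$, and obtain $M' \gea M$ with $p', q \in \gS(M')$ such that $p' \perp q$ and $p'$ extends $p$. You then insist on producing the witness for $(\ref{equiv-3})$ as $q_0 := q \rest M$, which forces you to prove $p' \perp \hat q$ for $\hat q$ the nonforking extension of $q_0$. As you yourself note, $q$ may fork over $M$ while $\hat q$ does not, so $q$ and $\hat q$ are in general different types; the implication $p' \perp q \Rightarrow p' \perp \hat q$ is not automatic, and your sketch of how to obtain it (via the omitting characterization and ``delicate bookkeeping'') is not a proof. In particular, the domination triple you invoke is for $q$, not for $\hat q$, and it is not clear how the model it produces says anything about realizations of $\hat q$.

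The paper sidesteps this entirely: there is no reason the witness for $(\ref{equiv-3})$ must be $q \rest M$. Since $p'$ does not fork over $M$ (Remark \ref{min-fork}) and both $M, M'$ are limit (categoricity in $\lambda$), the conjugation property (Fact \ref{conj-prop}, applied to the single type $p'$) gives $f: M' \cong M$ with $f(p') = p$. Then $p' \perp q$ transports to $p \perp f(q)$, and $f(q) \in \gS(M)$ is the required nonalgebraic type. One line, no orthogonality calculus needed. Your detour through $\hat q$ would, if completed, amount to proving a nontrivial monotonicity-of-orthogonality statement that the lemma does not require.
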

\begin{proof}
  (\ref{equiv-1}) implies (\ref{equiv-2}) because (\ref{equiv-2}) is a special case of (\ref{equiv-1}). Conversely, (\ref{equiv-2}) implies (\ref{equiv-1}): given $M \in \K_\lambda$ and $p \in \gS (M)$, first use density of minimal types to extend $p$ to a minimal $p' \in \gS (M')$ (so $M' \in \K_\lambda$ $M \lea M'$). Then apply (\ref{equiv-2}).

  Also, if (\ref{equiv-3}) holds, then (\ref{equiv-2}) holds with $M = M'$. Conversely, assume that (\ref{equiv-2}) holds. Let $p \in \gS (M)$ be minimal and let $p', q, M'$ witness (\ref{equiv-2}), i.e.\ $p', q \in \gS (M')$, $p'$ extends $p$ and $p' \perp q$. By Remark \ref{min-fork}, $p'$ does not fork over $M$. By the conjugation property (Fact \ref{conj-prop}), there exists $f: M' \cong M$ so that $f (p') = p$. Thus $p \perp f (q)$, hence (\ref{equiv-2}) holds.
\end{proof}

We use the characterization to show that unidimensionality implies categoricity in $\lambda^+$. This is similar to \cite[Proposition 4.25]{makkaishelah} but the proof is slightly more involved since our definition of unidimensionality is weaker. We start with a version of density of minimal types inside a fixed model. We will use the following fact, whose proof is a straightforward direct limit argument: 

\begin{fact}[Claim 0.32.(1) in \cite{sh576}]\label{limit-type}
  Let $\seq{M_i : i \le \omega}$ be an increasing continuous chain in $\K_\lambda$ and for each $i < \omega$, let $p_i \in \gS (M_i)$ be such that $j < i$ implies $p_i \rest M_j = p_j$. Then there exists $p \in \gS (M_\omega)$ so that $p \rest M_i = p_i$ for all $i < \omega$.
\end{fact}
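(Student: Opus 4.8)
The plan is to carry out the standard direct limit construction for Galois types over an $\omega$-chain. First I would fix, for each $i < \omega$, a triple $(a_i, M_i, N_i^0)$ representing $p_i$, where $N_i^0 \in \K_\lambda$, $M_i \lea N_i^0$ and $a_i \in |N_i^0|$; the Löwenheim-Skolem-Tarski axiom lets us choose $N_i^0$ of size $\lambda$. The coherence hypothesis $p_{i+1} \rest M_i = p_i$ then says $\gtp(a_{i+1} / M_i ; N_{i+1}^0) = \gtp(a_i / M_i ; N_i^0)$, which by the definition of equality of Galois types together with amalgamation in $\lambda$ (available since $\s$ is a good $\lambda$-frame) means the two triples can be amalgamated over $M_i$ in a way that identifies $a_i$ with $a_{i+1}$.

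Next I would build by recursion on $i < \omega$ an increasing continuous chain $\seq{N_i : i \le \omega}$ in $\K_\lambda$, a fixed element $a$, and $\K$-embeddings $g_i : M_i \to N_i$ and $\hat g_i : N_i^0 \to N_i$ such that $\hat g_i \rest M_i = g_i$, $\hat g_i(a_i) = a$, and $g_{i+1} \rest M_i = g_i$. At stage $0$ take $N_0 := N_0^0$, $g_0 := \id_{M_0}$, $\hat g_0 := \id_{N_0^0}$, and $a := a_0$. At a successor stage, feed the equality $\gtp(a_{i+1}/M_i; N_{i+1}^0) = \gtp(a_i/M_i; N_i^0)$ and the already-built embedding $\hat g_i : N_i^0 \to N_i$ into amalgamation to produce some $N \gea N_i$ with an embedding $\hat g_{i+1} : N_{i+1}^0 \to N$ that agrees with $g_i$ on $M_i$ and sends $a_{i+1}$ to $a$; then shrink via Löwenheim-Skolem-Tarski to get $N_{i+1} \in \K_\lambda$ with $N_i \cup \hat g_{i+1}(N_{i+1}^0) \subseteq |N_{i+1}|$ and $N_i \lea N_{i+1} \lea N$, and set $g_{i+1} := \hat g_{i+1} \rest M_{i+1}$. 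At the limit stage set $N_\omega := \bigcup_{i < \omega} N_i \in \K_\lambda$ and $g_\omega := \bigcup_{i < \omega} g_i$; since $M_\omega = \bigcup_{i < \omega} M_i$ is increasing continuous, $g_\omega$ is a $\K$-embedding $M_\omega \to N_\omega$ with $g_\omega(M_\omega) \lea N_\omega$.

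Finally I would let $p \in \gS(M_\omega)$ be the type whose image under the isomorphism $g_\omega : M_\omega \cong g_\omega(M_\omega)$ is $\gtp(a / g_\omega(M_\omega); N_\omega)$. For each $i < \omega$, restricting this equality to $g_i(M_i) = g_\omega(M_i) \lea g_\omega(M_\omega)$ gives $g_i(p \rest M_i) = \gtp(a / g_i(M_i); N_\omega)$, and since a Galois type is unchanged when its ambient model is enlarged ($N_i \lea N_\omega$) this equals $\gtp(a / g_i(M_i); N_i)$, which is $g_i(p_i)$ because $\hat g_i$ maps $(a_i, M_i, N_i^0)$ into $(a, g_i(M_i), N_i)$; hence $p \rest M_i = p_i$, as required. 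The only real subtlety is the bookkeeping in the recursion: one cannot simultaneously have every $N_i$ literally contain $M_\omega$ by inclusion and have $\seq{N_i}$ be a $\lea$-chain, so the embeddings $g_i$, $\hat g_i$ must be carried along and their coherence checked at each step. I expect this to be the only (minor) obstacle; there is no deeper model-theoretic content, since the statement concerns only the AEC structure and length-one Galois types, and the construction never needs to continue past stage $\omega$ (which is why the claim is stated for $\omega$-chains rather than arbitrary limit lengths).
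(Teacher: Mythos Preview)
Your proposal is correct and is precisely the ``straightforward direct limit argument'' the paper alludes to (the paper does not spell out a proof, citing instead \cite{sh576}). The recursion, the amalgamation step sending $a_{i+1}$ to $a$ via the equality $g_i(p_i) = \gtp(a / g_i(M_i); N_i)$, and the final pullback of $\gtp(a / g_\omega(M_\omega); N_\omega)$ along $g_\omega$ are exactly the expected ingredients, and your remark about carrying coherent embeddings rather than demanding literal inclusions is the only bookkeeping point worth flagging.
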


\begin{lem}\label{minimal-ext}
  Let $M_0 \lea M$ with $M_0 \in \K_\lambda$ and $M \in \K_{> \lambda}$. Let $p \in \gS (M_0)$. Then there exists $M_1 \in \K_\lambda$ with $M_0 \lea M_1 \lea M$ and $q \in \gS (M_1)$ so that $q$ extends $p$ and for all $M' \in \K_\lambda$ with $M_1 \lea M' \lea M$, any extension of $q$ to $\gS (M')$ does not fork over $M_1$.
\end{lem}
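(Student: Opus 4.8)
The plan is to argue by contradiction, building an $\omega$-chain of counterexamples inside $M$ whose union violates local character of $\s$. First I would dispose of the case where $p$ is algebraic: then $M_1 := M_0$ and $q := p$ work, since every extension of an algebraic type inside $M$ is again algebraic. So assume $p$ is nonalgebraic.

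Now suppose no pair $(M_1, q)$ as in the statement exists. By recursion on $i < \omega$ I would build a $\lea$-increasing sequence $\seq{M_i : i < \omega}$ in $\K_\lambda$ with $M_i \lea M$ for all $i$, together with types $p_i \in \gS (M_i)$ such that $p_0 = p$, $p_i \rest M_j = p_j$ whenever $j \le i$, and $p_{i + 1}$ forks over $M_i$. At stage $i$ the model $M_i$ satisfies $M_0 \lea M_i \lea M$ and $p_i$ extends $p$, so the failure hypothesis applied with $M_1 := M_i$ and $q := p_i$ produces $M' \in \K_\lambda$ with $M_i \lea M' \lea M$ and $q' \in \gS (M')$ extending $p_i$ such that $q'$ forks over $M_i$ (necessarily then $M' \gta M_i$, since $p_i$ does not fork over $M_i$); put $M_{i + 1} := M'$ and $p_{i + 1} := q'$. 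No continuity condition arises since $\omega$ has no limit predecessors, and the hypothesis $M \in \K_{> \lambda}$ is what makes this situation possible at all — for $M \in \K_\lambda$ the conclusion is immediate with $M_1 := M$.

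To conclude, set $M_\omega := \bigcup_{i < \omega} M_i$; then $M_\omega \in \K_\lambda$ and $M_\omega \lea M$ by the AEC axioms, so by Fact \ref{limit-type} there is $p_\omega \in \gS (M_\omega)$ with $p_\omega \rest M_i = p_i$ for all $i < \omega$. By local character in $\s$ applied to the increasing continuous chain $\seq{M_i : i \le \omega}$, there is $i < \omega$ with $p_\omega$ not forking over $M_i$, whence by monotonicity $p_{i + 1} = p_\omega \rest M_{i + 1}$ does not fork over $M_i$ — contradicting the construction. The only point requiring care is that this invokes local character of the good $\lambda$-frame at cofinality $\omega$, which is precisely the superstable-like strength of the good-frame axioms; so a chain of length $\omega$, rather than $\lambda^+$, already suffices, and the rest is routine bookkeeping (membership in $\K_\lambda$ and $M_\omega \lea M$).
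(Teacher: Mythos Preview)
Your argument is correct and essentially identical to the paper's proof: both build an $\omega$-chain of forking extensions inside $M$, invoke Fact~\ref{limit-type} to get a type over the union, and contradict local character of the good $\lambda$-frame. The only differences are cosmetic (you handle the algebraic case separately and add commentary on why cofinality $\omega$ suffices), but the construction and the punchline are the same.
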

\begin{proof}
  Suppose not. Build $\seq{N_i : i < \omega}$ increasing in $\K_\lambda$ and $\seq{q_i : i < \omega}$ such that for all $i < \omega$:

  \begin{enumerate}
    \item $N_0 = M_0$, $q_0 = p$.
    \item $N_i \lea M$.
    \item $q_i \in \gS (N_i)$ and $q_{i + 1}$ extends $q_i$.
    \item $q_{i + 1}$ forks over $N_i$.
  \end{enumerate}

  This is possible since we assumed that the lemma failed. This is enough: let $N_\omega := \bigcup_{i < \omega} N_i$. Let $q \in \gS (N_\omega)$ extend each $q_i$ (exists by Fact \ref{limit-type}). By local character, there exists $i < \omega$ such that $q$ does not fork over $N_i$, so $q \rest N_{i + 1} = q_{i + 1}$ does not fork over $N_i$, contradiction.
\end{proof}

\begin{lem}\label{unidim-categ-0}
  If $\s$ is unidimensional, then $\K$ is categorical in $\lambda^+$.
\end{lem}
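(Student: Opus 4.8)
The plan is to show that the unique saturated model of size $\lambda^+$ --- which exists because the good frame provides amalgamation, joint embedding, no maximal models, and stability in $\lambda$, and which is unique up to isomorphism by amalgamation --- is, up to isomorphism, the only model in $\K_{\lambda^+}$. So it suffices to prove that an arbitrary $M \in \K_{\lambda^+}$ is saturated, and I would do this by contradiction: if $M$ is not saturated then, by the usual characterization of saturation, $M$ omits some nonalgebraic $p \in \gS (M_0)$ for some $M_0 \lea M$ in $\K_\lambda$.

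First I would unwind the hypothesis. By Lemma \ref{technical-multidim-equiv}, $\s$ being unidimensional is exactly the failure of (\ref{equiv-3}): there are $M^\ast \in \K_\lambda$ and a minimal $p_0 \in \gS (M^\ast)$ with $p_0 \not\perp q$ for every nonalgebraic $q \in \gS (M^\ast)$. Since $\K$ is categorical in $\lambda$, every model in $\K_\lambda$ is a limit model and all of them are isomorphic, so (transporting $p_0$, and invoking the orthogonality calculus already developed) over any prescribed $M_1 \in \K_\lambda$ one has a minimal type --- in fact every minimal type over $M_1$ --- that is non-orthogonal to all nonalgebraic types over $M_1$. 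I would record three further points: the nonforking extension of a minimal type is minimal (immediate from the uniqueness clause and Remark \ref{min-fork}); over a limit model $\perp$ and $\wkperp$ coincide (Lemma \ref{perp-fund}(\ref{perp-fund-1})); and the following computation, which is the crux of the argument: if $r \in \gS (M_1)$ is minimal, $\gtp (a / M_1; N) = r$, and $\seq{ab}$ is \emph{not} independent in $(M_1, N)$, then $a$ lies in the minimal closure inside $N$ of $|M_1| \cup \{b\}$, since otherwise $\gtp (a / \cl_N (M_1 \cup \{b\}); N)$ would be a nonalgebraic --- hence, by minimality, nonforking --- extension of $r$, contradicting non-independence.

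The core of the proof is to realize $p$ in $M$. Using Lemma \ref{minimal-ext} to keep the base inside $M$, together with density of minimal types, one produces $M_1$ with $M_0 \lea M_1 \lea M$ in $\K_\lambda$ and a minimal $q \in \gS (M_1)$ extending $p$; as $q$ extends $p$, $q$ is omitted in $M$. Since $\| M \| > \| M_1 \|$ there is $c \in |M| \setminus |M_1|$; put $r := \gtp (c / M_1; M)$. By the previous paragraph $q \not\perp r$, hence $q \not\wkperp r$ (as $M_1$ is limit), so there are $N \gea M_1$ in $\K_\lambda$ and $a, b \in |N|$ with $\gtp (a / M_1; N) = q$, $\gtp (b / M_1; N) = r$, and $\seq{ab}$ not independent in $(M_1, N)$. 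By the crux, $a$ belongs to a model $\hat M \lea N$ in $\K_\lambda$ with $M_1 \cup \{b\} \subseteq |\hat M|$, and $\gtp (b / M_1; \hat M) = r = \gtp (c / M_1; M)$; amalgamating $\hat M$ and $M$ over $M_1$ so as to send $b$ to $c$, the minimal closure of $|M_1| \cup \{c\}$ is absolute and so is already computed inside the copy of $M$, whence the image of $a$ lies in that copy of $M$. Therefore $M$ realizes $q$, hence $p$, contradicting the choice of $p$.

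I expect two points to need real care. The first is making sure, in the core step, that the realization extracted from a non-orthogonality witness actually lands \emph{inside} $M$ rather than in a proper extension; the leverage is that every such witness lives in a model of size $\lambda < \lambda^+ = \| M \|$, so one amalgamates a size-$\lambda$ witness over a submodel of $M$ and relies on the embedding-invariance and absoluteness of the minimal closure. The second, and the genuine obstacle, is producing a \emph{minimal} type over a submodel of $M$ that is nevertheless omitted in $M$: density of minimal types only yields a minimal extension over a possibly larger model, which a priori need not embed into $M$ over $M_0$ --- precisely where the failure of saturation of $M$ bites --- so one must combine it carefully with Lemma \ref{minimal-ext}, or else run the whole argument along an increasing continuous chain $\seq{M_i : i < \lambda^+}$ exhausting $M$, using Fact \ref{limit-type} for coherence, showing that every transported copy of $p_0$ over $M_i$ is realized in $M$, and then bootstrapping to full saturation by means of the weak domination triples furnished by Theorem \ref{weak-dom-existence} (this last bootstrap, carried out without prime models, is the respect in which the proof is more involved than the corresponding one of Makkai and Shelah).
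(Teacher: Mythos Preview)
Your proposal has two genuine gaps, and the paper avoids both by proving the contrapositive.

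First gap: you assert that under unidimensionality \emph{every} minimal type over $M_1$ is non-orthogonal to every nonalgebraic type. Lemma \ref{technical-multidim-equiv} does not give this. Unidimensionality is the failure of condition (\ref{equiv-3}), whose negation says only that there exist \emph{some} $M$ and \emph{some} minimal $p$ non-orthogonal to everything over $M$. It does not follow that the particular minimal $q$ you construct (extending the omitted type) is non-orthogonal to the type $r = \gtp(c/M_1)$ you need. The later Lemma \ref{min-unidim} does yield the stronger ``one witness suffices'' statement, but its proof invokes Theorem \ref{unidim-categ}, hence Lemma \ref{unidim-categ-0} itself, so it is unavailable here.

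Second gap: you acknowledge but do not resolve the problem of obtaining a \emph{globally} minimal $q \in \gS(M_1)$ with $M_1 \lea M$ extending $p$ and omitted in $M$. Lemma \ref{minimal-ext} only gives a type whose extensions \emph{inside $M$} do not fork (local minimality), while density of minimal types gives global minimality over a model that need not embed into $M$ over $M_0$, precisely because $M$ is not saturated. Your fallback sketch via a $\lambda^+$-chain and weak domination triples is too vague to carry the argument. (Separately, your ``minimal closure'' formulation of the crux is misleading: no such closure exists in a general AEC; what actually follows from non-independence of $\seq{ab}$ with $\gtp(a/M_1)$ minimal, via Fact \ref{sym-indep} and Remark \ref{min-fork}, is that $a \in |M'|$ for \emph{every} $M' \in \K_\lambda$ with $M_1 \cup \{b\} \subseteq |M'|$ sitting below some common extension. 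That version of the crux is correct and is essentially what the paper uses in the other direction.)

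The paper instead assumes $\K$ is not categorical in $\lambda^+$ and verifies condition (\ref{equiv-2}) directly: given \emph{any} minimal $p \in \gS(M_0)$, it produces $p' \perp q$. The case split is whether some extension of $p$ is omitted in some $M \in \K_{\lambda^+}$. If yes, let $q$ be the type of any $c \in |M| \setminus |M_1|$; minimality of $p'$ plus your crux computation (run to establish weak orthogonality rather than to refute it) gives $p' \wkperp q$. If no, every extension of $p$ is realized in every $M \in \K_{\lambda^+}$; use non-categoricity to find an omitted $q_0$, apply Lemma \ref{minimal-ext} to $q_0$ to get an extension $q$ over $M_1 \lea M$ that is locally minimal in $M$, and run the same argument with roles of $p'$ and $q$ reversed. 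The key point is that only local minimality of $q$ is needed, because the weak-orthogonality computation only ever tests against elements of $M$.
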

\begin{proof}
  Assume that $\K$ is \emph{not} categorical in $\lambda^+$. We show that (\ref{equiv-2}) of Lemma \ref{technical-multidim-equiv} holds so $\s$ is not unidimensional. Let $M_0 \in \K_\lambda$ and let $p \in \gS (M_0)$ be minimal. We consider two cases:

  \paragraph{\underline{Case 1}} There exists $M \in \K_{\lambda^+}$, $M_1 \in \K_{\lambda}$ with $M_0 \lea M_1 \lea M$ and an extension $p' \in \gS (M_1)$ of $p$ so that $p'$ is omitted in $M$.

  Let $c \in |M| \backslash |M_1|$. Fix $M' \lea M$ in $\K_\lambda$ containing $c$ so that $M_1 \lea M'$ and let $q := \gtp (c / M_1; M')$. We claim that $q \wkperp p'$ (and so by Lemma \ref{perp-fund}, $p' \perp q$, as needed). Let $N \in \K_\lambda$ be such that $N \gea M_1$ and let $a, b \in |N|$ be such that $p' = \gtp (b / M_1; N)$, $q = \gtp (a / M_1; N)$. We want to see that $\seq{ba}$ is independent in $(M_1, N)$. We have that $\gtp (a / M_1; N) = \gtp (c / M_1; M')$, so let $N' \in \K_\lambda$ with $M' \lea N'$ and $f: N \xrightarrow[M_1]{} N'$ witness it, i.e.\ $f (a) = c$. Let $b' := f (b)$. We have that $\gtp (b' / M'; N')$ extends $p'$, and $b' \notin |M'|$ since $p'$ is omitted in $M$, hence by minimality $\gtp (b' /M'; N')$ does not fork over $M_1$. In particular, $\seq{c b'}$ is independent in $(M_1, N')$. By invariance and monotonicity, $\seq{b a}$ is independent in $(M_1, N)$.

  \paragraph{\underline{Case 2}} Not Case 1: For every $M \in \K_{\lambda^+}$, every $M_1 \in \K_{\lambda}$ with $M_0 \lea M_1 \lea M$, every extension $p' \in \gS (M_1)$ of $p$ is realized in $M$.

  By categoricity in $\lambda$ and non-categoricity in $\lambda^+$, we can find $M \in \K_{\lambda^+}$ with $M_0 \lea M$ and $q_0 \in \gS (M_0)$ omitted in $M$. Let $M_1 \in \K_\lambda$, $M_0 \lea M_1 \lea M$ and $q \in \gS (M_1)$ extend $q_0$ so that any extension of $q$ to a model $M' \lea M$ in $\K_\lambda$ does not fork over $M_1$ (this exists by Lemma \ref{minimal-ext}). Let $p' \in \gS (M_1)$ be a nonalgebraic extension of $p$. By assumption, $p$ is realized by some $c \in |M|$. Now by the same argument as above (reversing the roles of $p'$ and $q$), $p' \wkperp q$, hence $p' \perp q$, as desired.
\end{proof}
\begin{remark}
  In fact, the second case cannot happen. Otherwise, we could use the conjugation property to show that $\K$ has no $(p, \lambda)$-Vaughtian pair (in the sense of \cite[Definition 3.1]{tamenesstwo}) and apply \cite[Theorem 4.1]{tamenesstwo} to get that $\K$ is categorical in $\lambda^+$ in that case. Since the proof of case 2 is shorter, we prefer to let it stand.
\end{remark}

For the converse of Lemma \ref{unidim-categ-0}, we will use:

\begin{fact}[Theorem 6.1 in \cite{tamenessthree}]\label{no-vp-fact}
  Assume that $\K$ is categorical in $\lambda^+$. Then there exists $M \in \K_\lambda$ and a minimal type $p \in \gS (M)$ which is realized in every $N \in \K_\lambda$ with $M \lta N$.
\end{fact}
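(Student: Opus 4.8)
I would prove the contrapositive by a standard omitting-types construction: assuming no such pair $(M, p)$ exists, I will build a model of size $\lambda^+$ that omits a minimal type over one of its submodels of size $\lambda$, hence is not $\lambda^+$-saturated; since stability and amalgamation in $\lambda$ (both part of the good $\lambda$-frame) guarantee the existence of a saturated model of size $\lambda^+$, this contradicts categoricity in $\lambda^+$.

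\textbf{Setup.} Suppose for contradiction that for every $M \in \K_\lambda$ and every minimal $p \in \gS (M)$ there is $N \in \K_\lambda$ with $M \lta N$ omitting $p$. Fix some $M_0 \in \K_\lambda$ and a minimal $p_0 \in \gS (M_0)$: such a pair exists because there are no maximal models in $\lambda$, so $\gS (M_0)$ has a nonalgebraic member, and density of minimal types lets us extend it to a minimal type over some model of size $\lambda$ (rename it $M_0$). I first record that \emph{nonforking extensions of minimal types are minimal}: if $q \in \gS (M')$ is the nonforking extension of a minimal $p \in \gS (M)$ and $M' \lea M''$, any nonalgebraic extension of $q$ to $\gS (M'')$ is in particular a nonalgebraic extension of $p$, so there is exactly one by minimality of $p$.

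\textbf{The construction.} Build an increasing continuous chain $\seq{M_i : i < \lambda^+}$ in $\K_\lambda$ starting at the fixed $M_0$, such that each $M_i$ omits $p_0$ and $M_i \lta M_{i + 1}$. At limits take unions (a union of $< \lambda^+$ models each omitting $p_0$ still omits $p_0$, and has size $\lambda$). For the successor step, let $q_i \in \gS (M_i)$ be the nonforking extension of $p_0$; it is nonalgebraic and, by the remark above, minimal, so the contradiction hypothesis yields $N \gta M_i$ in $\K_\lambda$ omitting $q_i$, and I set $M_{i + 1} := N$. Then $M_{i + 1}$ omits $p_0$: if some $a \in |M_{i + 1}|$ realized $p_0$ over $M_0$, then $a \notin |M_i|$ (as $M_i$ omits $p_0$), so $\gtp (a / M_i; M_{i + 1})$ is a nonalgebraic extension of $p_0$ to $\gS (M_i)$; by minimality of $p_0$ it must be $q_i$, contradicting that $M_{i + 1}$ omits $q_i$. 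Now let $M_{\lambda^+} := \bigcup_{i < \lambda^+} M_i$. The chain being strictly increasing of length $\lambda^+$, $\|M_{\lambda^+}\| = \lambda^+$, and $M_{\lambda^+}$ omits $p_0 \in \gS (M_0)$ with $M_0 \lea M_{\lambda^+}$ of size $\lambda$, so $M_{\lambda^+}$ is not $\lambda^+$-saturated. On the other hand, from amalgamation and stability in $\lambda$ there is a saturated model of size $\lambda^+$. These two models of size $\lambda^+$ are non-isomorphic, contradicting categoricity in $\lambda^+$.

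\textbf{Main obstacle.} The delicate point is the successor step: the contradiction hypothesis only provides an extension omitting the moving target $q_i \in \gS (M_i)$, and one must exploit the minimality of $p_0$ to see that omitting $q_i$ forces $M_{i+1}$ to keep omitting the \emph{fixed} type $p_0 \in \gS (M_0)$ — this is what makes the whole chain witness non-saturation at stage $\lambda^+$. Everything else (the strict increase, the size computation for $M_{\lambda^+}$, and producing a saturated model of size $\lambda^+$ from stability in $\lambda$) is routine.
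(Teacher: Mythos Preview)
Your proposal is correct. The paper does not give its own proof of this fact---it is cited from \cite{tamenessthree}---but the subsequent remark indicates that the Grossberg--VanDieren argument relies on categoricity in $\lambda$ to ensure that unions of increasing chains of limit models are limit. Your version sidesteps this: because you work inside a type-full good $\lambda$-frame, you can simply take the nonforking extension $q_i$ of $p_0$ to an arbitrary $M_i \in \K_\lambda$ (no limit-model requirement), and minimality of $p_0$ alone carries the omitting argument through the chain. So the skeleton (contrapositive, build a $\lambda^+$-chain omitting a fixed minimal type, contradict saturation) is the same as the original, but the good-frame hypothesis lets you avoid the detour through limit models that the remark alludes to.
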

\begin{remark}
  The proof of Fact \ref{no-vp-fact} uses categoricity in $\lambda$ in a strong way (it uses that the union of an increasing chain of limit models is limit).
\end{remark}

\begin{lem}\label{unidim-categ-1}
  If $\K$ is categorical $\lambda^+$, then $\s$ is unidimensional.
\end{lem}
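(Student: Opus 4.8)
The plan is to argue by contraposition, using the characterization of non-unidimensionality through minimal types. Suppose $\s$ is \emph{not} unidimensional. By Lemma \ref{technical-multidim-equiv}, condition (\ref{equiv-3}) holds: for every $M \in \K_\lambda$ and every minimal $p \in \gS (M)$ there is a nonalgebraic $q \in \gS (M)$ with $p \perp q$. I will derive from this that $\K$ is not categorical in $\lambda^+$.

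First I would feed in the hypothesis of categoricity in $\lambda^+$ via Fact \ref{no-vp-fact}: it produces a model $M \in \K_\lambda$ together with a minimal type $p \in \gS (M)$ that is realized in \emph{every} $N \in \K_\lambda$ with $M \lta N$. Note also that categoricity in $\lambda$ (part of Hypothesis \ref{unidim-hyp}) makes $M$ a limit model, a fact I will use twice.

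Next, apply (\ref{equiv-3}) of Lemma \ref{technical-multidim-equiv} to this particular $M$ and $p$ to obtain a nonalgebraic $q \in \gS (M)$ with $p \perp q$. Since $M$ is limit, Lemma \ref{perp-fund}.(\ref{perp-fund-1}) upgrades this to $p \wkperp q$. Now invoke Lemma \ref{perp-fund}.(\ref{perp-fund-3}) (whose "in particular" clause already packages the existence of weak domination triples from Theorem \ref{weak-dom-existence}): because $M$ is limit and $p \wkperp q$, there exists $N \in \K_\lambda$ with $M \lta N$ such that $N$ realizes $q$ but \emph{omits} $p$. This contradicts the defining property of $p$ coming from Fact \ref{no-vp-fact}. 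Hence $\s$ must be unidimensional.

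I do not expect a genuine obstacle: the two hard ingredients—the "no Vaughtian pair at $\lambda$" consequence of categoricity in $\lambda$ and $\lambda^+$ (Fact \ref{no-vp-fact}) and the domination/orthogonality machinery (Lemma \ref{perp-fund}, Theorem \ref{weak-dom-existence})—have already been established, so the argument is essentially a bookkeeping matter of chaining them. The one point requiring care is that we only have the \emph{weak} notion of orthogonality available in a (not necessarily successful) good frame, so one must be careful to keep using $\wkperp$ rather than $\perp$, passing between the two only through the limitness of $M$.
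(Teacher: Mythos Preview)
Your proof is correct and follows essentially the same route as the paper: invoke Fact \ref{no-vp-fact} to get a minimal $p$ realized in every proper extension, use Lemma \ref{technical-multidim-equiv} to produce an orthogonal $q$, and then apply Lemma \ref{perp-fund}.(\ref{perp-fund-3}) (using that $M$ is limit by categoricity in $\lambda$) to omit $p$ in some proper extension, yielding a contradiction. The only difference is cosmetic: you explicitly pass from $\perp$ to $\wkperp$ via Lemma \ref{perp-fund}.(\ref{perp-fund-1}), whereas the paper uses this implicitly (since $p \perp q$ trivially implies $p \wkperp q$).
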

\begin{proof}
  By Fact \ref{no-vp-fact}, there exists $M \in \K_{\lambda}$ and a minimal $p \in \gS (M)$ so that $p$ is realized in every $N \gta M$. Now assume for a contradiction that $\K$ is \emph{not} unidimensional. Then by Lemma \ref{technical-multidim-equiv}, there exists a nonalgebraic $q \in \gS (M)$ such that $p \perp q$. By Lemma \ref{perp-fund}.(\ref{perp-fund-3}) (note that $M$ is limit by categoricity in $\lambda$), there exists $N \in \K_\lambda$ with $N \gta M$ so that $p$ is omitted in $N$, a contradiction to the choice of $p$.
\end{proof}

We have arrived to the main result of this section. For the convenience of the reader, we repeat Hypothesis \ref{unidim-hyp}.

\begin{thm}\label{unidim-categ}
  Let $\s$ be a type-full good $\lambda$-frame on $\K$. Assume that $\K$ is categorical in $\lambda$. Then $\s$ is unidimensional if and only if $\K$ is categorical in $\lambda^+$.
\end{thm}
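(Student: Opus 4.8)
The plan is essentially bookkeeping: the two directions of the equivalence have already been isolated as Lemma \ref{unidim-categ-0} and Lemma \ref{unidim-categ-1}, so the proof of Theorem \ref{unidim-categ} should consist of nothing more than citing them under the ambient Hypothesis \ref{unidim-hyp}. Concretely, I would first invoke Lemma \ref{unidim-categ-0} to get that if $\s$ is unidimensional then $\K$ is categorical in $\lambda^+$, and then invoke Lemma \ref{unidim-categ-1} for the converse, that categoricity in $\lambda^+$ forces $\s$ to be unidimensional. Since the standing hypothesis of the section already includes both that $\s$ is a type-full good $\lambda$-frame on $\K$ and that $\K$ is categorical in $\lambda$, the hypotheses of both lemmas are met verbatim, so no additional argument is needed.

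If one wanted to unwind this slightly rather than just quoting, the substantive content lives in the two lemmas: for the forward direction one uses the characterization of non-unidimensionality via minimal types (Lemma \ref{technical-multidim-equiv}) together with a case split on whether some extension of a fixed minimal type can be omitted in a model of size $\lambda^+$, producing in either case a pair of orthogonal types (the second case being handled via Lemma \ref{minimal-ext} and minimality); for the converse one uses Fact \ref{no-vp-fact} to produce a minimal type realized in every proper extension of size $\lambda$, and then Lemma \ref{perp-fund}.(\ref{perp-fund-3}) to derive a contradiction from any type orthogonal to it, using that the model of size $\lambda$ is limit by categoricity in $\lambda$. But for the theorem statement itself, there is genuinely nothing new to prove.

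There is therefore no real obstacle at this stage: the only thing to be careful about is that the equivalence is stated with categoricity in $\lambda$ as a running assumption rather than as part of the biconditional, and that Lemma \ref{unidim-categ-0} is the ``unidimensional $\Rightarrow$ categorical in $\lambda^+$'' direction while Lemma \ref{unidim-categ-1} is the reverse (the naming does not make this obvious), so I would double-check the direction of each citation. Accordingly I would simply write:

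\begin{proof}
  Immediate from Lemma \ref{unidim-categ-0} and Lemma \ref{unidim-categ-1}.
\end{proof}
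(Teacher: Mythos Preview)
Your proposal is correct and matches the paper's own proof exactly: the paper simply writes ``By Lemmas \ref{unidim-categ-0} and \ref{unidim-categ-1}.'' Your identification of which lemma handles which direction is also accurate.
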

\begin{proof}
  By Lemmas \ref{unidim-categ-0} and \ref{unidim-categ-1}.
\end{proof}

\section{Global orthogonality}

\begin{hypothesis}\label{sec-5-hyp} \
  \begin{enumerate}
    \item $\K$ is an AEC.
    \item $\theta > \LS (\K)$ is a cardinal or $\infty$. We set $\F := [\LS (\K), \theta)$.
    \item $\s = (\K_\F, \nf)$ is a type-full good $\F$-frame.
  \end{enumerate}
\end{hypothesis}

We start developing the theory of orthogonality and unidimensionality in a more global context (with no real loss, the reader can think of $\theta = \infty$ as being the main case). The main problem is to show that for $M$ sufficiently saturated, if $p, q \in \gS (M)$ do not fork over $M_0$, then $p \perp q$ if and only if $p \rest M_0 \perp q \rest M_0$. This can be done with the conjugation property in case $\|M_0\| = \|M\|$ but in general one needs to use more tools from the study of independent sequences. We will use Fact \ref{satfact} without further mention. We will also use a few facts about independent sequences:

\begin{fact}[Corollary 6.10 in \cite{tame-frames-revisited-v5}]\label{indep-facts}
  Independent sequences of length two satisfy the axioms of a good $\F$-frame. For example:
  \begin{enumerate}
    \item Monotonicity: If $\seq{ab}$ is independent in $(M_0, M, N)$ and $M_0 \lea M_0' \lea M' \lea M \lea N \lea N'$, then $\seq{ab}$ is independent in $(M_0', M', N')$.
    \item Continuity: If $\seq{M_i : i \le \delta}$ is increasing continuous, $M_\delta \lea N$, and $\seq{a b}$ is independent in $(M_0, M_i, N)$ for all $i < \delta$, then $\seq{ab}$ is independent in $(M_0, M_\delta, N)$.
  \end{enumerate}
\end{fact}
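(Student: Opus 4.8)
The plan is to make the length-two nonforking relation explicit and then verify the good-frame axioms one at a time, reducing each to the corresponding axiom of $\s$ applied to the two coordinates. Work on the underlying class $\K_\F$, so that the structural requirements on the class (amalgamation, joint embedding, no maximal models, stability) are inherited verbatim from $\s$, and declare $\gtp (ab / M; N)$ to \emph{not fork over} $M_0 \lea M$ precisely when $\seq{ab}$ is independent in $(M_0, M, N)$. Invariance is immediate, since an isomorphism carries a witnessing sequence $\seq{a_i : i < 2} \smallfrown \seq{M_i : i \le 2}$ to a witnessing sequence. The first real step is a \emph{well-definedness} lemma: whether $\seq{ab}$ is independent in $(M_0, M, N)$ should not depend on the choice of the witnessing chain, nor of the ambient extension $N^+$ used in the definition. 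Given two candidate chains one amalgamates them over $M$ inside $\K_\F$ and transports both to a common refinement using monotonicity and base monotonicity of $\s$ coordinate by coordinate; the same manipulation yields the harmless enlargements of $N$ and shrinkings of the intermediate model that monotonicity (item (1)) asks for.

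With well-definedness in hand, most of the remaining axioms fall out coordinatewise. Base monotonicity follows by applying base monotonicity of $\s$ to the types of the two coordinates; the continuity axioms, including item (2), follow by applying continuity of $\s$ to each coordinate and then reassembling a single witnessing chain over the union, again using amalgamation in $\K_\F$ and the well-definedness lemma; local character follows by choosing, for each coordinate, a small model witnessing local character for $\s$ and passing to their join, which is still small; transitivity combines transitivity of $\s$ in each coordinate with well-definedness. Symmetry is exactly Fact~\ref{sym-indep}.

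I expect the two genuinely delicate axioms to be extension and uniqueness, which together with the well-definedness step form the main obstacle: these force one to juggle several interlocking chains of models simultaneously rather than make a single coordinatewise invocation, and they are where amalgamation in $\K_\F$ really earns its keep. For extension, given $\seq{ab}$ independent in $(M_0, M, N)$ and $N \lea N'$ in $\K_\F$, one argues in two steps inside an amalgam over $N'$: first realize the nonforking extension (in the sense of $\s$) of the type of the first coordinate over the relevant model, then, using symmetry to arrange the coordinates in the order the definition of independence wants, realize the nonforking extension of the type of the second coordinate, rebuilding the witnessing chain as one goes; each of the two steps is a single application of the extension property of $\s$. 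For uniqueness, if $\seq{ab}$ and $\seq{a'b'}$ are both independent over $M_0$, realize the same type over $M_0$, and sit over a common intermediate model $M$, then uniqueness of $\s$ first pins down the type of the first coordinate over $M$; after identifying the first coordinates by an automorphism over $M$ and invoking symmetry, uniqueness of $\s$ in the second coordinate pins down the type of the whole pair over $M$. Everything beyond these points is bookkeeping on top of the axioms of $\s$ and the well-definedness lemma.
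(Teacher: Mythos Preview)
The paper does not supply a proof of this statement: it is quoted as a black-box fact from \cite{tame-frames-revisited-v5} (Corollary 6.10 there), with no argument given here. So there is nothing in the present paper to compare your proposal against.

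That said, your outline is broadly in the spirit of how the cited reference proceeds: define a pre-frame on types of pairs via independent sequences and verify the good-frame axioms by reducing to the length-one frame $\s$, with well-definedness of the witnessing chain as the first order of business and extension/uniqueness as the places where real work happens. One caution: when you write ``Symmetry is exactly Fact~\ref{sym-indep},'' be careful to distinguish the \emph{symmetry axiom of a good frame} (which concerns swapping the roles of the element-side and the model-side in a nonforking assertion $\nfs{M_0}{\bc}{M}{N}$) from the permutation invariance of the pair that Fact~\ref{sym-indep} records. The latter is a key ingredient in verifying the former --- and in your extension and uniqueness arguments, where you invoke it correctly --- but the two are not literally the same statement; proving frame-level symmetry for length-two types still requires a further argument on top of the length-one symmetry of $\s$.
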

\begin{remark}
  Inside which frame do we work in when we say that $\seq{ab}$ is independent, the global frame $\s$ or its restriction to a single cardinal? By monotonicity, the answer does not matter, i.e.\ the independent sequences are the same either way. Similarly, if $\lambda > \LS (\K)$ and $M_0, M, N \in \Ksatp{\lambda}_{[\lambda, \theta)}$, then $\seq{ab}$ is independent in $(M_0, M, N)$ with respect to $\s$ if and only if it is independent in $(M_0, N)$ with respect to $\s \rest \Ksatp{\lambda}_{[\lambda, \theta)}$ (i.e.\ we can require the models witnessing the independence to be saturated). This is a simple consequence of the extension property.
\end{remark}

We now define global orthogonality. 

\begin{defin}
  Let $M \in \K_{\F}$. For $p, q \in \gS (M)$ nonalgebraic, we write $p \perp q$ for $p \perpp{}{\s \rest \K_{\|M\|}} q$, and $p \wkperp q$ for $p \wkperpp{\s \rest \K_{\|M\|}} q$ (recall Definition \ref{perp-def}).
\end{defin}

Note that a priori we need not have that if $p \perp q$ and $p', q'$ are nonforking extensions of $p$ and $q$ to big models, then $p' \perp q'$. This will be proven first (Lemma \ref{perp-nf-1}).

\begin{lem}\label{wkperp-cont}
  Let $\delta$ be a limit ordinal. Let $\seq{M_i : i \le \delta}$ be increasing continuous in $\K_{\F}$. Let $p, q \in \gS (M_\delta)$ be nonalgebraic and assume that $p \rest M_i \wkperp q \rest M_i$ for all $i < \delta$. Then $p \wkperp q$.
\end{lem}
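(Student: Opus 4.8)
The plan is to verify the definition of $\wkperp$ directly. Fix $N \gea M_\delta$ with $\|N\| = \|M_\delta\|$ and $a, b \in |N|$ realizing $p$ and $q$ over $M_\delta$; it suffices to show that $\seq{ab}$ is independent in $(M_\delta, N)$.

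First I would record that $\seq{ab}$ is independent in $(M_\delta, M_\delta, N)$ is implied by $\seq{ab}$ being independent in $(M_i, M_i, N)$ for every $i < \delta$, which I establish now. For each $i < \delta$, pick (by the coherence axiom and downward L\"owenheim--Skolem) a model $N_i \lea N$ with $M_i \lea N_i$, $a, b \in |N_i|$, and $\|N_i\| = \|M_i\|$; since Galois types are invariant under enlarging the ambient model, $\gtp (a / M_i; N_i) = p \rest M_i$ and $\gtp (b / M_i; N_i) = q \rest M_i$, so the hypothesis $p \rest M_i \wkperp q \rest M_i$ gives that $\seq{ab}$ is independent in $(M_i, N_i)$ (computed in $\s \rest \K_{\|M_i\|}$). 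Since independence of sequences does not depend on the ambient frame (the remark after Fact \ref{indep-facts}), the monotonicity clause of Fact \ref{indep-facts} (enlarging the ambient model from $N_i$ to $N$) yields $\seq{ab}$ independent in $(M_i, M_i, N)$.

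Next, using local character of $\s$, fix $i^\ast < \delta$ such that neither $p$ nor $q$ forks over $M_{i^\ast}$. I claim $\seq{ab}$ is independent in $(M_{i^\ast}, M_i, N)$ for all $i \in [i^\ast, \delta)$. Unwinding the previous paragraph: for such $i$ there are $M_i \lea Q_1 \lea Q_2$ with $a \in |Q_1|$, $b \in |Q_2|$, $N \lea Q_2$, $\gtp(a/M_i; Q_1) = p \rest M_i$, and $\gtp (b / Q_1; Q_2)$ not forking over $M_i$. Now $p \rest M_i$ does not fork over $M_{i^\ast}$ (monotonicity of $\s$, since $M_{i^\ast} \lea M_i$), and $\gtp(b/Q_1;Q_2) \rest M_i = q \rest M_i$ does not fork over $M_{i^\ast}$, so transitivity of $\s$ gives that $\gtp(b/Q_1;Q_2)$ does not fork over $M_{i^\ast}$; hence $\seq{M_i, Q_1, Q_2}$ witnesses that $\seq{ab}$ is independent in $(M_{i^\ast}, M_i, N)$. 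Then the continuity clause of Fact \ref{indep-facts}, applied to the chain $\seq{M_i : i^\ast \le i \le \delta}$ with base $M_{i^\ast}$, yields $\seq{ab}$ independent in $(M_{i^\ast}, M_\delta, N)$, and applying base monotonicity of $\s$ to each nonforking statement in its witnessing sequence (all the relevant domains contain $M_\delta \gea M_{i^\ast}$) upgrades this to $\seq{ab}$ independent in $(M_\delta, M_\delta, N) = (M_\delta, N)$, as desired.

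The step I expect to be the main obstacle is the continuity argument: the continuity clause of Fact \ref{indep-facts} moves only the \emph{middle} model of an independent triple and keeps the base fixed, whereas the hypotheses naturally give independence over the \emph{moving} base $M_i$ — and this need not be independence over $M_0$, since $p$ and $q$ may well fork over $M_0$. The fix is precisely to use local character of $\s$ first, to bring $p$ and $q$ down to a single model $M_{i^\ast}$ of the chain, and only then invoke continuity, restoring the base to $M_\delta$ afterwards via base monotonicity. A secondary bookkeeping point, handled above by passing to the submodels $N_i$, is that the $M_i$ may be strictly smaller than $M_\delta$, so $p \rest M_i \wkperp q \rest M_i$ is a statement about the smaller frame $\s \rest \K_{\|M_i\|}$.
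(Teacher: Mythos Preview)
Your proof is correct and is essentially a detailed expansion of the paper's one-line proof, which simply reads ``By the continuity property of independent sequences (Fact \ref{indep-facts}).'' You have correctly identified the subtlety that the continuity clause of Fact \ref{indep-facts} is stated with a \emph{fixed} base $M_0$, whereas the hypothesis only gives independence in $(M_i, M_i, N)$ with a moving base; your fix---using local character of the $1$-frame to drop both $p$ and $q$ to a common $M_{i^\ast}$, then transitivity to push the base of each witnessing nonforking down to $M_{i^\ast}$, then continuity, then base monotonicity---is exactly the kind of routine unpacking the author is leaving to the reader. The bookkeeping with the submodels $N_i$ (to apply $\wkperp$ at the correct cardinality) is also handled correctly, and the remark after Fact \ref{indep-facts} justifies passing freely between the local and global frames.
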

\begin{proof}
  By the continuity property of independent sequences (Fact \ref{indep-facts}).
\end{proof}

The difference between the next lemma and the previous one is the use of $\perp$ instead of $\wkperp$.

\begin{lem}\label{perp-cont}
  Let $\delta$ be a limit ordinal. Let $\seq{M_i : i \le \delta}$ be increasing continuous in $\K_{\F}$. Let $p, q \in \gS (M_\delta)$ be nonalgebraic and assume that $p \rest M_i \perp q \rest M_i$ for all $i < \delta$. Then $p \perp q$.
\end{lem}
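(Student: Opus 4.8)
The plan is to reduce, via local character of $\s$, to a statement about nonforking extensions over a single base model, and then prove that statement by induction on the cardinality of the larger model, feeding the inductive steps into the continuity of weak orthogonality that was just established (Lemma \ref{wkperp-cont}).

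For the reduction: by the local character property of the good frame (see \cite[Claim II.2.11]{shelahaecbook}; for a good $\F$-frame this applies to chains in $\K_\F$ even of varying cardinality), applied to the chain $\seq{M_i : i < \delta}$ together with $p$ and with $q$, there is $i_0 < \delta$ such that neither $p$ nor $q$ forks over $M_{i_0}$. Equivalently, $p$ and $q$ are the nonforking extensions to $M_\delta$ of $p_0 := p \rest M_{i_0}$ and $q_0 := q \rest M_{i_0}$, and by the hypothesis taken at $i = i_0$ we have $p_0 \perp q_0$. It is then enough to prove the following claim: \emph{if $M \in \K_\F$, $r, s \in \gS (M)$ are nonalgebraic with $r \perp s$, $N \gea M$ is in $\K_\F$, and $r', s'$ are the nonforking extensions of $r, s$ to $N$, then $r' \wkperp s'$.} Granting the claim, fix any $N \gea M_\delta$ with $\|N\| = \|M_\delta\|$ and let $p', q'$ be the nonforking extensions of $p, q$ to $N$; by transitivity of nonforking these are also the nonforking extensions of $p_0, q_0$ to $N$, so the claim (with $M = M_{i_0}$) gives $p' \wkperp q'$, and since $N$ was arbitrary this is exactly $p \perp q$.

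The claim is proved by induction on $\nu := \|N\|$. If $\nu = \|M\|$ this is literally Definition \ref{perp-def}. If $\nu > \|M\|$, write $N$ as the union of a strictly increasing continuous chain $\seq{N_\xi : \xi \le \ell}$ with $\ell$ a limit ordinal, $N_\ell = N$, $M \lea N_0$, and $\|N_\xi\| < \nu$ for all $\xi < \ell$ (possible by Löwenheim--Skolem since $\|M\| < \nu$). Since $r'$ does not fork over $M$ and $M \lea N_\xi \lea N$, the restriction $r' \rest N_\xi$ does not fork over $M$ and extends $r$, so by uniqueness it is the nonforking extension of $r$ to $N_\xi$ (and is nonalgebraic); likewise for $s'$. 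By the induction hypothesis, $r' \rest N_\xi \wkperp s' \rest N_\xi$ for all $\xi < \ell$, so Lemma \ref{wkperp-cont} applied to $\seq{N_\xi : \xi \le \ell}$ yields $r' \wkperp s'$.

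The one genuine idea here is to induct on $\|N\|$ rather than attempt to transport the given chain $\seq{M_i : i \le \delta}$ up into $N$: a direct resolution of $N$ along the $M_i$ fails because the restrictions of $p'$ need not be nonforking extensions of the $p \rest M_i$ (the type $p$ may fork over every $M_i$), which is precisely why the preliminary local-character reduction to a single base $M_{i_0}$ is needed. The only other point requiring a moment's care is checking that local character of $\s$ is available for chains in $\K_\F$ whose members may have different cardinalities, which is part of the good $\F$-frame package.
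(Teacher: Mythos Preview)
Your proof is correct, and it is close in spirit to the paper's argument but organized differently. Both proofs begin with the same local-character reduction to a single base $M_{i_0}$. After that, the paper resolves an arbitrary $M' \gea M_\delta$ of size $\|M_\delta\|$ along the \emph{given} chain: it picks $M_i' \lea M'$ with $M_i \lea M_i'$ and $\|M_i'\| = \|M_i\|$, so that the hypothesis $p \rest M_i \perp q \rest M_i$ applies directly (same cardinality) to give $p' \rest M_i' \wkperp q' \rest M_i'$, and then invokes Lemma~\ref{wkperp-cont}. You instead isolate a general claim (nonforking extensions of orthogonal types are weakly orthogonal over any larger model in $\K_\F$) and prove it by induction on $\|N\|$; this claim is essentially the content of the paper's next lemma (Lemma~\ref{perp-nf-1}), which the paper derives \emph{from} Lemma~\ref{perp-cont} rather than inside it. Your route is slightly more self-contained and yields Lemma~\ref{perp-nf-1} for free; the paper's route is more economical in that it uses the full hypothesis (orthogonality at every $M_i$, not just at $M_{i_0}$) to avoid the cardinal induction.

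One small correction to your closing commentary: after the local-character reduction, a direct resolution of $N$ along the $M_i$ (the paper's method) \emph{does} work, since once $p, q$ do not fork over $M_{i_0}$ the restrictions $p' \rest M_i'$ are nonforking extensions of $p \rest M_i$ for all $i \ge i_0$. So the induction on $\|N\|$ is a legitimate alternative, not a necessity.
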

\begin{proof}
  By local character, there exists $i < \delta$ so that both $p$ and $q$ do not fork over $M_i$. Without loss of generality, $i = 0$. Let $\lambda := \|M_\delta\|$. If there exists $i < \delta$ so that $\lambda = \|M_i\|$, then the result follows from the definition of orthogonality. So assume that $\|M_i\| < \lambda$ for all $i < \delta$. Let $M' \in \K_\lambda$ be such that $M_\delta \lea M'$ and let $p', q'$ be the nonforking extensions to $M'$ of $p$, $q$ respectively. We want to see that $p' \wkperp q'$. Let $\seq{M_i' : i \le \delta}$ be an increasing continuous resolution of $M'$ such that $M_i \lea M_i'$ and $\|M_i'\| = \|M_i\|$ for all $i < \delta$. We know that $p' \rest M_i'$ does not fork over $M_0$, hence over $M_i$ and similarly $q' \rest M_i'$ does not fork over $M_i$. Therefore by definition of orthogonality, $p' \rest M_i' \wkperp q' \rest M_i'$. By Lemma \ref{wkperp-cont}, $p' \wkperp q'$.
\end{proof}

\begin{lem}\label{perp-nf-1}
  Let $M_0 \lea M$ be both in $\K_{\F}$. Let $p, q \in \gS (M)$ be nonalgebraic so that both do not fork over $M_0$. If $p \rest M_0 \perp q \rest M_0$, then $p \perp q$.
\end{lem}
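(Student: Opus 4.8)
The plan is to reduce the statement, via transitivity of nonforking, to a claim about the nonforking extensions of the \emph{restricted} types, and to prove that claim by induction on cardinality, with Lemma~\ref{wkperp-cont} (continuity of weak orthogonality) as the engine. Set $p_0 := p \rest M_0$ and $q_0 := q \rest M_0$; these are nonalgebraic, as already presupposed by the hypothesis $p_0 \perp q_0$. I claim it is enough to prove the following statement $(\star)$: for every $N \in \K_\F$ with $N \gea M_0$, if $p_N, q_N \in \gS (N)$ denote the nonforking extensions of $p_0$ and $q_0$ respectively, then $p_N \wkperp q_N$. Granting $(\star)$, one gets $p \perp q$ directly: fixing any $N \gea M$ in $\K_{\|M\|}$, transitivity and uniqueness of nonforking extensions show that the nonforking extensions of $p$ and of $q$ to $N$ are precisely $p_N$ and $q_N$ (using $N \gea M \gea M_0$ and that $p, q$ do not fork over $M_0$), and these are weakly orthogonal by $(\star)$.

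To prove $(\star)$ I would induct on $\mu := \|N\|$, noting $\mu \ge \|M_0\|$. If $\mu = \|M_0\|$, then $(\star)$ for this $N$ is a direct instance of the definition of $p_0 \perp q_0$. If $\mu > \|M_0\|$, fix an increasing continuous resolution $\seq{N_i : i \le \mu}$ of $N$ with $N_0 = M_0$, $N_i \lea N$ for all $i$, $\|N_i\| < \mu$ for $i < \mu$, and $N_\mu = N$; such a chain exists by a routine L\"owenheim--Skolem--Tarski construction (using that $\|M_0\| + |i| < \mu$ for $i < \mu$), and each $N_i$ then lies in $\K_\F$ with $M_0 \lea N_i \lea N$ by the coherence axiom. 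By monotonicity and uniqueness of nonforking, $p_N \rest N_i$ is the nonforking extension of $p_0$ to $N_i$, and likewise $q_N \rest N_i$ for $q_0$; since $\|N_i\| < \mu$, the induction hypothesis yields $p_N \rest N_i \wkperp q_N \rest N_i$ for every $i < \mu$. Since $\mu$ is a limit ordinal and $p_N, q_N \in \gS (N_\mu)$ are nonalgebraic, Lemma~\ref{wkperp-cont} gives $p_N \wkperp q_N$, completing the induction.

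I do not anticipate a serious obstacle: the proof uses only transitivity, uniqueness, and monotonicity of nonforking together with the already-established continuity of weak orthogonality, and it deliberately avoids the conjugation property (Fact~\ref{conj-prop}), since in this global setting $M_0$ and $M$ need not be limit models. The only mildly delicate points are the bookkeeping in the resolution of $N$ (keeping each $N_i$ inside $\K_\F$, i.e.\ of size in $[\LS (\K), \theta)$, and ensuring the chain is continuous with union $N$) and the verification that $p_N \rest N_i$ really is the nonforking extension of $p_0$ to $N_i$, which is immediate from transitivity plus uniqueness of nonforking extensions.
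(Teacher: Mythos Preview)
Your proposal is correct and follows essentially the same strategy as the paper: climb from $M_0$ up to $M$ via a resolution and use continuity of (weak) orthogonality at limit stages, with the base case supplied by the definition of $\perp$ at $\|M_0\|$. The only cosmetic difference is that the paper packages the argument as a single chain from $M_0$ to $M$ maintaining $\perp$ and invoking Lemma~\ref{perp-cont} at limits, whereas you phrase it as an explicit induction on $\|N\|$ proving the slightly stronger statement $(\star)$ and appealing directly to Lemma~\ref{wkperp-cont}; since Lemma~\ref{perp-cont} is itself proved from Lemma~\ref{wkperp-cont}, the two arguments unfold to the same thing.
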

\begin{proof}
  Let $\delta := \cf{\|M\|}$. Build $\seq{N_i : i \le \delta}$ increasing continuous such that $N_0 = M_0$, $N_\delta = M$, and $p \rest N_i \perp q \rest N_i$ for all $i \le \delta$. This is easy: at successor steps, we require $\|N_i\| = \|N_{i + 1}\|$ and use the definition of orthogonality. At limit steps, we use Lemma \ref{perp-cont}. Then $p \rest N_\delta \perp q \rest N_\delta$, but $N_\delta = M$ so $p \perp q$.
\end{proof}

\begin{question}\label{perp-nf-question}
Is the converse true? That is if $M_0 \lea M$ are in $\K_{\F}$, $p, q \in \gS (M)$ do not fork over $M_0$ and $p \perp q$, do we have that $p \rest M_0 \perp q \rest M_0$?
\end{question}

An answer to this question would be useful in order to transfer unidimensionality up in a more conceptual way than below. With a very mild additional hypothesis, we give a positive answer in Theorem \ref{perp-global} of the appendix but this is not needed for the rest of the paper.

We now go back to studying unidimensionality. We give a global definition:

\begin{defin}
  For $\lambda \in \F$, we say that $\s$ is \emph{$\lambda$-unidimensional} if the following is \emph{false}: for every limit $M \in \K_\lambda$ and every nonalgebraic $p \in \gS (M)$, there exists a limit $M' \in \K_\lambda$ with $M \lea M'$ and $p', q \in \gS (M')$ so that $p'$ extends $p$ and $p' \perp q$.
\end{defin}
\begin{remark}
  When $\lambda > \LS (\K)$, $\s$ is $\lambda$-unidimensional if and only if $\s \rest \Ksatp{\lambda}_\lambda$ is unidimensional (see Definition \ref{unidim-def}). If $\K$ is categorical in $\LS (\K)$, this also holds when $\lambda = \LS (\K)$ (if $\K$ is not categorical in $\LS (\K)$, we do not know that $\Ksatp{\LS (\K)}$ is an AEC).
\end{remark}

Our next goal is to prove (assuming categoricity in $\LS (\K)$) that $\lambda$-unidimensionality is equivalent to $\mu$-unidimensionality for every $\lambda, \mu \in \F$. We will use another characterization of $\lambda$-unidimensionality when $\lambda > \LS (\K)$. In that case, it is enough to check failure of unidimensionality with a single minimal type.

\begin{lem}\label{min-unidim}
  Let $\lambda > \LS (\K)$ be in $\F$. The following are equivalent:
  
  \begin{enumerate}
    \item\label{min-unidim-1} $\s$ is \emph{not} $\lambda$-unidimensional.
    \item\label{min-unidim-2} There exists a saturated $M \in \K_\lambda$ and nonalgebraic types $p, q \in \gS (M)$ such that $p$ is minimal and $p \perp q$.
  \end{enumerate}
\end{lem}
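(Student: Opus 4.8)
The plan is to pass to the restricted frame and argue with the theory of the earlier sections. Since $\lambda > \LS (\K)$, the class $\K' := \Ksatp{\lambda}$ is an AEC with Löwenheim-Skolem-Tarski number $\lambda$ (Fact \ref{satfact}), its members of size $\lambda$ are exactly the saturated (equivalently, limit) models of $\K$ of that size, and hence $\K'$ is categorical in $\lambda$; moreover $\s' := \s \rest \Ksatp{\lambda}_\lambda$ is a type-full good $\lambda$-frame on $\K'_\lambda$. Thus Hypothesis \ref{unidim-hyp} holds for $(\K', \s')$, and all the machinery developed above (weak domination triples, local orthogonality, Lemma \ref{technical-multidim-equiv}, Lemma \ref{unidim-categ-0}) is available for it. I will also use two bookkeeping observations: $\s$ is $\lambda$-unidimensional if and only if $\s'$ is unidimensional in the sense of Definition \ref{unidim-def} (this is recorded right after the definition of $\lambda$-unidimensionality), and for a saturated $M \in \K_\lambda$ and $p, q \in \gS (M)$, $p \perp q$ holds in $\s$ exactly when it holds in $\s'$, since by Lemma \ref{perp-fund}.(\ref{perp-fund-1}) it is equivalent to $p \wkperp q$, which refers only to independent pairs and does not see the ambient frame.

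For $(\ref{min-unidim-1}) \Rightarrow (\ref{min-unidim-2})$: by Lemma \ref{technical-multidim-equiv} applied to $(\K', \s')$, failure of unidimensionality of $\s'$ is equivalent to the statement that every minimal type over a saturated model in $\K_\lambda$ is orthogonal to some nonalgebraic type over the same model. It therefore suffices to exhibit one saturated $M \in \K_\lambda$ carrying a minimal type, which I obtain by taking any saturated $M$ (stability in $\lambda$), extending a nonalgebraic type over $M$ to a minimal type over a larger saturated model via density of minimal types, and transporting it back along an isomorphism onto $M$ (minimality being isomorphism-invariant).

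For $(\ref{min-unidim-2}) \Rightarrow (\ref{min-unidim-1})$: starting from a saturated $M_0 \in \K_\lambda$, a minimal $p_0 \in \gS (M_0)$, and a nonalgebraic $q_0 \in \gS (M_0)$ with $p_0 \perp q_0$, I build an increasing continuous chain $\seq{N_i : i < \lambda^+}$ of saturated models in $\K_\lambda$ with $N_0 = M_0$, each omitting $p_0$. At a successor step, let $p_0^{(i)}, q_0^{(i)} \in \gS (N_i)$ be the nonforking extensions of $p_0, q_0$; then $p_0^{(i)}$ is again minimal and $p_0^{(i)} \perp q_0^{(i)}$ by Lemma \ref{perp-nf-1}, so $p_0^{(i)} \wkperp q_0^{(i)}$ by Lemma \ref{perp-fund}.(\ref{perp-fund-1}) (as $N_i$ is limit), and Lemma \ref{perp-fund}.(\ref{perp-fund-3}) yields a saturated $N_{i+1} \in \K_\lambda$ with $N_i \lta N_{i+1}$ realizing $q_0^{(i)}$ but omitting $p_0^{(i)}$; since $p_0$ is minimal, any realization of $p_0$ lying outside $N_i$ realizes its unique nonalgebraic extension $p_0^{(i)}$ over $N_i$, so $p_0$ too is omitted in $N_{i+1}$. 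At limit stages take unions, which remain saturated of size $\lambda$ because $\K'$ is an AEC. The union $N^* := \bigcup_{i < \lambda^+} N_i$ then has size $\lambda^+$, omits $p_0$, and is $\lambda$-saturated (any submodel of size $< \lambda$ lies inside some $N_i$, which realizes all its Galois types); so $\K'$ has a non-saturated model of size $\lambda^+$, while a saturated model of size $\lambda^+$ realizes $p_0$, and hence $\K'$ is not categorical in $\lambda^+$. By the contrapositive of Lemma \ref{unidim-categ-0} applied to $(\K', \s')$, $\s'$, and therefore $\s$, is not $\lambda$-unidimensional.

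The step needing the most care is this chain construction, where one must keep $p_0$ omitted while obtaining enough saturation. What makes it go through is that omission of $p_0$ propagates along the chain \emph{solely} because $p_0$ is minimal (uniqueness of nonalgebraic extensions), while Lemma \ref{perp-fund}.(\ref{perp-fund-3})---which rests on the existence of weak domination triples (Theorem \ref{weak-dom-existence}) in $\s'$---lets us realize the successive nonforking extensions of $q_0$ without ever being forced to realize $p_0$. One must \emph{not} additionally require $N_{i+1}$ to be universal over $N_i$: that would conflict with omitting $p_0$, and it is in any case unnecessary, since a length-$\lambda^+$ increasing continuous chain of saturated models of size $\lambda$ automatically has a $\lambda$-saturated union. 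A minor point to verify is that orthogonality and unidimensionality really do agree for $\s$ and $\s'$ over saturated models of size $\lambda$, which is exactly what Lemma \ref{perp-fund}.(\ref{perp-fund-1}) provides.
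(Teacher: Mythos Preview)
Your proposal is correct and follows essentially the same strategy as the paper: pass to $\s' = \s \rest \Ksatp{\lambda}_\lambda$, and for $(\ref{min-unidim-2}) \Rightarrow (\ref{min-unidim-1})$ build a $\lambda^+$-chain of saturated models in $\K_\lambda$ all omitting the minimal type, concluding non-categoricity of $\Ksatp{\lambda}$ in $\lambda^+$ and invoking (the contrapositive of) Lemma \ref{unidim-categ-0}. The only substantive difference is at the successor step. The paper fixes $N \in \K_{\LS(\K)}$ with $p$ not forking over $N$, uses uniqueness of saturated models to get $f: M_j \cong_N M_0$ with $f(p') = p$, applies Lemma \ref{perp-fund}.(\ref{perp-fund-3}) once at $M_0$, and pulls the resulting extension back along $f^{-1}$; you instead lift the orthogonality to $N_i$ (which, as you note, is immediate from the definition of $\perp$ in a single cardinal---your citation of Lemma \ref{perp-nf-1} is harmless but unnecessary here) and apply Lemma \ref{perp-fund}.(\ref{perp-fund-3}) directly at $N_i$. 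Both are fine; the paper's conjugation avoids re-verifying orthogonality at each stage, while your version avoids the isomorphism bookkeeping.
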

\begin{proof} \
  \begin{itemize}
    \item (\ref{min-unidim-1}) implies (\ref{min-unidim-2}): Assume that $\s$ is not $\lambda$-unidimensional. Let $M \in \Ksatp{\lambda}_\lambda$ and let $p \in \gS (M)$ be minimal (exists by density of minimal types and uniqueness of saturated models). By Lemma \ref{technical-multidim-equiv}, there exists $q \in \gS (M)$ so that $p \perp q$, as desired.
    \item (\ref{min-unidim-2}) implies (\ref{min-unidim-1}): Let $M \in \Ksatp{\lambda}_\lambda$ and let $p, q \in \gS (M)$ be nonalgebraic so that $p$ is minimal and $p \perp q$. We show that $\Ksatp{\lambda}$ is \emph{not} categorical in $\lambda^+$, which is enough by Theorem \ref{unidim-categ}. Fix $N \in \K_{\LS (\K)}$ with $N \lea M$ so that $p$ does not fork over $N$. Build a strictly increasing continuous chain $\seq{M_i : i \le \lambda^+}$ such that for all $i < \lambda^+$:

  \begin{enumerate}
    \item $M_i \in \Ksatp{\lambda}_\lambda$.
    \item $M_0 = M$.
    \item $p$ is omitted in $M_i$.
  \end{enumerate}

  This is enough, since then $p$ is omitted in $M_{\lambda^+}$ so $M_{\lambda^+} \in \K_{\lambda^+}$ cannot be saturated. This is possible: at limits we take unions and for $i = 0$ we set $M_0 := M$. Now let $i = j + 1$ be given. Let $p' \in \gS (M_j)$ be the nonforking extension of $p$. By uniqueness of saturated models, there exists $f: M_j \cong_{N} M_0$. By uniqueness of nonforking extension, $f (p') = p$. By Lemma \ref{perp-fund}.(\ref{perp-fund-3}), there exists $M' \gea M_0$ in $\Ksatp{\lambda}_\lambda$ so that $p$ is omitted in $M'$. Let $M_{j + 1} := f^{-1}[M']$. Then $p'$ is omitted in $M_{j + 1}$. Since $p$ is minimal, $p$ is omitted in $|M_{j + 1}| \backslash |M_j|$, and hence by induction in $M_{j + 1}$.
  \end{itemize}
\end{proof}

An issue in transferring unidimensionality up is that we do not have a converse to Lemma \ref{perp-nf-1} (see Question \ref{perp-nf-question}), so we will ``cheat''and use the following transfer which follows from the proof of \cite[Theorem 6.3]{tamenessthree} (recall that we are assuming Hypothesis \ref{sec-5-hyp}).

\begin{fact}\label{upward-transfer-2}
  If $\K$ is categorical in $\LS (\K)$ and $\LS (\K)^+$, then $\K$ is categorical in all $\mu \in [\LS (\K), \theta]$.
\end{fact}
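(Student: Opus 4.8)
The plan is to reduce to the proof of \cite[Theorem 6.3]{tamenessthree}, which establishes precisely this upward transfer under the hypotheses that $\K$ is $\LS (\K)$-tame with amalgamation and no maximal models; it therefore suffices to check that a type-full good $\F$-frame supplies these ingredients and that the argument stays inside $\F$ until the very last step. The first point is routine: a type-full good $\F$-frame forces $\K_\F$ to have amalgamation, joint embedding, and no maximal models, to be stable in every $\mu \in \F$, and --- via local character together with uniqueness of nonforking extensions --- to be $\LS (\K)$-tame for types over models of size in $\F$ (this is the implication already invoked in the introduction).

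Recalling the argument itself: by categoricity in $\LS (\K)^+$, Fact \ref{no-vp-fact} produces a model $M_0 \in \K_{\LS (\K)}$ and a minimal $p \in \gS (M_0)$ realized in every $N \gta M_0$ of size $\LS (\K)$, and by categoricity in $\LS (\K)$ the model $M_0$ is limit, hence saturated. One then shows by induction on $\mu \in [\LS (\K), \theta)$ that $\K$ is categorical in $\mu$ and that the nonforking extension $p_\mu$ of $p$ to the saturated model of size $\mu$ extending $M_0$ is still realized in every proper $\lea$-extension. The inductive step transfers ``no Vaughtian pair for $p$'' upward using $\LS (\K)$-tameness; in the vocabulary of the present paper one can equivalently run this through orthogonality calculus, using that $\s$ is $\LS (\K)$-unidimensional (Lemma \ref{unidim-categ-1}) and the minimal-type characterizations of unidimensionality (Lemmas \ref{technical-multidim-equiv} and \ref{min-unidim}). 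The usual omitting-types construction then shows that a non-saturated model of size $\mu$ would produce a Vaughtian pair for $p$ below $\mu$, contradicting the inductive hypothesis; since saturated models of a fixed size are unique (Facts \ref{uq-limit} and \ref{satfact}), categoricity in $\mu$ follows.

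It remains to handle $\mu = \theta$ when $\theta$ is a cardinal, where the good frame is not available. I would argue directly: any $M \in \K_\theta$ is a continuous $\lea$-increasing union of a chain $\seq{M_i : i < \cf{\theta}}$ with $\|M_i\| < \theta$ cofinal in $\theta$, and by the categoricity already proved each $M_i$ is isomorphic to the saturated model of its size; a back-and-forth between two such presentations, using uniqueness of saturated models below $\theta$ together with amalgamation and joint embedding in $\K_\F$, shows that any two models of size $\theta$ are isomorphic.

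I expect the principal obstacle to be bookkeeping rather than a new idea: one must verify that the proof of \cite[Theorem 6.3]{tamenessthree} uses tameness and amalgamation only --- never categoricity above a Hanf number, and nothing about models of size $\ge \theta$ --- and one must treat the endpoint $\theta$ separately because it lies outside the frame. The genuinely load-bearing step is the upward transfer of ``no Vaughtian pairs for the minimal type $p$'', which is exactly where tameness (here supplied by the good $\F$-frame) is used in an essential way.
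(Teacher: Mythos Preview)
Your proposal is correct and matches the paper's approach exactly: the paper states this as a Fact and simply says it ``follows from the proof of \cite[Theorem 6.3]{tamenessthree}'', and you are spelling out precisely that reduction together with the verification that a type-full good $\F$-frame supplies the needed amalgamation, no maximal models, and $(\LS(\K), <\theta)$-tameness.

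One caution: your parenthetical suggestion to ``equivalently run this through orthogonality calculus'' via $\LS(\K)$-unidimensionality is circular in the context of this paper. The upward half of Theorem \ref{unidim-transfer} (going from $\mu$-unidimensional to $\lambda$-unidimensional for $\mu < \lambda$) is proved \emph{by invoking} Fact \ref{upward-transfer-2}; indeed the paper remarks just before stating the Fact that it ``cheats'' at this step precisely because it lacks a direct converse to Lemma \ref{perp-nf-1}. So the orthogonality route is not an independent alternative here --- stick to the direct Vaughtian-pair transfer from \cite{tamenessthree} as your main line, which you do.
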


For the convenience of the reader, we have repeated Hypothesis \ref{sec-5-hyp} in the statement of the next two theorems.

\begin{thm}\label{unidim-transfer}
  Let $\theta > \LS (\K)$ be a cardinal or $\infty$. and let $\F := [\LS (\K), \theta)$. Let $\s$ be a type-full good $\F$-frame on $\K_{\F}$.
    
  Assume that $\K$ is categorical in $\LS (\K)$. Let $\lambda$ and $\mu$ both be in $\F$. Then $\s$ is $\lambda$-unidimensional if and only if $\s$ is $\mu$-unidimensional. 
\end{thm}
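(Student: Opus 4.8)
The plan is to establish, for every $\lambda \in \F$, the equivalence of ``$\s$ is $\lambda$-unidimensional'' with the single condition ``$\K$ is categorical in $\LS (\K)^+$''; the theorem then follows at once. The first step is a local translation. By the remark following the definition of $\lambda$-unidimensionality, $\s$ is $\lambda$-unidimensional if and only if the type-full good $\lambda$-frame $\s \rest \Ksatp{\lambda}_\lambda$ is unidimensional (when $\lambda = \LS (\K)$, categoricity in $\LS (\K)$ makes $\Ksatp{\LS (\K)}_{\LS (\K)} = \K_{\LS (\K)}$, so this reads ``$\s \rest \K_{\LS (\K)}$ is unidimensional''). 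Since the relevant base class is categorical in $\lambda$ --- by uniqueness of saturated models when $\lambda > \LS (\K)$, by hypothesis when $\lambda = \LS (\K)$ --- Theorem \ref{unidim-categ} upgrades this to: $\s$ is $\lambda$-unidimensional if and only if $\Ksatp{\lambda}$ (when $\lambda = \LS (\K)$, $\K$ itself) is categorical in $\lambda^+$. Taking $\lambda = \LS (\K)$ gives in particular that $\s$ is $\LS (\K)$-unidimensional iff $\K$ is categorical in $\LS (\K)^+$.

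For the ``upward'' half, assume $\K$ is categorical in $\LS (\K)^+$. As $\K$ is also categorical in $\LS (\K)$, Fact \ref{upward-transfer-2} makes $\K$ categorical in every $\mu \in [\LS (\K), \theta]$, hence in $\lambda^+$ for each $\lambda \in \F$. A $\lambda$-saturated model of size $\lambda^+$ exists (the good frame provides stability), so the model of $\K$ of size $\lambda^+$ is $\lambda$-saturated and $\Ksatp{\lambda}$ is categorical in $\lambda^+$; by the first step, $\s$ is $\lambda$-unidimensional.

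For the ``downward'' half I argue the contrapositive: if $\K$ is not categorical in $\LS (\K)^+$, then $\s$ is not $\lambda$-unidimensional for any $\lambda \in \F$. By the first step $\s \rest \K_{\LS (\K)}$ is not unidimensional, so condition (\ref{equiv-3}) of Lemma \ref{technical-multidim-equiv} holds (the hypothesis of that lemma is met since $\K$ is categorical in $\LS (\K)$); together with density of minimal types this supplies $M \in \K_{\LS (\K)}$, a minimal $p \in \gS (M)$ and a nonalgebraic $q \in \gS (M)$ with $p \perp q$, which handles $\lambda = \LS (\K)$. Now fix $\lambda \in \F$ with $\lambda > \LS (\K)$, pick a limit $M^\ast \in \K_\lambda$ extending $M$, and let $p^\ast, q^\ast \in \gS (M^\ast)$ be the nonforking extensions of $p, q$. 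A short argument shows $p^\ast$ is still minimal: two nonalgebraic extensions of $p^\ast$ to any common larger model do not fork over a suitable intermediate $M_1 \in \K_{\LS (\K)}$ (local character), restrict there to nonalgebraic extensions of $p$ which coincide by minimality of $p$, and hence coincide by uniqueness of nonforking extensions. Since $q^\ast$ is nonalgebraic and $p^\ast \rest M = p \perp q = q^\ast \rest M$, Lemma \ref{perp-nf-1} gives $p^\ast \perp q^\ast$, so Lemma \ref{min-unidim} shows $\s$ is not $\lambda$-unidimensional.

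The two halves together give ``$\s$ is $\lambda$-unidimensional'' $\iff$ ``$\K$ is categorical in $\LS (\K)^+$'' for all $\lambda \in \F$, hence the theorem. The delicate point is the downward half: transferring a witnessing minimal type \emph{down} from $\lambda$ to $\LS (\K)$ does not work, since restrictions of minimal types need not be minimal and there is no converse to Lemma \ref{perp-nf-1} known (Question \ref{perp-nf-question}); one instead transfers the \emph{failure} of unidimensionality up, paying for it by routing the upward direction through Fact \ref{upward-transfer-2}.
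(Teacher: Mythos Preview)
Your proof is correct and follows essentially the same approach as the paper's: both use Lemma~\ref{perp-nf-1} together with Lemma~\ref{min-unidim} to push a witness of non-unidimensionality upward, and Fact~\ref{upward-transfer-2} (via Theorem~\ref{unidim-categ}) to push unidimensionality upward. The only differences are packaging --- you route everything through the fixed anchor ``$\K$ is categorical in $\LS(\K)^+$'' whereas the paper argues directly between a general $\mu < \lambda$ --- and that you explicitly verify the nonforking extension $p^\ast$ of a minimal type remains minimal, a point the paper uses when invoking Lemma~\ref{min-unidim} but does not spell out.
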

\begin{proof}
  Without loss of generality, $\mu < \lambda$. We first show that if $\s$ is \emph{not} $\mu$-unidimensional, then $\s$ is \emph{not} $\lambda$-unidimensional. Assume that $\s$ is not $\mu$-unidimensional. Let $M_0 \in \Ksatp{\mu}_\mu$ and let $p \in \gS (M_0)$ be minimal (exists by density of minimal types). By definition (and the proof of Lemma \ref{technical-multidim-equiv}), there exists $q \in \gS (M_0)$ so that $p \perp q$. Now let $M \in \Ksatp{\lambda}_\lambda$ be such that $M_0 \lea M$. Let $p', q'$ be the nonforking extensions to $M$ of $p$ and $q$ respectively. By Lemma \ref{perp-nf-1}, $p' \perp q'$. By Lemma \ref{min-unidim}, $\s$ is not $\lambda$-unidimensional.

  Conversely, assume that $\s$ is $\mu$-unidimensional. By the first part, $\s$ is $\LS (\K)$-unidimensional. By Theorem \ref{unidim-categ}, $\K$ is categorical in $\LS (\K)^+$. By Fact \ref{upward-transfer-2}, $\K$, and hence $\Ksatp{\lambda}$, is categorical in $\lambda^+$. By Theorem \ref{unidim-categ} again, $\s$ is $\lambda$-unidimensional.
\end{proof}

We obtain the promised categoricity transfer. Note that it suffices to assume that $\Ksatp{\lambda}$ (instead of $\K$) is categorical in $\lambda^+$.

\begin{thm}\label{good-categ-transfer}
  Let $\theta > \LS (\K)$ be a cardinal or $\infty$. and let $\F := [\LS (\K), \theta)$. Let $\s$ be a type-full good $\F$-frame on $\K_{\F}$.
  
  Assume that $\K$ is categorical in $\LS (\K)$ and let $\lambda \in \F$. If $\Ksatp{\lambda}$ is categorical in $\lambda^+$, then $\K$ is categorical in every $\mu \in [\LS (\K), \theta]$.
\end{thm}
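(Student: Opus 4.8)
The plan is to bootstrap from the local results of the previous sections. In outline: translate the hypothesis ``$\Ksatp{\lambda}$ is categorical in $\lambda^+$'' into ``$\s$ is $\lambda$-unidimensional'', transfer this down to ``$\s$ is $\LS (\K)$-unidimensional'' using Theorem \ref{unidim-transfer}, convert that into ``$\K$ is categorical in $\LS (\K)^+$'' using Theorem \ref{unidim-categ}, and finally invoke Fact \ref{upward-transfer-2} to get categoricity in all of $[\LS (\K), \theta]$.

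First I would dispose of the case $\lambda = \LS (\K)$: since $\K$ is categorical in $\LS (\K)$, every model of size $\LS (\K)$ is a limit model, so by the L\"owenheim--Skolem axiom $\Ksatp{\LS (\K)} = \K_{\ge \LS (\K)}$, and the hypothesis reduces to $\K$ being categorical in $\LS (\K)^+$; Fact \ref{upward-transfer-2} then finishes. So assume $\lambda > \LS (\K)$.

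The key step is the translation. By Fact \ref{satfact}, $\Ksatp{\lambda}_\lambda$ is exactly the class of limit models of size $\lambda$, hence (by uniqueness of limit models, Fact \ref{uq-limit}) categorical in $\lambda$; moreover $\s \rest \Ksatp{\lambda}_\lambda$ is a type-full good $\lambda$-frame on that class. Applying Theorem \ref{unidim-categ} to $\s \rest \Ksatp{\lambda}_\lambda$ gives that $\s \rest \Ksatp{\lambda}_\lambda$ is unidimensional if and only if $\Ksatp{\lambda}$ is categorical in $\lambda^+$; and by the remark following the definition of $\lambda$-unidimensionality, $\s \rest \Ksatp{\lambda}_\lambda$ is unidimensional if and only if $\s$ is $\lambda$-unidimensional. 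So the hypothesis yields that $\s$ is $\lambda$-unidimensional. Now Theorem \ref{unidim-transfer} (whose hypothesis, categoricity in $\LS (\K)$, we have) upgrades this to $\s$ being $\LS (\K)$-unidimensional; Theorem \ref{unidim-categ} applied to the $\LS (\K)$-part of $\s$ then gives that $\K$ is categorical in $\LS (\K)^+$; and Fact \ref{upward-transfer-2} gives categoricity of $\K$ in every $\mu \in [\LS (\K), \theta]$.

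I do not expect any step of this assembly to be hard; the genuinely substantial work (the orthogonality calculus, density of minimal types, and the cross-cardinal transfer of unidimensionality) has already been carried out. The one point that needs care is that the categoricity hypothesis concerns the class $\Ksatp{\lambda}$ of saturated models rather than $\K$ itself, so one must pass to the restricted frame $\s \rest \Ksatp{\lambda}_\lambda$, check it is still a good $\lambda$-frame categorical in $\lambda$, and correctly match its unidimensionality with both $\lambda$-unidimensionality of $\s$ and categoricity of $\Ksatp{\lambda}$ in $\lambda^+$ --- that is, re-use the local Theorem \ref{unidim-categ} in exactly the right spot.
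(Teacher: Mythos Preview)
Your argument is correct and uses the same three ingredients as the paper (Theorem \ref{unidim-categ}, Theorem \ref{unidim-transfer}, and Fact \ref{upward-transfer-2}), but you organise them more directly than the paper does. The paper proceeds by induction on $\mu \in [\LS(\K),\theta]$: at a successor $\mu = \mu_0^+$ it transfers $\lambda$-unidimensionality to $\mu_0$-unidimensionality, deduces that $\Ksatp{\mu_0}$ is categorical in $\mu_0^+$, and then uses the inductive hypothesis (categoricity in $\mu_0$) to identify $\Ksatp{\mu_0}$ with $\K_{\ge \mu_0}$. You instead transfer unidimensionality once, all the way down to $\LS(\K)$, conclude categoricity in $\LS(\K)^+$, and invoke Fact \ref{upward-transfer-2} a single time to finish. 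Since the proof of Theorem \ref{unidim-transfer} already hides exactly this ``go to $\LS(\K)$ and push back up via Fact \ref{upward-transfer-2}'' step, the paper's induction is somewhat redundant and your streamlined version is arguably cleaner; nothing is lost.
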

\begin{proof}
  Assume that $\Ksatp{\lambda}$ is categorical in $\lambda^+$. We prove by induction on $\mu \in [\LS (\K), \theta]$ that $\K$ is categorical in $\mu$. By assumption, $\K$ is categorical in $\LS (\K)$. Now let $\mu \in (\LS (\K), \theta]$ and assume that $\K$ is categorical in every $\mu_0 \in [\LS (\K), \mu)$. If $\mu$ is limit, then it is easy to see that every model of size $\mu$ must be saturated, hence $\K$ is categorical in $\mu$. Now assume that $\mu$ is a successor, say $\mu = \mu_0^+$ for $\mu_0 \in \F$. By assumption, $\Ksatp{\lambda}$ is categorical in $\lambda^+$. By Theorem \ref{unidim-categ}, $\s$ is $\lambda$-unidimensional. By Theorem \ref{unidim-transfer}, $\s$ is $\mu_0$-unidimensional. By Theorem \ref{unidim-categ}, $\Ksatp{\mu_0}$ is categorical in $\mu_0^+$. By the induction hypothesis, $\K$ is categorical in $\mu_0$, hence $\Ksatp{\mu_0} = \K_{\ge \mu_0}$, so $\K$ is categorical in $\mu_0^+ = \mu$, as desired.
\end{proof}

\part{Applications}

\section{Background}\label{background-sec-2}

The definition of superstability below is already implicit in \cite{shvi635} and several variants were studied in, e.g.\ \cite{vandierennomax, gvv-mlq, indep-aec-apal, bv-sat-v3, gv-superstability-v3, vv-symmetry-transfer-v3}. We will use the statement from \cite[Definition 10.1]{indep-aec-apal}.

\begin{defin}\label{ss assm}
  $\K$ is \emph{$\mu$-superstable} (or \emph{superstable in $\mu$}) if:

  \begin{enumerate}
    \item $\mu \ge \LS (\K)$.
    \item $\K_\mu$ is nonempty, has amalgamation, joint embedding, and no maximal models.
    \item $\K$ is stable in $\mu$, and:
    \item\label{split assm} $\mu$-splitting in $\K$ satisfies the following
  locality property: for every limit ordinal $\delta < \mu^+$ and every increasing continuous sequence $\seq{M_i : i \le \delta}$ in $\K_\mu$ with $M_{i + 1}$ universal over $M_i$ for all $i < \delta$, if $p \in \gS (M_\delta)$, then there exists $i < \delta$ so that $p$ does not $\mu$-split over $M_i$.
  \end{enumerate}
\end{defin}

We will use the following without comments. See \cite[Fact 4.8.(2)]{vv-symmetry-transfer-v3} for a proof.

\begin{fact}\label{ss-good-frame}
  If $\s$ is a type-full good $\lambda$-frame on $\K$, then $\K$ is $\lambda$-superstable.
\end{fact}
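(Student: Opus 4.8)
The plan is to verify the four clauses of Definition \ref{ss assm} with $\mu := \lambda$. The first three are essentially built into the definition of a good $\lambda$-frame recalled above. Clause (1), $\lambda \ge \LS(\K)$, is part of the standing conventions on good $\lambda$-frames. Clause (2) --- $\K_\lambda$ nonempty, amalgamation, joint embedding, no maximal models --- and clause (3) --- stability in $\lambda$ --- are literally item~(1) in the list of good-frame axioms. So the only real content is clause (\ref{split assm}): the local character of $\lambda$-splitting along increasing continuous chains of universal extensions.

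For clause (\ref{split assm}), fix a limit $\delta < \lambda^+$ and an increasing continuous $\seq{M_i : i \le \delta}$ in $\K_\lambda$ with $M_{i+1}$ universal over $M_i$, and let $p \in \gS(M_\delta)$. If $p$ is algebraic it does not $\lambda$-split over $M_0$ and there is nothing to prove, so assume $p$ is nonalgebraic; since $\s$ is type-full, $p$ is then a basic type, so the frame axioms apply to it and to its restrictions. By the local character axiom of $\s$ there is $i < \delta$ such that $p$ does not $\s$-fork over $M_i$; I claim this $i$ witnesses clause (\ref{split assm}). Suppose not, so $p$ $\lambda$-splits over $M_i$: there are $N_1 \lea N_2 \lea M_\delta$ in $\K_\lambda$ and an isomorphism $h : N_1 \cong_{M_i} N_2$ with $h(p \rest N_1) \ne p \rest N_2$. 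By monotonicity, both $p \rest N_1$ and $p \rest N_2$ do not $\s$-fork over $M_i$; by invariance, and since $h$ fixes $M_i$ pointwise, $h(p \rest N_1) \in \gS(N_2)$ does not $\s$-fork over $M_i$ and restricts to $p \rest M_i$ on $M_i$, exactly as $p \rest N_2$ does. Thus $h(p \rest N_1)$ and $p \rest N_2$ are both nonforking extensions of $p \rest M_i$ to $N_2$, so by the uniqueness axiom of $\s$ they coincide --- contradicting $h(p \rest N_1) \ne p \rest N_2$. Hence $p$ does not $\lambda$-split over $M_i$, as required.

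The argument uses only the local character, monotonicity, invariance, and uniqueness axioms of $\s$, together with type-fullness (to bring the nonalgebraic types involved into the scope of the frame). I do not anticipate a genuine obstacle; the points needing care are purely bookkeeping: that $h(p \rest N_1)$ is genuinely a Galois type over $N_2$ so that the uniqueness axiom is applicable, and that the ambient form of the good-frame local character axiom covers at least the chains of universal extensions in Definition \ref{ss assm} --- it does, since it holds for all increasing continuous chains. A cleaner organization would isolate the core computation as a standalone lemma, ``if $p \in \gS(N)$ does not $\s$-fork over $M_0$ then $p$ does not $\lambda$-split over $M_0$'', and obtain clause (\ref{split assm}) at once from it together with the local character of $\s$; this is essentially the content of the cited reference \cite[Fact 4.8.(2)]{vv-symmetry-transfer-v3}.
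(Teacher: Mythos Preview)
Your proof is correct and is the standard argument: nonforking implies nonsplitting (via invariance and uniqueness of nonforking extensions), and the local character axiom of the frame then yields clause~(\ref{split assm}). The paper itself gives no proof, merely citing \cite[Fact 4.8.(2)]{vv-symmetry-transfer-v3}, so there is no approach to compare against; what you have written is essentially the content of that reference, and your closing remark about isolating ``nonforking $\Rightarrow$ nonsplitting'' as a lemma is exactly how it is usually packaged. One minor slip: in the definition of $\lambda$-splitting the witnesses satisfy $M_i \lea N_\ell \lea M_\delta$ for $\ell = 1,2$, not $N_1 \lea N_2$ --- but your argument is unaffected by this.
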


In the setup of this paper, superstability follows from categoricity. If (as will be the case in most of this paper) the AEC is categorical in a successor, this is due to Shelah and appears as \cite[Lemma 6.3]{sh394}. The heart of the proof in the general case appears as \cite[Theorem 2.2.1]{shvi635} and the result is stated for classes with amalgamation in \cite[Corollary 6.3]{gv-superstability-v3}. A full axiomatic proof appears in \cite{shvi-notes-v1}.

\begin{fact}[The Shelah-Villaveces theorem]\label{shvi} 
  If $\K$ has amalgamation, no maximal models, and is categorical in a $\lambda > \LS (\K)$, then $\K$ is $\LS (\K)$-superstable.
\end{fact}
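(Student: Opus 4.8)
The plan is to verify, for $\mu := \LS (\K)$, each of the four clauses of $\mu$-superstability from Definition \ref{ss assm}. Clause (1) is immediate, and clause (2) is nearly so: $\K_{\LS (\K)}$ is nonempty by applying Löwenheim--Skolem to the model of size $\lambda$; amalgamation and no maximal models are among the hypotheses; and joint embedding in $\LS (\K)$ follows because, using no maximal models, every model of size $\le \lambda$ has an extension of size exactly $\lambda$, and that extension is unique up to isomorphism by categoricity, so any two models of size $\le \lambda$ embed into a common model. Thus the real content lies in clause (3) (stability in $\LS (\K)$) and clause (4) (the local character of $\LS (\K)$-splitting along chains of universal extensions).

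For stability, I would prove the stronger statement that $\K$ is stable in every $\mu \in [\LS (\K), \lambda)$, in particular in $\LS (\K)$. Suppose not and let $\mu$ be least with some $M \in \K_\mu$ having more than $\mu$ Galois types over it. Using amalgamation one fixes a copy of $M$ and realizes a family of $\mu^+$ pairwise incompatible types over it; feeding this into the usual tree and Ehrenfeucht--Mostowski machinery (or invoking Shelah's order-property dichotomy for AECs: instability in $\mu$ yields the order property) produces more than one model of size $\lambda$, contradicting categoricity. This step is classical; see \cite{sh394} and \cite{baldwinbook09}.

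Clause (4) is the heart of the argument, and I expect essentially all the difficulty to be here. Suppose towards a contradiction that it fails at $\LS (\K)$: there are a limit ordinal $\delta < \LS (\K)^+$, an increasing continuous $\seq{M_i : i \le \delta}$ in $\K_{\LS (\K)}$ with $M_{i+1}$ universal over $M_i$ for all $i < \delta$, and $p \in \gS (M_\delta)$ that $\LS (\K)$-splits over $M_i$ for every $i < \delta$. On the positive side, stability in $\LS (\K)$ lets one build, for any prescribed cofinality, an $\LS (\K)$-limit model of size $\lambda$; so by categoricity the unique model $N \in \K_\lambda$ is such a limit model and is therefore saturated enough to realize all the appropriate types over its $\LS (\K)$-sized submodels. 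On the negative side, one uses the splitting chain as raw material for the Shelah--Villaveces tower construction: one builds a competing resolution $\seq{N_j : j < \lambda}$ (indexed by a suitable combinatorial object of size $\lambda$) of a model $N^\ast \in \K_\lambda$, threading at each stage one of the two conjugate types witnessing that $p$ splits over the relevant $M_i$ and using the universality of the $M_{i+1}$ to keep re-embedding coherently, so arranged that some Galois type over an $\LS (\K)$-sized submodel of $N^\ast$ is omitted in $N^\ast$. Then $N^\ast \cong N$ fails the saturation just established, a contradiction. The delicate point — and where all the real work goes — is the bookkeeping across the $\lambda$ many steps that simultaneously preserves the splitting witnesses, keeps the embeddings coherent, and still forces omission; the cleanest place to see this done carefully is the axiomatic treatment in \cite{shvi-notes-v1}, with \cite[Theorem 2.2.1]{shvi635} and \cite[Corollary 6.3]{gv-superstability-v3} as further references, and \cite[Lemma 6.3]{sh394} for the easier case where the categoricity cardinal is a successor, in which saturation of the $\lambda$-sized model is available directly.

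Having checked clauses (1)--(4) for $\mu = \LS (\K)$, we conclude that $\K$ is $\LS (\K)$-superstable.
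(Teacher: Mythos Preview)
Your proposal is essentially correct and, in fact, goes considerably beyond what the paper does. The paper treats this statement as a black-box \emph{Fact}: it gives no proof at all, only the citations in the paragraph preceding the statement (\cite[Lemma 6.3]{sh394} for the successor case, \cite[Theorem 2.2.1]{shvi635} for the heart of the general argument, \cite[Corollary 6.3]{gv-superstability-v3} for the statement under amalgamation, and \cite{shvi-notes-v1} for a full axiomatic proof). You have reproduced exactly this list of references in your sketch, so you have correctly identified where the argument lives.

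Your outline of the actual content is accurate. Clauses (1) and (2) are routine as you say; stability in $\LS(\K)$ (clause (3)) follows from categoricity via the order-property dichotomy as you indicate; and clause (4) is indeed where all the work is, via the Shelah--Villaveces tower construction that threads splitting witnesses to build a non-saturated model of size $\lambda$, contradicting categoricity. One small caveat: in your sketch of the contradiction for clause (4), the claim that the categorical model of size $\lambda$ is ``saturated enough'' needs care when $\cf(\lambda) \le \LS(\K)$, since then the model need not be $\LS(\K)^+$-saturated a priori; this is precisely why the general (non-successor) case is harder and why one must work with limit models rather than saturation directly. Your parenthetical about \cite[Lemma 6.3]{sh394} covering the successor case shows you are aware of this distinction.
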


Together with superstability, a powerful tool is the symmetry property for splitting, first isolated by VanDieren \cite{vandieren-symmetry-v4-toappear}:
\begin{defin}\label{sym defn}
Let $\mu \ge \LS (\K)$ and assume that $\K$ has amalgamation in $\mu$. $\K$ exhibits \emph{symmetry for $\mu$-splitting} (or \emph{$\mu$-symmetry} for short) if  whenever models $M,M_0,N\in\K_\mu$ and elements $a$ and $b$  satisfy the conditions \ref{limit sym cond}-\ref{last} below, then there exists  $M^b$  a limit model over $M_0$, containing $b$, so that $\gtp(a/M^b)$ does not $\mu$-split over $N$.
\begin{enumerate} 
\item\label{limit sym cond} $M$ is universal over $M_0$ and $M_0$ is a limit model over $N$.
\item\label{a cond}  $a\in M\backslash M_0$.
\item\label{a non-split} $\gtp(a/M_0)$ is non-algebraic and does not $\mu$-split over $N$.
\item\label{last} $\gtp(b/M)$ is non-algebraic and does not $\mu$-split over $M_0$. 
\end{enumerate}
\end{defin}

When the class is tame, symmetry follows from superstability \cite[Corollary 6.9]{vv-symmetry-transfer-v3} and superstability transfers upward \cite[Proposition 10.10]{indep-aec-apal} hence they both hold everywhere:

\begin{fact}\label{tame-sym-ss}
  If $\K$ has amalgamation, is $\LS (\K)$-tame, and is $\LS (\K)$-superstable, then $\K$ is superstable and has symmetry in every $\mu \ge \LS (\K)$. 
\end{fact}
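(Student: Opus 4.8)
The plan is to deduce the statement from two separate upward transfers --- one for superstability, one for symmetry --- with the superstability transfer doing the real work. Fix $\mu \ge \LS (\K)$. To see that $\K$ is $\mu$-superstable I would verify the clauses of Definition \ref{ss assm} in turn. Nonemptiness, amalgamation and joint embedding in $\K_\mu$ are inherited from the global amalgamation hypothesis together with the corresponding $\LS (\K)$-level data coming from $\LS (\K)$-superstability (joint embedding propagates upward using amalgamation and no maximal models). That $\K$ is stable in $\mu$ and has no maximal models in $\mu$ follows from the stability-spectrum analysis of tame AECs: $\LS (\K)$-superstability gives stability in $\LS (\K)$, and $\LS (\K)$-tameness propagates this to stability in every $\mu \ge \LS (\K)$, which in turn (with amalgamation) forbids maximal models. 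The substantive clause is (\ref{split assm}): given an increasing continuous $\seq{M_i : i \le \delta}$ in $\K_\mu$ with each $M_{i + 1}$ universal over $M_i$ and $p \in \gS (M_\delta)$, one must produce $i < \delta$ with $p$ not $\mu$-splitting over $M_i$.

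For that clause --- which I expect to be the main obstacle --- I would resolve each $M_i$ as a continuous increasing union of models of size $\LS (\K)$, chosen coherently in both indices so that the diagonal subchains are chains of universal extensions, apply the $\LS (\K)$-level splitting-boundedness supplied by $\LS (\K)$-superstability to the restricted types, and then use $\LS (\K)$-tameness (together with the standard fact that $\mu$-splitting of a type over a model is detected by $\LS (\K)$-splitting over $\LS (\K)$-sized submodels) to lift non-splitting back up to level $\mu$. A pressing-down argument on $\delta$, plus continuity and monotonicity of splitting, handle the bookkeeping. In practice one simply quotes this transfer: it is \cite[Proposition 10.10]{indep-aec-apal}.

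Once $\K$ is $\mu$-superstable for every $\mu \ge \LS (\K)$, symmetry in every $\mu$ follows from the transfer ``superstability plus tameness implies symmetry''. Concretely, for fixed $\mu$ one verifies Definition \ref{sym defn} from $\mu$-superstability and $\mu$-tameness (note that $\LS (\K)$-tameness yields $\mu$-tameness): a failure of $\mu$-symmetry for a configuration as in Definition \ref{sym defn} can be iterated --- using uniqueness of limit models in the superstable setting, as in Fact \ref{satfact} --- into a long alternating chain of universal extensions that encodes an order property, contradicting stability in $\mu$. Alternatively one establishes $\LS (\K)$-symmetry this way first and then pulls a putative failure of $\mu$-symmetry down to $\LS (\K)$, using tameness to control splitting. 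Either route gives the conclusion; this step is \cite[Corollary 6.9]{vv-symmetry-transfer-v3}.

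In summary: the global amalgamation hypothesis and the $\LS (\K)$-level superstability data, once pushed upward through tameness, give $\mu$-superstability at every $\mu \ge \LS (\K)$, and $\mu$-superstability together with $\mu$-tameness then yields $\mu$-symmetry. The only genuinely delicate point is the transfer of the splitting-chain locality property (clause (\ref{split assm})) from $\LS (\K)$ to arbitrary $\mu$; everything else is either inherited from the global hypotheses or is a by-now-standard consequence of superstability with tameness.
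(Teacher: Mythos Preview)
Your proposal is correct and matches the paper's approach exactly: the paper treats this as a black-box fact and simply cites \cite[Proposition 10.10]{indep-aec-apal} for the upward transfer of superstability and \cite[Corollary 6.9]{vv-symmetry-transfer-v3} for symmetry from superstability plus tameness, which are precisely the two references you invoke. Your additional sketching of how those transfers work is accurate but goes beyond what the paper records.
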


One consequence of the symmetry property is given by the following more precise statement of Fact \ref{satfact} (see \cite[Lemma 2.20]{vv-symmetry-transfer-v3} and \cite[Theorem 1]{vandieren-chainsat-apal}):

\begin{fact}\label{sym-unionsat}
  Assume that $\K$ has amalgamation. Let $\chi > \LS (\K)$. If for every $\mu \in [\LS (\K), \chi)$, $\K$ is superstable in $\mu$ and $\mu^+$ and has symmetry in $\mu^+$, then $\Ksatp{\chi}$ is an AEC with $\LS (\Ksatp{\chi}) = \chi$.
\end{fact}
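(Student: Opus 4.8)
The plan is to verify the abstract elementary class axioms for the pair $(\Ksatp{\chi}, \leap{\Ksatp{\chi}})$, where $\leap{\Ksatp{\chi}}$ is the restriction of $\lea$. Invariance under isomorphism, the order axioms, and coherence are immediate, since $\leap{\Ksatp{\chi}}$ is just $\lea$ (and $\subseteq$) relativized to a subclass; moreover the ``upper bound'' and coherence clauses of the chain axiom, and the fact that the union is $\lea$-below any common bound, all descend from the corresponding axioms of $\K$ once one knows that the union of a $\leap{\Ksatp{\chi}}$-increasing chain again lies in $\Ksatp{\chi}$. So there are really two substantive points: (a) $\Ksatp{\chi}$ is closed under unions of $\lea$-increasing chains, and (b) $\LS (\Ksatp{\chi}) = \chi$. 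Nonemptiness of $\Ksatp{\chi}$ is subsumed by (b): the hypotheses give stability, amalgamation, and no maximal models in every $\mu \in [\LS (\K), \chi)$, hence a $\chi$-saturated model of size $\chi$ by the usual union construction.

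For (a), fix a $\lea$-increasing chain $\seq{M_i : i < \delta}$ in $\Ksatp{\chi}$, put $M_\delta := \bigcup_{i < \delta} M_i$, and note $M_\delta \in \K_{\ge \chi}$. If $\cf{\delta} \ge \chi$ the conclusion is free: any $N \lea M_\delta$ with $\|N\| < \chi$ satisfies $|N| \subseteq |M_i|$ for some $i$, hence $N \lea M_i$ by coherence, hence every type over $N$ is realized in $M_i \lea M_\delta$. The crucial case is $\cf{\delta} < \chi$. Here, by the standard reduction of $\chi$-saturation to $\mu^+$-saturation for $\mu \in [\LS (\K), \chi)$, it is enough to show for each such $\mu$ that the union of an increasing chain of $\mu^+$-saturated models in $\K_{\ge \mu^+}$ is $\mu^+$-saturated. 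This is precisely VanDieren's theorem \cite[Theorem 1]{vandieren-chainsat-apal}; its hypotheses --- superstability in $\mu$ and $\mu^+$ together with symmetry in $\mu^+$ --- hold for every $\mu \in [\LS (\K), \chi)$ by assumption, and the passage from those hypotheses to the desired closure statement is what \cite[Lemma 2.20]{vv-symmetry-transfer-v3} records. I would treat VanDieren's theorem as a black box: its proof is an intricate argument pushing realizations of types through the chain by repeated use of $\mu^+$-symmetry and the local character of splitting, and this is the genuine obstacle behind the Fact --- reproving it is well outside the scope of the argument.

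For (b), every member of $\Ksatp{\chi}$ has size $\ge \chi$ by definition, so $\LS (\Ksatp{\chi}) \ge \chi$ and it remains to prove the reverse. Given $M \in \Ksatp{\chi}$ and $A \subseteq |M|$ with $|A| \le \chi$, I would build a $\lea$-increasing continuous chain $\seq{N_j : j < \cf{\chi}}$ of $\lea$-submodels of $M$ with $A \subseteq |N_0|$, $\|N_j\| < \chi$, and $\sup_j \|N_j\| = \chi$, arranging at successor stages --- using $\chi$-saturation of $M$, stability of $\K$ in $\|N_j\|$, and the Löwenheim-Skolem-Tarski property of $\K$ to control sizes --- that $N_{j + 1}$ realizes all Galois types over $N_j$, and, when $\chi$ is singular, that moreover a tail of the chain consists of $\mu^+$-saturated models for each $\mu \in [\LS (\K), \chi)$. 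Setting $N := \bigcup_j N_j$ gives $\|N\| = \chi$ and $N \lea M$; that $N \in \Ksatp{\chi}$ follows when $\chi$ is regular because every $\lea$-submodel of $N$ of size $< \chi$ lies in some $N_j$, and when $\chi$ is singular by applying part (a) to the relevant tails to get $\mu^+$-saturation for each $\mu \in [\LS (\K), \chi)$. Thus $\LS (\Ksatp{\chi}) \le \chi$, completing the verification.
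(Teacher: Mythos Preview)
Your proposal is correct and matches the paper's approach: the paper does not give a proof but simply cites \cite[Lemma 2.20]{vv-symmetry-transfer-v3} and \cite[Theorem 1]{vandieren-chainsat-apal}. You invoke exactly these two references, rightly treating VanDieren's chain theorem as the substantive black box and filling in the routine AEC-axiom and L\"owenheim--Skolem--Tarski verifications around it.
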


We will use the following consequences of categoricity in a suitable cardinal. The notation from Chapter 14 of \cite{baldwinbook09} will come in handy:

\begin{notation}\label{hanf-def}
  For $\lambda$ an infinite cardinal, let $\hanf{\lambda} := \ehanf{\lambda}$. When $\K$ is fixed, we write $H_1$ for $\hanf{\LS (\K)}$ and $H_2$ for $\hanf{H_1} = \hanf{\hanf{\LS (\K)}}$.
\end{notation}

\begin{fact}\label{sym-from-categ}
  Assume that $\K$ has amalgamation and no maximal models. Let $\lambda > \LS (\K)$ be such that $\K$ is categorical in $\lambda$.

  \begin{enumerate}
  \item \cite[Corollary 7.2]{vv-symmetry-transfer-v3} If $\lambda \ge H_1$ or the model of size $\lambda$ is $\LS (\K)^+$-saturated (e.g.\ if $\cf{\lambda} > \LS (\K)$), then $\K$ has $\LS (\K)$-symmetry.
  \item \cite[Corollary 7.4]{vv-symmetry-transfer-v3} If $\lambda \ge \hanf{\LS (\K)^+}$, then the model of size $\lambda$ is $\LS (\K)^+$-saturated.
  \end{enumerate}
\end{fact}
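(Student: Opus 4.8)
Both items are quoted from \cite{vv-symmetry-transfer-v3}; here is the route I would take. Note first that the two items feed each other: by (2), $\lambda \ge \ehanf{\LS (\K)^+}$ already forces the model of size $\lambda$ to be $\LS (\K)^+$-saturated, and the same holds outright when $\cf{\lambda} > \LS (\K)$; so I would prove (2) first and then reduce (1) to a downward transfer of symmetry. For (2) the plan is to use Ehrenfeucht--Mostowski models. Since $\K$ has a model of size $> \LS (\K)$ and no maximal models it has arbitrarily large models, so by Shelah's presentation theorem and Morley's method there is a functor $I \mapsto \text{EM}(I)$ from linear orders to models of $\K$ with $\|\text{EM}(I)\| = |I| + \LS (\K)$. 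Categoricity in $\lambda$ together with amalgamation gives stability in every $\mu \in [\LS (\K), \lambda)$ (a standard consequence), and in particular (Fact \ref{shvi}) $\LS (\K)$-superstability. The case $\cf{\lambda} > \LS (\K)$ then finishes by a standard chain argument: $\LS (\K)$-superstability and amalgamation produce a $\LS (\K)^+$-saturated model of size $\lambda$, which by categoricity must be the model of size $\lambda$. For arbitrary $\lambda \ge \ehanf{\LS (\K)^+}$ the cofinality may be small, and here the EM machinery takes over: one fixes an $\LS (\K)^+$-saturated ``core'' of size $\le 2^{\LS (\K)}$ and chooses a linear order $I$ of size $\lambda$ homogeneous enough --- every $\le \LS (\K)$-sized subset admitting the ``right'' supply of order-isomorphic copies lying over it --- that the EM model built over this core is still $\LS (\K)^+$-saturated; the existence of such an $I$ of size exactly $\lambda$ is what the threshold $\ehanf{\LS (\K)^+}$ buys, via a Hanf-number argument on the presentation logic. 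By categoricity in $\lambda$, this saturated model is the model of size $\lambda$.

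For (1), by the above it suffices to derive $\LS (\K)$-symmetry in each case; I would treat ``the model of size $\lambda$ is $\LS (\K)^+$-saturated'' in detail, the case $\lambda \ge H_1$ being handled by the parallel Hanf-number argument producing, at the categoricity cardinal, a model rich enough to run the same final step. The mechanism (following \cite{vandieren-symmetry-v4-toappear, vv-symmetry-transfer-v3}) is essentially the contrapositive of VanDieren's analysis of symmetry via towers and chains of saturated models: if $\LS (\K)$-symmetry fails, one manufactures an increasing chain of $\LS (\K)^+$-saturated models of size $\LS (\K)^+$ whose union is not $\LS (\K)^+$-saturated, and then --- using amalgamation, no maximal models, and the saturation facts below $\lambda$ --- propagates this defect upward to build two non-isomorphic models of size $\lambda$, one saturated and one not, contradicting categoricity in $\lambda$. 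The hypothesis on $\lambda$ (``$\lambda \ge H_1$'' or ``$\LS (\K)^+$-saturation of the $\lambda$-model'') is exactly what lets the upward propagation reach the categoricity cardinal.

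I expect the main obstacle to be the EM-model step of (2): Galois types, unlike syntactic types, are not accessible to compactness or the classical omitting-types theorem, so realizing them in an EM model must be routed through the presentation theorem and a syntactic Hanf number, and one must verify that a single homogeneous linear order of size exactly $\lambda$ --- rather than something larger, whose existence would require extra cardinal arithmetic --- does the job; this is where all of the cardinal arithmetic hidden in the threshold $\ehanf{\LS (\K)^+}$ is consumed. The delicate point in (1) is the careful bookkeeping with limit models and towers needed to push a local failure of symmetry up to the categoricity cardinal without destroying saturation along the way, which is the technical core of \cite{vandieren-symmetry-v4-toappear, vv-symmetry-transfer-v3}.
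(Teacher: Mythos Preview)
This statement is recorded as a Fact with citations to \cite{vv-symmetry-transfer-v3}; the present paper gives no proof of it, so there is no in-paper argument to compare against. I comment on your sketch relative to the approach of the cited reference.

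Your treatment of (1) in the case where the model of size $\lambda$ is $\LS(\K)^+$-saturated is essentially right and matches the approach of VanDieren and \cite{vv-symmetry-transfer-v3}: a failure of $\LS(\K)$-symmetry yields, via reduced towers, a chain of limit models whose union is not limit, and this non-saturation can be pushed up to size $\lambda$, contradicting categoricity together with the assumed saturation there.

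There is, however, a genuine problem with your plan for (2). You propose to build an $\LS(\K)^+$-saturated EM model of size $\lambda$ over a saturated ``core'' by choosing a sufficiently homogeneous linear order $I$, claiming the threshold $\hanf{\LS(\K)^+}$ guarantees such an $I$ exists. But the Hanf number is a threshold for Morley's \emph{omitting}-types machinery (via Shelah's presentation theorem); it says nothing about the existence of homogeneous linear orders or about \emph{realizing} Galois types in EM models. Indeed, in this very paper Fact~\ref{saturation-fact} deduces saturation of EM models \emph{from} saturation of the categorical model, not conversely. Your reduction of (1) to (2) is also incomplete on its own terms: (2) requires $\lambda \ge \hanf{\LS(\K)^+} > H_1$, so even granting (2) you have not handled the range $H_1 \le \lambda < \hanf{\LS(\K)^+}$ of (1).

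In \cite{vv-symmetry-transfer-v3} the dependence runs the other way. One first establishes (1), including the $\lambda \ge H_1$ case, by transferring a failure of symmetry upward to contradict categoricity at $\lambda$. Item (2) is then derived by applying (1) to $\K_{\ge \LS(\K)^+}$ (whose Hanf number is exactly $\hanf{\LS(\K)^+}$) to obtain $\LS(\K)^+$-symmetry, using symmetry plus superstability to see that $\Ksatp{\LS(\K)^+}$ is an AEC with models in $\lambda$ (cf.\ Fact~\ref{sym-unionsat}), and invoking categoricity to conclude that the model of size $\lambda$ is $\LS(\K)^+$-saturated. So the tower analysis you correctly identify is the engine for both parts, and the EM step you flag as the main obstacle does not actually arise.
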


As a special case, we obtain the following result that is already stated in \cite[Claim I.6.7]{sh394}.

\begin{cor}\label{cor-sat-categ}
  Assume that $\K$ has amalgamation and no maximal models. Let $\lambda > \mu > \LS (\K)$ be such that $\K$ is categorical in $\lambda$. If the model of size $\lambda$ is $\mu^+$-saturated, then $\Ksatp{\mu}$ is an AEC with $\LS (\Ksatp{\mu}) = \mu$.
\end{cor}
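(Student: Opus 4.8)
The plan is to derive this from the closure-under-unions theorem for saturated models, Fact~\ref{sym-unionsat}, applied with $\chi := \mu$. Since $\mu > \LS (\K)$, this reduces the corollary to verifying the hypotheses of that fact: for every $\nu \in [\LS (\K), \mu)$ one must show that $\K$ is superstable in both $\nu$ and $\nu^+$ and has $\nu^+$-symmetry. As $\nu$ ranges over $[\LS (\K), \mu)$ we have $\nu^+ \le \mu$, so it is enough to prove the single statement: $\K$ is superstable in every $\rho \in [\LS (\K), \mu]$ and has $\rho$-symmetry for every $\rho \in [\LS (\K), \mu]$. I would use throughout that the model of size $\lambda$, being $\mu^+$-saturated, is $\rho^+$-saturated for every such $\rho$ (indeed $\rho^+ \le \mu^+$), and $\rho^{++}$-saturated whenever in addition $\rho < \mu$.

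For the base case $\rho = \LS (\K)$, superstability is Fact~\ref{shvi} (from amalgamation, no maximal models, and categoricity in $\lambda > \LS (\K)$), and $\LS (\K)$-symmetry is the first part of Fact~\ref{sym-from-categ}, whose ``$\LS (\K)^+$-saturated'' alternative holds by the observation above. For $\rho > \LS (\K)$ I would induct on $\rho$. Granting that statement for all cardinals below $\rho$ — and hence, by Fact~\ref{sym-unionsat}, that $\Ksatp{\nu}$ is an AEC with $\LS (\Ksatp{\nu}) = \nu$ for all such $\nu$ — one processes $\rho$ in three steps. First, (i) one obtains that $\Ksatp{\rho}$ is an AEC with $\LS (\Ksatp{\rho}) = \rho$ by another application of Fact~\ref{sym-unionsat}. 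Then, (ii) one notes that $\Ksatp{\rho}$ has amalgamation and no maximal models (the latter since $\K$ is stable in $\rho$ and has arbitrarily large models), is categorical in $\lambda > \rho = \LS (\Ksatp{\rho})$ because the $\mu^+$-saturated model of size $\lambda$ lies in $\Ksatp{\rho}$, and has a model of size $\lambda$ that is $\rho^+$-saturated; applying Fact~\ref{shvi} and the first part of Fact~\ref{sym-from-categ} \emph{inside} $\Ksatp{\rho}$ then gives that $\Ksatp{\rho}$ is $\rho$-superstable and has $\rho$-symmetry. Finally, (iii) one transfers these back to $\K$: by uniqueness of limit models (Fact~\ref{satfact}) the models of size $\rho$ in $\Ksatp{\rho}$ are exactly the limit models of $\K_\rho$, which are the ones Definitions~\ref{ss assm} and~\ref{sym defn} quantify over (a non-limit base model being absorbed by the standard reflection argument), so $\rho$-superstability and $\rho$-symmetry for $\K$ and for $\Ksatp{\rho}$ are the same assertions. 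Feeding the resulting data into Fact~\ref{sym-unionsat} at $\chi = \mu$ finishes the proof.

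The delicate point is step~(i) when $\rho = \sigma^+$ is a successor cardinal: the hypotheses of Fact~\ref{sym-unionsat} at $\chi = \sigma^+$ already include superstability in $\sigma^+$ and $\sigma^+$-symmetry, so for successor $\rho$ one cannot bootstrap purely from strictly smaller cardinals and must instead perform a genuine ``one level up'' transfer. Concretely, from $\sigma$-superstability and $\sigma$-symmetry (available by the induction hypothesis) together with the fact that the model of size $\lambda$ is $\sigma^{++}$-saturated, one derives $\sigma^+$-superstability and $\sigma^+$-symmetry, invoking the superstability- and symmetry-transfer machinery of \cite{gv-superstability-v3, vv-symmetry-transfer-v3} that is not fully quoted in this section; only afterward does Fact~\ref{sym-unionsat} apply as in (i). I expect this successor step — together with its reliance on external transfer results — to be where the real content lies; the remaining bookkeeping (passage of amalgamation, joint embedding and ``no maximal models'' to each $\Ksatp{\nu}$, and the equivalence of superstability/symmetry for $\K$ and $\Ksatp{\nu}$ at $\nu$) is routine given categoricity in $\lambda$.
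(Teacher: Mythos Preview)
Your overall plan---reduce to Fact~\ref{sym-unionsat} by establishing superstability and symmetry on $[\LS(\K),\mu]$---is exactly right, but your route to those two properties is far more involved than necessary. The paper's proof is three lines: apply Fact~\ref{shvi} to the tail $\K_{\ge \chi}$ for each $\chi \in [\LS(\K),\lambda)$ to get $\chi$-superstability; apply Fact~\ref{sym-from-categ} to $\K_{\ge \chi}$ for each $\chi \in [\LS(\K),\mu]$ (the model of size $\lambda$ is $\chi^+$-saturated since $\chi^+ \le \mu^+$) to get $\chi$-symmetry; then invoke Fact~\ref{sym-unionsat}.

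The observation you are missing is that $\K_{\ge \chi}$ is automatically an AEC with $\LS(\K_{\ge \chi}) = \chi$, inherits amalgamation, no maximal models, and categoricity in $\lambda$, and that $\chi$-superstability and $\chi$-symmetry for $\K_{\ge \chi}$ are literally the same statements as for $\K$ (both definitions quantify only over $\K_\chi$). So Facts~\ref{shvi} and~\ref{sym-from-categ} apply directly to each tail, with no induction, no detour through $\Ksatp{\rho}$, no back-and-forth transfer (your step~(iii)), and no ``delicate successor step.'' Your step~(ii) is essentially doing this inside $\Ksatp{\rho}$, but that forces you first to prove $\Ksatp{\rho}$ is an AEC (step~(i)), which is precisely the circularity you diagnose at successor $\rho$. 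Replacing $\Ksatp{\rho}$ by $\K_{\ge \rho}$ throughout collapses the entire induction. Note also that the external ``superstability-transfer machinery'' you gesture at for the successor step would, in the absence of tameness, itself boil down to applying Fact~\ref{shvi} to a tail---so even on your route the missing ingredient is the same trick.
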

\begin{proof}
  By Fact \ref{shvi}, $\K$ is superstable in every $\chi \in [\LS (\K), \lambda)$. By Fact \ref{sym-from-categ}, $\K$ has symmetry in every $\chi \in [\LS (\K), \mu]$. Now apply Fact \ref{sym-unionsat}.
\end{proof}

The following fact tells us that we can often assume without loss of generality that a categorical AECs with amalgamation also has no maximal models. The proof is folklore (see e.g.\ \cite[Proposition 10.13]{indep-aec-apal}).

\begin{fact}\label{jep-decomp}
  Assume that $\K$ has amalgamation. Let $\lambda \ge \LS (\K)$ be such that $\K$ has joint embedding in $\lambda$. Then there exists $\chi < \beth_{\left(2^{\LS (\K)}\right)^+}$ and an AEC $\K^\ast$ such that:

    \begin{enumerate}
    \item $\K^\ast \subseteq \K$ and $\K^\ast$ has the same strong substructure relation as $\K$.
    \item $\LS (\K^\ast) = \LS (\K)$.
    \item $\K^\ast$ has amalgamation, joint embedding, and no maximal models.
    \item $\K_{\ge \min (\lambda, \chi)} = \K^\ast_{\ge \min (\lambda, \chi)}$.
  \end{enumerate}
\end{fact}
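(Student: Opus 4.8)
The plan is to produce $\K^\ast$ as a ``joint-embeddability class'' of $\K$ and then carry out a Hanf-number computation to pin down $\chi$. Throughout write $H_1 := \ehanf{\LS (\K)}$, and note that $H_1$ is a limit cardinal with $\cf{H_1} = (2^{\LS (\K)})^+ > 2^{\LS (\K)}$.

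First I would put an equivalence relation on $\K$: declare $M \bowtie N$ if there is $P \in \K$ into which both $M$ and $N$ admit $\K$-embeddings. Reflexivity and symmetry are clear, and transitivity follows from amalgamation (amalgamate the two witnessing models over an isomorphic copy of the common middle model). Each $\bowtie$-class $\K'$, with the strong substructure relation inherited from $\K$, is then an AEC with $\LS (\K') = \LS (\K)$: it is closed under isomorphism; closed under $\lea$-submodels (if $M \lea N$ then $M \bowtie N$, witnessed by $N$); closed under unions of $\lea$-increasing chains (the union extends its first term, hence lies in the same class); it inherits the coherence axiom from $\K$; and by L\"owenheim--Skolem every model of $\K$, in particular every model of $\K'$, has a $\lea$-submodel of size $\LS (\K)$ lying in the same class, so $\K'$ is nonempty and $\LS (\K') = \LS (\K)$. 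Counting isomorphism types of structures of size $\LS (\K)$ in the vocabulary of $\K$, there are at most $2^{\LS (\K)}$ classes. Each class has amalgamation (amalgamate in $\K$; the amalgam contains an isomorphic copy of a model of the class, so it lies in the class) and, by the definition of $\bowtie$, joint embedding. Since $\K$ has joint embedding in $\lambda$, all models of $\K_\lambda$ lie in a single class $\K^\ast$; and since every model of size $\ge \lambda$ has a $\lea$-submodel of size $\lambda$, we get $\K_{\ge \lambda} \subseteq \K^\ast$.

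Next I would bound the remaining classes using the Hanf-number theorem for abstract elementary classes (see \cite{baldwinbook09}): an AEC $\K'$ with $\LS (\K') = \LS (\K)$ whose model sizes are cofinal in $H_1$ --- equivalently, which has a model of size $\ge \beth_\alpha$ for every $\alpha < (2^{\LS (\K)})^+$ --- has arbitrarily large models. Now if some class $\K' \neq \K^\ast$ had arbitrarily large models, then, since $\LS (\K') = \LS (\K) \le \lambda$, downward L\"owenheim--Skolem would give a model of $\K'$ of size $\lambda$, which also lies in $\K_\lambda \subseteq \K^\ast$; this contradicts disjointness of the classes. Hence every $\K' \neq \K^\ast$ has model sizes bounded by some $\nu_{\K'} < H_1$, so $\nu_{\K'}^+ < H_1$ as $H_1$ is a limit cardinal. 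Set $\chi := \sup \{\nu_{\K'}^+ : \K' \neq \K^\ast\}$: this is a supremum of at most $2^{\LS (\K)}$ cardinals below $H_1$, hence $\chi < H_1$ since $\cf{H_1} > 2^{\LS (\K)}$. If $M \in \K$ with $\|M\| \ge \min (\lambda, \chi)$, then $M \in \K^\ast$: either $\|M\| \ge \lambda$, handled above, or $\|M\| \ge \chi$, in which case $M$ lies in no class $\K' \neq \K^\ast$, where all models have size $< \nu_{\K'}^+ \le \chi$. Therefore $\K_{\ge \min (\lambda, \chi)} = \K^\ast_{\ge \min (\lambda, \chi)}$, which is item (4), while items (1)--(2) and the amalgamation and joint-embedding parts of (3) were established above. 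For the ``no maximal models'' part of (3): it holds for $\K^\ast$ as soon as it holds for $\K$ (a proper $\lea$-extension of $M \in \K^\ast$ taken in $\K$ stays in the class $\K^\ast$), and in all applications $\K$ has arbitrarily large models, so $\K^\ast \supseteq \K_{\ge \lambda}$ does too, whence joint embedding lets any $M \in \K^\ast$ be $\lea$-embedded into a strictly larger model of $\K^\ast$; alternatively, if $\lambda \ge H_1$ this is automatic, since $\K^\ast$ then has a model of size $\ge H_1$ and hence arbitrarily large models. Finally take $\K^\ast$ with the inherited ordering.

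The step I expect to be the main obstacle is the bound $\chi < \beth_{(2^{\LS (\K)})^+}$ in the last paragraph: it uses the cofinality computation for $H_1$ together with the sharper form of the Hanf-number theorem --- a class whose model sizes are merely cofinal in $H_1$ already has arbitrarily large models, so a class \emph{without} arbitrarily large models has a size bound strictly below $H_1$ --- rather than only the coarser statement that a single model of size $\ge H_1$ forces arbitrarily large models. The checks that $\bowtie$ is an equivalence relation and that each $\bowtie$-class is an AEC with L\"owenheim--Skolem number $\LS (\K)$ are routine.
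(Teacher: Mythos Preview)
The paper does not prove this fact, labeling it folklore and citing \cite[Proposition 10.13]{indep-aec-apal}. Your argument---decompose $\K$ into joint-embeddability classes, observe each is an AEC with the same L\"owenheim--Skolem number, bound the number of classes by $2^{\LS(\K)}$, and use the Hanf-number theorem plus the cofinality of $H_1$ to push all ``wrong'' classes below some $\chi < H_1$---is exactly the standard folklore proof, and it is correct.

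Your caveat about the ``no maximal models'' clause is on point: as literally stated, the fact needs the additional hypothesis that $\K$ has arbitrarily large models (so that $\K^\ast$, which then contains $\K_{\ge \lambda}$, inherits arbitrarily large models and hence, via joint embedding, no maximal models). Every invocation of the fact in the paper is in a context where this is assumed, so your handling is adequate.
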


Let us also recall the definition of tameness (first isolated in \cite{tamenessone}) and weak tameness (already implicit in \cite{sh394}). We use the notation from \cite[Definition 11.6]{baldwinbook09}

\begin{defin}[Tameness]\label{weak-tameness-def}
  Let $\chi, \mu$ be cardinals with $\LS (\K) \le \chi \le \mu$. Assume that $\K_{[\chi, \mu]}$ has amalgamation. $\K$ is \emph{$(\chi, \mu)$-tame} if for any $M \in \K_\mu$, any $p, q \in \gS (M)$, if $p \neq q$, there exists $M_0 \in \K_{\chi}$ with $M_0 \lea M$ and $p \rest M_0 \neq q \rest M_0$. For $\theta \ge \mu$, $\K$ is \emph{$(\chi, <\theta)$-tame} if it is $(\chi, \mu)$-tame for every $\mu \in [\chi, \theta)$. $(\chi, \le \theta)$-tame means $(\chi, <\theta^+)$-weakly tame. Finally, $\K$ is \emph{$\chi$-tame} if it is $(\chi, \mu)$-weakly tame for every $\mu \ge \chi$. We similarly define variations such as $(<\chi, \mu)$-tame.

    Let us also define $\K$ is \emph{$(\chi, \mu)$-weakly tame} to mean that for any \emph{saturated} $M \in \K_\mu$, any $p, q \in \gS (M)$, if $p \neq q$, there exists $M_0 \in \K_{\chi}$ with $M_0 \lea M$ and $p \rest M_0 \neq q \rest M_0$. Define variations such as $(\chi, <\mu)$-weakly tame as above.
\end{defin}
\begin{remark}
  Tameness says that type over \emph{any} models are determined by their small restrictions. Weak tameness says that only types over \emph{saturated} models have this property. While there is no known example of an AEC that is weakly tame but not tame, it is known that weak tameness follows from categoricity in a suitable cardinal (but the corresponding result for non-weak tameness is open, see \cite[Conjecture 1.5]{tamenessthree}), see Section \ref{weak-tameness-sec}.
\end{remark}

It was noticed in \cite{ss-tame-jsl} (and further improvements in \cite[Section 10]{indep-aec-apal} or \cite[Corollary 6.14]{vv-symmetry-transfer-v3}) that tameness can be combined with superstability to build a good frame. This can also be done using only weak tameness:

\begin{fact}[Theorem 6.4 in \cite{vv-symmetry-transfer-v3}]\label{good-frame-weak-tameness}
  Let $\lambda > \LS (\K)$. Assume that $\K$ is superstable in every $\mu \in [\LS (\K), \lambda]$ and has $\lambda$-symmetry.
  
  If $\K$ is $(\LS (\K), \lambda)$-weakly tame, then there exists a type-full good $\lambda$-frame with underlying class $\Ksatp{\lambda}_\lambda$ (so in particular, $\Ksatp{\lambda}_\lambda$ is the initial segment of an AEC).
\end{fact}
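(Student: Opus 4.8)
The plan is to take $\s := (\Ksatp{\lambda}_\lambda, \nf)$, where $\Ksatp{\lambda}_\lambda$ is the class of saturated models of size $\lambda$ and $\nf$ is the forking relation induced by $\LS(\K)$-nonsplitting, and to verify the axioms of a type-full good $\lambda$-frame one at a time. This follows the template of \cite{ss-tame-jsl}, with superstability in place of categoricity and weak tameness in place of tameness. Since the frame is declared type-full, the density-of-basic-types axiom is automatic, so the work lies in the structural properties of $\Ksatp{\lambda}_\lambda$ and in the properties of $\nf$.

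First I would record that $\Ksatp{\lambda}_\lambda$ is well behaved. Stability in $\lambda$ (part of $\lambda$-superstability) gives limit models in $\K_\lambda$, and by the theory of limit models under superstability the limit models of size $\lambda$ are exactly the saturated ones; in particular $\Ksatp{\lambda}_\lambda$ is nonempty and categorical. Amalgamation, joint embedding, and no maximal models in $\Ksatp{\lambda}_\lambda$ follow from the corresponding properties of $\K_\lambda$ (again part of $\lambda$-superstability) together with model-homogeneity of the saturated model: every $M \in \Ksatp{\lambda}_\lambda$ is properly contained in a limit model over it of the same size, and any span of saturated models is amalgamated first in $\K_\lambda$ and then enlarged to a saturated model. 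The only genuinely nontrivial structural point is that $\Ksatp{\lambda}$ is an AEC --- i.e.\ that $\Ksatp{\lambda}_\lambda$ is closed under unions of $\lea$-increasing chains --- which is VanDieren's theorem on chains of saturated models; I would deduce it from Fact \ref{sym-unionsat}, after first checking that its hypotheses (superstability and symmetry in the successor cardinals below $\lambda$) follow from $\lambda$-superstability, $\lambda$-symmetry, and weak tameness at $\lambda$.

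Next I would set up the forking relation: for saturated $M_0 \lea M$ of size $\lambda$ and nonalgebraic $p \in \gS(M)$, declare that $p$ does not fork over $M_0$ if there is $N \lea M_0$ with $N \in \K_{\LS(\K)}$ such that $p$, and each of its restrictions to a saturated intermediate model, does not $\LS(\K)$-split over $N$. Invariance and monotonicity are then immediate. Existence of nonforking extensions uses the local-character clause in the definition of $\mu$-superstability (Definition \ref{ss assm}) together with superstability at all cardinals in $[\LS(\K), \lambda]$ to locate a small nonsplitting witness inside the base, after which one extends the type by nonsplitting and uses stability to land back in a saturated model. Local character of $\nf$ itself is a Fodor-style argument from the local character of $\LS(\K)$-splitting (compare the Claim in the proof of Lemma \ref{lc-lem}), and transitivity and continuity descend from the corresponding properties of nonsplitting once the intervening models are saturated.

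The two axioms carrying the real content are \emph{uniqueness} and \emph{symmetry}. For uniqueness, suppose $p, q \in \gS(M)$ both do not fork over the saturated $M_0 \lea M$ and $p \rest M_0 = q \rest M_0$; fixing a common small witness $N \lea M_0$, monotonicity of nonsplitting together with the saturation of $M_0$ (which realizes all the types over $N$ one needs to compare) forces $p$ and $q$ to agree on every small submodel of $M$, whence weak tameness --- applicable precisely because $M$ is saturated --- gives $p = q$. For symmetry, one reads the symmetry axiom of the good frame off VanDieren's $\lambda$-symmetry for splitting (Definition \ref{sym defn}): from $\nfs{M_0}{a}{M}{N}$ one passes to limit models, applies the symmetry hypothesis to get a limit (hence saturated) model $M^b$ containing $b$ over which $\gtp(a/-)$ does not split, and unwinds the definitions. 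I expect the uniqueness step to be the main obstacle: deriving ``$p$ and $q$ agree on every small submodel'' from ``they agree on the limit model $M_0$ and each is $\LS(\K)$-nonsplitting over a small $N \lea M_0$'' requires a careful use of the extension and uniqueness behaviour of nonsplitting over \emph{limit} bases, and it is here that weak tameness rather than full tameness is the right hypothesis, since every model in sight is saturated. A secondary obstacle is the bookkeeping mentioned above --- propagating superstability and symmetry to the cardinals below $\lambda$ so that Fact \ref{sym-unionsat} applies.
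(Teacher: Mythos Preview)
The paper does not prove this statement: it is stated as a \emph{Fact} and cited directly from \cite{vv-symmetry-transfer-v3} (Theorem 6.4 there), used as a black box throughout. So there is no ``paper's own proof'' to compare against.

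Your sketch is a reasonable outline of how such a result is established in the cited reference, and follows the general template of \cite{ss-tame-jsl} with weak tameness in place of tameness. One genuine point of concern: you propose to obtain closure of $\Ksatp{\lambda}_\lambda$ under chains by invoking Fact~\ref{sym-unionsat}, but that fact requires symmetry in $\mu^+$ for \emph{every} $\mu \in [\LS(\K), \lambda)$, whereas the hypothesis of the statement gives you only $\lambda$-symmetry (and superstability everywhere, but not symmetry below $\lambda$). You flag this as ``bookkeeping'', but it is not: deriving symmetry at smaller cardinals from $\lambda$-symmetry alone is not automatic. In the actual argument of \cite{vv-symmetry-transfer-v3}, chain closure of the saturated models of size $\lambda$ is obtained more directly from $\lambda$-symmetry via VanDieren's reduced-towers machinery, which operates at the single cardinal $\lambda$ and does not require symmetry below. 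So your approach to this step would need to be replaced rather than merely filled in.
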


Once we have a good $\lambda$-frame, we can enlarge it so that the forking relation works over larger models.

\begin{fact}[Corollary 6.9 in \cite{tame-frames-revisited-v5}]\label{frame-upward-transfer}
  Let $\theta > \lambda \ge \LS (\K)$. Let $\F := [\lambda, \theta)$. Assume that $\K_{\F}$ has amalgamation. Let $\s$ be a type-full good $\lambda$-frame on $\K_{\lambda}$. If $\K$ is $(\lambda, <\theta)$-tame, then there exists a type-full good $\F$-frame $\s'$ extending $\s$: $\s' \rest \K_\lambda = \s$.
\end{fact}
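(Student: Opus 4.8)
The plan is to construct the extended forking relation explicitly and verify the good-frame axioms one at a time, using $(\lambda, <\theta)$-tameness as the bridge between models of size $\lambda$ and the larger models of $\K_\F$. For $M_0 \lea M$ in $\K_\F$ and nonalgebraic $p \in \gS (M)$, declare that \emph{$p$ does not $\s'$-fork over $M_0$} if there is $N_0 \in \K_\lambda$ with $N_0 \lea M_0$ such that $p \rest N$ does not $\s$-fork over $N_0$ for every $N \in \K_\lambda$ with $N_0 \lea N \lea M$ (and, as elsewhere in the paper, algebraic types do not fork over their domain). With this definition, invariance, monotonicity, and base monotonicity are immediate from the corresponding properties of $\s$. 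Local character follows from local character of $\s$ applied along a continuous resolution of $M$ into models of size $\lambda$, together with a regularity argument stabilizing the witnessing base inside a single $\K_\lambda$-model. Transitivity is a directed-system chase once the witnessing bases are arranged coherently.

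Tameness first enters in \textbf{uniqueness} and \textbf{extension}. For uniqueness: if $p, q \in \gS (M)$ both do not $\s'$-fork over $M_0$ and agree on $M_0$, choose a common $\K_\lambda$-witness $N_0$; then $p \rest N = q \rest N$ for every $N \in \K_\lambda$ with $N_0 \lea N \lea M$ by uniqueness for $\s$, hence $p = q$ by $(\lambda, <\theta)$-tameness. For extension: given $p \in \gS (M_0)$ nonforking over $N_0 \in \K_\lambda$ and $M \gea M_0$ in $\K_\F$, resolve $M$ into an increasing continuous chain of $\K_\lambda$-models based at $N_0$, take the $\s$-nonforking extension to each level coherently (using extension and uniqueness for $\s$), and assemble an honest type $p' \in \gS (M)$ by a direct-limit argument (the analogue of Fact \ref{limit-type}); the uniqueness just proved shows $p'$ does not depend on the resolution and is the unique nonforking extension of $p$. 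Stability in each $\mu \in \F$ then follows from stability in $\lambda$ together with tameness (either by a direct counting argument or because the extended relation already supplies few types via local character and uniqueness). Amalgamation in $\K_\F$ is assumed, joint embedding and no maximal models in $\K_\lambda$ come with $\s$ and propagate up $\F$ by realizing nonforking extensions of nonalgebraic types and taking directed unions, and density of basic types is vacuous because $\s'$ is type-full.

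The main obstacle is \textbf{symmetry}: this is exactly where Boney's original frame-transfer argument needed tameness for types of length two, and where the present improvement lies. The plan is to route the verification through the theory of independent sequences of length two for the $\lambda$-frame $\s$. Monotonicity and continuity of $\s$-independence let one witness and check that ``$\seq{ab}$ is independent in $(M_0, M, N)$'' along a continuous resolution of the relevant models into models of size $\lambda$; at that level symmetry of $\s$ (Fact \ref{sym-indep}) produces witnesses for the swapped sequence $\seq{ba}$, and $(\lambda, <\theta)$-tameness then glues these $\K_\lambda$-witnesses into witnesses over $M$ and $N$. Equivalently, one can pass through VanDieren's $\mu$-symmetry for $\mu$-splitting and its upward transfer. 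The reason length-one tameness is enough here --- contrary to what one might first expect --- is that the swapped configuration only ever requires controlling the types of the \emph{single} elements $a$ and $b$ over $\K_\lambda$-submodels, never the type of the pair $ab$. Once symmetry is in hand, collecting the axioms shows that $\s'$ is a type-full good $\F$-frame, and $\s' \rest \K_\lambda = \s$ holds by construction. I expect every step other than symmetry to be careful but essentially routine bookkeeping with resolutions and direct limits.
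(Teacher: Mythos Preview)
This statement is cited as a Fact from \cite{tame-frames-revisited-v5}; the present paper gives no proof and uses it as a black box. Your outline of the extended forking relation and of the verification of all axioms other than symmetry is correct and matches the standard approach (with the minor caveat that extension is cleanest by induction on $\|M\|$ rather than via a single resolution into $\K_\lambda$-models, since $\|M\|$ may exceed $\lambda^+$).

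The symmetry paragraph is where the substance lies, and your sketch is not yet an argument. You propose to resolve into $\K_\lambda$-pieces, apply Fact \ref{sym-indep} at level $\lambda$, and then ``glue'' via tameness. But symmetry of the $\F$-frame requires you to \emph{produce} a model $M_1' \in \K_\F$ containing a prescribed element over which a given type does not fork; the $\K_\lambda$-level witnesses handed to you by Fact \ref{sym-indep} along a resolution do not obviously assemble into such an $M_1'$, and tameness is a statement about \emph{distinguishing} types, not about constructing models. There is also a circularity risk: Fact \ref{indep-facts} (Corollary 6.10 of the same reference, immediately following the result you are proving) is stated for a good $\F$-frame already in hand, so you cannot simply invoke it before symmetry is established. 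The argument in \cite{tame-frames-revisited-v5} first builds a good-minus-symmetry $\F$-frame from $\s$ and length-one tameness, then develops the theory of independent sequences in that weaker setting --- proving local character and continuity for sequences without assuming symmetry is the technical heart, spanning several sections --- and finally derives symmetry of the length-one frame from those properties. Your closing intuition (that only types of single elements ever need to be controlled) is exactly why length-one tameness suffices, but the work lies in making the independent-sequence machinery run before symmetry is available, and that is what your sketch elides.
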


Assuming only weak tameness, we can show that if $\s$ is a (type-full) good $\mu$-frame and $\s'$ is a good $\lambda$-frame with $\mu < \lambda$ and the underlying class of $\s'$ is the saturated models in the underlying class of $\s$, then forking in $\s'$ can be described in terms of forking in $\s$. This is proven as Theorem \ref{shrinking-compat} in the appendix and is used to replace tameness by weak tameness in the main theorem.




\section{Weak tameness from categoricity}\label{weak-tameness-sec}

We quote a result of Shelah from \cite[Chapter IV]{shelahaecbook} on deriving weak tameness from categoricity and combine it with the corresponding results in \cite{vv-symmetry-transfer-v3}. We derive a small improvement on some of the Hanf numbers, positively answering a question of Baldwin \cite[Question 11.16]{baldwinbook09} (see also \cite[Remark 14.15]{baldwinbook09}) which asked whether it was possible to obtain $\chi$-weak tameness for some $\chi < H_1$ rather than $(<H_1)$-weak tameness. We give two applications in AECs with amalgamation and no maximal models that are categorical in a high-enough cardinal that is still \emph{below} the Hanf number: the construction of a good frame and a non-trivial restriction on the categoricity spectrum.

The following appears as \cite[Main Claim II.2.3]{sh394} (a simplified and improved argument is in \cite[Theorem 11.15]{baldwinbook09}):

\begin{fact}\label{weak-tameness-from-categ-fact-0}
  Assume that $\K$ has amalgamation. Let $\lambda > \mu \ge H_1$. Assume that $\K$ is categorical in $\lambda$, and the model of cardinality $\lambda$ is $\mu^+$-saturated. Then there exists $\chi < H_1$ such that $\K$ is $(\chi, \mu)$-weakly tame.
\end{fact}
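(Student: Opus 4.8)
The plan is to reconstruct this (quoted) result following the strategy of \cite[Main Claim II.2.3]{sh394}, in the streamlined form of \cite[Theorem 11.15]{baldwinbook09}, organizing everything around $\mu$-splitting. \emph{Step 1 (structural consequences of categoricity).} First I would invoke Fact \ref{jep-decomp} to reduce to the case where $\K$ also has no maximal models below $\lambda$ (the exceptional bound $\beth_{\left(2^{\LS (\K)}\right)^+} = H_1$ there is harmless since $\lambda > H_1$). Then fix, by the presentation theorem, a $\Phi$ proper for linear orders in a vocabulary of size $\le \LS (\K)$ with every Ehrenfeucht--Mostowski model $\text{EM} (I, \Phi)$ (reduct to the vocabulary of $\K$) lying in $\K$. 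From categoricity in $\lambda$ together with amalgamation one gets that $\K$ is stable in every $\nu \in [\LS (\K), \lambda)$, and --- by the Shelah--Villaveces argument (Fact \ref{shvi}, run with base $\nu$ in place of $\LS (\K)$) --- superstable in every such $\nu$; in particular the saturated model $M^\ast \in \K_\mu$ exists and is unique, so weak $(\chi, \mu)$-tameness is really a statement about the pairs of types over $M^\ast$.

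\emph{Step 2 (reduce to a splitting-locality statement about $M^\ast$).} I claim it suffices to produce a single $\chi < H_1$ such that every $p \in \gS (M^\ast)$ fails to $\mu$-split over some $M_0 \lea M^\ast$ with $\|M_0\| \le \chi$. Granting this, suppose $p, q \in \gS (M^\ast)$ agree on every submodel of $M^\ast$ of size $\le \chi$; choose $M_p, M_q \lea M^\ast$ of size $\le \chi$ over which $p$, resp.\ $q$, does not $\mu$-split, amalgamate inside $M^\ast$ to get a common $M_0 \lea M^\ast$ of size $\le \chi$ above both, and use monotonicity of non-splitting in the base to conclude that $p$ and $q$ both fail to $\mu$-split over $M_0$; since $M^\ast$ is saturated it is universal over $M_0$, so uniqueness of non-$\mu$-splitting extensions over universal models forces $p = q$. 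Hence $\K$ is $(\chi, \mu)$-weakly tame.

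\emph{Step 3 (the splitting-locality statement, via the Hanf number).} This I would prove by contradiction. If it fails, then for each $\chi < H_1$ there is a type over $M^\ast$ that $\mu$-splits over \emph{every} submodel of size $\le \chi$; using that $\cf{H_1} = \left(2^{\LS (\K)}\right)^+$ is regular, stack these instances into a continuous increasing ``long splitting chain'' $\seq{N_i : i < \left(2^{\LS (\K)}\right)^+}$ with witnesses, realized inside $\text{EM} (I^\ast, \Phi)$ for a suitable linear order $I^\ast$ of size $H_1$. The whole point of the threshold $H_1 = \beth_{\left(2^{\LS (\K)}\right)^+}$ is that the data controlling each witness in the EM presentation --- the quantifier-free type of its generating tuple of indiscernibles over the finitely many relevant ``cut positions'' --- ranges over a fixed set of size $\le 2^{\LS (\K)}$, so a Morley-style omitting-types / pigeonhole argument over an index order of length $\ge H_1$ produces genuine indiscernibility and lets one ``stretch'' the configuration to a linear order of size $\lambda$. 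The resulting model of $\K$ of size $\lambda$ then omits the $\mu$-variable type coded by the stretched chain, hence is not $\mu^+$-saturated --- contradicting the hypothesis on the categoricity model.

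\emph{Main obstacle.} The hard part is the stretching in Step 3: pinning down exactly which bounded fragment of $\Phi$-syntactic data governs $\mu$-splitting and running Morley's omitting-types machinery so that ``$\mu$-splits over everything small'' is preserved under blowing the index order up to cardinality $\lambda$. This is precisely the computation for which $\beth_{\left(2^{\LS (\K)}\right)^+}$ is the designed bound, carried out in \cite[Chapter IV]{shelahaecbook} and \cite{baldwinbook09}. A secondary point requiring care is the exact form of non-splitting uniqueness invoked in Step 2 (over universal versus limit models) and the resolution of $M^\ast$ into a chain when $\cf{\mu}$ is small; both are routine given the superstability extracted in Step 1.
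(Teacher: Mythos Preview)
The paper does not give its own proof of this statement: it is quoted as a black box (a ``Fact'') with a citation to \cite[Main Claim II.2.3]{sh394} and to the simplified version \cite[Theorem 11.15]{baldwinbook09}. Your plan correctly identifies this and reconstructs the argument along the lines of those cited sources --- the reduction to $\mu$-splitting locality over the saturated model of size $\mu$, followed by the EM-model stretching argument to contradict $\mu^+$-saturation of the categoricity model, is exactly the strategy of Baldwin's exposition. So there is nothing to compare against in the paper itself; your outline is consistent with the intended external proof.
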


As opposed to Fact \ref{weak-tameness-from-categ-fact-0}, the following also applies when the categoricity cardinal is below the Hanf number.

\begin{fact}[Claim IV.7.2 in \cite{shelahaecbook}]\label{weak-tameness-categ-2}
  Let $\mu > \LS (\K)$. If:

  \begin{enumerate}
    \item $\K_{<\mu}$ has amalgamation.
    \item $\cf{\mu} > \LS (\K)$.
    \item\label{weak-tameness-cond-3} $\Phi$ is a proper for linear orders, and if $\theta \in (\LS (\K), \mu)$, $I$ is a $\theta$-wide\footnote{A linear order is \emph{$\theta$-wide} if for every $\theta_0 < \theta$, $I$ contains an increasing sequence of length $\theta_0^+$, see \cite[Definition IV.0.14.(1)]{shelahaecbook}.} linear order, then $\text{EM}_{\tau (\K)} (I, \Phi)$ is $\theta$-saturated.
  \end{enumerate}

  Then there exists $\chi \in (\LS (\K), \mu)$ such that $\K$ is $(\chi, <\mu)$-weakly tame.
\end{fact}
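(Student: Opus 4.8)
This is a theorem of Shelah, so I will only describe the strategy. The plan is to argue by contradiction: suppose that for \emph{every} $\chi \in (\LS (\K), \mu)$ the class $\K$ fails to be $(\chi, <\mu)$-weakly tame. For each such $\chi$, I would first fix a witness to this failure: a cardinal $\mu_\chi \in [\chi, \mu)$, a saturated $M_\chi \in \K_{\mu_\chi}$, an extension $N_\chi \gea M_\chi$ in $\K_{\mu_\chi}$, and Galois types $p_\chi \neq q_\chi$ over $M_\chi$ realized in $N_\chi$ with $p_\chi \rest M_0 = q_\chi \rest M_0$ for every $M_0 \lea M_\chi$ of size $\le \chi$. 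Here amalgamation in $\K_{<\mu}$ makes Galois types below $\mu$ behave well and gives uniqueness of saturated models of each fixed size $< \mu$, while $\cf{\mu} > \LS (\K)$ will be used throughout to keep constructions proceeding in $\le \LS (\K)$-many (or, for a fixed $\chi < \mu$, $\le \chi$-many) steps strictly below $\mu$.

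The key use of hypothesis (3) is to pass from these abstract witnesses to concrete Ehrenfeucht--Mostowski models, exactly as one uses the presentation theorem and Erd\H{o}s--Rado to represent the categoricity model as an EM model in the argument of \cite[Main Claim II.2.3]{sh394} (simplified in \cite[Theorem 11.15]{baldwinbook09}). For any $\mu' \in (\LS (\K), \mu)$ there is a $\mu'$-wide linear order $I$ of size $\mu'$, and then $\text{EM}_{\tau (\K)} (I, \Phi)$ has size $\mu'$ and is $\mu'$-saturated, hence saturated; by uniqueness of saturated models I would therefore assume $M_\chi = \text{EM}_{\tau (\K)} (I_\chi, \Phi)$ for a suitable wide $I_\chi$, and likewise enlarge $N_\chi$ to a saturated model presented over a wide order extending $I_\chi$. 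The payoff is that the Galois type of an element $\sigma (\bar s)$ ($\sigma$ a Skolem term, $\bar s$ a tuple of indiscernibles) over a \emph{sub}-EM-model $\text{EM}_{\tau (\K)} (I_0, \Phi)$ depends only on $\sigma$ and on the order-theoretic position of $\bar s$ relative to $I_0$; moreover, by Löwenheim--Skolem, every $M_0 \lea M_\chi$ of size $\le \chi$ lies inside such a sub-EM-model over some $I_0 \subseteq I_\chi$ of size $\le \chi$. Feeding the non-tameness of $(p_\chi, q_\chi)$ through this correspondence shows that the distinction between $p_\chi$ and $q_\chi$ can only be detected by a cut of the index order of cofinality $> \chi$: it is exactly because the separating cut is too ``thin'' that no $\le\chi$-sized piece of $M_\chi$ tells $p_\chi$ and $q_\chi$ apart.

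The final step is to run this analysis uniformly in $\chi$ and apply an Erd\H{o}s--Rado / Hanf-number argument to regularize the (linear-order proper) blueprints obtained for the various $\chi$ into a single blueprint $\Psi$ for which $\text{EM}_{\tau (\K)} (I, \Psi)$ carries, at some scale below $\mu$, a non-tame pair of types for every sufficiently wide $I$ --- and it is precisely this regularization that forces the conclusion to be $(\chi_0, <\mu)$-weak tameness for \emph{some} $\chi_0 \in (\LS (\K), \mu)$ rather than for all $\chi$. Now feed into $\Psi$ a linear order that is $\theta$-wide for a fixed $\theta \in (\LS (\K), \mu)$ chosen above all the cofinalities that have appeared: by hypothesis (3) the resulting model is $\theta$-saturated, yet the ``thin-cut'' non-tameness can be exploited through a Vaughtian-pair / omitting-types construction over a wide linear order (as in \cite[Main Claim II.2.3]{sh394}) to build a model of size $< \mu$ that omits a type which $\theta$-saturation forces it to realize --- the desired contradiction. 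The hard part will be the control of \emph{Galois} rather than syntactic types through the EM machinery: the index orders must be simultaneously wide enough for hypothesis (3) to apply and homogeneous enough that type distinctions become genuinely finitary, and the combinatorial regularization must be carried out without ever escaping $\mu$.
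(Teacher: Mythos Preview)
The paper does not give its own proof of this statement: it is stated as a \emph{Fact} with a bare citation to \cite[Claim IV.7.2]{shelahaecbook}, and is used as a black box. So there is nothing in the paper to compare your sketch against beyond the citation itself.

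That said, your outline is in the right spirit for Shelah's argument: the core mechanism is indeed that hypothesis~(3) lets one realize the saturated models as EM models over wide orders, and that Galois types over sub-EM-models are controlled by finitary order-theoretic data (Skolem terms plus cuts), so that a failure of $(\chi,<\mu)$-weak tameness for every $\chi$ would force the existence of cuts of arbitrarily large cofinality, eventually contradicting the saturation guaranteed by~(3). Your description of the ``regularization'' and the final contradiction is vague --- Shelah's actual argument does not quite proceed by building a single blueprint $\Psi$ witnessing non-tameness uniformly, but rather analyzes directly how a pair of types $p \neq q$ over a saturated $M = \text{EM}_{\tau(\K)}(I,\Phi)$ can agree on all small pieces, and shows this is impossible once $I$ is wide enough relative to $\LS(\K)$. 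If you want to flesh this out, the cleanest exposition of the analogous argument (in the global case) is \cite[Theorem 11.15]{baldwinbook09}, which you already cite; Shelah's local version follows the same template but with the cofinality hypothesis $\cf{\mu} > \LS(\K)$ replacing the assumption that one is above the Hanf number.
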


Condition (\ref{weak-tameness-cond-3}) in Fact \ref{weak-tameness-categ-2} can be derived from categoricity if the model in the categoricity cardinal is sufficiently saturated. This is implicit in \cite{sh394} and appears as \cite[Lemma 10.11]{baldwinbook09}.

\begin{fact}\label{saturation-fact}
  If $\K$ has amalgamation and no maximal models, $\mu > \LS (\K)$, $\K$ is categorical in a $\lambda \ge \mu$, so that the model of size $\lambda$ is $\mu$-saturated, then for every $\Phi$ proper for linear orders, if $\theta \in (\LS (\K), \mu)$ and $I$ is a $\theta$-wide linear order, we have that $\text{EM}_{\tau (\K)} (I, \Phi)$ is $\theta$-saturated.
\end{fact}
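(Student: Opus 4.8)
The plan is to use categoricity in $\lambda$ together with the assumed $\mu$-saturation of the model of that size. The key point is that the model of size $\lambda$ can be presented as $\text{EM}_{\tau (\K)} (J, \Phi)$ for a linear order $J$ of size $\lambda$, and hence is $\mu$-saturated, hence $\theta$-saturated since $\theta < \mu$; if I arrange $J$ to agree with $I$ around a suitable piece of the index-support of the given small submodel, I can realize the type there and then transport the realization back into $\text{EM}_{\tau (\K)} (I, \Phi)$. So let $\Phi$ be proper for linear orders, $\theta \in (\LS (\K), \mu)$, $I$ a $\theta$-wide linear order, $N := \text{EM}_{\tau (\K)} (I, \Phi)$, and suppose $M \lea N$ with $\|M\| < \theta$ and $p \in \gS (M)$; I must realize $p$ in $N$. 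First I would reduce to the case where $M$ itself is an EM-submodel: since each element of $N$ is a term (in the vocabulary $\tau \supseteq \tau (\K)$ of $\Phi$) in finitely many members of $I$, there is $W \subseteq I$ with $|W| \le \|M\| + \LS (\K) =: \theta_0 < \theta$ such that $|M| \subseteq |M_W|$, where $M_W := \text{EM}_{\tau (\K)} (W, \Phi)$; coherence gives $M \lea M_W$, and by the extension property of Galois types (amalgamation) it is enough to realize in $N$ some $p^\ast \in \gS (M_W)$ extending $p$. So I may assume $M = M_W$ with $\|M\| = \theta_0 < \theta$.

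Next I would isolate a cut of $W$ that is ``widely realized'' inside $I$. Using $\theta$-wideness and $\theta_0 < \theta$, fix an increasing chain $C \subseteq I$ of length $\theta_0^+$. The at most $\theta_0$ members of $W$ split $C \setminus W$ into at most $\theta_0$ pieces, each piece lying entirely in a single cut of $W$ and consecutive pieces being separated by a member of $W$; since $\theta_0^+$ is regular, one such piece $C_0$ has size $\theta_0^+$, and it lies in a single cut $(A_0, B_0)$ of $W$. Now form $J := A_0 + \lambda + B_0$, i.e.\ a copy of the ordinal $\lambda$ pushed into the cut $(A_0, B_0)$ of $W$; then $|J| = \lambda$ and $W$ is a sub-linear-order of $J$, so $M_W \lea \text{EM}_{\tau (\K)} (J, \Phi)$, and by the defining properties of a template proper for linear orders $\text{EM}_{\tau (\K)} (J, \Phi)$ has size $\lambda$. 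By categoricity in $\lambda$ it is isomorphic to the model of size $\lambda$, which is $\mu$-saturated and hence $\theta$-saturated; since $\|M_W\| < \theta$, the type $p^\ast$ is realized there, say by $d = \sigma (\bar{s})$ for a $\tau$-term $\sigma$ and a finite tuple $\bar{s}$ from $J$.

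Finally I would transport $d$ into $N$. Writing $\bar{s}$ as its part $\bar{w}$ in $W$ together with its part $\bar{e}$ in the inserted copy of $\lambda$, the tuple $\bar{e}$ is a finite increasing sequence lying in the cut $(A_0, B_0)$ of $W$; since $C_0 \subseteq I$ is an increasing $\theta_0^+$-chain inside that same cut, I can choose a finite increasing $\bar{e}' \subseteq C_0$ of the same length, so that $\bar{s}' := \bar{w} \cup \bar{e}'$ has the same quantifier-free type over $W$ inside $I$ as $\bar{s}$ has inside $J$ (all the relevant order relations with $W$ are fixed by membership in the cut $(A_0, B_0)$). Functoriality of the EM-construction then gives an isomorphism from the EM-submodel generated by $W \cup \bar{s}$ onto the one generated by $W \cup \bar{s}'$ that fixes $M_W$ pointwise and sends $d = \sigma (\bar{s})$ to $d' := \sigma (\bar{s}')$; since Galois types are computed in any sufficiently large ambient model and $W \cup \bar{s}$ embeds in $J$ while $W \cup \bar{s}'$ embeds in $I$, this yields $\gtp (d' / M_W; N) = \gtp (d / M_W; \text{EM}_{\tau (\K)} (J, \Phi)) = p^\ast$, so $d' \in N$ realizes $p^\ast$, hence $p$. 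The step I expect to be the crux is the second one: one needs not merely \emph{some} point of $I$ realizing a cut of $W$, but an increasing sequence long enough to absorb the (a priori unbounded in length) finite index-support of the realization $d$, and it is precisely $\theta$-wideness of $I$, measured against $\|M\| < \theta$, that supplies this. The remaining hypotheses (amalgamation, no maximal models) serve only to ensure that Galois types and EM-models behave as expected and that the model of size $\lambda$ is the one carrying the assumed saturation.
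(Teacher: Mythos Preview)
The paper does not actually supply a proof of this fact: it is stated as a citation (implicit in \cite{sh394}, with a full statement at \cite[Lemma 10.11]{baldwinbook09}), so there is no in-paper argument to compare against. Your proof is correct and is essentially the standard argument one finds in the cited sources: reduce to a type over an EM-submodel $M_W$ with $|W| = \theta_0 < \theta$; use $\theta$-wideness to locate a cut $(A_0,B_0)$ of $W$ realized in $I$ by a long increasing sequence; blow up that cut to a copy of $\lambda$ to obtain $J$ with $M_W \lea \text{EM}_{\tau(\K)}(J,\Phi) \in \K_\lambda$; use categoricity and $\mu$-saturation of the model of size $\lambda$ to realize the type there; and finally pull the finite index-support of the realization back into $I$ via the matching cut, using functoriality of the EM construction to preserve the Galois type over $M_W$. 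The one place to be slightly more careful in exposition is the claim that $\bar{s}'$ has the same quantifier-free type over $W$ as $\bar{s}$: what you are really using is that the map fixing $W$ and sending $\bar{e}$ to $\bar{e}'$ is an order-isomorphism of $W \cup \bar{e}$ onto $W \cup \bar{e}'$ (both finite tuples sitting entirely inside the same cut of $W$), which is exactly what the EM functor needs. With that made explicit, the argument is complete.
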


Combining these facts, we obtain the following result. Note that the second part is a slight improvement on Fact \ref{weak-tameness-from-categ-fact-0}, as the model of size $\lambda$ is allowed to be $H_1$-saturated. Moreover the amount of weak tameness $\chi$ can be chosen independently of $\mu$:

\begin{thm}\label{technical-lem}
  Assume that $\K$ has amalgamation. Let $\lambda  > \LS (\K)$ be such that $\K$ is categorical in $\lambda$.

  \begin{enumerate}
    \item Let $\mu$ be a limit cardinal such that $\cf{\mu} > \LS (\K)$. If $\K$ has no maximal models and the model of size $\lambda$ is $\mu$-saturated, then there exists $\chi < \mu$ such that $\K$ is $(\chi, <\mu)$-weakly tame.
    \item If the model of size $\lambda$ is $H_1$-saturated, then there exists $\chi < H_1$ such that whenever $\mu \ge H_1$ is so that the model of size $\lambda$ is $\mu$-saturated, we have that $\K$ is $(\chi, <\mu)$-weakly tame.
  \end{enumerate}
\end{thm}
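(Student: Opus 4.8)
Both parts reduce to Shelah's weak tameness criterion (Fact \ref{weak-tameness-categ-2}), whose one nonroutine hypothesis---that $\text{EM}_{\tau (\K)} (I, \Phi)$ is $\theta$-saturated whenever $\theta \in (\LS (\K), \mu)$ and $I$ is $\theta$-wide---is exactly what Fact \ref{saturation-fact} provides from categoricity. For part (1) I would first record two elementary facts: $\mu > \LS (\K)$, since $\LS (\K) < \cf{\mu} \le \mu$; and $\lambda \ge \mu$, since having no maximal models yields a nonalgebraic Galois type over the model of size $\lambda$, which that model could not realize were it $\mu$-saturated with $\mu > \lambda$. Then Fact \ref{saturation-fact}, applied with this $\mu$ (amalgamation, no maximal models, categoricity in $\lambda \ge \mu$, and $\mu$-saturation of the model of size $\lambda$ are all at hand), supplies hypothesis (3) of Fact \ref{weak-tameness-categ-2}, while its hypotheses (1) and (2)---that $\K_{<\mu}$ has amalgamation and $\cf{\mu} > \LS (\K)$---are immediate. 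So Fact \ref{weak-tameness-categ-2} returns $\chi \in (\LS (\K), \mu)$ with $\K$ being $(\chi, <\mu)$-weakly tame.

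\textbf{Part (2): reductions and the base case.} First I would reduce to the case that $\K$ has no maximal models: categoricity in $\lambda$ gives joint embedding in $\lambda$, so Fact \ref{jep-decomp} produces $\chi_0 < H_1$ and an AEC $\K^\ast \subseteq \K$ with the same ordering, $\LS (\K^\ast) = \LS (\K)$, no maximal models, and $\K^\ast_{\ge \chi_0} = \K_{\ge \chi_0}$; since the $\chi$ we produce can be taken $\ge \chi_0$ and neither hypothesis nor conclusion is affected by altering $\K$ below $\chi_0$, it suffices to treat $\K^\ast$. As in part (1), $H_1$-saturation of the model of size $\lambda$ forces $\lambda \ge H_1$. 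Now $H_1 = \ehanf{\LS (\K)}$ is a limit cardinal with $\cf{H_1} = \left(2^{\LS (\K)}\right)^+ > \LS (\K)$, so running part (1) with $\mu := H_1$ produces $\chi \in (\LS (\K), H_1)$ with $\K$ being $(\chi, <H_1)$-weakly tame; fix such a $\chi$. This already gives the conclusion for $\mu = H_1$.

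\textbf{Part (2): larger $\mu$ and the main obstacle.} Now suppose $\mu > H_1$ is such that the model of size $\lambda$ is $\mu$-saturated, and let $\nu \in [H_1, \mu)$. Then $\nu < \mu \le \lambda$ (the last inequality as in part (1)), and the model of size $\lambda$, being $\mu$-saturated with $\nu < \mu$, is $\nu^+$-saturated; hence Fact \ref{weak-tameness-from-categ-fact-0}, applied with parameter $\nu$, gives some $\chi_\nu < H_1$ with $\K$ being $(\chi_\nu, \nu)$-weakly tame. Since weak tameness is monotone in the witnessing cardinal ($(\chi', \nu)$-weak tameness implies $(\chi'', \nu)$-weak tameness for $\chi' \le \chi'' \le \nu$, by extending a witnessing submodel), $\K$ is $(\chi, <\mu)$-weakly tame provided $\chi \ge \chi_\nu$ for every such $\nu$. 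This uniformity is the delicate point: a priori the $\chi_\nu$ (and the $\chi$ produced at $\mu = H_1$) could be cofinal in $H_1$. The resolution I would use is that the cardinal extracted in the proofs of Facts \ref{weak-tameness-categ-2} and \ref{weak-tameness-from-categ-fact-0} is a Hanf-type invariant depending only on $\LS (\K)$ and the template $\Phi$ generating the model of size $\lambda$---not on the auxiliary cardinal---so all of them can be taken equal to a single $\chi < H_1$; alternatively, one can avoid Fact \ref{weak-tameness-from-categ-fact-0} entirely and transfer $(\chi, <H_1)$-weak tameness upward to $(\chi, \nu)$-weak tameness for $\nu \ge H_1$ using that $\K$ is superstable in every $\mu' \in [\LS (\K), \nu)$ (Fact \ref{shvi}), whence types over the saturated model of size $\nu$ do not split over submodels of size $< H_1$ and are therefore pinned down already there. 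Either way, the resulting $\chi < H_1$ witnesses $(\chi, <\mu)$-weak tameness for all $\mu$ as in the statement of (2).
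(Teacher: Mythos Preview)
Your treatment of part (1) matches the paper's proof exactly. For part (2), you correctly isolate the obstacle---that the witnesses $\chi_\nu$ from Fact \ref{weak-tameness-from-categ-fact-0} could a priori be cofinal in $H_1$---but neither of your proposed resolutions is the paper's, and neither is fully justified as written.

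Your first resolution asserts that the $\chi$ produced by Facts \ref{weak-tameness-categ-2} and \ref{weak-tameness-from-categ-fact-0} is a Hanf-type invariant independent of the auxiliary cardinal. This is a claim about the internals of proofs that the paper treats as black boxes; you do not substantiate it, and the paper does not rely on it. Your second resolution, via non-splitting, is sketchy: knowing that $p$ and $q$ do not split over small submodels does not by itself give $p \rest M_0 \neq q \rest M_0$; you would need uniqueness of non-splitting extensions, which is essentially a form of the tameness you are trying to prove.

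The paper's actual argument avoids uniformity of the $\chi_\nu$ altogether by \emph{composing} weak tameness through $H_1$. From $(\chi_\nu, \nu)$-weak tameness one gets, by monotonicity, $(H_1, \nu)$-weak tameness---and here the dependence on $\nu$ has been erased. One already has $(\chi, H_1)$-weak tameness from part (1). To chain these into $(\chi, \nu)$-weak tameness one needs that inside any saturated model of size $\nu$ and over any set of size $H_1$ there sits a \emph{saturated} submodel of size $H_1$; this is exactly $\LS(\Ksatp{H_1}) = H_1$, supplied by Corollary \ref{cor-sat-categ}. That corollary is the missing ingredient in your argument.
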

\begin{proof} \
  \begin{enumerate}
    \item By Fact \ref{weak-tameness-categ-2} (using Fact \ref{saturation-fact} to see that (\ref{weak-tameness-cond-3}) is satisfied).
    \item Without loss of generality (Fact \ref{jep-decomp}), $\K$ has no maximal models. By the first part (with $\mu$ there standing for $H_1$ here), there exists $\chi < H_1$ such that $\K$ is $(\chi, <H_1)$-weakly tame. Now assume that the model of size $\lambda$ is $\mu$-saturated, for $\mu > H_1$. Let $\mu' \in [H_1, \mu)$. We show that $\K$ is $(\chi, \mu')$-weakly tame. By Fact \ref{weak-tameness-from-categ-fact-0} (with $\mu$ there standing for $\mu'$ here), there exists $\chi' < H_1$ such that $\K$ is $(\chi', \mu')$-weakly tame. In particular, $\K$ is $(H_1, \mu')$-weakly tame. Now by Corollary \ref{cor-sat-categ}, $\Ksatp{H_1}$ is an AEC with $\LS (\Ksatp{H_1}) = H_1$. Thus we can combine $(\chi, H_1)$-weak and $(H_1, \mu')$-weak tameness to get $(\chi, \mu)$-weak tameness, as desired.
  \end{enumerate}
\end{proof}

We give two applications of (the first part of) Theorem \ref{technical-lem}. First, we obtain an improvement on the Hanf number for the construction of a good frame in \cite[Corollary 7.9]{vv-symmetry-transfer-v3} ($\mu$ below can be less than $H_1$, e.g.\ $\mu = \aleph_{\LS (\K)^+}$).

\begin{thm}\label{good-frame-improvement}
  Assume that $\K$ has amalgamation and no maximal models. Let $\mu$ be a limit cardinal such that $\cf{\mu} > \LS (\K)$ and assume that $\K$ is categorical in a $\lambda \ge \mu$. If the model of size $\lambda$ is $\mu$-saturated, then there exists $\chi < \mu$ such that for all $\mu_0 \in [\chi, \mu)$, there is a good $\mu_0$-frame on $\Ksatp{\mu_0}$.
\end{thm}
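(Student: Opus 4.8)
The plan is to combine weak tameness from categoricity (Theorem \ref{technical-lem}.(1)), superstability from categoricity (Fact \ref{shvi}), symmetry from categoricity (Fact \ref{sym-from-categ}), and the construction of a good frame from weak tameness (Fact \ref{good-frame-weak-tameness}), following the template of \cite[Corollary 7.9]{vv-symmetry-transfer-v3} but feeding in Theorem \ref{technical-lem}.(1) in place of Fact \ref{weak-tameness-from-categ-fact-0} --- which is exactly what lets $\mu$ drop below $H_1$. First I would apply Theorem \ref{technical-lem}.(1) (all its hypotheses are at hand: $\K$ has amalgamation and no maximal models, $\mu$ is a limit cardinal with $\cf{\mu} > \LS (\K)$, $\K$ is categorical in $\lambda \ge \mu > \LS (\K)$, and the model of size $\lambda$ is $\mu$-saturated) to obtain $\chi_0 < \mu$ with $\K$ being $(\chi_0, <\mu)$-weakly tame. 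Enlarging $\chi_0$ if necessary (weak tameness is preserved under enlarging the witnessing cardinal below $\mu$), I may assume $\LS (\K) < \chi_0 < \mu$, and I set $\chi := \chi_0^+$, which is still $< \mu$ since $\mu$ is a limit cardinal.

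Since the model of size $\lambda$ is $\mu$-saturated, hence $\chi_0^+$-saturated, and $\LS (\K) < \chi_0 < \lambda$, Corollary \ref{cor-sat-categ} gives that $\Ksatp{\chi_0}$ is an AEC with $\LS (\Ksatp{\chi_0}) = \chi_0$; and $(\chi_0, <\mu)$-weak tameness of $\K$ passes to $(\chi_0, \mu_0)$-weak tameness of $\Ksatp{\chi_0}$ for every $\mu_0 \in [\chi, \mu)$ (a size-$\chi_0$ submodel witnessing a difference of types sits inside a $\chi_0$-saturated model, so it can be enlarged there to a member of $\Ksatp{\chi_0}$ of size $\chi_0$), i.e. $\Ksatp{\chi_0}$ is weakly tame over its L\"owenheim--Skolem number. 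Next, by Fact \ref{shvi} applied at each cardinal (as in the proof of Corollary \ref{cor-sat-categ}) $\K$ is superstable in every cardinal in $[\LS (\K), \lambda) \supseteq [\LS (\K), \mu)$, and, arguing as in that same proof (Fact \ref{sym-from-categ}.(1) directly gives $\LS (\K)$-symmetry), $\K$ has symmetry in every cardinal in $[\LS (\K), \mu)$. Now fix $\mu_0 \in [\chi, \mu)$: then $\mu_0 > \chi_0 = \LS (\Ksatp{\chi_0})$ and $\mu_0 < \mu \le \lambda$, the superstability and symmetry just obtained restrict to the AEC $\Ksatp{\chi_0}$ at every cardinal in $[\chi_0, \mu_0]$, and $\Ksatp{\chi_0}$ is $(\chi_0, \mu_0)$-weakly tame; so Fact \ref{good-frame-weak-tameness}, applied to the AEC $\Ksatp{\chi_0}$ with its $\lambda$ equal to $\mu_0$, produces a type-full good $\mu_0$-frame whose underlying class is the $\mu_0$-saturated models of $\Ksatp{\chi_0}$ of size $\mu_0$, which --- as $\mu_0 \ge \chi_0$ --- is exactly $\Ksatp{\mu_0}_{\mu_0}$. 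This is the desired good $\mu_0$-frame on $\Ksatp{\mu_0}$, and $\chi$ is as required.

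The step I expect to be the main obstacle is the descent of superstability and symmetry from $\K$ to the saturated subclass $\Ksatp{\chi_0}$: for $\nu \in (\chi_0, \mu_0]$ the models of $\Ksatp{\chi_0}$ of size $\nu$ form a proper subclass of $\K_\nu$, so one must check that $\nu$-splitting together with its locality property, and symmetry for $\nu$-splitting, behave correctly when restricted to $\chi_0$-saturated models. These facts are available from the study of saturated models in superstable AECs (e.g.\ \cite{bv-sat-v3, vv-symmetry-transfer-v3}); isolating exactly the statements needed, together with the routine check that weak tameness of $\K$ carries over to $\Ksatp{\chi_0}$ as weak tameness over $\LS (\Ksatp{\chi_0})$, is where the real work lies --- the rest is bookkeeping about which cardinals sit below $\mu$ and below $\lambda$.
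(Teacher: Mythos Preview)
Your proof is correct and uses the same four ingredients as the paper (Theorem \ref{technical-lem}, Fact \ref{shvi}, Fact \ref{sym-from-categ}, Fact \ref{good-frame-weak-tameness}), but you take an unnecessary detour that creates the very obstacle you flag. After obtaining $\chi < \mu$ with $(\chi, <\mu)$-weak tameness, the paper simply applies Fact \ref{good-frame-weak-tameness} to the AEC $\K_{\ge \chi}$ (which has L\"owenheim--Skolem number $\chi$) rather than to $\Ksatp{\chi_0}$. Since $\K_{\ge \chi}$ is just a tail of $\K$, superstability and symmetry at cardinals in $[\chi, \mu)$ are literally the same statements for $\K$ and for $\K_{\ge \chi}$; likewise $(\chi, \mu_0)$-weak tameness for $\K$ is the same as for $\K_{\ge \chi}$, and the resulting class of $\mu_0$-saturated models is identical. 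So no ``descent to a saturated subclass'' is needed at all, and the bookkeeping about which properties transfer to $\Ksatp{\chi_0}$ evaporates.

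In short: replace ``pass to $\Ksatp{\chi_0}$'' by ``pass to $\K_{\ge \chi}$'' and your main obstacle disappears; the rest of your argument is exactly the paper's.
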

\begin{proof}
By Fact \ref{sym-from-categ}, $\K$ has $\mu_0$-symmetry for every $\mu_0 < \chi$. By Fact \ref{shvi}, $\K$ is also superstable in every $\mu_0 \in [\LS (K), \lambda)$. Now by Theorem \ref{technical-lem}, there exists $\chi < \mu$ so that $\K$ is $(\chi, <\mu)$-weakly tame. We finish by applying Fact \ref{good-frame-weak-tameness}.
\end{proof}

Second, we can study the categoricity spectrum \emph{below} the Hanf number of an AEC with amalgamation and no maximal models. While it is known that the categoricity spectrum in such AECs must be a closed set (see the proof of \cite[Corollary 4.3]{tamenesstwo}), we show (in ZFC) that there are other restrictions:

\begin{thm}\label{categ-below-hanf}
  Assume that $\K$ has amalgamation and arbitrarily large models. Let $\mu$ be a limit cardinal such that $\cf{\mu} > \LS (\K)$. If $\K$ is categorical in unboundedly many successor cardinals below $\mu$, then there exists $\mu_0 < \mu$ such that $\K$ is categorical in every $\lambda \in [\mu_0, \mu]$.

  In particular (setting $\mu := \aleph_{\LS (\K)^+}$), if $\K$ is categorical in $\aleph_{\alpha + 1}$ for unboundedly many $\alpha < \LS (\K)^+$, then there exists $\alpha_0 < \LS (\K)^+$ such that $\K$ is categorical in $\aleph_\beta$ for every $\beta \in [\alpha_0, \LS (\K)^+]$.
\end{thm}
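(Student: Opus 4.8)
The plan is to bootstrap from the ``unboundedly many'' hypothesis up to a good frame covering an interval below $\mu$ and then to invoke the downward transfer. First I would apply Fact \ref{jep-decomp} with $\lambda$ any categoricity cardinal below $\mu$: since the conclusion only concerns a final segment of $[\LS (\K), \mu]$, this lets us assume throughout that $\K$ has amalgamation, joint embedding, and no maximal models. \emph{Phase 1 (superstability, symmetry, categoricity in $\mu$).} For each $\chi \in [\LS (\K), \mu)$ pick a categoricity cardinal $\lambda_\chi \in (\chi, \mu)$ that is a successor, hence regular with $\cf{\lambda_\chi} > \chi$; then Fact \ref{shvi} and Fact \ref{sym-from-categ}.(1), applied to the AEC $\K_{\ge \chi}$, give that $\K$ is superstable in $\chi$ and has $\chi$-symmetry. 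Feeding this into Fact \ref{sym-unionsat} shows $\Ksatp{\chi}$ is an AEC with $\LS (\Ksatp{\chi}) = \chi$ for every $\chi \in (\LS (\K), \mu]$. Hence for each categoricity successor $\lambda < \mu$ the class $\Ksatp{\lambda}$ is a nonempty AEC whose size-$\lambda$ members are all saturated and are strong submodels of $\K$, so categoricity of $\K$ in $\lambda$ forces the model of $\K$ of size $\lambda$ to be saturated, and in particular $\chi^+$-saturated for every $\chi < \lambda$. A union argument now gives categoricity in $\mu$: given $M \in \K_\mu$ and $\chi \in [\LS (\K), \mu)$, write $M$ as the union of an increasing chain of strong submodels whose sizes are categoricity successors in $(\chi, \mu)$; each is saturated by the previous point, hence $\chi^+$-saturated, and $\Ksatp{\chi^+}$ is closed under unions of chains, so $M$ is $\chi^+$-saturated; as $\chi$ was arbitrary, $M$ is saturated. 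Thus $\K$ is categorical in $\mu$ and its model of size $\mu$ is saturated. (This is where both the ``unboundedly many'' hypothesis and $\cf{\mu} > \LS (\K)$ get used, the latter to keep the resolution inside models of size $< \mu$.)

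\emph{Phase 2 (weak tameness and a frame on the saturated models).} Next I would apply the first part of Theorem \ref{technical-lem} with the categoricity cardinal $\mu$ --- legitimate since by Phase 1 its model is $\mu$-saturated and $\cf{\mu} > \LS (\K)$ --- to obtain $\chi^\ast < \mu$, which we may take above $\LS (\K)$, with $\K$ being $(\chi^\ast, <\mu)$-weakly tame. Moreover the hypotheses of Theorem \ref{good-frame-improvement} now hold with $\lambda := \mu$, so (after possibly enlarging $\chi^\ast$) there is a type-full good $\mu_0$-frame on $\Ksatp{\mu_0}_{\mu_0}$ for every $\mu_0 \in [\chi^\ast, \mu)$. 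Since the size-$\chi^\ast$ members of the AEC $\Ksatp{\chi^\ast}$ are the limit models and weak tameness of $\K$ lets the small restrictions of a type be taken over saturated models, $\Ksatp{\chi^\ast}$ is itself $(\chi^\ast, <\mu)$-tame, so Fact \ref{frame-upward-transfer} upgrades the good $\chi^\ast$-frame on $\Ksatp{\chi^\ast}_{\chi^\ast}$ to a type-full good $[\chi^\ast, \mu)$-frame $\s$ on $\Ksatp{\chi^\ast}$.

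\emph{Phase 3 (downward transfer).} Fix a categoricity successor $\kappa^+ \in (\chi^\ast, \mu)$. By Phase 1 every model of $\K$ of size $\kappa^+$ is $\kappa$-saturated and there is exactly one up to isomorphism, so $(\Ksatp{\chi^\ast})^{\kappa\text{-sat}}$ is categorical in $\kappa^+$; and $\Ksatp{\chi^\ast}$ is categorical in $\chi^\ast$ by uniqueness of limit models (Fact \ref{uq-limit}). Invoking the weak-tameness strengthening of Theorem \ref{good-categ-transfer} from Appendix \ref{shrinking-appendix} (whose input is precisely the good frame $\s$ on the saturated models together with the compatibility statement Theorem \ref{shrinking-compat}) then gives that $\K$ is categorical in every $\nu \in [\chi^\ast, \mu]$ --- which is the theorem with $\mu_0 := \chi^\ast$. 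The ``in particular'' is the instance $\mu := \aleph_{\LS (\K)^+}$, whose cofinality $\LS (\K)^+$ exceeds $\LS (\K)$.

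I expect the main obstacle to be Phase 1 --- converting categoricity in cofinally many cardinals below $\mu$ into genuine saturation of those models and then into categoricity in $\mu$ itself --- together with the bookkeeping in Phases 2--3 that must carefully distinguish $\K$ from its subclass of saturated models, since only \emph{weak} tameness is available; this second point is exactly what Appendix \ref{shrinking-appendix} is designed to handle.
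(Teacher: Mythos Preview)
Your overall route is considerably longer than the paper's, and Phase 2 contains a genuine gap.

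The paper's proof does \emph{not} invoke the downward transfer at all. After reducing to no maximal models via Fact \ref{jep-decomp} and observing (essentially your Phase 1) that $\K$ is categorical in $\mu$ with saturated model, the paper applies Theorem \ref{technical-lem} to obtain $(\mu_0', <\mu)$-weak tameness for some $\mu_0' < \mu$, enlarges $\mu_0'$ so that some categoricity successor $\mu_0$ of $\K$ lies above it, and then applies the \emph{upward} transfer of Grossberg--VanDieren (Fact \ref{gv-upward}, in its weakly tame form from Remark \ref{gv-upward-rmk}) to get categoricity of $\K$ in all of $[\mu_0, \mu]$. That is the entire argument. Your machinery of good frames and Theorem \ref{good-categ-transfer} is unnecessary here, because the statement only asks for categoricity on a \emph{tail} below $\mu$, and a single categoricity successor plus weak tameness already pushes categoricity upward to $\mu$.

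The gap in your Phase 2 is the inference ``weak tameness of $\K$ lets the small restrictions of a type be taken over saturated models, so $\Ksatp{\chi^\ast}$ is itself $(\chi^\ast, <\mu)$-tame.'' Weak tameness of $\K$ (Definition \ref{weak-tameness-def}) only controls types over \emph{fully} saturated models, i.e.\ over $M \in \K_{\mu'}$ that are $\mu'$-saturated, not over arbitrary $\chi^\ast$-saturated models; so Fact \ref{frame-upward-transfer}, which needs genuine tameness of the underlying class, does not apply to $\Ksatp{\chi^\ast}$. This is exactly the obstacle that Appendix \ref{shrinking-appendix} is written to circumvent: you should bypass Fact \ref{frame-upward-transfer} entirely and go straight to Theorem \ref{shrinking-frame-weak-tameness} to build a shrinking frame, then apply Theorem \ref{good-categ-transfer-weak}. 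Even with that fix, the conclusion of Theorem \ref{good-categ-transfer-weak} is categoricity of $\Ksatp{(\chi^\ast)^+}$, not of $\K$; to finish you must choose $\mu_0$ to be a categoricity cardinal of $\K$ above $(\chi^\ast)^+$ and use that $\Ksatp{(\chi^\ast)^+}$ is closed under chains (so every $M \in \K_{\ge \mu_0}$, being a union of models in $\K_{\mu_0} = \Ksatp{(\chi^\ast)^+}_{\mu_0}$, lies in $\Ksatp{(\chi^\ast)^+}$), as in the proof of Corollary \ref{main-cor}.
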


Before starting the proof, we make a remark:

\begin{remark}\label{gv-upward-rmk}
  Fact \ref{gv-upward} generalizes to AECs that are only $\LS (\K)$-weakly tame, or just $(\LS (\K), <\theta)$-weakly tame (in the second case, we can only conclude categoricity up to and including $\theta$). This is implicit in \cite{tamenesstwo, tamenessthree} and stated explicitly in Chapter 13 of \cite{baldwinbook09}.
\end{remark}

\begin{proof}[Proof of Theorem \ref{categ-below-hanf}]
  Without loss of generality (Fact \ref{jep-decomp}), $\K$ has no maximal models. By amalgamation, every model of size $\mu$ is saturated. In particular $\K$ is categorical in $\mu$. By Theorem \ref{technical-lem} (with $\lambda, \mu$ there standing for $\mu, \mu$ here), there exists $\mu_0' < \mu$ such that $\K$ is $(\mu_0', <\mu)$-weakly tame. By making $\mu_0'$ bigger if necessary, we can assume without loss of generality that $\mu_0' > \LS (\K)$ and $\K$ is categorical in $\mu_0 := (\chi_0')^+$. By the upward categoricity transfer of Grossberg and VanDieren (Fact \ref{gv-upward}, keeping in mind Remark \ref{gv-upward-rmk}), $\K$ is categorical in every $\lambda \in [\mu_0, \mu]$.
\end{proof}

\section{Shelah's omitting type theorem}\label{shelah-sec}

In this section, we give a nonlocal proof of the improvement of the bounds in \cite{sh394} from $\beth_{H_1}$ to $H_1$ using the methods of \cite{sh394}. We will present a more powerful local proof in the next sections. We also give several partial categoricity transfers in AECs with amalgamation, including Theorem \ref{omitting-categ-transfer} which says that in a tame AEC with amalgamation, categoricity in \emph{some} cardinal (above the tameness cardinal) implies categoricity in a proper class of cardinals. The main tool is a powerful generalization of Morley's omitting type theorem (Fact \ref{shelah-omit}), an early form of which appears in \cite{makkaishelah}.

All throughout, we assume:

\begin{hypothesis}
  $\K$ is an AEC with amalgamation.
\end{hypothesis}

As a motivation, we first state Morley's omitting type theorem for AECs \cite[II.1.10]{sh394}. We state a slightly stronger conclusion (replacing $H_1$ by some $\chi < H_1$) that is implicit e.g.\ in \cite{sh394} but to the best of our knowledge, a proof of this stronger result has not appeared in print before. We include a proof (similar to the proof of \cite[Theorem 5.4]{bg-v10}, though there is an additional step involved) for the convenience of the reader.

\begin{fact}[Morley's omitting type theorem for AECs]\label{omitting-type}
  Let $\lambda > \LS (\K)$. If every model in $\K_\lambda$ is $\LS (\K)^+$-saturated, then there exists $\chi < H_1$ such that every model in $\K_{\ge \chi}$ is $\LS (\K)^+$-saturated.
\end{fact}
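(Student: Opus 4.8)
The plan is to carry out Morley's method of omitting types in the abstract-elementary-class setting --- via Shelah's presentation theorem and an Erd\H{o}s--Rado argument --- preceded by a pigeonhole step that reduces the (a priori non-uniform) failure of $\LS(\K)^+$-saturation to the omission of a single fixed, small family of syntactic types in a suitable first-order expansion. Set $\kappa := \LS(\K)$, and call a cardinal $\mu \geq \kappa^+$ \emph{bad} if some model in $\K_\mu$ is not $\kappa^+$-saturated. By downward L\"owenheim--Skolem the bad cardinals are downward closed in $[\kappa^+, \infty)$: if $M_0 \lea N$ with $\|M_0\| = \kappa$ and $p \in \gS(M_0)$ is omitted in $N \in \K_\mu$, then any $\lea$-submodel of $N$ of size $\mu' \in [\kappa^+, \mu]$ containing $M_0$ also omits $p$. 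Assume toward a contradiction that no $\chi < H_1$ works; then for every $\chi < H_1$ there is a bad cardinal $\geq \chi$, so by downward closure every cardinal in $[\kappa^+, H_1)$ is bad, and in particular the bad cardinals are cofinal in $H_1$. It therefore suffices to prove the transfer statement: \emph{if the bad cardinals are cofinal in $H_1$, then every cardinal $\geq \kappa^+$ is bad}, which contradicts the hypothesis since $\lambda \geq \kappa^+$.

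To prove the transfer, first make the witnesses uniform. For each bad $\mu < H_1$ pick $N_\mu \in \K_\mu$, $M_\mu \lea N_\mu$ of size $\kappa$, a nonalgebraic $p_\mu \in \gS(M_\mu)$ omitted in $N_\mu$, and a model $N_\mu' \gea M_\mu$ of size $\kappa$ realizing $p_\mu$. A Galois type over $\emptyset$ of a tuple of length $\leq \kappa$ is determined by the isomorphism type of a $\lea$-submodel of size $\leq \kappa$ containing it, so there are at most $2^\kappa$ of them; since $\cf(H_1) = (2^\kappa)^+ > 2^\kappa$ and the bad cardinals are cofinal in $H_1$, the Galois type over $\emptyset$ of an enumeration of $M_\mu$ together with a realization of $p_\mu$, computed in $N_\mu'$, takes one fixed value on a set of bad cardinals still cofinal in $H_1$. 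Discarding the rest, we may fix a triple $(M, a, N)$ with $M \lea N$ of size $\kappa$, $q := \gtp(a / M; N)$ nonalgebraic, and, for cofinally many $\mu < H_1$, a $\lea$-embedding $h_\mu : M \to N_\mu$ such that the corresponding type $h_\mu(q)$ over $h_\mu(M)$ is omitted in $N_\mu$.

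Now invoke Shelah's presentation theorem (see \cite{baldwinbook09}): $\K = \{ M^\ast \restriction \tau(\K) : M^\ast \models T_1 \text{ and } M^\ast \text{ omits every type in } \Gamma_1 \}$ for a first-order theory $T_1$ (which we may take universal, by adding Skolem functions) in a language $\tau_1 \supseteq \tau(\K)$ with $|\tau_1| = \kappa$ and a set $\Gamma_1$ of $\tau_1$-types over $\emptyset$ with $|\Gamma_1| \leq 2^\kappa$, $\tau_1$-substructure of the expansions inducing $\lea$. Fix a $\tau_1$-expansion $M^1$ of $M$, and expand $\tau_1$ further by $\kappa$ constants $\bar c$ naming the universe of $M^1$ and by Skolem functions, yielding a language $\tau^\ast$ of size $\kappa$. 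Let $\Gamma_q$ be the set of quantifier-free $\tau^\ast$-types over $\emptyset$ in one variable that a realization of $q$ over a copy of $M^1$ named by $\bar c$ can have in a $\tau^\ast$-structure whose $\tau_1$-reduct omits $\Gamma_1$; then $|\Gamma_q| \leq 2^\kappa$, and --- using the standard fact that in this presentation the Galois type of a tuple in a $\tau(\K)$-reduct is determined by its quantifier-free $\tau_1$-type --- a model of $T_1$ that omits $\Gamma_1 \cup \Gamma_q$ and in which $\bar c$ names a $\lea$-submodel isomorphic to $M$ has $\tau(\K)$-reduct omitting the image of $q$ over that submodel. Each $N_\mu$ from the cofinal set, suitably $\tau^\ast$-expanded with $\bar c$ naming $h_\mu(M)$, is then a model of $T_1$ of size $\mu$ omitting $\Gamma_1 \cup \Gamma_q$, and such models exist in sizes cofinal in $H_1 = \beth_{(2^\kappa)^+}$. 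Morley's incremental Erd\H{o}s--Rado construction, run through the $(2^\kappa)^+$ levels of indiscernibility, produces an EM template $\Phi$ over $\tau^\ast$ extending $T_1$ and compatible with omitting $\Gamma_1 \cup \Gamma_q$; hence for every infinite linear order $I$ the $\tau(\K)$-reduct of $\text{EM}_{\tau^\ast}(I, \Phi)$ lies in $\K_{|I|}$, and whenever $|I| \geq \kappa^+$ it omits the image of $q$ over the $\kappa$-sized submodel named by $\bar c$, so it is not $\kappa^+$-saturated. Thus every cardinal $\geq \kappa^+$ is bad, which completes the argument.

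I expect the main obstacle to be this middle translation, which is the ``additional step'' over a bare omitting-types theorem: $\LS(\K)^+$-saturation involves omitted Galois types over \emph{varying} $\LS(\K)$-sized submodels, so one must (i) pigeonhole the $\LS(\K)$-sized configurations to a single one --- this is what forces the threshold $\beth_{(2^{\LS(\K)})^+}$ and, through the cofinality computation, lets the bound be taken \emph{strictly below} it --- and (ii) express ``omits a given Galois type over the submodel named by $\bar c$'' as ``omits a given set of quantifier-free $\tau^\ast$-types'' in both directions, using the presentation theorem. The incremental Erd\H{o}s--Rado extraction and the combinatorics on bad cardinals are routine, paralleling the proof of \cite[Theorem 5.4]{bg-v10}.
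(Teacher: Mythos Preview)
Your proposal is correct and follows essentially the same route as the paper: reduce to a single fixed Galois type by pigeonhole on the $2^{\LS(\K)}$ many isomorphism classes (using $\cf(H_1)=(2^{\LS(\K)})^+$), then apply Shelah's presentation theorem together with Morley's method to produce arbitrarily large models omitting that type. The only difference is packaging: the paper forms the auxiliary AEC $\K_{\neg p}$ of models omitting the fixed type (with constants for the base model $N$) and applies the Hanf-number machinery to it directly, whereas you unpack this step by encoding ``omits $q$'' as omission of a set $\Gamma_q$ of quantifier-free syntactic types in the Skolemized presentation --- both are valid, and the $\K_{\neg p}$ abstraction simply hides the verification (which you carry out) that quantifier-free type over the constants determines Galois type over the named submodel.
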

\begin{proof}[Proof sketch]
  Without loss of generality (Fact \ref{jep-decomp}), $\K$ has no maximal models. Suppose the conclusion fails. Then for every $\chi \in [\LS (\K), H_1)$, there exists $M_{\chi} \in \K_{\chi}$ which is not $\LS (\K)^+$-saturated. Pick witnesses $M_{0, \chi} \lea M_\chi$ and $p_{\chi} \in \gS (M_{0, \chi})$ such that $\|M_{0, \chi}\| = \LS (\K)$ and $M_{\chi}$ omits $p_{\chi}$. Now there are only $2^{\LS (K)}$ isomorphism types of Galois types over models of size $\LS (\K)$, and $\cf{H_1} = \left(2^{\LS (\K)}\right)^+ > 2^{\LS (\K)}$, so there exists $N \in \K_{\LS (\K)}$, $p \in \gS (M)$, and an unbounded $S \subseteq [\LS (\K), H_1)$ such that for all $\chi \in S$, $p_{\chi}$ is isomorphic to $p$ (in the natural sense). Look at the AEC $\K_{\neg p}$ of all the models of $\K$ omitting $p$, with constants added for $N$ (see e.g.\ the definition of $\K^+$ in the proof of \cite[Theorem 5.4]{bg-v10}). For each $\chi \in S$, an appropriate expansion of a copy of $M_{\chi}$ is in $\K_{\neg p}$. $\K_{\neg p}$ has Löwenheim-Skolem-Taski number $\LS (\K)$, so by Shelah's presentation theorem and Morley's omitting type theorem (for first-order theories), $\K_{\neg p}$ has arbitrarily large models, contradicting the assumptions on $\lambda$.
\end{proof}

A generalization of Fact \ref{omitting-type} is what we call Shelah's omitting type theorem. The statement appears (in a more general form) in \cite[Lemma II.1.6]{sh394}, but the full proof (for models of an $\Ll_{\kappa, \omega}$ theory, $\kappa$ a strongly compact cardinal) can already be found in \cite[Proposition 3.3]{makkaishelah} (see also Will Boney's note \cite{boney-shelah-omitting}). We state a simplified version:

\begin{fact}[Shelah's omitting type theorem]\label{shelah-omit}
  Let $M_0 \lea M$ both be in $\K_{\ge \LS (\K)}$ and let $p \in \gS (M_0)$. Assume that $M$ omits $p / E_{\LS (\K)}$. That is, for every $a \in |M|$, there is $M_0' \lea M_0$ with $\|M_0'\| = \LS (\K)$ such that $\gtp (a / M_0'; M) \neq p \rest M_0'$. 

  If $ \beth_{(2^{\LS (\K)})^+} (\|M_0\|) \le \|M\|$, then there is a non-$\LS (\K)^+$-saturated model in every cardinal.
\end{fact}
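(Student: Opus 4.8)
The plan is to reduce to Morley's omitting types theorem for first-order logic, in its relativized (two-cardinal) form, via Shelah's presentation theorem; this is the strategy of Makkai and Shelah \cite[Proposition 3.3]{makkaishelah} (see also \cite[Lemma II.1.6]{sh394} and Boney's note \cite{boney-shelah-omitting}), and I would follow it. Write $\kappa := \LS (\K)$ and $\mu_0 := \|M_0\|$. Note that since $\|M\| \ge \beth_{(2^\kappa)^+} (\mu_0) \ge H_1$, the class $\K$ automatically has arbitrarily large models, so all that is really at issue is producing, in each cardinal, a model that fails to be $\kappa^+$-saturated.

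First I would apply Shelah's presentation theorem to obtain a first-order vocabulary $\tau^* \supseteq \tau (\K)$ with $|\tau^*| = \kappa$, a universal $\tau^*$-theory $T^*$ with built-in Skolem functions (so that $\tau^*$-substructures of models of $T^*$ have $\tau (\K)$-reducts that are $\lea$-submodels), and a set $\Gamma_0$ of at most $2^\kappa$ $\tau^*$-types, such that $\K$ is exactly the class of $\tau (\K)$-reducts of models of $T^*$ omitting $\Gamma_0$. Then I would expand once more, adjoining a unary predicate $P$ (interpreted in an expansion $M^*$ of $M$ as $|M_0|$) together with axioms forcing $P$ to carry a $\tau^*$-substructure. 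The crucial step is to code the hypothesis that $M$ omits $p / E_\kappa$ as an omitting-types condition: a $\lea$-submodel of $M_0$ of size $\kappa$ is the $\tau (\K)$-reduct of a $\tau^*$-substructure generated by $\kappa$ elements of $P$, and since there are only $2^\kappa$ isomorphism types of pairs $(N_0, q)$ with $N_0 \in \K_\kappa$ and $q \in \gS (N_0)$, the assertion ``for every $a \in |M|$ there is $M_0' \lea M_0$ of size $\kappa$ with $\gtp (a / M_0'; M) \neq p \rest M_0'$'' should be expressible as the omission of a further family $\Gamma_1$ of at most $2^\kappa$ $\tau^*$-types (adjoining boundedly many more symbols if needed, so as to keep $|\tau^*| = \kappa$). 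The expansion $M^*$ then witnesses that $T^*$ has a model of size $\|M\| \ge \beth_{(2^\kappa)^+} (\mu_0)$ that omits $\Gamma_0 \cup \Gamma_1$ and has $|P^{M^*}| = \mu_0$.

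It then remains to stretch. I would invoke the relativized Hanf-number theorem for omitting types (the usual argument via the partition calculus; see \cite[Chapter 14]{baldwinbook09} and the references in \cite{makkaishelah}) to extract from $M^*$ an Ehrenfeucht-Mostowski blueprint $\Phi$, proper for linear orders, such that for every linear order $I$ the model $\text{EM}_{\tau^*} (I, \Phi)$ satisfies $T^*$ and omits $\Gamma_0 \cup \Gamma_1$; the relativized form, in which $P$ is allowed to remain of size $\mu_0$, is exactly what accounts for the factor $\beth_{(2^\kappa)^+} (\mu_0)$ in the hypothesis. Taking $\tau (\K)$-reducts then yields, in every cardinality $\ge \kappa$, a model $N \in \K$ with a submodel $N_0 := P^N \lea N$ over which the image of $p$ is omitted modulo $E_\kappa$. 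Finally, using the homogeneity of EM models (an automorphism of the index order induces one of $N$), I expect that the $E_\kappa$-class of $p$ in $N$ is governed by a bounded part of the skeleton, so that omitting $p / E_\kappa$ already forces some genuine type $q \in \gS (N_0^-)$ with $N_0^- \lea N$ and $\|N_0^-\| = \kappa$ to be omitted in $N$; hence $N$ is not $\kappa^+$-saturated.

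The hard part will be the translation step of the second paragraph: encoding ``$M$ omits $p / E_\kappa$'' faithfully as the omission of at most $2^\kappa$ first-order types over a vocabulary of size only $\kappa$ (rather than $2^\kappa$, which would blow up the Hanf number), and checking that this condition is preserved under passage to EM models, so that the omission survives arbitrary stretching. This is precisely the technical content of \cite[Proposition 3.3]{makkaishelah} (and \cite[Lemma II.1.6]{sh394}), whose argument carries over. A secondary subtlety, flagged above, is to obtain in the stretched models an honest $\kappa$-small omitted type, so as to deduce genuine failure of $\kappa^+$-saturation from the weaker statement that $p / E_\kappa$ is omitted.
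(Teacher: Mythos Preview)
The paper does not give its own proof of this statement: it is recorded as a \emph{Fact}, with the proof deferred to \cite[Lemma II.1.6]{sh394}, \cite[Proposition 3.3]{makkaishelah}, and Boney's note \cite{boney-shelah-omitting}. Your proposal is exactly the strategy of those references (Shelah's presentation theorem together with the relativized Morley omitting-types argument), and you have correctly isolated the two genuine technical points: encoding ``$M$ omits $p/E_{\LS(\K)}$'' as the omission of at most $2^{\LS(\K)}$ first-order types over a language of size $\LS(\K)$, and extracting an honest omitted type over a model of size $\LS(\K)$ in the stretched EM model.

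One small sharpening of your last paragraph: the reason the stretched model fails to be $\LS(\K)^+$-saturated is not so much ``homogeneity'' as the two-cardinal feature of the construction. Since the indiscernibles can be chosen in $|M| \setminus |M_0|$ and (via the Erd\H{o}s--Rado step, which is where the base $\|M_0\|$ enters the Hanf bound) so that every Skolem term landing in $P$ is constant on increasing tuples, the predicate $P$ in $\text{EM}(I,\Phi)$ has size at most $|\tau^*| = \LS(\K)$ regardless of $|I|$. Thus in the stretched model the copy $N_0$ of $M_0$ already has size $\LS(\K)$, and omitting $p/E_{\LS(\K)}$ over it is literally omitting $p$ over a model of size $\LS(\K)$. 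This is what you are gesturing at, but it is cleaner to say it this way than to invoke automorphisms.
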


Note that taking $\|M_0\| = \LS (\K)$, we recover Morley's omitting type theorem for AECs. Note also that when $\K$ is $\LS (\K)$-tame, $M$ above omits $p / E_{\LS (\K)}$ if and only if $M$ omits $p$. The following two direct consequences are implicit in \cite{sh394}.

\begin{lem}\label{omitting-type-technical}
  Let $\LS (\K) < \lambda$ and let $\LS (\K) \le \chi < \mu$. Assume that:
  
  \begin{enumerate}
  \item $\beth_{\left(2^{\LS (\K)}\right)^+} (\chi) \le \mu$.
  \item $\K$ is $(\LS (\K), \le \chi)$-weakly tame.
  \item For every $\chi_0 \in [\LS (\K)^+, \chi]$, $\Ksatp{\chi_0}$ is an AEC with $\LS (\Ksatp{\chi_0}) = \chi_0$.
  \end{enumerate}

  If every model in $\K_{\lambda}$ is $\LS (\K)^+$-saturated, then every model in $\K_{\mu}$ is $\chi^+$-saturated.
\end{lem}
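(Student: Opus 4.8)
The plan is to prove, by induction on the cardinal $\chi_0 \in [\LS (\K), \chi]$, that every model in $\K_\mu$ is $\chi_0^+$-saturated; instantiating this with $\chi_0 = \chi$ gives the lemma. For the base case $\chi_0 = \LS (\K)$, I would apply Morley's omitting type theorem for AECs (Fact \ref{omitting-type}) to the hypothesis, obtaining some $\chi' < H_1$ such that every model in $\K_{\ge \chi'}$ is $\LS (\K)^+$-saturated; since $\mu \ge \beth_{\left(2^{\LS (\K)}\right)^+} (\chi) \ge H_1 > \chi'$ (using $\chi \ge \LS (\K)$ and that $\beth$ is monotone in its base argument), every model in $\K_\mu$ is $\LS (\K)^+$-saturated. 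I will invoke this base case once more at the end of the argument to close the induction.

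For the inductive step, fix $\chi_0 \in (\LS (\K), \chi]$ and assume every model in $\K_\mu$ is $\chi_0'^+$-saturated for every cardinal $\chi_0' \in [\LS (\K), \chi_0)$; equivalently, every model in $\K_\mu$ is $\chi_0$-saturated. Suppose toward a contradiction that $M \in \K_\mu$ is not $\chi_0^+$-saturated, and fix $M_0 \lea M$ and a nonalgebraic $p \in \gS (M_0)$ with $\|M_0\| \le \chi_0$ such that $M$ omits $p$. The induction hypothesis forces $\|M_0\| = \chi_0$ (were $\|M_0\| < \chi_0$, $M$ would realize $p$). Now $\chi_0 \in [\LS (\K)^+, \chi]$, so assumption (3) gives that $\Ksatp{\chi_0}$ is an AEC with $\LS (\Ksatp{\chi_0}) = \chi_0$, and $M \in \Ksatp{\chi_0}$ since $M$ is $\chi_0$-saturated and $\|M\| = \mu \ge \chi_0$. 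Applying the Löwenheim-Skolem-Tarski axiom of $\Ksatp{\chi_0}$ to the set $|M_0| \subseteq |M|$ produces $M_0^\ast \in \Ksatp{\chi_0}$ with $|M_0| \subseteq |M_0^\ast|$, $M_0^\ast \lea M$ and $\|M_0^\ast\| = \chi_0$; coherence in $\K$ yields $M_0 \lea M_0^\ast$, and $M_0^\ast$ is saturated (being $\chi_0$-saturated of size $\chi_0$). Using amalgamation, extend $p$ to a (necessarily nonalgebraic) $p^\ast \in \gS (M_0^\ast)$; since $p^\ast \rest M_0 = p$ and $M$ omits $p$, the model $M$ omits $p^\ast$ as well.

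It now suffices to check the two hypotheses of Shelah's omitting type theorem (Fact \ref{shelah-omit}) for $(M_0^\ast, M, p^\ast)$. The cardinal condition holds because $\beth_{\left(2^{\LS (\K)}\right)^+} (\|M_0^\ast\|) = \beth_{\left(2^{\LS (\K)}\right)^+} (\chi_0) \le \beth_{\left(2^{\LS (\K)}\right)^+} (\chi) \le \mu = \|M\|$ by assumption (1) and monotonicity of $\beth$ in its base. For the omitting condition, I would show $M$ omits $p^\ast / E_{\LS (\K)}$: otherwise there is $a \in |M|$ with $\gtp (a / N; M) = p^\ast \rest N$ for every $N \lea M_0^\ast$ of size $\LS (\K)$; but $M_0^\ast$ is saturated of size $\chi_0 \le \chi$ and $\K$ is $(\LS (\K), \chi_0)$-weakly tame by assumption (2), so this would force $\gtp (a / M_0^\ast; M) = p^\ast$, contradicting that $M$ omits $p^\ast$. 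Then Fact \ref{shelah-omit} yields a non-$\LS (\K)^+$-saturated model in every cardinal, in particular one in $\K_\mu$, contradicting the base case; this closes the induction.

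The crux of the argument, and the point at which assumptions (2) and (3) are genuinely needed, is the replacement of the omitted type $p$ over the possibly non-saturated model $M_0$ by the omitted type $p^\ast$ over a \emph{saturated} model $M_0^\ast$ sitting inside $M$: weak tameness only controls types over saturated models, and it is precisely the induction hypothesis -- which makes $M$ itself $\chi_0$-saturated, hence a member of the AEC $\Ksatp{\chi_0}$ -- that allows this saturation to be performed inside $M$.
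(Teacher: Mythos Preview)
Your proof is correct and follows essentially the same approach as the paper: induction on $\chi_0 \in [\LS(\K), \chi]$, enlarging the witnessing submodel to a saturated one inside $M$ via assumption (3), and then invoking weak tameness and Shelah's omitting type theorem. The only cosmetic differences are that for the base case you cite Morley's omitting type theorem (Fact \ref{omitting-type}) rather than Fact \ref{shelah-omit} with $\|M_0\| = \LS(\K)$ directly, and you derive the final contradiction at $\mu$ (via the base case) rather than at $\lambda$ (via the hypothesis); both are immediate since Fact \ref{shelah-omit} yields a non-$\LS(\K)^+$-saturated model in \emph{every} cardinal.
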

\begin{proof}
  Assume that every model in $\K_{\lambda}$ is $\LS (\K)^+$-saturated. We prove that for all $\chi_0 \in [\LS (\K), \chi]$, every model in $\K_{\mu}$ is $\chi_0^+$-saturated. We proceed by induction on $\chi_0$. If $\chi_0 = \LS (\K)$,  take $M \in \K_{\mu}$ and $M_0 \lea M$ of size $\chi_0$. If there is a type over $M_0$ omitted in $\K_{\mu}$, then by Fact \ref{shelah-omit}, there is a non-$\LS (\K)^+$-saturated model of size $\lambda$, a contradiction. Therefore every model in $\K_{\mu}$ is $\LS (\K)^+$-saturated.

  Now let $\chi_0 > \LS (\K)$ and assume inductively that every model in $\K_{\mu}$ is $\chi_0$-saturated. Let $M \in \K_{\mu}$ and let $M_0 \lea M$ have size $\chi_0$. Since $\LS (\Ksatp{\chi_0}) = \chi_0$ and $M$ is $\chi_0$-saturated, we take enlarge $M_0$ if necessary to assume without loss of generality that $M_0$ is $\chi_0$-saturated. Let $p \in \gS (M_0)$. Then by the weak tameness hypothesis $p / E_{\LS (\K)} = p$. So if $M$ omits $p$, then by Fact \ref{shelah-omit}, $\K_\lambda$ has a non-$\LS (\K)^+$-saturated model, a contradiction, so $M$ realizes $p$, as needed.
\end{proof}
\begin{remark}
  We only use above that $\LS (\Ksatp{\chi_0}) = \chi_0$, not that $\Ksatp{\chi_0}$ is an AEC. That is, we only use that for $M \in \Ksatp{\chi_0}$ and $A \subseteq |M|$, there exists a $\chi_0$-saturated $M_0 \lea M$ such that $\|M_0\| \le |A| + \chi_0$.
\end{remark}
\begin{lem}\label{omitting-type-technical-2}
  Let $\LS (\K) < \mu < \lambda$. Assume that:
  
  \begin{enumerate}
  \item $\K$ is categorical in $\lambda$.
  \item $\mu = \beth_{\delta}$, for some limit ordinal $\delta$ divisible by $\left(2^{\LS (\K)}\right)^+$.
  \item $\K$ is $(\LS (\K), <\mu)$-weakly tame.
  \end{enumerate}

  If the model of size $\lambda$ is $\mu$-saturated, then $\K$ is categorical in $\mu$.
\end{lem}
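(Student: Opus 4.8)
The plan is to derive categoricity in $\mu$ from Lemma~\ref{omitting-type-technical}: I will show that every model in $\K_\mu$ is $\chi^+$-saturated for \emph{every} $\chi\in[\LS(\K),\mu)$, hence that every model in $\K_\mu$ is saturated, and then conclude by the usual uniqueness of saturated (equivalently, model-homogeneous) models of size $\mu$ in an AEC with amalgamation. Note $\K_\mu\neq\emptyset$, since the model of size $\lambda$ has a strong substructure of size $\mu$.

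First I would reduce to the case that $\K$ has no maximal models. Categoricity in $\lambda$ gives joint embedding in $\lambda$, so Fact~\ref{jep-decomp} yields $\chi^\ast<\beth_{\left(2^{\LS(\K)}\right)^+}$ and an AEC $\K^\ast\subseteq\K$ with the same strong substructure relation, $\LS(\K^\ast)=\LS(\K)$, amalgamation, joint embedding, no maximal models, and $\K_{\ge\min(\lambda,\chi^\ast)}=\K^\ast_{\ge\min(\lambda,\chi^\ast)}$. Since $\delta$ is a nonzero limit ordinal divisible by $\left(2^{\LS(\K)}\right)^+$, we have $\delta\ge\left(2^{\LS(\K)}\right)^+$, so $\mu=\beth_\delta\ge\beth_{\left(2^{\LS(\K)}\right)^+}>\chi^\ast$, and also $\lambda>\mu>\chi^\ast$. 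Thus $\K$ and $\K^\ast$ agree on all models and Galois types of size $\ge\chi^\ast$, in particular in sizes $\mu$ and $\lambda$, so the hypotheses pass to $\K^\ast$ and it suffices to prove $\K^\ast$ categorical in $\mu$; the one point needing a little care here is that $(\LS(\K),<\mu)$-weak tameness and $\mu$-saturation of the model of size $\lambda$ are inherited by $\K^\ast$, which holds because the decomposition alters the class only below $\chi^\ast<\mu$ and a small submodel can always be closed off to size $\chi^\ast$. So from now on assume $\K$ has no maximal models.

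Next, fix $\chi\in[\LS(\K),\mu)$ and check the hypotheses of Lemma~\ref{omitting-type-technical} for this $\chi$ and $\mu$. Its running assumption (every model in $\K_\lambda$ is $\LS(\K)^+$-saturated) holds: the model of size $\lambda$ is the given $\mu$-saturated one and $\mu>\LS(\K)$. Hypothesis~(2), $(\LS(\K),\le\chi)$-weak tameness, follows from $(\LS(\K),<\mu)$-weak tameness since $\chi<\mu$. For hypothesis~(3): given $\chi_0\in[\LS(\K)^+,\chi]$ one has $\LS(\K)<\chi_0<\mu<\lambda$ and the model of size $\lambda$ is $\chi_0^+$-saturated, so Corollary~\ref{cor-sat-categ} gives that $\Ksatp{\chi_0}$ is an AEC with $\LS(\Ksatp{\chi_0})=\chi_0$. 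Hypothesis~(1), $\beth_{\left(2^{\LS(\K)}\right)^+}(\chi)\le\mu$, is where the divisibility hypothesis enters: writing $\kappa:=\left(2^{\LS(\K)}\right)^+$, since $\chi<\mu=\beth_\delta=\sup_{\alpha<\delta}\beth_\alpha$ with $\delta$ a limit, pick $\alpha_0<\delta$ with $\chi\le\beth_{\alpha_0}$; since $\kappa\mid\delta$ and $\alpha_0<\delta$ we get $\alpha_0+\kappa\le\delta$ (if $\delta=\kappa\cdot\gamma$ and $\alpha_0=\kappa\cdot\beta+\rho$ with $\rho<\kappa$, then $\beta<\gamma$, so $\alpha_0+\kappa=\kappa\cdot(\beta+1)\le\kappa\cdot\gamma=\delta$), whence $\beth_\kappa(\chi)\le\beth_\kappa(\beth_{\alpha_0})=\beth_{\alpha_0+\kappa}\le\beth_\delta=\mu$. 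Then Lemma~\ref{omitting-type-technical} gives that every model in $\K_\mu$ is $\chi^+$-saturated; letting $\chi$ vary over $[\LS(\K),\mu)$ yields that every model in $\K_\mu$ is saturated, and the argument concludes as in the first paragraph.

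I expect the only genuinely delicate point to be the inheritance of weak tameness (and the saturation hypothesis) by $\K^\ast$ in the no-maximal-models reduction --- but since the decomposition only affects cardinals below $\chi^\ast<\mu$, this is routine bookkeeping. The ordinal arithmetic in hypothesis~(1) is the sole real computation, and it is exactly what the assumption ``$\delta$ divisible by $\left(2^{\LS(\K)}\right)^+$'' is tailored to; everything else is an assembly of Lemma~\ref{omitting-type-technical} and Corollary~\ref{cor-sat-categ}.
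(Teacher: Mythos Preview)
Your proposal is correct and follows essentially the same approach as the paper: reduce to no maximal models via Fact~\ref{jep-decomp}, then for each $\chi\in[\LS(\K),\mu)$ verify the hypotheses of Lemma~\ref{omitting-type-technical} (using Corollary~\ref{cor-sat-categ} for hypothesis~(3)) to conclude every model of size $\mu$ is saturated. You are simply more explicit than the paper about the $\beth$-arithmetic underlying hypothesis~(1) and about what must be checked in the no-maximal-models reduction.
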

\begin{proof}
  Without loss of generality (by Fact \ref{jep-decomp}), $\K$ has joint embedding and no maximal models. Thus it is enough to show that every model of size $\mu$ is saturated. Let $\chi \in [\LS (\K), \mu)$. We have to check that the hypotheses of Lemma \ref{omitting-type-technical} hold. The only problematic part is to see that $\Ksatp{\chi}$ is an AEC with $\LS (\Ksatp{\chi}) = \chi$ (when $\chi > \LS (\K)$). But this holds by Corollary \ref{cor-sat-categ}.
\end{proof}

We obtain the following downward transfer result that slightly improves on \cite[Corollary 7.7]{vv-symmetry-transfer-v3}:

\begin{cor}\label{downward-transfer}
  Let $\LS (\K) < \mu < \lambda$ be such that:

  \begin{enumerate}
  \item $\K$ is categorical in $\lambda$.
  \item $\mu = \beth_{\delta}$, for some limit ordinal $\delta$ divisible by $H_1$.
  \end{enumerate}

  If the model of size $\lambda$ is $\mu$-saturated (e.g.\ if $\cf{\lambda} \ge \mu$ or by Fact \ref{sym-from-categ} if $\lambda \ge \sup_{\mu_0 < \mu} \hanf{\mu_0^+}$), then $\K$ is categorical in $\mu$.
\end{cor}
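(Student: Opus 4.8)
The plan is to reduce, via Fact \ref{jep-decomp}, to the case where $\K$ has joint embedding and no maximal models (this is legitimate since $\K$ is categorical, hence has joint embedding, in $\lambda$, and the passage to $\K^\ast$ changes only models of size below $\beth_{(2^{\LS(\K)})^+}=H_1\le\mu$, so it preserves categoricity in $\mu$ and the saturation hypothesis), and then to show that every model of size $\mu$ is saturated. Since $\K$ has a model of size $\lambda>\mu$ and no maximal models, a model of size $\mu$ exists, so uniqueness of saturated models will then give categoricity in $\mu$.

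The next step is to extract weak tameness. Because $\delta$ is a limit ordinal divisible by $H_1$, $\mu=\beth_\delta$ is a limit cardinal with $\mu>H_1$; hence the model of size $\lambda$, being $\mu$-saturated, is in particular $H_1$-saturated, and Theorem \ref{technical-lem}(2) yields some $\chi<H_1$ with $\K$ being $(\chi,<\mu)$-weakly tame. I would also record the arithmetical content of the divisibility hypothesis: every cardinal $\nu<H_1$ satisfies $\nu+H_1=H_1$ and $\nu\cdot H_1=H_1$ (as ordinals), so $\delta$ is in fact divisible by every such $\nu$; in particular, since $(2^{\LS(\K)})^+<H_1$, one gets $\delta_0+(2^{\LS(\K)})^+\le\delta$ for every $\delta_0<\delta$, and therefore $\beth_{(2^{\LS(\K)})^+}(\chi_0)\le\mu$ for every $\chi_0<\mu$ (write $\chi_0<\beth_{\delta_0}$, $\delta_0<\delta$). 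This is exactly the estimate that makes Shelah's omitting type theorem (Fact \ref{shelah-omit}) applicable at every saturation level below $\mu$.

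With these in hand I would run the induction from the proof of Lemma \ref{omitting-type-technical-2} (through Lemma \ref{omitting-type-technical}): by induction on $\chi_0\in[\LS(\K),\mu)$, every model of size $\mu$ is $\chi_0^+$-saturated. The base case $\chi_0=\LS(\K)$ is Fact \ref{shelah-omit} applied in $\K$ itself — a type over a submodel of size $\LS(\K)$ omitted in a model of size $\mu\ge H_1$ produces a non-$\LS(\K)^+$-saturated model of size $\lambda$, contradicting its $\mu$-saturation. In the successor step I would use Corollary \ref{cor-sat-categ} to see that $\Ksatp{\chi_0}$ is an AEC with $\LS(\Ksatp{\chi_0})=\chi_0$ (its hypothesis holds because the model of size $\lambda$ is $\chi_0^+$-saturated and $\LS(\K)<\chi_0<\lambda$), enlarge the witnessing submodel to one that is $\chi_0$-saturated of size $\chi_0$, use the available weak tameness to check that the type is omitted in the strong sense required by Fact \ref{shelah-omit} (phrased in the class for which $\chi_0$ is the Löwenheim-Skolem number), and conclude a contradiction with the $\mu$-saturation of the model of size $\lambda$. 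Since $\mu$ is a limit cardinal, this induction shows that every model of size $\mu$ is saturated, which finishes the proof.

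The step I expect to be the main obstacle is the successor step, precisely because of a mismatch of thresholds: the only weak tameness one can extract (Theorem \ref{technical-lem}, ultimately Facts \ref{weak-tameness-from-categ-fact-0} and \ref{weak-tameness-categ-2}) has a threshold $\chi$ that is below $H_1$ but in general strictly above $\LS(\K)$, whereas Shelah's omitting type theorem is formulated relative to $E_{\LS(\K)}$. Reconciling the two — by climbing the saturation scale and, at each level $\chi_0$, working inside the class of $\chi_0$-saturated models, where $\chi_0$ becomes the ambient Löwenheim-Skolem number, the weak tameness we have plays the role of weak tameness ``from $\LS$'' there, and the divisibility of $\delta$ by $H_1$ (rather than merely by $(2^{\LS(\K)})^+$) supplies the $\beth$-bound this requires — is the delicate bookkeeping the proof must carry out, and the main point of care is checking that Galois types and the weak tameness hypothesis transfer correctly between $\K$ and these restricted classes.
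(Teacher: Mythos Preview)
Your strategy is the paper's strategy: get $(\chi,<\mu)$-weak tameness from Theorem \ref{technical-lem}, then feed it into the omitting-type machinery of Lemmas \ref{omitting-type-technical} and \ref{omitting-type-technical-2}. The paper's execution is a single line: apply Lemma \ref{omitting-type-technical-2} to the class $\K_{\ge\chi}$. This one move resolves the threshold mismatch you worry about in your last paragraph, because in $\K_{\ge\chi}$ the L\"owenheim--Skolem number \emph{is} $\chi$, so $(\chi,<\mu)$-weak tameness is exactly the hypothesis of Lemma \ref{omitting-type-technical-2}, and since $\chi<H_1$ one has $(2^{\chi})^+<H_1$, so divisibility of $\delta$ by $H_1$ gives divisibility by $(2^{\chi})^+$.

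Your proposed resolution --- applying Fact \ref{shelah-omit} ``in the class for which $\chi_0$ is the L\"owenheim--Skolem number'' --- is where things go slightly wrong. If you really change the ambient class at each stage $\chi_0$, the $\beth$-bound you need becomes $\beth_{(2^{\chi_0})^+}(\chi_0)\le\mu$, and for $\chi_0\ge H_1$ divisibility of $\delta$ by $H_1$ no longer guarantees this. (Your earlier arithmetic computed only the bound $\beth_{(2^{\LS(\K)})^+}(\chi_0)\le\mu$, which is the bound for $\K$, not for a class with LS number $\chi_0$; these two threads of your argument are inconsistent.) The fix is simply to keep the ambient class fixed at $\K_{\ge\chi}$: at level $\chi_0\ge\chi$ you enlarge $M_0$ to be $\chi_0$-saturated, use $(\chi,\chi_0)$-weak tameness to see that omitting $p$ is omitting $p/E_\chi$, and apply Fact \ref{shelah-omit} in $\K_{\ge\chi}$ with the bound $\beth_{(2^{\chi})^+}(\chi_0)\le\mu$. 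That is precisely what citing Lemma \ref{omitting-type-technical-2} for $\K_{\ge\chi}$ does.
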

\begin{proof}
  By Theorem \ref{technical-lem}, there exists $\chi < H_1$ such that $\K$ is $(\chi, <\mu)$-weakly tame. Now apply Lemma \ref{omitting-type-technical-2} to $\K_{\ge \chi}$.
\end{proof}

When tameness holds instead of weak tameness, we obtain the following generalization of the second main result of \cite{makkaishelah}:

\begin{thm}\label{omitting-categ-transfer}
  If $\K$ is $\LS (\K)$-tame, has arbitrarily large models, and is categorical in a $\lambda > \LS (\K)$, then $\K$ is categorical in all cardinals of the form $\beth_{\delta}$, where $\left(2^{\LS (\K)}\right)^+$ divides $\delta$.
\end{thm}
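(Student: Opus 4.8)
The plan is to reduce categoricity in $\mu$ to the assertion that every model of size $\mu$ is saturated, and to obtain the latter by feeding Shelah's omitting type theorem --- in the packaged form of Lemma~\ref{omitting-type-technical} --- the single input that the model of size $\lambda$ is $\LS (\K)^+$-saturated, and then bootstrapping upward to $\mu$. The divisibility hypothesis on $\delta$ will be used exactly once, to verify the Hanf-number condition in that lemma uniformly in $\chi$.

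First I would carry out the standard reductions. By Fact~\ref{jep-decomp} we may assume $\K$ has joint embedding, amalgamation and no maximal models, since the cardinal $\chi$ below which this replacement alters the class satisfies $\chi < H_1$, while every target $\mu = \beth_\delta$ with $\delta$ a nonzero ordinal divisible by $\left(2^{\LS (\K)}\right)^+$ satisfies $\mu \geq \beth_{\left(2^{\LS (\K)}\right)^+} = H_1$. By Fact~\ref{shvi}, $\K$ is $\LS (\K)$-superstable, so by Fact~\ref{tame-sym-ss} (using $\LS (\K)$-tameness) $\K$ is superstable and has symmetry in every cardinal $\geq \LS (\K)$; hence by Fact~\ref{sym-unionsat}, $\Ksatp{\chi_0}$ is an AEC with $\LS (\Ksatp{\chi_0}) = \chi_0$ for every $\chi_0 > \LS (\K)$. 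Next, by Fact~\ref{good-frame-weak-tameness} (whose hypotheses are now in place) there is a type-full good $\lambda$-frame on $\Ksatp{\lambda}_\lambda$, whose underlying class is nonempty by the definition of a good frame; so there is a saturated model of size $\lambda$, which by categoricity in $\lambda$ is the unique model of size $\lambda$. Thus every model in $\K_\lambda$ is saturated, in particular $\LS (\K)^+$-saturated.

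Now fix a target $\mu = \beth_\delta$ with $\left(2^{\LS (\K)}\right)^+$ dividing $\delta$ (so $\mu$ is a limit cardinal and $\mu \geq H_1$), and fix $\chi \in [\LS (\K), \mu)$; I would apply Lemma~\ref{omitting-type-technical} with this $\lambda$ and this pair $\chi < \mu$. Hypothesis (1), $\beth_{\left(2^{\LS (\K)}\right)^+} (\chi) \leq \mu$, holds because, writing $\chi \leq \beth_{\delta_0}$ for some $\delta_0 < \delta$, one has $\beth_{\left(2^{\LS (\K)}\right)^+} (\chi) \leq \beth_{\delta_0 + \left(2^{\LS (\K)}\right)^+}$ and $\delta_0 + \left(2^{\LS (\K)}\right)^+ \leq \delta$ precisely because $\left(2^{\LS (\K)}\right)^+$ divides $\delta$; hypothesis (2), $(\LS (\K), \leq \chi)$-weak tameness, follows from $\LS (\K)$-tameness; and hypothesis (3) was arranged in the previous paragraph. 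Since every model in $\K_\lambda$ is $\LS (\K)^+$-saturated, Lemma~\ref{omitting-type-technical} gives that every model in $\K_\mu$ is $\chi^+$-saturated. As $\chi < \mu$ was arbitrary and $\mu$ is a limit cardinal, every model in $\K_\mu$ is saturated; since $\K$ has a model of size $\mu$ (arbitrarily large models and no maximal models) and saturated models of a fixed cardinality are unique up to isomorphism under amalgamation, $\K$ is categorical in $\mu$. Note that this argument applies equally whether $\mu < \lambda$, $\mu = \lambda$, or $\mu > \lambda$.

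I expect the only non-bookkeeping point --- the ``main obstacle'' in spirit --- to be the recognition that the divisibility hypothesis $\left(2^{\LS (\K)}\right)^+ \mid \delta$ is exactly the ordinal-arithmetic closure condition making $\beth_{\left(2^{\LS (\K)}\right)^+} (\chi) \leq \beth_\delta$ hold for all $\chi < \beth_\delta$, together with the observation that getting a saturated model in the categoricity cardinal $\lambda$ should go through the good-frame existence result (Fact~\ref{good-frame-weak-tameness}) rather than bare stability, so as to cover $\lambda$ of small cofinality without fuss. Everything else is an assembly of the facts recalled in this section.
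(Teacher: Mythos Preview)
Your proposal is correct and follows essentially the same route as the paper: reduce via Fact~\ref{jep-decomp}, use Facts~\ref{shvi}, \ref{tame-sym-ss}, \ref{sym-unionsat} to get superstability, symmetry, and that each $\Ksatp{\chi_0}$ is an AEC, establish that the model of size $\lambda$ is $\LS(\K)^+$-saturated, and then apply Lemma~\ref{omitting-type-technical} for every $\chi<\mu$. The only cosmetic difference is that you extract $\LS(\K)^+$-saturation at $\lambda$ from Fact~\ref{good-frame-weak-tameness} (nonemptiness of the good frame's underlying class), whereas the paper derives it as a byproduct of the $\Ksatp{\chi}$-is-an-AEC argument; both are valid and the paper's accompanying remark even points to a third route via stability in $\lambda$.
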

\begin{proof}
  Without loss of generality (Fact \ref{jep-decomp}), $\K$ has no maximal models. Let $\delta$ be a limit ordinal divisible by $\left(2^{\LS (\K)}\right)^+$. Let $\mu := \beth_{\delta}$. We prove that every model in $\K_{\mu}$ is saturated. Observe first that for every $\chi \ge \LS (\K)^+$, $\Ksatp{\chi}$ is an AEC with $\LS (\Ksatp{\chi}) = \chi$. This follows from Facts \ref{shvi}, \ref{tame-sym-ss}, and \ref{sym-unionsat}. In particular, the model of size $\lambda$ is $\LS (\K)^+$-saturated. Therefore for each $\chi < \mu$, Lemma \ref{omitting-type-technical} tells us that every model in $\K_{\mu}$ is $\chi^+$-saturated. Thus every model in $\K_{\mu}$ is saturated.
\end{proof}
\begin{remark}
  We could rely on fewer background facts by directly using Fact \ref{shelah-omit}. In that case, all that one needs to show is that the model of size $\lambda$ is $\LS (\K)^+$-saturated. This holds by combining Fact \ref{shvi} (telling us that $\K$ is $\LS (\K)$-superstable) and \cite[Theorem 5.6]{ss-tame-jsl} (saying that $\K$ is stable in $\lambda$).
\end{remark}

We can derive the desired improvements on the bounds of \cite{sh394}.

\begin{cor}\label{sh394-bounds-improvement}
  Let $\K$ be an $\LS (\K)$-tame AEC with amalgamation. If $\K$ is categorical in \emph{some successor} $\lambda \ge H_1$, then $\K$ is categorical in \emph{all} $\lambda' \ge H_1$.
\end{cor}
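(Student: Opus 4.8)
The plan is to descend to $H_1$ using Shelah's omitting-type theorem, to transfer upward from $\lambda$ via Grossberg--VanDieren, and to fill the remaining interval $[H_1, \lambda]$ using the interval transfer that comes out of the main theorem. First I would reduce to the case where $\K$ has arbitrarily large models: categoricity in $\lambda$ gives joint embedding in $\lambda$, so Fact \ref{jep-decomp} yields a sub-AEC $\K^\ast$ with the same strong substructure relation, $\LS(\K^\ast) = \LS(\K)$, amalgamation, joint embedding and no maximal models, agreeing with $\K$ on all models of size $\ge \chi$ for some $\chi < \beth_{\left(2^{\LS(\K)}\right)^+} = H_1$. Since $\lambda \ge H_1 > \chi$ and $\K^\ast$ is still $\LS(\K)$-tame, replacing $\K$ by $\K^\ast$ affects nothing at cardinals $\ge H_1$; note also that $\lambda > H_1$, since a successor cardinal cannot equal the limit cardinal $H_1 = \beth_{\left(2^{\LS(\K)}\right)^+}$.

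Now I would carry out three steps. First, apply Theorem \ref{omitting-categ-transfer}: $\K$ is $\LS(\K)$-tame with arbitrarily large models and categorical in $\lambda > \LS(\K)$, hence categorical in $\beth_\delta$ for every ordinal $\delta$ divisible by $\left(2^{\LS(\K)}\right)^+$; taking $\delta := \left(2^{\LS(\K)}\right)^+$ gives $\beth_\delta = H_1$, so $\K$ is categorical in $H_1$. Second, apply Fact \ref{gv-upward} to the successor cardinal $\lambda > \LS(\K)^+$: $\K$ is categorical in every $\lambda' \ge \lambda$. Third, since $\LS(\K) < H_1 < \lambda$, $\lambda$ is a successor, and $\K$ is categorical in both $H_1$ and $\lambda$, Corollary \ref{main-cor} with $\lambda_0 := H_1$ gives that $\K$ is categorical in all $\lambda' \in [H_1, \lambda]$. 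Combining the second and third steps, $\K$ is categorical in all $\lambda' \ge H_1$, as desired.

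The part I expect to be the main obstacle is the descent below the ``second Hanf number'' $\beth_{H_1}$ that bounds Shelah's original transfer: the omitting-type theorem (Theorem \ref{omitting-categ-transfer}) only yields categoricity at the $\beth_\delta$-points, and it is exactly $\LS(\K)$-tameness that lets one reach $H_1$ among them, in place of Shelah's transfer of ``no Vaughtian pairs'' through models of set theory. Bridging from categoricity at $H_1$ up to the interval $[H_1, \lambda]$ is then no longer an omitting-type matter; it is where the machinery behind the main theorem enters, through Corollary \ref{main-cor}. One could instead build a type-full good $[H_1, \infty)$-frame on $\Ksatp{H_1}$ (via Facts \ref{shvi}, \ref{tame-sym-ss}, \ref{sym-unionsat}, \ref{good-frame-weak-tameness} and \ref{frame-upward-transfer}) and feed it to Theorem \ref{good-categ-transfer}, but one would then still have to check that the relevant models of $\K$ below $\lambda$ are $H_1$-saturated before reading the conclusion back into $\K$, which is why the route through Corollary \ref{main-cor} is cleaner.
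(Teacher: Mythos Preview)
Your proof is correct, but it takes a different route from the paper's proof of this particular corollary. The paper's proof of Corollary \ref{sh394-bounds-improvement} is deliberately placed in Section \ref{shelah-sec}, whose point is to show that the improved threshold $H_1$ can be obtained using \emph{only} the omitting-type technology plus Shelah's original methods from \cite{sh394}: after using Theorem \ref{omitting-categ-transfer} to reach categoricity in $H_1$, the paper simply says ``now proceed as in the proof of the main result of \cite{sh394}''. In particular, that proof does not invoke the good-frame machinery of Part~I at all.

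Your argument instead pulls in Corollary \ref{main-cor} (hence Theorem \ref{good-categ-transfer}) to bridge the interval $[H_1, \lambda]$. This is exactly the alternate proof the paper itself sketches in the Remark following Corollary \ref{main-thm}. What you gain is a self-contained argument that does not appeal to Shelah's Vaughtian-pair transfer; what the paper's proof gains is the demonstration that this specific bound improvement is already accessible without any of the orthogonality or frame results developed earlier. Note also that once you have categoricity in $H_1$ and in $\lambda$, your appeal to Corollary \ref{main-cor} alone already yields categoricity in all $\lambda' \ge H_1$, so your separate invocation of Fact \ref{gv-upward} is redundant.
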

\begin{proof}
  By Theorem \ref{omitting-categ-transfer}, $\K$ is categorical in $H_1$. Now proceed as in the proof of the main result of \cite{sh394}, see e.g.\ \cite[Theorem 14.14]{baldwinbook09}.
\end{proof}

Note that this method does not allow us to go lower than the Hanf number, even if we know for example that $\K$ is categorical below it (Shelah's argument for transferring Vaughtian pairs is not local enough). See Corollary \ref{main-thm} for another proof.

\section{Categoricity at a successor or with primes}\label{main-thm-sec}

We apply Theorem \ref{good-categ-transfer} to categorical tame AECs with amalgamation when the categoricity cardinal is a successor or the AEC has primes (recall Definition \ref{prime-def}). All throughout, we assume:

\begin{hypothesis}\label{ap-hyp}
  $\K$ is an AEC with amalgamation.
\end{hypothesis}

\begin{lem}[Main lemma]\label{main-lem}
  Assume that $\K$ has no maximal models and is $\LS (\K)$-tame. Let $\lambda > \LS (\K)^+$ be a successor cardinal. If $\K$ is categorical in \emph{some successor} $\lambda > \LS (\K)^+$, then $\Ksatp{\LS (\K)^+}$ is categorical in all $\lambda' \ge \LS (\K)^+$.
\end{lem}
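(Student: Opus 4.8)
The plan is to pass from $\K$ to its sub-AEC $\K' := \Ksatp{\LS (\K)^+}$ of $\LS (\K)^+$-saturated models, equip $\K'$ with a global good frame, and apply Theorem \ref{good-categ-transfer} to $\K'$. Throughout, write $\lambda = \lambda_0^+$ with $\lambda_0 \ge \LS (\K)^+$, which is possible since $\lambda > \LS (\K)^+$.

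First I would establish the structural facts about $\K'$. Since $\K$ has amalgamation and no maximal models and is categorical in $\lambda > \LS (\K)$, Fact \ref{shvi} gives that $\K$ is $\LS (\K)$-superstable, and then $\LS (\K)$-tameness together with Fact \ref{tame-sym-ss} gives that $\K$ is superstable and has symmetry in every $\mu \ge \LS (\K)$. Consequently, by Fact \ref{sym-unionsat} (with $\chi = \LS (\K)^+$), $\K'$ is an AEC with $\LS (\K') = \LS (\K)^+$; it inherits amalgamation from $\K$, has no maximal models, and is $(\LS (\K)^+, <\infty)$-tame (two distinct Galois types over $M \in \K'$ are separated over some submodel of size $\LS (\K)$ by $\LS (\K)$-tameness of $\K$, and that submodel embeds into a saturated submodel of size $\LS (\K)^+$). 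Here one uses the standard fact that, the saturated models being cofinal in $\K$, Galois types over models of $\K'$ --- and hence the notion of saturation for models of $\K'$ --- are computed the same way in $\K'$ and in $\K$.

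Next I would build the frame and invoke the main theorem. By Fact \ref{existence-extension-tame}.(2), applied with $\LS (\K)$ in place of the cardinal ``$\lambda$'' there (legitimate since $\K$ is $\LS (\K)$-tame, has no maximal models, and is categorical in $\lambda > \LS (\K)$), there is a type-full good $\LS (\K)^+$-frame $\s_0$ whose underlying class is the class of saturated models of $\K$ of size $\LS (\K)^+$, i.e.\ $\K'_{\LS (\K)^+}$; by uniqueness of limit models (Fact \ref{uq-limit}) this makes $\K'$ categorical in $\LS (\K') = \LS (\K)^+$. Since $\K'$ is an AEC with amalgamation that is $(\LS (\K)^+, <\infty)$-tame, Fact \ref{frame-upward-transfer} extends $\s_0$ to a type-full good $[\LS (\K)^+, \infty)$-frame $\s$ on $\K'$. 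Now apply Theorem \ref{good-categ-transfer} to $\K'$ with this $\s$, with $\theta = \infty$, and with $\lambda_0$ in the role of the cardinal ``$\lambda$'' of that theorem (note $\LS (\K') = \LS (\K)^+ \le \lambda_0$). Its first hypothesis, categoricity of $\K'$ in $\LS (\K')$, we have just verified; for the second, that $\Ksatpp{\K'}{\lambda_0}$ is categorical in $\lambda_0^+ = \lambda$, observe that (unwinding Definition \ref{ksat-def} and again using that saturation agrees in $\K'$ and $\K$) the class $\Ksatpp{\K'}{\lambda_0}_\lambda$ contains a saturated model of $\K'$ of size $\lambda$ --- which exists because $\K'$ is a stable AEC with amalgamation and no maximal models --- and is a subclass of the models of $\K$ of size $\lambda$, of which there is exactly one up to isomorphism by categoricity of $\K$ in $\lambda$; a nonempty subclass of a single isomorphism type is that isomorphism type, so $\Ksatpp{\K'}{\lambda_0}$ is categorical in $\lambda$. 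Theorem \ref{good-categ-transfer} then gives that $\K'$ is categorical in every $\mu \ge \LS (\K)^+$, i.e.\ that $\Ksatp{\LS (\K)^+}$ is categorical in all $\lambda' \ge \LS (\K)^+$, as required.

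The only real work is the bookkeeping around the passage to $\K'$: checking that $\K'$ is genuinely an AEC with $\LS (\K') = \LS (\K)^+$ (this is where superstability and symmetry below $\LS (\K)^+$ are used), that amalgamation and tameness descend to $\K'$, and that Galois types and saturation are unchanged between $\K'$ and $\K$; the substantive model theory is packaged inside Theorem \ref{good-categ-transfer}. The ``successor'' hypothesis on $\lambda$ enters at exactly one point --- producing the starting datum that $\Ksatpp{\K'}{\lambda_0}$ is categorical in $\lambda_0^+$ --- and there only through the triviality that a nonempty subclass of one isomorphism type is that type.
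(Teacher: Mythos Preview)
Your proof is correct and follows essentially the same route as the paper's: obtain superstability and symmetry everywhere via Facts \ref{shvi} and \ref{tame-sym-ss}, build a type-full good $\LS(\K)^+$-frame on $\Ksatp{\LS(\K)^+}_{\LS(\K)^+}$, extend it globally with Fact \ref{frame-upward-transfer}, and invoke Theorem \ref{good-categ-transfer} with $\theta = \infty$ and $\lambda_0$ in the role of ``$\lambda$'' there. The only cosmetic difference is that you cite Fact \ref{existence-extension-tame}.(2) to produce the initial frame while the paper cites Fact \ref{good-frame-weak-tameness}; these are closely related results from the same source and yield the same object here, and your added verification that $\Ksatpp{\K'}{\lambda_0}$ is categorical in $\lambda_0^+$ simply makes explicit what the paper leaves implicit.
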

\begin{proof}
  By Fact \ref{shvi}, $\K$ is $\LS (\K)$-superstable. By Fact \ref{tame-sym-ss}, $\K$ is superstable and has symmetry in every $\mu \ge \LS (\K)$. By Facts \ref{good-frame-weak-tameness} and \ref{frame-upward-transfer}, there exists a type-full good $(\ge \LS (\K)^+)$-frame $\s$ with underlying class $\K_{\s} := \Ksatp{\LS (\K)^+}$. Moreover, $\K_{\s}$ is categorical in $\LS (\K)^+$. Thus we can apply Theorem \ref{good-categ-transfer} where $\K, \LS (\K), \theta$ there stand for $\Ksatp{\LS (\K)^+}, \LS (\K)^+, \infty$ here.
\end{proof}
\begin{cor}\label{main-cor}
  Assume that $\K$ has arbitrarily large models and is $\LS (\K)$-tame. Let $\LS (\K) < \lambda_0 < \lambda$. If $\lambda$ is a successor and $\K$ is categorical in $\lambda_0$ and $\lambda$, then $\K$ is categorical in all $\lambda' \ge \lambda_0$.
\end{cor}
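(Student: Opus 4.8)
The plan is to manufacture from the hypotheses a type-full good frame on an interval of cardinals with base $\lambda_0$, carried \emph{by the AEC $\K_{\ge \lambda_0}$ itself} rather than merely by its saturated models, and then to invoke Theorem \ref{good-categ-transfer} with $\lambda_0$ in the role of $\LS(\K)$ and the categoricity in the successor $\lambda$ in the role of the remaining categoricity hypothesis. What makes this possible is that categoricity in $\lambda_0$ collapses the distinction between $\K_{\lambda_0}$ and the (unique) saturated model of that size.

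First I would reduce to the case where $\K$ has no maximal models: $\K$ has joint embedding in $\lambda_0$ by categoricity, so Fact \ref{jep-decomp} applied at $\lambda_0$ yields a sub-AEC $\K^\ast \subseteq \K$ with the same ordering and with $\LS(\K^\ast) = \LS(\K)$, which has amalgamation, joint embedding and no maximal models, agrees with $\K$ on all cardinals $\ge \min(\lambda_0, \chi) \le \lambda_0$ (for the $\chi$ provided), and is therefore still $\LS(\K)$-tame, has arbitrarily large models, and is categorical in both $\lambda_0$ and $\lambda$; since $\K$ and $\K^\ast$ agree at and above $\lambda_0$ it suffices to prove the statement for $\K^\ast$. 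So assume $\K$ has no maximal models. By Fact \ref{shvi}, $\K$ is $\LS(\K)$-superstable, hence by Fact \ref{tame-sym-ss} it is superstable and has symmetry in every cardinal $\ge \LS(\K)$. By Fact \ref{good-frame-weak-tameness} (and since $\LS(\K)$-tameness is in particular $(\LS(\K), \lambda_0)$-weak tameness) there is a type-full good $\lambda_0$-frame with underlying class $\Ksatp{\lambda_0}_{\lambda_0}$; superstability provides a saturated model of size $\lambda_0$, and categoricity of $\K$ in $\lambda_0$ forces it to be \emph{the} model of size $\lambda_0$, so $\Ksatp{\lambda_0}_{\lambda_0} = \K_{\lambda_0}$ and we in fact have a type-full good $\lambda_0$-frame on $\K_{\lambda_0}$. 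Since $\K$ is $(\lambda_0, <\infty)$-tame (again by $\LS(\K)$-tameness) and $\K_{\ge \lambda_0}$ has amalgamation, Fact \ref{frame-upward-transfer} then extends this to a type-full good $[\lambda_0, \infty)$-frame $\s$ on the AEC $\K_{\ge \lambda_0}$, which has $\LS(\K_{\ge \lambda_0}) = \lambda_0$.

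Now I would apply Theorem \ref{good-categ-transfer} to the AEC $\K_{\ge \lambda_0}$ equipped with $\s$. Categoricity in $\LS(\K_{\ge \lambda_0}) = \lambda_0$ is exactly the hypothesis that $\K$ is categorical in $\lambda_0$. For the other input, write $\lambda = \mu_0^+$ (possible as $\lambda$ is a successor) and note $\mu_0 \ge \lambda_0$; since $\K$ is stable in every cardinal $\mu$ with $\LS(\K) \le \mu < \mu_0$, there is a $\mu_0$-saturated model of size $\mu_0^+ = \lambda$ (for instance by Fact \ref{satfact}), and by categoricity of $\K$ in $\lambda$ this is the unique model of size $\lambda$, so the class of $\mu_0$-saturated models of $\K_{\ge \lambda_0}$ is categorical in $\mu_0^+$. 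Theorem \ref{good-categ-transfer}, with its ``$\lambda$'' instantiated as $\mu_0 \in [\lambda_0, \infty)$ and its ``$\theta$'' as $\infty$, then gives that $\K_{\ge \lambda_0}$, and hence $\K$, is categorical in every cardinal $\ge \lambda_0$, as desired.

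I expect the most delicate step to be the identification, in the second paragraph, of the good $\lambda_0$-frame produced on the saturated models of size $\lambda_0$ with a good $\lambda_0$-frame on all of $\K_{\lambda_0}$: this is precisely where categoricity in $\lambda_0$ is used, and it is what lets the upward transfer of Fact \ref{frame-upward-transfer} produce a \emph{global} good frame on $\K_{\ge \lambda_0}$ itself instead of only on $\Ksatp{\lambda_0}$ — working inside the saturated models one would instead be left with the harder problem of transferring saturation downward through the interval $(\lambda_0, \lambda)$. A few routine points also deserve mention: that $\LS(\K)$-tameness passes to the sub-AEC $\K^\ast$ of Fact \ref{jep-decomp}, that the stability of $\K$ below $\mu_0$ needed for the successor input is the superstability delivered by Facts \ref{shvi} and \ref{tame-sym-ss}, and that the degenerate case $\mu_0 = \lambda_0$ (that is, $\lambda = \lambda_0^+$) should be handled exactly as in the proof of Lemma \ref{main-lem}.
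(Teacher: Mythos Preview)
Your proof is correct and follows essentially the same strategy as the paper: reduce to no maximal models, produce a type-full good frame via superstability, symmetry, and tameness, extend it globally by Fact \ref{frame-upward-transfer}, and then invoke Theorem \ref{good-categ-transfer}. The only difference is where you base the frame: the paper goes through Lemma \ref{main-lem}, building the frame at $\LS(\K)^+$ on $\Ksatp{\LS(\K)^+}$, concludes that $\Ksatp{\LS(\K)^+}$ is categorical in all $\lambda' \ge \LS(\K)^+$, and only \emph{afterwards} uses categoricity in $\lambda_0$ (plus stability below $\lambda_0$) to identify $\Ksatp{\LS(\K)^+}_{\ge \lambda_0}$ with $\K_{\ge \lambda_0}$; you instead use categoricity in $\lambda_0$ \emph{upfront} to collapse $\Ksatp{\lambda_0}_{\lambda_0}$ to $\K_{\lambda_0}$ and base the frame at $\lambda_0$ directly. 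Both orderings exploit the same fact (the model of size $\lambda_0$ is saturated), so this is a minor repackaging rather than a different argument; your version is arguably a bit more direct for this particular corollary, while the paper's version is more modular in that it reuses Lemma \ref{main-lem} verbatim.
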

\begin{proof}
  By Fact \ref{jep-decomp}, we can assume without loss of generality that $\K$ has no maximal models. By Lemma \ref{main-lem}, $\Ksatp{\LS (\K)^+}$ is categorical in all $\lambda' \ge \LS (\K)^+$. Moreover by the proof of Lemma \ref{main-lem}, $\K$ is stable in every $\mu \in [\LS (\K)^+, \lambda)$, hence the model of size $\lambda_0$ is saturated. Therefore $\Ksatp{\LS (\K)^+}_{\ge \lambda_0} = \K_{\ge \lambda_0}$, and the result follows.
\end{proof}
\begin{remark}\label{main-cor-rmk}
  We can allow $\lambda_0 = \LS (\K)$ but then the proof is more complicated: we do not know how to build a good $\LS (\K)$-frame so have to work with $\LS (\K)$-splitting. 
\end{remark}
\begin{remark}
  The case $\lambda' \ge \lambda$ in Lemma \ref{main-lem} and Corollary \ref{main-cor} is Fact \ref{gv-upward}. The contribution of this paper is the case $\lambda' < \lambda$.
\end{remark}

We deduce another proof of Corollary \ref{sh394-bounds-improvement}. We prove a slightly stronger result:

\begin{cor}\label{main-thm}
  Assume that $\K$ is $\LS (\K)$-tame and has arbitrarily large models. If $\K$ is categorical in \emph{some successor} $\lambda > \LS (\K)^+$, then there exists $\chi < H_1$ such that $\K$ is categorical in all $\lambda' \ge \min (\chi, \lambda)$.
\end{cor}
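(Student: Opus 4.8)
The plan is to run the machinery assembled for Lemma~\ref{main-lem} and then drop the categoricity threshold strictly below $H_1$ using Morley's omitting type theorem for AECs. First I would reduce to the case where $\K$ has no maximal models: $\K$ has joint embedding in $\lambda$ by categoricity, so Fact~\ref{jep-decomp} produces a cardinal $\chi_0 < H_1$ and a sub-AEC --- still $\LS(\K)$-tame, with amalgamation and arbitrarily large models and categorical in $\lambda$ --- that has no maximal models and agrees with $\K$ above $\chi_0$; since the conclusion only concerns cardinals $\geq \min(\chi, \lambda)$ with $\chi \geq \chi_0$, it is enough to prove it for this sub-AEC, which I rename $\K$. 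By Fact~\ref{shvi}, $\K$ is $\LS(\K)$-superstable, hence by Fact~\ref{tame-sym-ss} superstable --- in particular stable --- in every cardinal $\geq \LS(\K)$.

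Now apply Lemma~\ref{main-lem}: $\Ksatp{\LS(\K)^+}$ is categorical in every $\lambda' \geq \LS(\K)^+$. It remains to find $\chi < H_1$ above which every model of $\K$ is $\LS(\K)^+$-saturated, since then $\K_{\lambda'} = \Ksatp{\LS(\K)^+}_{\lambda'}$ (with the inherited ordering) for all $\lambda' \geq \chi$, so $\K$ inherits categoricity in each such $\lambda'$ from $\Ksatp{\LS(\K)^+}$. For this, note that the unique model of $\K$ of size $\lambda$ is saturated: writing $\lambda = \mu^+$ we have $\mu > \LS(\K)$ and $\K$ is stable in $\mu$, so $\K$ has a saturated model of size $\mu^+$ (a standard consequence of stability in $\mu$), which by categoricity must be the model of size $\lambda$; in particular every model in $\K_\lambda$ is $\LS(\K)^+$-saturated. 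Fact~\ref{omitting-type} then gives $\chi' < H_1$ such that every model in $\K_{\geq \chi'}$ is $\LS(\K)^+$-saturated. Set $\chi := \max(\chi', \chi_0, \LS(\K)^+)$; as $H_1 = \beth_{(2^{\LS(\K)})^+}$ has cofinality $(2^{\LS(\K)})^+ > \LS(\K)^+$, we get $\chi < H_1$, and $\K$ is categorical in every $\lambda' \geq \chi$. Combining this with the Grossberg--VanDieren upward transfer (Fact~\ref{gv-upward}), which gives categoricity in every $\lambda' \geq \lambda$, we conclude that $\K$ is categorical in all $\lambda' \geq \min(\chi, \lambda)$, as desired.

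The substantive input is entirely packaged in Lemma~\ref{main-lem} and Fact~\ref{omitting-type}; the main thing to watch is the bookkeeping --- verifying that the maximum of the cardinal coming from Fact~\ref{jep-decomp}, the cardinal coming from the omitting type theorem, and $\LS(\K)^+$ remains below $H_1$ --- together with the routine point, handled exactly as in the proofs of Lemma~\ref{main-lem} and Corollary~\ref{main-cor}, that $\LS(\K)$-tameness and the other hypotheses survive the passage to the sub-AEC without maximal models.
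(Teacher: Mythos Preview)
Your proof is correct and follows essentially the same approach as the paper: apply Lemma~\ref{main-lem} to get categoricity of $\Ksatp{\LS(\K)^+}$ everywhere above $\LS(\K)^+$, then use Fact~\ref{omitting-type} to find $\chi < H_1$ above which every model is $\LS(\K)^+$-saturated. The only minor differences are that you are more explicit about the reduction to no maximal models (which the paper glosses over), and you invoke Fact~\ref{gv-upward} to cover the range $[\lambda, \chi)$ whereas the paper instead observes directly that every model of size $\ge \lambda$ is $\LS(\K)^+$-saturated (via L\"owenheim--Skolem down to $\lambda$ and categoricity there); both are valid and the paper even mentions the Grossberg--VanDieren route in a remark.
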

\begin{proof}
  By Lemma \ref{main-lem}, $\Ksatp{\LS (\K)^+}$ is categorical in all $\mu \ge \LS (\K)^+$. Since $\lambda$ is regular, the model of size $\lambda$ is saturated hence $\LS (\K)^+$-saturated, so every model in $\K_{\ge \lambda}$ is $\LS (\K)^+$-saturated. By Fact \ref{omitting-type}, there exists $\chi < H_1$ such that every model in $\K_{\ge \chi}$ is $\LS (\K)^+$-saturated. Thus every model in $\K_{\ge \min (\chi, \lambda)}$ is $\LS (\K)^+$-saturated, that is:

  $$
  \Ksatp{\LS (\K)^+}_{\ge \min (\chi, \lambda)} = \K_{\ge \min (\chi, \lambda)}
  $$

  The result follows.
\end{proof}
\begin{remark}
  An alternate proof (which just deduces categoricity in all $\lambda' \ge \min (\lambda, H_1)$) goes as follows. By Theorem \ref{omitting-categ-transfer}, $\K$ is categorical in $H_1$. By Fact \ref{gv-upward}, $\K$ is categorical in all $\lambda' \ge \lambda$. By Corollary \ref{main-cor} (where $\lambda_0$, $\lambda_1$ there stand for $\min (\lambda, H_1), \max (\lambda^+, H_1^+)$ here), we get that $\K$ is categorical in every $\lambda' \ge \min (\lambda, H_1)$.
\end{remark}

We can similarly deduce several consequences on tame AECs with primes. One of the main results of \cite{categ-primes-v3} was (the point compared to Shelah's downward categoricity transfer \cite{sh394} is that $\lambda$ need \emph{not} be a successor): 

\begin{fact}[Theorem 3.8 in \cite{categ-primes-v3}]\label{prime-fact}
  Assume that $\K$ has no maximal models, is $H_2$-tame, and $\K_{\ge H_2}$ has primes. If $\K$ is categorical in some $\lambda > H_2$, then it is categorical in all $\lambda' \ge H_2$.
\end{fact}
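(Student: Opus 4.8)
The plan is to reduce Fact~\ref{prime-fact} to the prime-based categoricity transfer of \cite{categ-primes-v3} (Theorem~2.16, quoted above), which only asks for categoricity at the base of a global good frame and at one cardinal above it. The work then splits into three tasks: (i)~build a type-full good $[\chi_0,\infty)$-frame whose base $\chi_0$ is small enough --- below $H_1$ --- that the model of size $\lambda$ still lies in its underlying class $\Ksatp{\chi_0}$; (ii)~check that $\Ksatp{\chi_0}$ is categorical in $\chi_0$ and in $\lambda$; and (iii)~supply the prime-model property for the class $\Ksatp{\chi_0}$. Tasks (i) and (ii) follow the pattern of Lemma~\ref{main-lem} and Theorem~\ref{good-categ-transfer}; task (iii) is the genuinely delicate point.

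For (i): by Fact~\ref{jep-decomp} we may assume, at the cost of passing to $\K_{\ge \chi}$ for some $\chi < H_1 \le H_2$ (harmless for the conclusion), that $\K$ also has joint embedding and no maximal models. Since $\K$ is categorical in $\lambda > H_2$, Fact~\ref{shvi} gives $\LS(\K)$-superstability, and iterating Fact~\ref{sym-from-categ} with Corollary~\ref{cor-sat-categ} shows that the model of size $\lambda$ is $\mu$-saturated for every $\mu < H_1$; by Theorem~\ref{technical-lem}(2) (using $H_2$-tameness for the range above $H_2$) we may fix $\chi_0$ with $\LS(\K) < \chi_0 < H_1$ such that, in addition, $\K$ is $(\chi_0, <\infty)$-weakly tame and the model of size $\lambda$ is $\chi_0^+$-saturated. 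By Fact~\ref{tame-sym-ss}, $\K$ is superstable and has symmetry in every cardinal $\ge \LS(\K)$, so Fact~\ref{good-frame-weak-tameness} yields a type-full good $\chi_0^+$-frame on $\Ksatp{\chi_0^+}_{\chi_0^+}$, which by the (weak-tameness form of the) extension results behind Fact~\ref{frame-upward-transfer}, cf.\ Section~\ref{background-sec-2}, extends to a type-full good $[\chi_0^+,\infty)$-frame $\s$ with underlying class $\Ksatp{\chi_0^+}$. Finally, the omitting-type machinery of Section~\ref{shelah-sec} (Fact~\ref{omitting-type} and Lemma~\ref{omitting-type-technical}, iterated as needed) shows that every model of size $\ge H_2$ is $\chi_0^+$-saturated, so $\Ksatp{\chi_0^+}_{\ge H_2} = \K_{\ge H_2}$.

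For (ii): $\Ksatp{\chi_0^+}$ is categorical in $\chi_0^+$ because its models of that size are exactly the limit models, which are unique; and it is categorical in $\lambda$ because the model of size $\lambda$ is $\chi_0^+$-saturated, so $\K_\lambda = \Ksatp{\chi_0^+}_\lambda$. Granting (iii), the prime-based transfer (Theorem~2.16 in \cite{categ-primes-v3}), applied to $\Ksatp{\chi_0^+}$, its good $[\chi_0^+,\infty)$-frame $\s$, base $\chi_0^+$, and the categoricity cardinal $\lambda > \chi_0^+$, gives that $\Ksatp{\chi_0^+}$ is categorical in every $\mu' > \chi_0^+$. For any $\lambda' \ge H_2$ we then have $\K_{\lambda'} = \Ksatp{\chi_0^+}_{\lambda'}$ by the last line of the previous paragraph, and the right-hand side is a single isomorphism type, so $\K$ is categorical in $\lambda'$, which is the conclusion.

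\textbf{Main obstacle.} The crux is part (iii): the hypothesis supplies primes only for $\K_{\ge H_2}$, whereas the frame $\s$ lives on $\Ksatp{\chi_0^+}$, a class containing models of size strictly below $H_2$ over which no prime triples are a priori available. I see two routes. The first is to reflect the property downward: primes for $\K_{\ge H_2}$ make the induced good frame on $\K_{\ge H_2}$ weakly successful (it has uniqueness triples), weak successfulness reflects down under tameness to the $\chi_0^+$-frame (cf.\ the uniqueness-triple/domination-triple analysis of Section~\ref{domination-sec} and \cite{indep-aec-apal}), and one then checks --- using uniqueness of limit models --- that over the \emph{saturated} models of $\Ksatp{\chi_0^+}$ this suffices to recover genuine primes, which is all the unidimensionality argument behind Theorem~2.16 ever invokes. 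The second route is to work throughout inside $\K_{\ge H_2}$, which has $\LS$-number $H_2$, is $\LS$-tame, and has primes outright: one builds a good frame with base $H_2$, using primes to control the base cardinal directly, and pays the price of separately establishing categoricity at $H_2$ --- which follows once the omitting-type argument of part (i) is pushed to show that every model of size $\ge H_2$ is in fact saturated (not merely $\chi_0^+$-saturated). Either way, making the prime hypothesis interact correctly with the cardinal at which the frame is constructed is the step requiring real care; the remaining ingredients are assembled from results already in the excerpt and in \cite{categ-primes-v3}.
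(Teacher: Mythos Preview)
The paper does not prove this statement: it is stated as a \emph{Fact} and attributed to \cite[Theorem~3.8]{categ-primes-v3}, then used as a black box (see the proof of Corollary~\ref{improved-prime-categ}, where it is invoked to pass from ``categorical in a proper class of cardinals'' to ``categorical in a successor''). There is therefore nothing in the present paper to compare your argument against.

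That said, a few remarks on your reconstruction. Your Route~2 is the natural line and is essentially how the result is obtained in \cite{categ-primes-v3}: one works in $\K_{\ge H_2}$, which is $\LS$-tame, has primes, amalgamation, and no maximal models; one builds a type-full good $[H_2^+,\infty)$-frame on $\Ksatpp{(\K_{\ge H_2})}{H_2^+}$ via the superstability/symmetry/frame machinery (Facts~\ref{shvi}, \ref{tame-sym-ss}, \ref{good-frame-weak-tameness}, \ref{frame-upward-transfer}); and one then applies the prime-based transfer (Theorem~2.16 of \cite{categ-primes-v3}) directly, since primes are available on the whole class. The point you flag --- that the model of size $\lambda$ must be shown to be sufficiently saturated so as to lie in the underlying class of the frame --- is handled by stability in every $\mu\in[H_2,\lambda)$ (from superstability) together with closure of chains of saturated models (Fact~\ref{sym-unionsat}), not by the omitting-type arguments you invoke.

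Your Route~1, by contrast, is not a viable shortcut: weak successfulness (existence of uniqueness triples) does not recover genuine prime models over $M\cup\{a\}$, and the orthogonality argument behind Theorem~2.16 in \cite{categ-primes-v3} really uses primes, not merely domination or uniqueness triples. The descent of primes from $\K_{\ge H_2}$ to $\Ksatp{\chi_0^+}$ for $\chi_0<H_1$ that you sketch is not something the available tools deliver.
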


We show that we can obtain categoricity in more cardinals provided that $\K$ has more tameness:

\begin{cor}\label{improved-prime-categ}
  Assume that $\K$ is $\LS (\K)$-tame and has arbitrarily large models. Assume also that $\K$ has primes (or just that $\K_{\ge \mu}$ has prime, for some $\mu$). If $\K$ is categorical in \emph{some} $\lambda > \LS (\K)$, then $\K$ is categorical in \emph{all} $\lambda' \ge \min (\lambda, H_1)$.
\end{cor}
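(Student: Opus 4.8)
The plan is to pass to the class $\K_{\s} := \Ksatp{\LS(\K)^+}$ of $\LS(\K)^+$-saturated models, where a good frame is available, then exploit the hypothesis that $\K$ has primes to transfer categoricity all the way up inside $\K_{\s}$, and finally bring the result back down to $\K$ by means of Morley's omitting type theorem (Fact \ref{omitting-type}) and the Grossberg--VanDieren transfer in the guise of Corollary \ref{main-cor}. The key observation is that Theorem \ref{omitting-categ-transfer} hands us categoricity in $H_1$ for free, and this second categoricity cardinal is exactly what makes the prime transfer \cite[Theorem 2.16]{categ-primes-v3} applicable — unlike Theorem \ref{good-categ-transfer}, that transfer does \emph{not} require categoricity in a successor, which is why primes suffice.

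First I would reduce to the case that $\K$ has no maximal models (Fact \ref{jep-decomp}; since $\K$ is categorical in $\lambda$ and the truncation cardinal is $< H_1$, this costs nothing below the conclusion). By Fact \ref{shvi}, $\K$ is $\LS(\K)$-superstable, hence by Fact \ref{tame-sym-ss} superstable and with symmetry in every $\mu \ge \LS(\K)$; by Fact \ref{sym-unionsat}, $\Ksatp{\chi}$ is then an AEC with $\LS(\Ksatp{\chi}) = \chi$ for every $\chi > \LS(\K)$. Exactly as in the construction in the proof of Lemma \ref{main-lem} (that construction does not use the successor hypothesis), Facts \ref{good-frame-weak-tameness} and \ref{frame-upward-transfer}, using $\LS(\K)$-tameness, yield a type-full good $(\ge \LS(\K)^+)$-frame $\s$ with underlying class $\K_{\s} = \Ksatp{\LS(\K)^+}$, and $\K_{\s}$ is categorical in $\LS(\K)^+$ (its unique limit model). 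Since $\K$ has primes, $\K_{\s}$ has primes too; this is the same verification carried out in \cite{categ-primes-v3} in order to prove Fact \ref{prime-fact}. (For the variant hypothesis that only $\K_{\ge \mu}$ has primes one works, mutatis mutandis, with a saturated class sitting above $\mu$; I will treat the case that $\K$ itself has primes.)

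Next I would produce the second categoricity cardinal. By Theorem \ref{omitting-categ-transfer}, $\K$ is categorical in $H_1$; since $\cf{H_1} = \left(2^{\LS(\K)}\right)^+ > \LS(\K)$, the model of size $H_1$ is $\LS(\K)^+$-saturated (a standard consequence of categoricity in a cardinal of cofinality $> \LS(\K)$; cf.\ the parenthetical in Fact \ref{sym-from-categ}.(1) and \cite[Claim I.6.7]{sh394}), so $\K_{\s}$ is categorical in $H_1$ as well. As $\K_{\s}$ carries a type-full good $[\LS(\K)^+, \infty)$-frame, has primes, and is categorical in both $\LS(\K)^+$ and $H_1 > \LS(\K)^+$, the transfer \cite[Theorem 2.16]{categ-primes-v3} applies and gives that $\K_{\s}$ is categorical in \emph{every} $\mu \ge \LS(\K)^+$.

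Finally I would descend to $\K$. By Fact \ref{omitting-type} (applied with categoricity cardinal $H_1$) there is $\chi < H_1$ such that every model in $\K_{\ge \chi}$ is $\LS(\K)^+$-saturated, i.e.\ $\K_{\ge \chi} = (\K_{\s})_{\ge \chi}$; combined with the previous paragraph, $\K$ is categorical in every $\lambda' \ge \chi$, and in particular in some successor cardinal $\lambda_1 > \max(\lambda, \chi)$. Since $\LS(\K) < \lambda < \lambda_1$ with $\lambda_1$ a successor and $\K$ categorical in both $\lambda$ and $\lambda_1$, Corollary \ref{main-cor} yields categoricity of $\K$ in all $\lambda' \in [\lambda, \lambda_1]$; putting this together with categoricity in all $\lambda' \ge \chi$ and using $\chi < H_1$, $\K$ is categorical in every $\lambda' \ge \min(\lambda, H_1)$, as desired. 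The step I expect to be most delicate is the bookkeeping around $\K_{\s}$: confirming that $\Ksatp{\LS(\K)^+}$ is an AEC carrying a type-full good $(\ge \LS(\K)^+)$-frame which inherits primes from $\K$ — the prime model of $\K$ over $M \cup \{a\}$ need not itself be $\LS(\K)^+$-saturated, so one passes to its $\LS(\K)^+$-saturation and uses uniqueness of limit (equivalently, saturated) models (Facts \ref{uq-limit}, \ref{satfact}) to see that the resulting triple is still prime in $\K_{\s}$.
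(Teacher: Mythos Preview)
Your argument is correct and lands at the same place, but by a different route than the paper's. The paper treats the prime hypothesis purely as a black box: from Theorem \ref{omitting-categ-transfer} one gets categoricity in a proper class, then Fact \ref{prime-fact} (applied to $\K_{\ge \mu}$, which has primes by hypothesis) yields categoricity in \emph{some} successor, Corollary \ref{main-thm} turns that into categoricity in all $\lambda' \ge H_1$, and Corollary \ref{main-cor} fills in $[\lambda, H_1]$. In particular the paper never checks that $\Ksatp{\LS(\K)^+}$ inherits primes from $\K$. Your route instead builds the frame on $\K_{\s}=\Ksatp{\LS(\K)^+}$ and applies \cite[Theorem 2.16]{categ-primes-v3} directly there, which forces you to confront exactly that inheritance question. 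Your deferral to \cite{categ-primes-v3} for this is legitimate (it is part of how Fact \ref{prime-fact} is proven), but the sketch you give --- take the $\K$-prime $N$ over $M\cup\{a\}$ and ``pass to its $\LS(\K)^+$-saturation'' --- is not a proof as stated: a $\K$-embedding of $N$ into a saturated $N''$ need not extend to an arbitrary saturated $N'\gea N$, so uniqueness of saturated models alone does not finish it. The paper's approach is shorter and cleaner precisely because it sidesteps this verification and also handles the ``$\K_{\ge\mu}$ has primes'' variant uniformly; your approach is more hands-on and makes the role of the good frame on $\K_{\s}$ explicit, at the cost of having to import that lemma.
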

\begin{proof}
  Without loss of generality (Fact \ref{jep-decomp}), $\K$ has no maximal models. By Theorem \ref{omitting-categ-transfer}, $\K$ is categorical in a proper class of cardinals. By Fact \ref{prime-fact} (applied to $\K_{\ge \mu}$, where $\mu$ is such that $\K_{\ge \mu}$ has primes), $\K$ is categorical in a successor cardinal. By Corollary \ref{main-thm}, $\K$ is categorical in all $\lambda' \ge H_1$. By Corollary \ref{main-cor} (with $\lambda_0, \lambda$ there standing for $\lambda, H_1^+$ here), $\K$ is categorical also in all $\lambda' \in [\lambda, H_1^+]$.
\end{proof}
\begin{remark}
  Similarly to Corollary \ref{main-thm}, we get that there is a $\chi < H_1$ such that $\K$ is categorical in all $\lambda' \ge \chi$.
\end{remark}

Specializing to universal classes, we can improve some of the Hanf number bounds in \cite{ap-universal-v9}, obtaining in particular the full categoricity conjecture (i.e.\ the Hanf number is $H_1$) \emph{assuming amalgamation}.

\begin{cor}\label{improved-univ-categ}
  Let $\K$ be a universal class with amalgamation and arbitrarily large models. If $\K$ is categorical in \emph{some} $\lambda > \LS (\K)$, then $\K$ is categorical in \emph{all} $\lambda' \ge \min (\lambda, H_1)$.
\end{cor}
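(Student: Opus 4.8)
The plan is to deduce Corollary \ref{improved-univ-categ} from Corollary \ref{improved-prime-categ}. Since a universal class with amalgamation and arbitrarily large models already satisfies most of the hypotheses of that corollary, all that remains is to check that $\K$ is $\LS (\K)$-tame and that $\K$ has primes (in the sense of Definition \ref{prime-def}).

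For tameness, I would invoke the fact, established by Vasey in \cite{ap-universal-v9}, that every universal class is $\LS (\K)$-tame: in a universal class, Galois types over a model $M$ are controlled by the quantifier-free data over the substructures generated by small subsets of $M$, so two distinct types must already differ over a submodel of size $\LS (\K)$. Thus this hypothesis of Corollary \ref{improved-prime-categ} is automatic. For primes, I would argue directly. In a universal class the substructure generated by a subset of a model is again a member of $\K$, and substructure coincides with strong substructure. So, given a nonalgebraic $p \in \gS (M)$, pick $N \gea M$ and $a \in |N|$ with $p = \gtp (a / M; N)$, and let $N_0 \lea N$ have universe the closure of $|M| \cup \{a\}$ under the functions of $\tau (\K)$. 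Then $M \lea N_0$, $a \in |N_0| \setminus |M|$, and $\gtp (a / M; N_0) = p$. I claim $(a, M, N_0)$ is a prime triple: if $N' \in \K$ and $a' \in |N'|$ satisfy $p = \gtp (a' / M; N')$, then there are a model $N^\ast$ and $\K$-embeddings $g_0 : N_0 \to N^\ast$ and $g' : N' \to N^\ast$ fixing $M$ with $g_0 (a) = g' (a')$; since $N_0$ is generated by $|M| \cup \{a\}$, the image $g_0 [N_0]$ is the substructure of $N^\ast$ generated by $|M| \cup \{g' (a')\}$, which is contained in $g' [N']$, so $f := (g')^{-1} \circ g_0 : N_0 \xrightarrow[M]{} N'$ is well defined and satisfies $f (a) = a'$, as required.

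With both tameness and primes in hand, Corollary \ref{improved-prime-categ} applies verbatim and yields categoricity in all $\lambda' \ge \min (\lambda, H_1)$ (and, as in the remark following that corollary, one in fact gets categoricity in all $\lambda' \ge \chi$ for some $\chi < H_1$). I do not expect any real obstacle here: the only point requiring a little care is the verification that universal classes have primes, and even that reduces to the elementary observation that an embedding of members of a universal class is determined by its action on generators.
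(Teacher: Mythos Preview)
Your proposal is correct and follows the same route as the paper: verify tameness and existence of primes for universal classes, then invoke Corollary \ref{improved-prime-categ}. The paper simply cites \cite[Remark 5.3]{ap-universal-v9} for primes where you spell out the (correct) closure argument, and one attribution nit: tameness of universal classes is due to Boney \cite{tameness-groups} (a full proof appears in \cite{ap-universal-v9}), not to Vasey.
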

\begin{proof}
  By a result of Boney \cite{tameness-groups} (a full proof appears as \cite[Theorem 3.7]{ap-universal-v9}), $\K$ is $\LS (\K)$-tame. Also \cite[Remark 5.3]{ap-universal-v9}, $\K$ has primes. Thus we can apply Corollary \ref{improved-prime-categ}.
\end{proof}

\section{Categoricity in a limit without primes}\label{categ-limit-sec}

In this section, we give an exposition of Shelah's proof of the eventual categoricity conjecture in AECs with amalgamation \cite[Theorem IV.7.12]{shelahaecbook} (assuming the weak generalized continuum hypothesis), see Corollary \ref{shelah-ap}. We improve the threshold cardinal to $H_1$ assuming tameness (Corollary \ref{abstract-thm-3-proof}), and moreover give alternate proofs for several of the hard steps in Shelah's argument.

Most of the results of this section will use the weak generalized continuum hypothesis. We adopt the following notation:

\begin{notation}
  For a cardinal $\lambda$, $\WGCH (\lambda)$ is the statement ``$2^\lambda < 2^{\lambda^+}$''. More generally, for $S$ a class of cardinals, $\WGCH (S)$ is the statement ``$\WGCH (\lambda)$ for all $\lambda \in S$''. $\WGCH$ will stand for $\WGCH (\text{Card})$, where $\text{Card}$ is the class of all cardinals.
\end{notation}

We assume familiarity with the definitions of a \emph{weakly successful}, \emph{successful}, and \emph{$\omega$-successful} good $\lambda$-frame (see \cite[Definition III.1.1]{shelahaecbook}, and on weakly successful see Definition \ref{weakly-succ-def}). We use the notation from \cite{jrsh875}. We will say a good $\lambda$-frame is \emph{$\succp$} if it is successful and $\leap{\K_{\lambda^+}}^{\text{NF}}$ is just $\leap{\K_{\lambda^+}}$ on the saturated models in $\K_{\lambda^+}$, see \cite[Definition 6.1.4]{jrsh875}. We say a good $\lambda$-frame is \emph{$\omega$-$\succp$} if it is $\omega$-successful and $\succp$.

We first state the unpublished Claim of Shelah mentioned in the introduction. This stems from \cite[Discussion III.12.40]{shelahaecbook}. A proof should appear in \cite{sh842}. 

\begin{claim}\label{claim-xxx}
  Let $\s$ be an $\omega$-$\succp$ good $\lambda$-frame on $\K$. Assume $\WGCH([\lambda, \lambda^{+\omega}))$. If $\Ksatp{\lambda^{+\omega}}$ is categorical in some $\mu > \lambda^{+\omega}$, then $\Ksatp{\lambda^{+\omega}}$ is categorical in all $\mu' > \lambda^{+\omega}$. Moreover, for any $\mu' > \lambda$, $\Ksatp{\mu'}$ has amalgamation in $\mu'$.
\end{claim}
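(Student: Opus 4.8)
The plan is to extract from the $\omega$-$\succp$ hypothesis enough of the structure theory of \cite[Chapter III]{shelahaecbook} (centered on the discussion at III.12.40) to treat $\Ksatp{\lambda^{+\omega}}$ as one treats the class of models of a superstable first-order theory, and then to run an orthogonality/unidimensionality transfer as in Part I of this paper together with a Vaughtian-pair argument. The first stage is to \emph{stack the frames}: since $\s$ is $\omega$-successful, iterate the successful extension to obtain, for each $n < \omega$, a $\succp$ good $\lambda^{+n}$-frame $\s^{+n}$ on the $\lambda^{+n}$-saturated models of $\K$ of size $\lambda^{+n}$, with $\leap{\K_{\lambda^{+n}}}^{\text{NF}}$ agreeing with $\lea$ there; the $\succp$ property ensures these frames cohere (forking in $\s^{+n}$ restricts to forking in $\s^{+m}$ for $m \le n$). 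Taking unions and invoking the chain-of-saturated-models results used for Fact \ref{satfact}, $\Ksatp{\lambda^{+\omega}}$ is an AEC with $\LS(\Ksatp{\lambda^{+\omega}}) = \lambda^{+\omega}$ carrying a natural independence relation on its models of size $\lambda^{+\omega}$. In particular $\Ksatp{\mu'}$ has amalgamation for every $\mu' \in [\lambda, \lambda^{+\omega}]$ (amalgamation of saturated models inside a good frame, the case $\mu' = \lambda$ being part of the hypothesis).

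The second stage is to \emph{go above $\lambda^{+\omega}$}. That $\Ksatp{\lambda^{+\omega}}_{\lambda^{+\omega}}$ supports a good $\lambda^{+\omega}$-frame is essentially what III.12.40 delivers; the delicate step is extending it to a type-full good $(\ge\lambda^{+\omega})$-frame, which in particular yields amalgamation in every $\mu' > \lambda^{+\omega}$ and so completes the ``moreover''. Here $\WGCH$ enters through Shelah's non-structure dichotomies: if amalgamation, or the existence/uniqueness properties needed to lift forking, fail in some $\mu' \ge \lambda^{+\omega}$, one builds $2^{(\mu')^+}$ pairwise non-isomorphic models in $(\mu')^+$ (via the ``nice'' constructions and the independence of systems of models of \cite[Chapter IV]{shelahaecbook}), which is incompatible with categoricity at or above a categoricity cardinal. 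The locality (tameness-like behaviour) automatically available at the level of $\lambda^{+\omega}$-saturated models then lets one push the frame-extension argument through cardinal by cardinal.

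The third stage is to \emph{transfer categoricity}. Now $\Ksatp{\lambda^{+\omega}}$ has a global good frame and is categorical in its least cardinal $\lambda^{+\omega}$ by uniqueness of saturated models. The new content relative to Theorem \ref{good-categ-transfer}, which transfers \emph{downward from a successor}, is to first deduce categoricity in the successor $\lambda^{+(\omega+1)}$ from categoricity in an arbitrary $\mu > \lambda^{+\omega}$; this is an omitting-type / Vaughtian-pair argument (in the spirit of \cite{sh394}, or of Fact \ref{no-vp-fact}) carried out inside $\Ksatp{\lambda^{+\omega}}$, using that the model of size $\mu$ is saturated (this is where Shelah invokes PCF theory and $\theta$-wide linear orders, but one may instead use the saturation-of-the-categoricity-model results of \cite{vv-symmetry-transfer-v3}). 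Once categoricity in $\lambda^{+(\omega+1)}$ is in hand, Theorem \ref{good-categ-transfer} gives categoricity in every cardinal in $[\lambda^{+\omega}, \mu]$, and a Grossberg--VanDieren-style upward transfer (or a direct omitting-type argument, legitimate since the class is tame at this level) gives it for every $\mu' > \mu$ as well.

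The main obstacle is the second stage: obtaining amalgamation and lifting the forking calculus \emph{past} $\lambda^{+\omega}$ in ZFC $+$ $\WGCH$ without being handed a global good frame to begin with. This is precisely the content of the unpublished \cite{sh842}, which is why the statement is quoted here as a hypothesis rather than proved; carrying it out honestly requires either a full reconstruction of Shelah's Chapter IV non-structure machinery or a genuinely new abstract shortcut. By contrast, stages one and three, and the part of the ``moreover'' below $\lambda^{+\omega}$, are routine given the stacked frames and the orthogonality calculus developed earlier in this paper.
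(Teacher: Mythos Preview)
The paper does not prove Claim \ref{claim-xxx} at all. It is explicitly introduced as ``the unpublished Claim of Shelah mentioned in the introduction'' whose proof ``should appear in \cite{sh842}'', and thereafter it is used purely as a black-box hypothesis (e.g.\ in Corollaries \ref{abstract-thm-3-proof}, \ref{shelah-ap}, \ref{categ-measurable}). So there is no proof in the paper for your proposal to be compared against.

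You have in fact recognized this yourself: your third-to-last sentence says the second stage ``is precisely the content of the unpublished \cite{sh842}, which is why the statement is quoted here as a hypothesis rather than proved''. That is exactly right, and it means your write-up is not a proof but a plausibility outline. As such it is a reasonable sketch of the shape a proof would take (stack the successive frames, lift past $\lambda^{+\omega}$ using non-structure dichotomies under WGCH, then transfer categoricity), but it cannot be assessed as correct or incorrect since the decisive step is deferred to an unavailable source. If the intent was to supply what the paper supplies, the honest answer is: the paper supplies nothing here beyond the statement and a pointer to \cite{sh842}.
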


Next, we discuss how to obtain an $\omega$-$\succp$ good frame from a good frame. The proof of the following fact is contained in the proof of \cite[Theorem IV.7.12]{shelahaecbook} (see $\odot_4$ there). We give a full proof in Appendix \ref{weakly-succ-appendix}. Note that as opposed to the results in \cite[Section II.5]{shelahaecbook}, we do \emph{not} assume that $\K$ has few models in $\lambda^{+2}$.

\begin{fact}\label{weakly-succ-fact}
  Assume $\WGCH (\lambda)$. If $\s$ is a good $\lambda$-frame on $\K$, $\K$ is categorical in $\lambda$, has amalgamation in $\lambda^+$ and is stable in $\lambda^+$, then $\s$ is weakly successful.
\end{fact}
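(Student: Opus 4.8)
The plan is to reproduce the argument Shelah uses inside the proof of \cite[Theorem IV.7.12]{shelahaecbook} (the step labeled $\odot_4$ there), which in turn rests on the non-structure machinery of \cite[Section II.5]{shelahaecbook}; the only modification is that the hypothesis ``$\K$ has few models in $\lambda^{+2}$'' is replaced by amalgamation and stability in $\lambda^+$ (together with categoricity in $\lambda$). Recall that $\s$ is \emph{weakly successful} precisely when it has the existence property for uniqueness triples (Definition \ref{weakly-succ-def}): every basic — here, since $\s$ is type-full, every nonalgebraic — type is represented by a uniqueness triple. So I would argue by contradiction: suppose some $M_0 \in \K_\lambda$ and some nonalgebraic $p_0 \in \gS (M_0)$ are represented by \emph{no} uniqueness triple, and derive a contradiction with the ``tameness-free'' good behavior of $\K$ in $\lambda^+$.

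\textbf{Preliminaries and the branching construction.} First, from $\WGCH (\lambda)$, i.e.\ $2^\lambda < 2^{\lambda^+}$, we get the Devlin--Shelah weak diamond $\Phi_{\lambda^+}$ on $\lambda^+$. Categoricity in $\lambda$ makes every model of $\K_\lambda$ limit, so the conjugation property (Fact \ref{conj-prop}) is available; using it we transport the failure of the uniqueness triple property along isomorphisms, so that the obstruction recurs over every limit model and every nonforking extension of $p_0$ and we may work freely with limit models throughout. Amalgamation in $\lambda^+$ lets us speak of Galois types over models of size $\lambda^+$, and with stability in $\lambda^+$ it forces the saturated model of size $\lambda^+$ to be unique. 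Now unwind the failure of ``$(a,M,N)$ is a uniqueness triple'' for \emph{every} triple representing a nonforking extension of $p_0$: given such $(a,M,N)$ there is $M' \gea M$ in $\K_\lambda$ and two $\nf$-amalgams of $N$ with $M'$ over $M$, say $(N^0, f^0)$ and $(N^1, f^1)$ with $f^\ell : N \xrightarrow[M']{} N^\ell$ and $\nfs{M}{f^\ell(a)}{M'}{N^\ell}$, which are incompatible: $\gtp (f^0[N] / M'; N^0) \neq \gtp (f^1[N] / M'; N^1)$. Recursing on $\alpha < \lambda^+$, build a tree $\langle M_\eta : \eta \in {}^{\alpha}2 \rangle$ of limit models in $\K_\lambda$, increasing and continuous along branches, with coherent auxiliary data (elements $a_\eta$, extensions $N_\eta$, amalgams), so that at each node $\eta$ the successor pair $M_{\eta\frown\langle 0\rangle}, M_{\eta\frown\langle 1\rangle}$ realizes the displayed incompatibility for the configuration accumulated so far, with unions at limits. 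Interleave a bookkeeping that lists all potential isomorphisms between partial branches and all pairs $(M_1, \bar r)$ with $M_1 \in \K_\lambda$ and $\bar r$ a length-$\lambda^+$ attempted enumeration of $\gS (M_1)$, confronting each at cofinally many levels. For a branch $\eta \in {}^{\lambda^+}2$ set $M_\eta^\ast := \bigcup_{\alpha < \lambda^+} M_{\eta\restriction\alpha} \in \K_{\lambda^+}$.

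\textbf{Weak-diamond diagonalization and contradiction.} Apply $\Phi_{\lambda^+}$ to the colouring that, at level $\alpha$, records which Galois type over $M_{\eta\restriction\alpha}$ the distinguished copy of $N_{\eta\restriction\alpha}$ realizes in each of the two permitted one-step extensions. On a stationary set of levels the weak diamond predicts the direction in which the relevant incompatibility will be displayed, and there we branch \emph{against} the prediction. Played against a confronted pair $(M_1, \bar r)$ this forces, for suitable $\eta$, some Galois type over $M_1$ realized in $M_\eta^\ast$ but absent from $\bar r$; played against a purported isomorphism of two $M_\eta^\ast$'s it forces that isomorphism to break. Since by the frame machinery we may also insert saturation demands into the bookkeeping and take each $M_\eta^\ast$ saturated, the isomorphism alternative contradicts uniqueness of the saturated model of size $\lambda^+$, while the type-enumeration alternative contradicts stability in $\lambda^+$. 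Either way, $p_0$ must after all be represented by a uniqueness triple, so $\s$ is weakly successful.

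\textbf{Main obstacle.} The hard part is the weak-diamond construction itself: selecting a colouring so that one application of $\Phi_{\lambda^+}$ simultaneously defeats every candidate isomorphism and every candidate type-enumeration, and — the genuinely delicate point — verifying that the incompatibility of the $\nf$-amalgams supplied at each node genuinely survives the $\lambda^+$-length union rather than being ``healed'' at some limit stage. This is exactly where the bookkeeping of \cite[Section II.5]{shelahaecbook} and of $\odot_4$ in \cite[Theorem IV.7.12]{shelahaecbook} must be reconstructed with care; everything else is routine good-frame manipulation.
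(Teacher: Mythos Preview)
Your setup is right—assume some $p_0$ has no uniqueness-triple representation, use conjugation to propagate this failure, and build a binary tree of models in $\K_\lambda$ by repeatedly splitting into two non-equivalent nonforking amalgams—but the contradiction you aim for is not the one the paper uses, and yours has a real gap. The paper does \emph{not} try to produce many non-isomorphic saturated models or many types over a fixed base. Instead, amalgamation and stability in $\lambda^+$ are used only to get a single universal $N \in \K_{\lambda^+}$ over the saturated $M_{\lambda^+} := \bigcup_\alpha M_\alpha$; every branch model $M_\eta$ then embeds into $N$ via some $g_\eta$ inverting the tree maps $f_\eta$. One now applies the Devlin--Shelah principle $\Theta_{\lambda^+}$ (a consequence of $2^\lambda < 2^{\lambda^+}$) directly to the family $\seq{g_\eta : \eta \in {}^{\lambda^+}2}$, not to a colouring of types. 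This yields $\eta, \nu$ and a level $\delta$ with $\rho := \eta \rest \delta = \nu \rest \delta$, $\eta(\delta) \neq \nu(\delta)$, and $g_\eta \rest M_\rho = g_\nu \rest M_\rho$ (using a club argument to ensure $|M_\rho| \subseteq \delta$). But then $g_\eta \rest M_{\rho \smallfrown 0}$ and $g_\nu \rest M_{\rho \smallfrown 1}$ show that the two amalgams built at node $\rho$ are equivalent over $M_\rho$ after all—an immediate, local contradiction. Nothing needs to ``survive the union,'' and no bookkeeping of isomorphisms or type enumerations is required.

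Your two proposed contradictions do not clearly follow from the tree. For non-isomorphism: once you arrange each $M_\eta^\ast$ to be saturated, they are all isomorphic by uniqueness of the saturated model, so the branching cannot contradict that. The incompatibility at each node is incompatibility of amalgams \emph{over a specified base with specified embeddings}, which says nothing about abstract isomorphism types. For instability: the incompatible amalgams give different types over \emph{varying} bases as one moves up the tree, not many types over one fixed $M_1$; extracting $>\lambda^+$ types over a single model from this would require a further argument you do not supply. Your ``main obstacle'' paragraph essentially concedes this. The paper's route avoids the issue entirely by turning the universal model itself into the source of contradiction.
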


We can obtain the stability hypothesis and successfulness using weak tameness:

\begin{fact}[Theorem 4.5 in \cite{b-k-vd-spectrum}]\label{upward-stab-succ}
  Let $\lambda \ge \LS (\K)$, be such that $\K$ has amalgamation in $\lambda$ and $\lambda^+$, and $\K$ is stable in $\lambda$. If $\K$ is $(\lambda, \lambda^+)$-weakly tame, then $\K$ is stable in $\lambda^+$.
\end{fact}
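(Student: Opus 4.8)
The plan is to bound $|\gS(M)|$ for $M \in \K_{\lambda^+}$ by first reducing to the case where $M$ is saturated, and then counting types over a saturated model via $\lambda$-splitting. The three ingredients are: existence of a saturated model in $\K_{\lambda^+}$ (from stability and amalgamation in $\lambda$); the basic theory of $\lambda$-splitting in a stable AEC (uniqueness of nonsplitting extensions over universal extensions, together with a local character statement along chains of length $\lambda^+$); and weak tameness, which is what upgrades ``agreement on every submodel of size $\lambda$'' to ``equality'' for types over a saturated model of size $\lambda^+$.

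First I would reduce to counting $|\gS(M^*)|$ for $M^*$ saturated of size $\lambda^+$. Given an arbitrary $M \in \K_{\lambda^+}$, write $M = \bigcup_{i < \lambda^+} P_i$ with $\seq{P_i : i < \lambda^+}$ increasing continuous in $\K_\lambda$, and build $\seq{M_i : i < \lambda^+}$ increasing continuous in $\K_\lambda$ with $P_i \lea M_i$ and each $M_{i+1}$ universal over $M_i$ (at successors, amalgamate $M_i$ with $P_{i+1}$ over $P_i$ using amalgamation in $\lambda$, then take a universal extension, which exists by stability in $\lambda$). Since a universal extension of $M_i$ realizes every type in $\gS(M_i)$, the union $M^* := \bigcup_{i < \lambda^+} M_i$ is a saturated member of $\K_{\lambda^+}$ with $M \lea M^*$. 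By the extension property (amalgamation in $\lambda^+$), the restriction map $\gS(M^*) \to \gS(M)$ is surjective, so $|\gS(M)| \le |\gS(M^*)|$, and it suffices to bound the latter.

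Now fix such an $M^* = \bigcup_{i < \lambda^+} M_i$. By the local character of $\lambda$-splitting --- a standard consequence of stability in $\lambda$ --- every $p \in \gS(M^*)$ fails to $\lambda$-split over $M_i$ for some $i < \lambda^+$; let $f(p)$ be the least such $i$. I claim $p \mapsto (f(p), p \rest M_{f(p)+1})$ is injective, which suffices since then $|\gS(M^*)| \le \sum_{i < \lambda^+} |\gS(M_{i+1})| \le \lambda^+ \cdot \lambda = \lambda^+$ by stability in $\lambda$. So suppose $f(p) = f(q) = i$ and $p \rest M_{i+1} = q \rest M_{i+1}$. Since $M^*$ is saturated, $(\lambda, \lambda^+)$-weak tameness reduces $p = q$ to showing $p \rest N = q \rest N$ for every $N \lea M^*$ of size $\lambda$ with $M_{i+1} \lea N$ (every size-$\lambda$ submodel of $M^*$ embeds into such an $N$). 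Fix such an $N$. As $M_{i+1}$ is universal over $M_i$, there is $g : N \xrightarrow[M_i]{} M_{i+1}$; since neither $p$ nor $q$ $\lambda$-splits over $M_i$ and $g$ fixes $M_i$ pointwise, $p \rest N = g^{-1}(p \rest g[N])$ and $q \rest N = g^{-1}(q \rest g[N])$, and the two right-hand sides coincide because $g[N] \lea M_{i+1}$ and $p \rest M_{i+1} = q \rest M_{i+1}$. Hence $p \rest N = q \rest N$, as needed.

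The step I expect to be the main obstacle is the local character of $\lambda$-splitting along chains of length $\lambda^+$: that stability in $\lambda$ alone (no superstability, no cardinal arithmetic) forces that for any increasing continuous $\seq{M_i : i < \lambda^+}$ in $\K_\lambda$ with $M_{i+1}$ universal over $M_i$, every $p \in \gS(\bigcup_i M_i)$ is nonsplitting over some $M_i$. This is where the stability count $|\gS(M)| \le \lambda$ in $\K_\lambda$ (equivalently, the failure of a long order property) is genuinely used; the rest of the argument --- the saturated model construction, the uniqueness of nonsplitting extensions over universal extensions, and the bookkeeping with weak tameness --- is soft once this is available.
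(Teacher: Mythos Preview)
The paper does not prove this statement; it quotes it as a fact from \cite{b-k-vd-spectrum} without argument. Your proof is correct and is essentially the original argument from that source: reduce to a saturated $M^\ast\in\K_{\lambda^+}$ (so that weak tameness applies), invoke the local character of $\lambda$-splitting along a $(\lambda,\lambda^+)$-limit tower (which follows from stability in $\lambda$), and use uniqueness of non-$\lambda$-splitting extensions over universal extensions together with $(\lambda,\lambda^+)$-weak tameness to conclude that $p\mapsto (f(p),\,p\rest M_{f(p)+1})$ is injective. There is nothing further to compare.
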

\begin{fact}[Corollary 7.19 in \cite{jarden-tameness-apal}]\label{jarden-successful}
  If $\s$ is a weakly successful good $\lambda$-frame on $\K$, $\K$ is categorical\footnote{Note that by \cite[Claim III.1.21]{shelahaecbook}, this implies the conjugation property, so Hypothesis 6.5 in \cite{jarden-tameness-apal} is satisfied.} in $\lambda$, $\K$ has amalgamation in $\lambda^+$, and $\K$ is $(\lambda, \lambda^+)$-weakly tame, then $\s$ is $\succp$.
\end{fact}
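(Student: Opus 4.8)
The plan is to separate the two halves of the conclusion. Recall that for a weakly successful good $\lambda$-frame, being \emph{successful} amounts to the derived ordering $\leap{\K_{\lambda^+}}^{\text{NF}}$ (non-forking amalgamation, built from the uniqueness triples of $\Ktuqp{\lambda}$) being \emph{smooth} on the saturated models of $\K_{\lambda^+}$, equivalently to $(\Ksatp{\lambda^+}_{\lambda^+}, \leap{\K_{\lambda^+}}^{\text{NF}})$ being the $\lambda^+$-part of an AEC and $\s^+$ being a good $\lambda^+$-frame; and being $\succp$ requires in addition that $\leap{\K_{\lambda^+}}^{\text{NF}}$ coincide with $\lea$ on those saturated models. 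One always has $M \leap{\K_{\lambda^+}}^{\text{NF}} N \Rightarrow M \lea N$ directly from the definition of $\mathrm{NF}$, so the whole statement reduces to a single claim: for saturated $M \lea N$ in $\K_{\lambda^+}$, one has $M \leap{\K_{\lambda^+}}^{\text{NF}} N$. Indeed this gives smoothness (the NF-ordering then agrees with the genuine ordering, which is smooth) together with the $\succp$ clause in one stroke, and the remaining good-$\lambda^+$-frame axioms for $\s^+$ then follow from Shelah's machinery for weakly successful frames in \cite[Section III.1, III.7]{shelahaecbook} and \cite{jrsh875}.

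First I would assemble the preliminaries. Categoricity in $\lambda$ makes the unique model of size $\lambda$ superlimit, so every $M \in \K_\lambda$ is a limit model and the conjugation property (Fact \ref{conj-prop}) holds. Since $\s$ is a good $\lambda$-frame, $\K$ is stable in $\lambda$; combined with amalgamation in $\lambda^+$ and $(\lambda, \lambda^+)$-weak tameness, Fact \ref{upward-stab-succ} yields stability in $\lambda^+$, hence there is a (unique) saturated model in $\K_{\lambda^+}$, and the whole discussion of $\Ksatp{\lambda^+}_{\lambda^+}$ is non-vacuous.

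The heart of the argument is the reduction of the claim to something detectable ``from below'', and this is exactly where $(\lambda,\lambda^+)$-weak tameness is used. Given saturated $M \lea N$ in $\K_{\lambda^+}$, I would build increasing continuous resolutions $\seq{M_i : i < \lambda^+}$ of $M$ and $\seq{N_i : i < \lambda^+}$ of $N$ in $\K_\lambda$, with each $M_i, N_i$ limit, $M_i \lea N_i$, and arranged (via uniqueness of limit models and a bookkeeping of size-$\lambda$ types) so that $N_{i+1}$ is \emph{reduced} over $M_{i+1}$ relative to $N$: no $a \in |N_{i+1}| \setminus |M_{i+1}|$ has $\gtp(a/M_{i+1}; N)$ already realized in $M$. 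The point of weak tameness is that realization of a type over the size-$\lambda^+$ model $M$ is witnessed by its restrictions to size-$\lambda$ submodels, so this reducedness condition can be maintained level by level. One then checks, using density of uniqueness triples and the construction of $\mathrm{NF}$, that such a reduced resolution witnesses $M \leap{\K_{\lambda^+}}^{\text{NF}} N$, the uniqueness property of $\Ktuqp{\lambda}$ ensuring the resulting amalgam does not depend on the choices made. This closes the reduction and hence proves successfulness and the $\succp$ clause together.

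I expect the main obstacle to be precisely this reducedness step: verifying that $(\lambda,\lambda^+)$-weak tameness genuinely substitutes for Shelah's usual hypothesis of few models in $\lambda^{++}$ in the construction of $\mathrm{NF}$-amalgamations over saturated $\lambda^+$-models, i.e.\ that ``$N$ is reduced over $M$'' (phrased via size-$\lambda$ submodels) is preserved along the resolution and really characterizes $\leap{\K_{\lambda^+}}^{\text{NF}}$. This calls for a somewhat delicate interplay between the uniqueness property of uniqueness triples, the conjugation property, and weak tameness. Everything else --- the stability transfer, existence and uniqueness of the saturated model in $\lambda^+$, and quoting the residual verification that $\s^+$ satisfies the good-frame axioms --- is routine bookkeeping through \cite[Chapter III]{shelahaecbook} and \cite{jrsh875}.
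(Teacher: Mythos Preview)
The paper does not prove this statement: it is labeled a \emph{Fact}, cited as Corollary 7.19 of \cite{jarden-tameness-apal}, and used as a black box in Corollary \ref{getting-succp}. There is no in-paper argument to compare your proposal against; the footnote merely checks that Jarden's Hypothesis 6.5 (the conjugation property) is available via categoricity in $\lambda$ and \cite[Claim III.1.21]{shelahaecbook}.

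As to the content of your sketch: the reduction you propose---show that $\lea$ and $\leap{\K_{\lambda^+}}^{\text{NF}}$ agree on saturated models of size $\lambda^+$, which simultaneously yields smoothness and the $\succp$ clause---is indeed the shape of Jarden's argument. But the step you yourself flag as the obstacle is where all the work lies, and your ``reduced resolution'' description is too vague to count as a proof. In particular, the claim that a resolution with your reducedness property witnesses $M \leap{\K_{\lambda^+}}^{\text{NF}} N$ is not obvious: one must actually produce, at each successor stage, an $\mathrm{NF}$-square $\mathrm{NF}(M_i, M_{i+1}, N_i, N_{i+1})$, and this requires invoking the existence and uniqueness properties of uniqueness triples together with weak tameness in a specific way (roughly, tameness lets one show that the canonical nonforking amalgam is the only amalgam compatible with the given embedding into $N$). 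Your sketch gestures at this but does not carry it out. If you want to reconstruct the proof rather than cite it, you should consult \cite{jarden-tameness-apal} directly, where the interaction between tameness and $\mathrm{NF}$ is worked out in detail.
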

\begin{remark}
  Although we will not need it, the converse (i.e.\ obtaining weak tameness from being $\succp$) is also true, see \cite[Theorem 7.1.13.(b)]{jrsh875}.
\end{remark}

\begin{cor}\label{getting-succp}
  Assume $\WGCH (\lambda)$. If $\s$ is a good $\lambda$-frame on $\K$, $\K$ is categorical in $\lambda$, has amalgamation in $\lambda^+$, and is $(\lambda, \lambda^+)$-weakly tame, then $\s$ is $\succp$.
\end{cor}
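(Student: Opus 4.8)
The plan is simply to chain together the three facts immediately preceding the statement. First I would unwind the hypothesis that $\s$ is a good $\lambda$-frame on $\K$: by definition this gives that $\K_\lambda$ is nonempty, has amalgamation in $\lambda$, joint embedding in $\lambda$, no maximal models in $\lambda$, and is stable in $\lambda$. In particular $\LS (\K) \le \lambda$, $\K$ has amalgamation in $\lambda$, and $\K$ is stable in $\lambda$. Combined with the assumed amalgamation in $\lambda^+$ and $(\lambda, \lambda^+)$-weak tameness, the hypotheses of Fact \ref{upward-stab-succ} are met, so $\K$ is stable in $\lambda^+$.

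Next I would invoke Fact \ref{weakly-succ-fact}: we now have $\WGCH (\lambda)$, a good $\lambda$-frame $\s$ on $\K$, categoricity in $\lambda$, amalgamation in $\lambda^+$, and (just established) stability in $\lambda^+$, so $\s$ is weakly successful. Finally I would apply Fact \ref{jarden-successful} to the weakly successful good $\lambda$-frame $\s$: together with categoricity in $\lambda$, amalgamation in $\lambda^+$, and $(\lambda, \lambda^+)$-weak tameness, this yields that $\s$ is $\succp$, which is exactly what we want.

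There is no genuine obstacle here; this corollary is purely a bookkeeping combination of black boxes. The only point requiring a moment's care is to make sure the stability hypothesis needed for Fact \ref{weakly-succ-fact} is produced \emph{before} it is used, i.e.\ that one first passes through Fact \ref{upward-stab-succ} to upgrade stability in $\lambda$ to stability in $\lambda^+$, rather than assuming the latter outright. Everything else is verifying that the various amalgamation, categoricity, and weak tameness hypotheses line up, which they do verbatim.
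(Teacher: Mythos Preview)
Your proposal is correct and follows exactly the same three-step chain as the paper's proof: first Fact \ref{upward-stab-succ} to get stability in $\lambda^+$, then Fact \ref{weakly-succ-fact} to get weakly successful, then Fact \ref{jarden-successful} to get $\succp$. You have simply spelled out the hypothesis-checking that the paper leaves implicit.
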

\begin{proof}
  By Fact \ref{upward-stab-succ}, $\K$ is stable in $\lambda^+$. By Fact \ref{weakly-succ-fact}, $\s$ is weakly successful. By Fact \ref{jarden-successful}, $\s$ is $\succp$.
\end{proof}

Using Fact \ref{good-frame-weak-tameness} to build the good frame, we obtain:

\begin{lem}\label{key-lem-categ-limit}
  Assume that $\K$ has amalgamation and no maximal models. Assume $\WGCH ([\LS (\K), \LS (\K)^{+\omega}))$ and Claim \ref{claim-xxx}. If $\K$ is categorical in a $\lambda > \LS (\K)^{+\omega}$ and:

  \begin{enumerate}
    \item The model of size $\lambda$ is $\LS (\K)^{++}$-saturated.
    \item $\K$ is $(\LS (\K), <\LS (\K)^{+\omega})$-weakly tame.
  \end{enumerate}

  Then $\Ksatp{\LS (\K)^{+\omega}}$ is categorical in all $\lambda' > \LS (\K)^{+\omega}$. In particular, $\K$ is categorical in all $\lambda' \ge \min (\lambda, \sup_{n < \omega} \hanf{\LS (\K)^{+n}})$.
\end{lem}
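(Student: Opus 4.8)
The plan is to manufacture from the hypotheses an $\omega$-$\succp$ good $\LS (\K)^+$-frame on $\Ksatp{\LS (\K)^+}$ and then feed it to Claim \ref{claim-xxx} (noting that $(\LS (\K)^+)^{+\omega} = \LS (\K)^{+\omega}$, so the claim will speak about $\Ksatp{\LS (\K)^{+\omega}}$). By Fact \ref{jep-decomp} we may assume $\K$ has joint embedding (it already has amalgamation and no maximal models). By Fact \ref{shvi}, $\K$ is superstable in every $\mu \in [\LS (\K), \lambda)$; since the model of size $\lambda$ is $\LS (\K)^{++}$-saturated, Fact \ref{sym-from-categ}.(1) gives that $\K$ has $\LS (\K)^+$-symmetry, and Corollary \ref{cor-sat-categ} gives that $\Ksatp{\LS (\K)^+}$ is an AEC with $\LS (\Ksatp{\LS (\K)^+}) = \LS (\K)^+$, categorical in $\LS (\K)^+$ by uniqueness of saturated models. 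Combining $\LS (\K)^+$-superstability, $\LS (\K)^+$-symmetry, and $(\LS (\K), \LS (\K)^+)$-weak tameness (part of hypothesis~(2)), Fact \ref{good-frame-weak-tameness} produces a type-full good $\LS (\K)^+$-frame $\s$ on $\Ksatp{\LS (\K)^+}_{\LS (\K)^+}$.

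Next I would upgrade $\s$ to an $\omega$-$\succp$ frame by induction on $n < \omega$. The base case is Corollary \ref{getting-succp} applied to $\s$ at $\LS (\K)^+$: its hypotheses are $\WGCH (\LS (\K)^+)$ (part of ours), amalgamation in $\LS (\K)^{++}$ (inherited from $\K$ using stability in $\LS (\K)^+$, which comes from the good frame), and $(\LS (\K)^+, \LS (\K)^{++})$-weak tameness (a consequence of hypothesis~(2)). For the inductive step, if $\s^{+n}$ (the $n$-fold successor of $\s$) is a $\succp$ good $\LS (\K)^{+(1+n)}$-frame on $\Ksatp{\LS (\K)^{+(1+n)}}$, then Shelah's theory of successful frames (\cite[Chapter III]{shelahaecbook}) gives that its successor $\s^{+(n+1)}$ is a good $\LS (\K)^{+(2+n)}$-frame on $\Ksatp{\LS (\K)^{+(2+n)}}$, and Corollary \ref{getting-succp} applied at $\LS (\K)^{+(2+n)}$ --- using the relevant instance of $\WGCH$, the stability supplied by the good frame together with Fact \ref{upward-stab-succ} to obtain amalgamation one cardinal up, and the relevant instance of weak tameness from hypothesis~(2) --- shows $\s^{+(n+1)}$ is again $\succp$. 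Hence $\s$ is $\omega$-$\succp$.

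It remains to check the last hypothesis of Claim \ref{claim-xxx}, that $\Ksatp{\LS (\K)^{+\omega}}$ is categorical in \emph{some} $\mu > \LS (\K)^{+\omega}$; \textbf{this is the main obstacle.} The strategy is to show that the model of $\K$ of size $\lambda$ --- or, failing that, the model of size some larger categoricity cardinal --- is $\LS (\K)^{+\omega}$-saturated, whence it lies in $\Ksatp{\LS (\K)^{+\omega}}$ and categoricity of $\Ksatp{\LS (\K)^{+\omega}}$ follows. One bootstraps the degree of saturation through the tower $\Ksatp{\LS (\K)^{+n}}$ (each an AEC with the expected Löwenheim-Skolem-Tarski number, via the $\omega$-$\succp$ structure or Fact \ref{sym-unionsat}): at the $n$-th level, Morley's omitting type theorem for AECs (Fact \ref{omitting-type}) propagates $\LS (\K)^{+(n+1)}$-saturation from the categoricity cardinal down to all sufficiently large cardinals, while Fact \ref{sym-from-categ}.(2) applied to $\Ksatp{\LS (\K)^{+n}}$ supplies the next degree of saturation of the categoricity model once the cardinal in question is $\ge \hanf{\LS (\K)^{+(n+1)}}$. (When $\LS (\K)^{+\omega} < \lambda < \sup_{n < \omega} \hanf{\LS (\K)^{+n}}$ the $\lambda$-model itself need not reach $\LS (\K)^{+\omega}$-saturation, and one instead works with a categoricity cardinal produced along the way, using that $\K$ has arbitrarily large models.) Given this, Claim \ref{claim-xxx} yields that $\Ksatp{\LS (\K)^{+\omega}}$ is categorical in all $\mu' > \LS (\K)^{+\omega}$.

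Finally, the same bootstrapping shows that $\K$ and $\Ksatp{\LS (\K)^{+\omega}}$ have the same models in every cardinal $\ge \min (\lambda, \sup_{n < \omega} \hanf{\LS (\K)^{+n}})$ (every such model being $\LS (\K)^{+\omega}$-saturated), so the categoricity of $\Ksatp{\LS (\K)^{+\omega}}$ transfers to $\K$ in that range, giving the ``in particular'' clause. I expect the bookkeeping in the second and third paragraphs --- tracking which instances of $\WGCH$, weak tameness, amalgamation, and stability are available at each level of the tower, and pinning down exactly how much saturation the categoricity model carries --- to be the delicate part; Claim \ref{claim-xxx} itself is used as a black box.
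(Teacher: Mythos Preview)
Your proposal is correct and follows essentially the same architecture as the paper: use Facts~\ref{sym-from-categ} and~\ref{shvi} together with Fact~\ref{good-frame-weak-tameness} to obtain a good $\LS(\K)^+$-frame on $\Ksatp{\LS(\K)^+}$, iterate Corollary~\ref{getting-succp} to make it $\omega$-$\succp$, invoke Claim~\ref{claim-xxx}, and derive the ``in particular'' clause via Fact~\ref{omitting-type}. The paper's proof is considerably more terse---it does not pause on the categoricity hypothesis of Claim~\ref{claim-xxx} that you flag as the ``main obstacle'' and instead, after applying the claim, observes that Fact~\ref{shvi} gives stability in $\lambda$ (once $\K$ is categorical on a tail) and hence the $\lambda$-model is saturated---so your bootstrapping via Fact~\ref{sym-from-categ}.(2) is additional care rather than a different route.
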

\begin{proof}
  By Fact \ref{sym-from-categ}, $\K$ has symmetry in $\LS (\K)$ and $\LS (\K)^+$. By Fact \ref{shvi}, $\K$ is superstable in every $\chi \in [\LS (\K), \lambda)$. By Fact \ref{good-frame-weak-tameness}, there is a good $\LS (\K)^+$-frame $\s$ on $\Ksatp{\LS (\K)^+}$. By repeated applications of Corollary \ref{getting-succp}, $\s$ is $\omega$-$\succp$. By Claim \ref{claim-xxx}, $\Ksatp{\LS (\K)^{+\omega}}$ is categorical in all $\lambda' > \LS (\K)^{+\omega}$, and hence by Fact \ref{omitting-type}, $\K$ is categorical in every $\lambda' \ge \sup_{n < \omega} \hanf{\LS (\K)^{+\omega}}$. In particular by Fact \ref{shvi}, $\K$ is stable in $\lambda$, so the model of size $\lambda$ is saturated (hence $\LS (\K)^{+\omega}$-saturated), and so $\K$ must be categorical in all $\lambda' \ge \min (\lambda, \sup_{n < \omega} \hanf{\LS (\K)^{+\omega}})$.
\end{proof}

\begin{cor}\label{abstract-thm-3-proof}
  Assume $\WGCH ([\LS (\K), \LS (\K)^{+\omega}))$ and Claim \ref{claim-xxx}. Assume that $\K$ has amalgamation, arbitrarily large models, and is $\LS (\K)$-tame. If $\K$ is categorical in a $\lambda > \LS (\K)$, then there exists $\chi < H_1$ such that $\K$ is categorical in all $\lambda' \ge \min (\lambda, \chi)$.
\end{cor}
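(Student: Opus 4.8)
The plan is to prove that $\K$ is categorical in every cardinal $\ge \chi$ for a suitable $\chi < H_1$, and then, should $\lambda$ lie below $\chi$, to fill in the interval $[\lambda, \chi)$. I would start by reducing, via Fact \ref{jep-decomp}, to the case that $\K$ has no maximal models. Since $\K$ is categorical in $\lambda > \LS (\K)$, Fact \ref{shvi} gives $\LS (\K)$-superstability, and Fact \ref{tame-sym-ss} (using $\LS (\K)$-tameness) upgrades this to superstability and symmetry in every $\mu \ge \LS (\K)$; hence by Fact \ref{sym-unionsat} each $\Ksatp{\chi}$ with $\chi > \LS (\K)$ is an AEC with $\LS (\Ksatp{\chi}) = \chi$, and by Facts \ref{good-frame-weak-tameness} and \ref{frame-upward-transfer} there is a type-full good $(\ge \LS (\K)^+)$-frame $\s$ on $\Ksatp{\LS (\K)^+}$.

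Next I would locate a good categoricity cardinal high up. By Theorem \ref{omitting-categ-transfer}, $\K$ is categorical in a proper class of cardinals, so I may fix one, $\mu^\ast$, large enough that the model of size $\mu^\ast$ is $\LS (\K)^{++}$-saturated --- for instance $\mu^\ast \ge \ehanf{\LS (\K)^{++}}$, the saturation being read off by applying Fact \ref{sym-from-categ}(2) first to $\K$ and then, through Corollary \ref{cor-sat-categ}, inside the AEC $\Ksatp{\LS (\K)^+}$. With this choice, Lemma \ref{key-lem-categ-limit} applies with $\lambda$ there replaced by $\mu^\ast$: its weak-tameness hypothesis follows from $\LS (\K)$-tameness, while $\WGCH ([\LS (\K), \LS (\K)^{+\omega}))$ and Claim \ref{claim-xxx} are assumed. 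It gives that $\Ksatp{\LS (\K)^{+\omega}}$ is categorical in every cardinal $> \LS (\K)^{+\omega}$, in particular in the successor $(\LS (\K)^{+\omega})^+$.

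I would then descend using Theorem \ref{good-categ-transfer}, applied to the AEC $\Ksatp{\LS (\K)^+}$ equipped with the good $(\ge \LS (\K)^+)$-frame $\s$, with $\theta = \infty$ and with ``$\lambda$'' in that theorem taken to be $\LS (\K)^{+\omega}$. The two inputs needed are that $\Ksatp{\LS (\K)^+}$ is categorical in $\LS (\K)^+$ (its models of that size are precisely the saturated ones, unique by amalgamation and stability) and that the $\LS (\K)^{+\omega}$-saturated models of $\Ksatp{\LS (\K)^+}$ are categorical in $(\LS (\K)^{+\omega})^+$; the latter holds because, for models already $\LS (\K)^+$-saturated, $\LS (\K)^{+\omega}$-saturation computed in $\Ksatp{\LS (\K)^+}$ coincides with $\LS (\K)^{+\omega}$-saturation computed in $\K$, so this class is exactly $\Ksatp{\LS (\K)^{+\omega}}$, categorical by the previous step. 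Hence $\Ksatp{\LS (\K)^+}$ is categorical in every $\mu \ge \LS (\K)^+$. Finally, the model of size $\mu^\ast$ is in particular $\LS (\K)^+$-saturated, so every model in $\K_{\mu^\ast}$ is $\LS (\K)^+$-saturated, and Morley's omitting type theorem (Fact \ref{omitting-type}) produces $\chi < H_1$ with $\K_{\ge \chi} = \Ksatp{\LS (\K)^+}_{\ge \chi}$; therefore $\K$ is categorical in every $\mu \ge \chi$. If $\lambda \ge \chi$ this is exactly the conclusion; if $\lambda < \chi$, then $\K$ is categorical in $\lambda$ and in the successor $\chi^+$ with $\LS (\K) < \lambda < \chi^+$, so Corollary \ref{main-cor} gives categoricity in every $\mu \ge \lambda = \min (\lambda, \chi)$, completing the proof.

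I expect the main obstacle to be the two bookkeeping points flagged above: arranging a categoricity cardinal whose model is $\LS (\K)^{++}$-saturated (so that Lemma \ref{key-lem-categ-limit} can be fed), and checking that $\LS (\K)^{+\omega}$-saturation relative to the AEC $\Ksatp{\LS (\K)^+}$ agrees with that relative to $\K$, so that the descent via Theorem \ref{good-categ-transfer} genuinely refers to $\Ksatp{\LS (\K)^{+\omega}}$ and deposits categoricity on $\Ksatp{\LS (\K)^+}$.
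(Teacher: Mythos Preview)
Your proposal is correct and follows essentially the same route as the paper: reduce to no maximal models, use Theorem \ref{omitting-categ-transfer} to obtain a high categoricity cardinal, feed Lemma \ref{key-lem-categ-limit}, descend via the good-frame machinery, and finish with Corollary \ref{main-cor}. The paper packages the descent more economically---after Lemma \ref{key-lem-categ-limit} yields categoricity of $\K$ in the successor $\left(\hanf{\LS(\K)^{+\omega}}\right)^+$, it simply quotes Corollary \ref{main-thm} rather than re-deriving it via Theorem \ref{good-categ-transfer} and Fact \ref{omitting-type}; it also avoids your separate $\mu^\ast$ by first replacing $\lambda$ with a cardinal $\ge H_1$ and observing directly that stability everywhere makes the categorical model saturated, so your two bookkeeping obstacles disappear.
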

\begin{proof}
  By Fact \ref{jep-decomp}, without loss of generality $\K$ has no maximal models. By Theorem \ref{omitting-categ-transfer} we can assume without loss of generality that $\lambda \ge H_1$ (then we can use Corollary \ref{main-cor} to transfer categoricity downward). By Fact \ref{shvi}, $\K$ is $\LS (\K)$-superstable. By\footnote{In our case, we only use stability in $\lambda$, which follows by \cite[Theorem 5.6]{ss-tame-jsl}.} Fact \ref{tame-sym-ss}, $\K$ is stable in $\lambda$, hence the model of size $\lambda$ is saturated. By Lemma \ref{key-lem-categ-limit}, $\K$ is categorical in all $\lambda' \ge \hanf{\LS (\K)^{+\omega}}$. In particular, $\K$ is categorical in $\left(\hanf{\LS (\K)^{+\omega}}\right)^+$. By Corollary \ref{main-thm}, there exists $\chi < H_1$ such that $\K$ is categorical in all $\lambda' \ge \chi$.
\end{proof}

Without tameness, we can obtain the hypotheses of Lemma \ref{key-lem-categ-limit} from categoricity in a high-enough cardinal. More precisely, to obtain enough weak tameness, we use Theorem \ref{technical-lem}. To obtain the first condition in Lemma \ref{key-lem-categ-limit} (i.e.\ that the model in the categoricity cardinal is sufficiently saturated), we will use\footnote{Shelah claims \cite[Claim IV.7.8]{shelahaecbook} a slightly different result using PCF theory and the existence of certain linear orders: under amalgamation and no maximal models, for $\mu > \LS (\K)$, categoricity in some $\lambda$ so that $\lambda \ge \aleph_{\mu^{+4}} + 2^{2^{\mu}}$ implies that the model of size $\lambda$ is $\mu$-saturated.} Fact \ref{sym-from-categ}.

This allows us to give a proof of \cite[Theorem IV.7.12]{shelahaecbook}. Note that, while we give a slightly different proof to attempt to convince doubters, the result is due to Shelah. In fact, Shelah assumes amalgamation more locally but we haven't fully verified his general proof. As explained in the introduction, we avoid relying on PCF theory or on Shelah's construction of certain linear orders in \cite[Sections IV.5, IV.6]{shelahaecbook}.

\begin{fact}\label{shelah-main-thm}
  Assume Claim \ref{claim-xxx}. Assume that $\K$ has amalgamation. Let $\lambda$ and $\mu$ be cardinals such that:

  \begin{enumerate}
    \item $\K$ is categorical in $\lambda$.
    \item $\mu$ is a limit cardinal with $\cf{\mu} > \LS (\K)$.
    \item For all $\chi < \mu$, $\hanf{\chi} < \lambda$.
    \item For unboundedly many $\chi < \mu$, $\WGCH ([\chi, \chi^{+\omega}))$.
  \end{enumerate}

  Then there exists $\mu_\ast < \mu$ such that $\K$ is categorical in all $\lambda' \ge \hanf{\mu_\ast}$.
\end{fact}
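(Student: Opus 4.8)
The strategy is to reduce to an application of Lemma \ref{key-lem-categ-limit} inside the class of $\chi$-saturated models for a carefully chosen $\chi < \mu$.

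\emph{Step 1 (setup and saturation at $\lambda$).} By Fact \ref{jep-decomp} we may assume $\K$ has joint embedding and no maximal models; this is harmless, since condition (3) applied to $\chi = \LS (\K)$ gives $\lambda > H_1$, so the two classes agree in every cardinal that matters. Note $\mu > \LS (\K)$ because $\cf{\mu} > \LS (\K)$. Applying Fact \ref{shvi} and Fact \ref{sym-from-categ}.(1) to the tails $\K_{\ge \chi'}$ (for which condition (3) supplies $\lambda > \hanf{\chi'}$) shows that $\K$ is superstable in every $\chi' \in [\LS (\K), \lambda)$ and has $\chi'$-symmetry for every $\chi' \in [\LS (\K), \mu)$; hence by Fact \ref{sym-unionsat} the class $\Ksatp{\kappa}$ is, for every $\kappa \in (\LS (\K), \mu)$, an AEC with amalgamation, no maximal models, and $\LS (\Ksatp{\kappa}) = \kappa$. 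Feeding these classes into an iteration of Fact \ref{sym-from-categ}.(2) --- precisely the argument behind the parenthetical remark in Corollary \ref{downward-transfer}, using that $\lambda > \hanf{\kappa^+}$ for all $\kappa < \mu$ --- one proves by induction on $\kappa \in [\LS (\K), \mu)$ that the model of size $\lambda$ is $\kappa^+$-saturated, and therefore (since $\mu$ is a limit cardinal) that it is $\mu$-saturated.

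\emph{Step 2 (choosing $\chi$ and applying Lemma \ref{key-lem-categ-limit}).} Since $\cf{\mu} > \LS (\K)$, $\K$ has no maximal models, and the model of size $\lambda$ is $\mu$-saturated, Theorem \ref{technical-lem}.(1) produces $\chi_0 < \mu$ with $\K$ being $(\chi_0, <\mu)$-weakly tame. By condition (4), fix $\chi \in [\max (\chi_0, \LS (\K)^+), \mu)$ with $\WGCH ([\chi, \chi^{+\omega}))$; as $\cf{\mu} > \omega$ we have $\chi^{+\omega} < \mu$, and since $\chi \ge \chi_0$ the class $\K$ --- hence also $\Ksatp{\chi}$, whose saturated models of size $<\chi^{+\omega}$ are those of $\K$ --- is $(\chi, <\chi^{+\omega})$-weakly tame. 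Now $\Ksatp{\chi}$ is an AEC with amalgamation, no maximal models, and $\LS (\Ksatp{\chi}) = \chi$; it is categorical in $\lambda$ (the model of size $\lambda$, being $\mu$-saturated, is the unique $\chi$-saturated model of that size); $\lambda > \chi^{+\omega}$ since $\hanf{\chi^{+\omega}} < \lambda$ by condition (3); its model of size $\lambda$ is $\chi^{++}$-saturated (again from $\mu$-saturation, as $\chi^{++} < \mu$); and $\WGCH ([\chi, \chi^{+\omega}))$ holds. Thus Lemma \ref{key-lem-categ-limit}, applied to $\Ksatp{\chi}$ with $\chi$ in the role of $\LS (\K)$, yields that the class of $\chi^{+\omega}$-saturated models of $\Ksatp{\chi}$ --- namely $\Ksatp{\chi^{+\omega}}$ --- is categorical in all $\lambda' > \chi^{+\omega}$.

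\emph{Step 3 (descending to the Hanf number).} Set $\mu_\ast := \chi^{+\omega} < \mu$. It remains to show $\K$ is categorical in every $\lambda' \ge \hanf{\mu_\ast}$, and for this it suffices to show every model of $\K$ of size $\ge \hanf{\mu_\ast}$ is $\chi^{+\omega}$-saturated (such a model then lies in $\Ksatp{\chi^{+\omega}}$, and $\lambda' > \mu_\ast$ gives uniqueness). Starting from the model of size $\lambda$, which is $\mu$-saturated hence $\chi^{+\omega}$-saturated, I iterate Morley's omitting type theorem (Fact \ref{omitting-type}) along $\kappa \in [\LS (\K), \chi^{+\omega}]$, passing at each stage $\kappa > \LS (\K)$ to the AEC $\Ksatp{\kappa}$ (nice by Step 1): this yields a bound, at most $\sup_{n < \omega} \hanf{\chi^{+n}}$, above which every model of $\K$ is $\chi^{+\omega}$-saturated. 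A routine cardinal-arithmetic computation using $\WGCH ([\chi, \chi^{+\omega}))$ gives $\sup_{n < \omega} \hanf{\chi^{+n}} < \hanf{\chi^{+\omega}} = \hanf{\mu_\ast}$, so every model of $\K$ of size $\ge \hanf{\mu_\ast}$ is indeed $\chi^{+\omega}$-saturated, and $\K$ is categorical in every $\lambda' \ge \hanf{\mu_\ast}$, as desired.

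The main obstacle here is not conceptual --- the WGCH-dependent heart of the argument is packaged inside Lemma \ref{key-lem-categ-limit} and Claim \ref{claim-xxx} --- but the bookkeeping: one must check repeatedly that Galois types and degrees of saturation transfer correctly between $\K$ and its saturated subclasses $\Ksatp{\kappa}$ (using $\LS (\Ksatp{\kappa}) = \kappa$), carry out the two saturation inductions (upward at $\lambda$ in Step 1, downward toward $\hanf{\mu_\ast}$ in Step 3), and verify the cardinal arithmetic that keeps the final threshold below $\hanf{\mu_\ast}$.
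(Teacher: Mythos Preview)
Your proof is correct and follows essentially the same route as the paper: reduce to no maximal models, show the model of size $\lambda$ is $\mu$-saturated via Fact \ref{sym-from-categ}, extract weak tameness below $\mu$ from Theorem \ref{technical-lem}, pick $\chi$ large enough to also satisfy the WGCH hypothesis, and feed everything into Lemma \ref{key-lem-categ-limit} to get $\mu_\ast = \chi^{+\omega}$.

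Two minor simplifications are available. First, the paper applies Lemma \ref{key-lem-categ-limit} to the tail class $\K_{\ge \chi}$ rather than to $\Ksatp{\chi}$; since $\K$ itself is categorical in $\lambda$ and $(\chi, <\mu)$-weakly tame, there is no need to pass to the saturated subclass, and this spares you the bookkeeping about types and saturation transferring between $\K$ and $\Ksatp{\chi}$. Second, your entire Step 3 is already packaged in the ``in particular'' clause of Lemma \ref{key-lem-categ-limit}: applied to $\K_{\ge \chi}$ it directly yields categoricity in all $\lambda' \ge \min(\lambda, \sup_{n<\omega} \hanf{\chi^{+n}})$, and since $\sup_{n<\omega} \hanf{\chi^{+n}} \le \hanf{\chi^{+\omega}} < \lambda$ (by condition (3) with $\chi^{+\omega} < \mu$), this gives categoricity in all $\lambda' \ge \hanf{\mu_\ast}$ without any further omitting-type induction or cardinal arithmetic.
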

\begin{proof}
  By Fact \ref{jep-decomp}, without loss of generality $\K$ has no maximal models. By Fact \ref{sym-from-categ} (used with $\K$ there standing for $\K_{\ge \chi^+}$ here, for each $\chi < \mu$), the model of size $\lambda$ is $\mu$-saturated. By Theorem \ref{technical-lem}, there exists $\chi < \mu$ so that $\K$ is $(\chi, <\mu)$-weakly tame. Increasing $\chi$ if necessary, assume without loss of generality that $\WGCH ((\chi, \chi^{+\omega}])$ holds. Now apply Lemma \ref{key-lem-categ-limit} with $\K$ there standing for $\K_{\ge \chi}$ here. We get that $\K$ is categorical in all $\lambda' \ge \hanf{\chi^{+\omega}}$, so we obtain the desired conclusion with $\mu_{\ast} := \chi^{+\omega}$.
\end{proof}
\begin{remark}\label{ap-rmk}
  The proof of Fact \ref{shelah-main-thm} given above goes through assuming only that $\K$ has amalgamation below the categoricity cardinal $\lambda$ (using the moreover part of Claim \ref{claim-xxx} to check uniqueness of saturated models).
\end{remark}

\begin{cor}\label{shelah-ap}
  Assume Claim \ref{claim-xxx} and $\WGCH$. If $\K$ has amalgamation and is categorical in \emph{some} $\lambda \ge \hanf{\aleph_{\LS (\K)^+}}$, then $\K$ is categorical in \emph{all} $\lambda' \ge \hanf{\aleph_{\LS (\K)^+}}$.
\end{cor}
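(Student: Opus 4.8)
The plan is to derive Corollary \ref{shelah-ap} as an essentially immediate consequence of Fact \ref{shelah-main-thm}, applied with $\mu := \aleph_{\LS (\K)^+}$. With this choice, the threshold $\hanf{\mu}$ is exactly the cardinal $\hanf{\aleph_{\LS (\K)^+}}$ that appears in the statement, and the hypothesis $\lambda \ge \hanf{\aleph_{\LS (\K)^+}}$ says precisely that $\lambda \ge \hanf{\mu}$, so the output of Fact \ref{shelah-main-thm} is tuned to give us the conclusion we want.

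The first task is to check the four hypotheses of Fact \ref{shelah-main-thm}. Hypothesis (1) is the assumed categoricity in $\lambda$, and hypothesis (4) is immediate from full $\WGCH$, which in particular yields $\WGCH ([\chi, \chi^{+\omega}))$ for every $\chi < \mu$. For hypothesis (2): $\LS (\K)^+$ is an infinite cardinal, hence a limit ordinal, so $\mu = \aleph_{\LS (\K)^+}$ is a limit cardinal, and $\cf{\mu} = \cf{\LS (\K)^+} = \LS (\K)^+ > \LS (\K)$ since successor cardinals are regular. The only point requiring a short computation is hypothesis (3), that $\hanf{\chi} < \lambda$ for every $\chi < \mu$. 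Since $\mu$ is a limit cardinal, $\chi^+ < \mu$; by $\WGCH$ at $\chi$ we have $2^\chi < 2^{\chi^+}$, hence $(2^\chi)^+ \le 2^{\chi^+} < (2^{\chi^+})^+$, and therefore $\hanf{\chi} = \beth_{(2^\chi)^+} \le \beth_{2^{\chi^+}} < \beth_{(2^{\chi^+})^+} = \hanf{\chi^+} \le \hanf{\mu} \le \lambda$, where the last two inequalities use $\chi^+ < \mu$, monotonicity of $\hanf{\cdot}$, and the hypothesis on $\lambda$.

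Once the hypotheses are verified, Fact \ref{shelah-main-thm} produces some $\mu_\ast < \mu$ such that $\K$ is categorical in all $\lambda' \ge \hanf{\mu_\ast}$; since $\mu_\ast < \mu = \aleph_{\LS (\K)^+}$ we get $\hanf{\mu_\ast} \le \hanf{\aleph_{\LS (\K)^+}}$, and so $\K$ is categorical in every $\lambda' \ge \hanf{\aleph_{\LS (\K)^+}}$, which is the assertion. I expect all the genuine content to reside in Fact \ref{shelah-main-thm} itself (and ultimately in Claim \ref{claim-xxx}); the present deduction is pure bookkeeping, with hypothesis (3) being the one place where $\WGCH$ is used beyond the short intervals $[\chi, \chi^{+\omega})$.
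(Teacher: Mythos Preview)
Your proof is correct and follows exactly the paper's approach: the paper's entire proof is ``Set $\mu := \aleph_{\LS (\K)^+}$ in Fact \ref{shelah-main-thm}.'' You have simply filled in the verification of the four hypotheses, including the observation that WGCH is genuinely used to get the strict inequality in hypothesis (3) when $\lambda = \hanf{\mu}$.
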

\begin{proof}
  Set $\mu := \aleph_{\LS (\K)^+}$ in Fact \ref{shelah-main-thm}.
\end{proof}

We can also state a version using large cardinals instead of amalgamation. This is implicit in Shelah's work (see the remark after \cite[Theorem IV.7.12]{shelahaecbook}), but to the best of our knowledge, the details have not appeared in print before. We will use the following fact, which follows from \cite{kosh362, sh472}. Note that while the results there are stated when $\K$ is the class of models of an $\Ll_{\kappa, \omega}$-theory, Boney observed that the proofs go through just as well in an AEC $\K$ with $\kappa > \LS (\K)$, see the discussion around \cite[Theorem 7.6]{tamelc-jsl}.

\begin{fact}\label{measurable-fact}
  Let $\K$ be an AEC and let $\kappa > \LS (\K)$ be a measurable cardinal. Let $\lambda \ge \hanf{\kappa}$ be such that $\K$ is categorical in $\lambda$. Then:

  \begin{enumerate}
  \item\label{meas-fact-1} \cite{kosh362} $\K_{[\kappa, \lambda)}$ has amalgamation and no maximal models.
  \item\label{meas-fact-2} \cite[Claim 1.16]{sh472} The model of size $\lambda$ is saturated.
  \item\label{meas-fact-3} \cite[Corollary 3.7]{sh472} $\K$ is $(\kappa, <\lambda)$-tame.
  \item\label{meas-fact-4} \cite[Theorem 3.16]{sh472} If $\lambda$ is a successor cardinal, then $\K$ is categorical in all $\lambda' \ge \hanf{\kappa}$.
  \end{enumerate}
\end{fact}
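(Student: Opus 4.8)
The statement collects results from the literature, so the plan is not to prove it from first principles but to indicate how each clause is obtained and why the arguments of \cite{kosh362, sh472}, originally phrased for the class of models of an $\Ll_{\kappa,\omega}$-theory, transfer to an arbitrary AEC. The unifying tool is that a measurable cardinal $\kappa > \LS(\K)$ forces $\K$ to be closed under $\kappa$-complete ultraproducts, with a {\L}o\'s-type theorem for $\K$-embeddings: fixing a nonprincipal $\kappa$-complete ultrafilter $U$ on $\kappa$, one realizes $\K$ as a pseudo-elementary class via Shelah's presentation theorem, observes that the reduct of the ultraproduct mod $U$ of models of the presentation sentence is again a model of it, and checks that the diagonal map is a $\K$-embedding. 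This is Boney's observation; see the discussion around \cite[Theorem 7.6]{tamelc-jsl}. Granting it, the proofs of \cite{kosh362, sh472}, which invoke $\Ll_{\kappa,\omega}$-compactness only through such ultrafilters, go through essentially verbatim.

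For clause (\ref{meas-fact-1}) I would follow \cite{kosh362}: given $M_1, M_2 \in \K_\mu$ with $\kappa \le \mu < \lambda$ over a common $M_0 \lea M_i$, one uses categoricity in $\lambda \ge \hanf{\kappa}$ and Ehrenfeucht--Mostowski models to build a single model of size $\lambda$ into which both $M_1$ and $M_2$ embed over $M_0$, then cuts it down by Löwenheim--Skolem --- invoking ultraproduct closure to preserve the relevant $\kappa$-complete data --- to an amalgam in $\K_\mu$. The absence of maximal models in $[\kappa,\lambda)$ follows from categoricity in $\lambda \ge \hanf{\kappa}$, which puts a model above the first Hanf number and so forces $\K$ to have arbitrarily large models.

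Clauses (\ref{meas-fact-2}) and (\ref{meas-fact-3}) are \cite[Claim 1.16]{sh472} and \cite[Corollary 3.7]{sh472}. For saturation of the categoricity model, the ultrafilter $U$ is used to relate omitted Galois types to omitted syntactic types, so an omitted Galois type over a $\kappa$-sized submodel of the model of size $\lambda$ would contradict Morley's omitting types theorem for AECs (Fact \ref{omitting-type}) at $\lambda \ge \hanf{\kappa}$. For $(\kappa,<\lambda)$-tameness, two distinct Galois types over $M \in \K_{<\lambda}$ that agree on every $\kappa$-sized restriction get transported into the $\kappa$-ultrapower of $M$, where $\kappa$-completeness of $U$ forces them to coincide, a contradiction. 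Clause (\ref{meas-fact-4}) is \cite[Theorem 3.16]{sh472}: with amalgamation and no maximal models in $[\kappa,\lambda)$ (clause \ref{meas-fact-1}), $(\kappa,<\lambda)$-tameness (clause \ref{meas-fact-3}), and a saturated model at the successor cardinal $\lambda$ (clause \ref{meas-fact-2}), one runs the Grossberg--VanDieren-style upward transfer --- equivalently, Shelah's ``no Vaughtian pair'' argument --- from $\hanf{\kappa}$ to get categoricity in every $\lambda' \ge \hanf{\kappa}$.

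The main obstacle is purely in the bookkeeping: checking that no proof in \cite{kosh362, sh472} secretly uses features of $\Ll_{\kappa,\omega}$ going beyond $\kappa$-complete ultraproduct closure and its {\L}o\'s theorem --- genuine syntactic compactness, say, or preservation of full $\Ll_{\kappa,\omega}$-elementarity rather than mere $\K$-embedding. That verification is exactly what Boney's analysis supplies, so in writing the proof I would cite it rather than redo it, and simply record that for measurable $\kappa > \LS(\K)$ the standing hypotheses of \cite{kosh362, sh472} are met.
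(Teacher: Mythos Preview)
Your proposal is correct and matches the paper's approach: the paper does not give a proof of this fact at all, simply citing \cite{kosh362, sh472} and noting Boney's observation (around \cite[Theorem 7.6]{tamelc-jsl}) that the arguments transfer from $\Ll_{\kappa,\omega}$-classes to arbitrary AECs via closure under $\kappa$-complete ultraproducts. Your write-up is more detailed than the paper's, which is fine, though your sketch of clause (\ref{meas-fact-4}) is slightly imprecise --- the Grossberg--VanDieren transfer is purely upward, whereas getting categoricity in $[\hanf{\kappa}, \lambda)$ requires the downward direction, which is what Shelah's Vaughtian-pair argument in \cite{sh472} actually supplies; the two are not equivalent.
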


\begin{cor}\label{categ-measurable}
  Assume Claim \ref{claim-xxx} and $\WGCH$. Let $\kappa > \LS (\K)$ be a measurable cardinal. If $\K$ is categorical in \emph{some} $\lambda \ge \hanf{\kappa}$, then $\K$ is categorical in \emph{all}\footnote{The proof gives that there exists $\chi < \hanf{\kappa}$ such that $\K$ is categorical in all $\lambda' \ge \chi$.} $\lambda' \ge \hanf{\kappa}$
\end{cor}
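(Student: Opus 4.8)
The plan is to extract enough structure from the measurable cardinal to land in the setting of Section \ref{categ-limit-sec}, and then bridge the gap down to $\hanf{\kappa}$ using the successor case already built into Fact \ref{measurable-fact}. First I would put $\K' := \K_{\ge \kappa}$, so that $\LS (\K') = \kappa$, and invoke Fact \ref{measurable-fact}: since $\kappa > \LS (\K)$ is measurable and $\K$ is categorical in $\lambda \ge \hanf{\kappa}$, the class $\K'_{<\lambda}$ has amalgamation and no maximal models, $\K'$ is $(\kappa, <\lambda)$-tame, and the model of size $\lambda$ is saturated. I would also record that $\K$ (hence $\K'$) has arbitrarily large models, since it has a model of size $\lambda \ge \hanf{\LS (\K)}$ and so Shelah's presentation theorem and the Hanf number argument apply.

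Next I would apply Lemma \ref{key-lem-categ-limit} with $\K'$ in place of $\K$ and $\kappa$ in place of $\LS (\K)$, using Remark \ref{ap-rmk} to deal with the fact that amalgamation is only known below $\lambda$: the moreover clause of Claim \ref{claim-xxx} supplies uniqueness and amalgamation of the saturated models in cardinals above $\lambda$, which is all that is needed, since the good-frame machinery inside Lemma \ref{key-lem-categ-limit} (Facts \ref{shvi}, \ref{sym-from-categ}, \ref{good-frame-weak-tameness} and Corollary \ref{getting-succp}) only ever operates in cardinals below $\lambda$. The remaining hypotheses are immediate: $\WGCH ([\kappa, \kappa^{+\omega}))$ follows from $\WGCH$; $\K'$ is categorical in $\lambda > \kappa^{+\omega}$; the model of size $\lambda$ is saturated, hence $\kappa^{++}$-saturated; and $(\kappa, <\kappa^{+\omega})$-weak tameness follows from $(\kappa, <\lambda)$-tameness since $\kappa^{+\omega} < \lambda$. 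This yields that $\Ksatp{\kappa^{+\omega}}$ is categorical in all $\lambda' > \kappa^{+\omega}$ and that $\K$ is categorical in all $\lambda' \ge M$, where $M := \min (\lambda, \sup_{n < \omega} \hanf{\kappa^{+n}})$.

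To finish I would observe that $M \ge \hanf{\kappa}$ — because $\lambda \ge \hanf{\kappa}$ and $\hanf{\kappa^{+n}} \ge \hanf{\kappa}$ for every $n$ — so $\K$ is in particular categorical in the successor cardinal $M^+ \ge \hanf{\kappa}$; applying Fact \ref{measurable-fact}.(\ref{meas-fact-4}) with $M^+$ as the categoricity cardinal then gives that $\K$ is categorical in all $\lambda' \ge \hanf{\kappa}$, as desired. For the sharper threshold mentioned in the footnote, I would feed the categoricity just obtained (every model of size $\hanf{\kappa}$ is now saturated, hence $\kappa^+$-saturated) into Morley's omitting type theorem (Fact \ref{omitting-type} for $\K'$) and push this downward as in the proofs of Corollaries \ref{main-thm} and \ref{abstract-thm-3-proof}, using Lemma \ref{omitting-type-technical}, Corollary \ref{cor-sat-categ}, and the good-frame transfer of Theorem \ref{good-categ-transfer}, to obtain some $\chi < \hanf{\kappa}$ with $\K$ categorical in all $\lambda' \ge \chi$.

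I expect the main obstacle to be the amalgamation bookkeeping: verifying that each ingredient used inside Lemma \ref{key-lem-categ-limit} genuinely requires amalgamation only below the categoricity cardinal, so that Fact \ref{measurable-fact}.(\ref{meas-fact-1}) together with the moreover clause of Claim \ref{claim-xxx} really does suffice. This is precisely the content of Remark \ref{ap-rmk}, so it should go through, but it has to be checked. The choice $\mu = \kappa^{+\omega}$, the weak-tameness bookkeeping, and the descent below $\hanf{\kappa}$ are all routine.
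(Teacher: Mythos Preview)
Your proposal is correct and follows the same overall strategy as the paper: extract amalgamation and saturation from Fact \ref{measurable-fact}, apply Lemma \ref{key-lem-categ-limit} (with Remark \ref{ap-rmk} handling the amalgamation bookkeeping), and then close the gap down to $\hanf{\kappa}$ using Fact \ref{measurable-fact}.(\ref{meas-fact-4}).

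There is one minor difference in how the weak-tameness hypothesis of Lemma \ref{key-lem-categ-limit} is obtained. You use the $(\kappa, <\lambda)$-tameness coming from Fact \ref{measurable-fact}.(\ref{meas-fact-3}) directly, applying the lemma to $\K_{\ge \kappa}$. The paper instead sets $\mu := \aleph_{\kappa^+}$, invokes Theorem \ref{technical-lem} (which only needs saturation, not tameness) to extract $(\chi, <\mu)$-weak tameness for some $\chi < \mu$, and applies Lemma \ref{key-lem-categ-limit} to $\K_{\ge \chi}$. Your route is shorter and more transparent; the paper's route avoids appealing to part (\ref{meas-fact-3}) of Fact \ref{measurable-fact}, so it would still go through if one only knew the first two parts of that fact. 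For the footnote, the paper obtains the sharper $\chi < \hanf{\kappa}$ simply by citing Corollary \ref{main-thm} (applied to $\K_{\ge \kappa}$, using the tameness from Fact \ref{measurable-fact}.(\ref{meas-fact-3})) in place of Fact \ref{measurable-fact}.(\ref{meas-fact-4}) at the last step; your sketch via Fact \ref{omitting-type} and the good-frame transfer amounts to unpacking that corollary.
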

\begin{proof}
  By Fact \ref{measurable-fact}.(\ref{meas-fact-1}), $\K_{[\kappa, \lambda)}$ has amalgamation (and no maximal models, by taking ultrapowers). Note that by Remark \ref{ap-rmk} we do not need amalgamation in $\K_{\ge \lambda}$. By Fact \ref{measurable-fact}.(\ref{meas-fact-2}, the model of size $\lambda$ is saturated. Let $\mu := \aleph_{\kappa^+}$. By Theorem \ref{technical-lem}, there exists $\chi < \mu$ so that $\K$ is $(\chi, <\mu)$-weakly tame. By Lemma \ref{key-lem-categ-limit} (with $\K$ there standing for $\K_{\ge \chi}$ here), $\K$ is categorical in all $\lambda' \ge \hanf{\mu}$. In particular, $\K$ is categorical in $(\hanf{\mu})^+$. By Fact \ref{measurable-fact}.(\ref{meas-fact-4}) (or by Corollary \ref{main-thm}, since by Fact \ref{measurable-fact}.(\ref{meas-fact-3}) $\K$ has enough tameness), $\K$ is also categorical in all $\lambda' \in [\hanf{\kappa}, \hanf{\mu})$.
\end{proof}

The same proof gives:

\begin{cor}
  Assume Claim \ref{claim-xxx} and $\WGCH$. Let $\kappa$ be a measurable cardinal and let $T$ be a theory in $\Ll_{\kappa, \omega}$. If $T$ is categorical in \emph{some} $\lambda \ge \hanf{|T| + \kappa}$, then $T$ is categorical in \emph{all} $\lambda' \ge \hanf{|T| + \kappa}$.
\end{cor}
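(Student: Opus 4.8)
The plan is to transcribe the proof of Corollary \ref{categ-measurable}, after setting up $T$ model-theoretically. First I would let $\K := \mathrm{Mod}(T)$, where $M \prec_\K N$ means that $M$ is an $\F$-elementary submodel of $N$ for the least fragment $\F$ of $\Ll_{\kappa,\omega}$ containing $T$. Since $\Ll_{\kappa,\omega}$ has only finitary quantifiers and $\kappa$ is inaccessible, $\F$ has size at most $|T| + \kappa$, so $\K$ is an AEC with $\LS(\K) \le |T| + \kappa =: \chi_0$; in particular $\hanf{\LS(\K)} \le \hanf{\chi_0} = \hanf{|T| + \kappa}$.

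The key observation would be that Fact \ref{measurable-fact} is available here with $\chi_0$ in the role of $\LS(\K)$: items (\ref{meas-fact-1})--(\ref{meas-fact-4}) were first proved in \cite{kosh362, sh472} precisely for the class of models of an $\Ll_{\kappa,\omega}$-theory, and those proofs use only that $\kappa$ is measurable (the hypothesis $\kappa > \LS(\K)$ is needed solely for Boney's passage to general AECs). So, assuming $T$ is categorical in some $\lambda \ge \hanf{\chi_0}$, I would record: $\K$ has amalgamation and no maximal models in the interval $[\chi_0, \lambda)$; the model of size $\lambda$ is saturated; $\K$ is $(\kappa, <\lambda)$-tame; and if $\lambda$ is a successor then $T$ is already categorical in all $\lambda' \ge \hanf{\chi_0}$.

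Granting this, I would run the proof of Corollary \ref{categ-measurable} verbatim, with $\chi_0$ in place of $\LS(\K)$ and working inside $\K_{\ge \chi_0}$: set $\mu := \aleph_{\chi_0^+}$; use Theorem \ref{technical-lem} to fix $\chi < \mu$ with $\K$ being $(\chi, <\mu)$-weakly tame; apply Lemma \ref{key-lem-categ-limit} to $\K_{\ge \chi}$ (legitimate since $\WGCH$ and Claim \ref{claim-xxx} are assumed) to get that $T$ is categorical in all $\lambda' \ge \hanf{\mu}$; and cover the remaining interval $[\hanf{\chi_0}, \hanf{\mu})$ from the fact that $T$ is now categorical in the successor $(\hanf{\mu})^+$, via the last clause above (equivalently, via Corollary \ref{main-thm}, since $\K$ is tame). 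As $\hanf{\chi_0} = \hanf{|T| + \kappa}$, this yields the statement. The only real obstacle is bookkeeping: one must verify that the results of \cite{kosh362, sh472} really do apply to an $\Ll_{\kappa,\omega}$-theory of arbitrary cardinality — in particular that amalgamation reaches down to $\chi_0$ and that the Hanf-number thresholds there agree with $\hanf{|T| + \kappa}$ — after which the argument is a pure transcription. (When $|T| < \kappa$ one in fact has $\LS(\K) < \kappa$, so in that case Corollary \ref{categ-measurable} applies directly as a black box.)
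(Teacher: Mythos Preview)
Your proposal is correct and follows exactly the approach the paper intends: the paper's entire proof is the sentence ``The same proof gives,'' meaning one re-runs the argument of Corollary \ref{categ-measurable} with $\chi_0 = |T| + \kappa$ playing the role of $\LS(\K)$ and appealing directly to the $\Ll_{\kappa,\omega}$ versions of the results in \cite{kosh362, sh472}. Your careful setup (noting that $\LS(\K) \le |T| + \kappa$, that the original Kolman--Shelah and Shelah results were proved for $\Ll_{\kappa,\omega}$-theories without the hypothesis $\kappa > \LS(\K)$, and that the case $|T| < \kappa$ reduces to Corollary \ref{categ-measurable} as a black box) makes explicit the bookkeeping the paper leaves implicit.
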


\section{Summary}\label{summary-sec}

Table \ref{summary-table} summarizes several known approximations of Shelah's eventual categoricity conjecture, for a fixed AEC $\K$. The topmost line and leftmost column contain properties that are either model-theoretic, set-theoretic, or about the categoricity cardinal. The intersection of a line and a column gives a known categoricity transfer for a class having these properties. ``AP'' stands for ``$\K$ has the amalgamation property'', ``Primes'' is short for ``$\K$ has primes'' (Definition \ref{prime-def}), ``s.c.'' is short for ``strongly compact'', and $(\ast)_{\K}$ is the statement ``$\LS (\K) = \kappa$, $\K$ has amalgamation, and $\K$ is $\LS (\K)$-tame''.

Each transfer is described by its type, a comma, and a threshold $\mu$. A ``Full'' type means that categoricity in \emph{some} $\lambda \ge \mu$ implies categoricity in \emph{all} $\lambda' \ge \mu$. A ``Down'' type means that we only know a downward transfer: categoricity in some $\lambda \ge \mu$ implies categoricity in all $\lambda' \in [\mu, \lambda]$ (in this case, we can still do an argument similar to the existence of Hanf numbers \cite{hanf-number} to deduce Shelah's \emph{eventual} categoricity conjecture, see \cite[Conclusion 15.13]{baldwinbook09}). A ``Partial'' type means that we only know that categoricity in some $\lambda \ge \mu$ implies categoricity in \emph{some} $\lambda'$ with $\lambda' \neq \lambda$ (we do \emph{not} require that $\lambda' \ge \mu$). When reading the line beginning with ``categ.\ in a successor'', one should assume that the starting categoricity cardinal $\lambda$ is a successor.

For example, the first entry says that if an AEC $\K$ satisfies the amalgamation property and WGCH together with Claim \ref{claim-xxx} hold, then categoricity in some $\lambda \ge \hanf{\aleph_{\LS (\K)^+}}$ implies categoricity in all $\lambda' \ge \hanf{\aleph_{\LS (\K)^+}}$.

Note that in the first column, it is enough to assume amalgamation below the categoricity cardinal (see Remark \ref{ap-rmk}). So by Fact \ref{measurable-fact}.(\ref{meas-fact-1}) and because strongly compact cardinals are measurable, we can see the properties in the topmost row as being arranged in increasing order of strength. Moreover, the existence of a strongly compact cardinal implies $(\ast)$: amalgamation follows from the methods of \cite[Proposition 1.13]{makkaishelah} and tameness from the main theorem of \cite{tamelc-jsl}.

\begin{fact}\label{star-fact}
  Let $\K$ be an AEC and let $\kappa > \LS (\K)$ be a strongly compact cardinal. Let $\lambda \ge \hanf{\kappa}$. If $\K$ is categorical in $\lambda$, then $(\ast)_{\K_{\ge \kappa}}$ holds.
\end{fact}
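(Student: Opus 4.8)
The plan is to verify the three conjuncts of $(\ast)_{\K_{\ge \kappa}}$, namely that $\LS (\K_{\ge \kappa}) = \kappa$, that $\K_{\ge \kappa}$ has the amalgamation property, and that $\K_{\ge \kappa}$ is $\kappa$-tame, treating them in that order (but deferring the conjunct that presupposes amalgamation to the end). Throughout I will use that a strongly compact cardinal is measurable, hence inaccessible, so in particular $2^{\LS (\K)} < \kappa$. The statement $\LS (\K_{\ge \kappa}) = \kappa$ is a routine consequence of $\LS (\K) < \kappa$: the collection of models of $\K$ of size at least $\kappa$, with the inherited strong substructure relation, is closed under unions of increasing chains, inherits coherence, and the L\"owenheim-Skolem-Tarski property of $\K$ produces the required submodels (which one pads up to size $\kappa$ whenever it delivers a smaller one).

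The heart of the matter is amalgamation in $\K_{\ge \kappa}$, which I would obtain by adapting the methods of \cite[Proposition 1.13]{makkaishelah} to the AEC setting. First I would invoke Shelah's presentation theorem to write $\K$ as the class of reducts of models of a first-order theory $T_1$ omitting a family $\Gamma$ of types, with $|T_1| + |\Gamma| \le 2^{\LS (\K)} < \kappa$; since each type in $\Gamma$ involves fewer than $\kappa$ formulas and fewer than $\kappa$ variables, strong compactness of $\kappa$ guarantees that $\K_{\ge \kappa}$ is closed under $\kappa$-complete ultraproducts and that the diagonal embedding of a model into such an ultraproduct is a $\K$-embedding. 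I would then combine this closure with categoricity in $\lambda \ge \hanf{\kappa}$: using Ehrenfeucht-Mostowski models together with the fact that $\Ll_{\kappa, \kappa}$-types can be realized in sufficiently large models, one constructs for every $\mu \ge \kappa$ a $\mu$-model-homogeneous model in $\K_{\ge \kappa}$, and the existence of such models at once yields amalgamation. A shortcut is available: since $\kappa$ is measurable and $\lambda \ge \hanf{\kappa}$, Fact \ref{measurable-fact}.(\ref{meas-fact-1}) already gives amalgamation and no maximal models in $\K_{[\kappa, \lambda)}$, so only the ultraproduct and homogeneity construction above $\lambda$ is genuinely new.

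For $\kappa$-tameness of $\K_{\ge \kappa}$ I would quote the main theorem of \cite{tamelc-jsl}: strong compactness of $\kappa$ together with $\kappa > \LS (\K)$ forces $\K$ to be fully $(<\kappa)$-tame, and, restricting attention to models of size at least $\kappa$ and using the L\"owenheim-Skolem-Tarski property to turn a $(<\kappa)$-sized witness into a $\kappa$-sized one, this gives that $\K_{\ge \kappa}$ is $\kappa$-tame in the sense of Definition \ref{weak-tameness-def}. Since that definition presupposes amalgamation, this step must come after the amalgamation step; with all three conjuncts in hand, $(\ast)_{\K_{\ge \kappa}}$ follows.

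I expect the amalgamation step to be the main obstacle. The subtlety is that, before amalgamation is established, Galois types over the models of $\K_{\ge \kappa}$ are not well behaved, so the homogeneity construction must be carried out with the syntactic $\Ll_{\kappa, \kappa}$-types supplied by strong compactness in place of Galois types, and one must then check that the resulting ultralimit models genuinely amalgamate.
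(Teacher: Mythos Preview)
Your proposal is correct and follows the same approach as the paper: the paper does not give a formal proof of this fact but simply cites that amalgamation follows from the methods of \cite[Proposition 1.13]{makkaishelah} and tameness from the main theorem of \cite{tamelc-jsl}, which are exactly the references you invoke and elaborate on. Your additional remarks (the routine verification of $\LS(\K_{\ge \kappa}) = \kappa$, the shortcut via Fact \ref{measurable-fact}.(\ref{meas-fact-1}) below $\lambda$, and the observation that tameness should be checked after amalgamation) are all appropriate expansions of the paper's one-sentence justification.
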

\begin{remark}
  An analog of $(\ast)_{\K}$ in the case $\kappa$ is measurable would be given by conclusions (\ref{meas-fact-1})-(\ref{meas-fact-3}) in Fact \ref{measurable-fact}.
\end{remark}

\begin{table}[h]
\begin{center}
  \begin{tabular}{| l | c | c | c |}
    \hline
    & AP & $\kappa > \LS (\K)$ measurable & $\kappa > \LS (\K)$ s.c.\ or $(\ast)_{\K}$ \\ \hline
    
    WGCH and \ref{claim-xxx} & Full, $\hanf{\aleph_{\LS (\K)^+}}$ & Full, $\hanf{\kappa}$ & Full, $\hanf{\kappa}$ \\ \hline
    Categ.\ in a successor & Down, $\beth_{H_1}$ & Down, $\hanf{\kappa}$ & Full, $\hanf{\kappa}$ \\ \hline
    Primes & Partial, $\beth_{\beth_{H_1}}$ & Down, $\hanf{\kappa}$ & Full, $\hanf{\kappa}$ \\ \hline
    No extra hypothesis & Partial, $\beth_{\beth_{H_1}}$ & Partial, $\hanf{\kappa}^+$ & Partial, $\hanf{\kappa}$ \\ 
    \hline
  \end{tabular}
\end{center}
  \caption{Some approximations to Shelah's categoricity conjecture. Properties in the top row are consequences of large cardinal axioms while properties in the first column do not follow (or are not known to follow) from large cardinals. Each entry gives a type of transfer (full, down, or partial) as well as a cardinal threshold. See the beginning of this section for more information on how to read the table.}\label{summary-table}
\end{table}

The results in the first row are Corollary \ref{shelah-ap} and Corollary \ref{categ-measurable} (for the strongly compact case, recall that the properties in the topmost row are in increasing order of strength). The first result in the second row is the downward transfer of \cite{sh394} (see also Corollary \ref{sh394-alternate} for an alternate proof). The second is Fact \ref{measurable-fact}.(\ref{meas-fact-4}). The third is given by Corollary \ref{main-cor} (recalling Fact \ref{star-fact}).

The last two results in the first column are given by Corollary \ref{downward-transfer} (categoricity above $\beth_{\beth_{H_1}}$ implies categoricity in $\beth_{H_1}$). As for the last column, the third result is by Corollary \ref{improved-prime-categ}, and the fourth is by Theorem \ref{omitting-categ-transfer}. Very similar proofs (using Fact \ref{measurable-fact} to deduce the needed amount of amalgamation and tameness) give the corresponding results in the second column. 

\appendix

\section{Shrinking good frames}\label{shrinking-appendix}

We state a generalization of Theorem \ref{good-categ-transfer} to frames that are only defined over classes of saturated models (Shelah studies these frames in more details in \cite{sh842}). This allows us to replace the assumption of tameness by only weak tameness in several results (see Appendix \ref{more-weak-tameness}).

We start by giving a precise definition of these frames (we call them shrinking frames for reasons that will soon become apparent). 

\begin{defin}[Shrinking frame]\label{shrinking-frame-def}
  Let $\lambda$ be an infinite cardinal and let $\theta > \lambda$ be a cardinal or $\infty$. Let $\F := [\lambda, \theta)$ and let $\K$ be an AEC.
    
    We say that $\seq{\s_\mu : \mu \in \F}$ is a \emph{shrinking type-full good $\F$-frame on $\K_{\F}$} (or on $\K$) if:

    \begin{enumerate}
    \item $\K$ is $(\lambda, <\theta)$-weakly tame.
    \item $\s_{\lambda}$ has underlying class $\K_\lambda$.
    \item For each $\mu \in \F$, $\s_{\mu}$ is a type-full good $\mu$-frame with $\K_{\s_{\mu}} = \Ksatp{\mu}_{\mu}$. In particular, $\K$ is categorical in $\lambda$.
    \end{enumerate}
\end{defin}

The reason for the name \emph{shrinking} is that if $\mu < \mu'$ are in $\F$, then the AEC generated by $\K_{\s_{\mu'}}$ is $\Ksatp{\mu'}$, but the underlying class $\K_{\s_{\mu}}$ is only $\Ksatp{\mu}$ which could be a proper subclass of $\Ksatp{\mu'}$ (if $\K$ is not categorical in $\mu'$). Note that that a type-full good $[\lambda, \theta)$-frame (which is categorical in $\lambda$) induces a shrinking frame in a natural way.

  \begin{prop}
    If $\s$ is a type-full good $[\lambda, \theta)$-frame and $\K_{\s}$ is categorical in $\lambda$, then $\seq{\s \rest \Ksatp{\mu} : \mu \in [\lambda, \theta)}$ is a shrinking type-full good $[\lambda, \theta)$-frame.
  \end{prop}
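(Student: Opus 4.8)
The plan is to verify, one clause at a time, the three conditions of Definition~\ref{shrinking-frame-def} for the sequence $\seq{\s_\mu : \mu \in \F}$, where $\F := [\lambda, \theta)$ and, since each $\s_\mu$ is to be a good $\mu$-frame, we read $\s \rest \Ksatp{\mu}$ as the restriction of $\s$ to the single-cardinal subclass $\Ksatp{\mu}_\mu$ of $\K_\F$. Almost everything is bookkeeping against facts already quoted in the excerpt; the only genuinely new assertion is the weak tameness of $\K$, and the only point requiring real care is that shrinking the underlying class does not cost us a good-frame axiom.

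For condition~(1) I would prove the stronger statement that $\K$ is $(\lambda,<\theta)$-tame (hence $(\lambda,<\theta)$-weakly tame). Fix $\mu \in (\lambda,\theta)$, $M \in \K_\mu$, and $p \neq q$ in $\gS(M)$. By local character of forking in $\s$ (a type over a model of size $\mu$ does not fork over some submodel of size $\lambda$, applying the chain version of local character along a resolution of $M$), there are $M_0, M_1 \lea M$ in $\K_\lambda$ over which $p$, respectively $q$, does not fork; choosing $M_2 \in \K_\lambda$ with $M_0, M_1 \lea M_2 \lea M$ (possible as $\|M\| \geq \lambda$) and using monotonicity, neither $p$ nor $q$ forks over $M_2$. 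If $p \rest M_2 = q \rest M_2$, uniqueness of nonforking extensions forces $p = q$, a contradiction; so $p \rest M_2 \neq q \rest M_2$, witnessing $(\lambda,\mu)$-tameness. The case $\mu = \lambda$ is trivial.

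For condition~(2) and for condition~(3) at $\mu = \lambda$: since $\s \rest \K_\lambda$ is a good $\lambda$-frame, $\K$ is stable in $\lambda$ and has amalgamation, joint embedding and no maximal models there, so a limit model of size $\lambda$ exists; by categoricity of $\K$ in $\lambda$ every model of size $\lambda$ is isomorphic to it and hence limit, so Fact~\ref{satfact}.(1) gives $\Ksatp{\lambda}_\lambda = \K_\lambda$ and thus $\s_\lambda = \s \rest \K_\lambda$ is a type-full good $\lambda$-frame with underlying class $\K_\lambda = \Ksatp{\lambda}_\lambda$. For condition~(3) at $\mu \in (\lambda,\theta)$, Fact~\ref{satfact}.(1) identifies $\Ksatp{\mu}_\mu$ with the class of limit models of size $\mu$ and Fact~\ref{satfact}.(2) endows it with an AEC structure (with the inherited ordering). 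The forking axioms (invariance, monotonicity, extension, uniqueness, transitivity, local character, symmetry) and type-fullness are inherited verbatim from $\s \rest \K_\mu$, because every model occurring in an instance of one of these axioms is itself a model of size $\mu$ and hence already available (for extension, note that in $M_0 \lea M$ with both limit of size $\mu$ the nonforking extension to $M$ of a type over $M_0$ already exists in $\s$). The remaining structural axioms follow from the ambient good $\mu$-frame $\s \rest \K_\mu$: joint embedding from uniqueness of limit models, amalgamation by amalgamating in $\K_\mu$ and then extending the amalgam to a limit model of size $\mu$, no maximal models by taking a proper limit extension of size $\mu$, and stability in $\mu$ together with density of basic types directly from $\s$ and type-fullness. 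This yields $\K_{\s_\mu} = \Ksatp{\mu}_\mu$, completing the verification.

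The main obstacle — really the only place where something could go wrong rather than just be routine — is condition~(3) for $\mu > \lambda$: one must confirm that passing from $\K_\mu$ to the (possibly proper) subclass of limit models of size $\mu$ preserves amalgamation, ``no maximal models'', and the well-posedness of the extension and local-character axioms in the smaller class. The facts that make this succeed are uniqueness of limit models and VanDieren's chain-saturation theorem (both packaged into Fact~\ref{satfact}), together with the existence of limit extensions over any model of size $\mu$, which is where $\mu$-superstability — guaranteed by the good $\mu$-frame $\s \rest \K_\mu$ — is used.
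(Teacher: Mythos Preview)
Your proof is correct and follows the same approach as the paper, which simply says ``Straightforward, recalling Fact~\ref{satfact}''; you have spelled out in detail what that means, including the standard derivation of $(\lambda,<\theta)$-tameness from local character plus uniqueness in the global frame, and the verification that restricting to saturated models preserves the good-frame axioms.
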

  \begin{proof}
    Straightforward, recalling Fact \ref{satfact}.
  \end{proof}

  Shrinking frames can be built using Fact \ref{good-frame-weak-tameness}:

\begin{thm}\label{shrinking-frame-weak-tameness}
  Let $\K$ be an AEC. Let $\lambda > \LS (\K)$. Assume that for every $\mu \in [\LS (\K), \lambda)$, $\K$ is $\mu$-superstable and has $\mu$-symmetry.

    If $\K$ is $(\LS (\K), <\lambda)$-weakly tame, then there exists a shrinking type-full good $[\LS (\K)^+, \lambda)$-frame on $\K$.
\end{thm}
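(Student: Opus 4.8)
The plan is to build the sequence $\seq{\s_\mu : \mu \in [\LS (\K)^+, \lambda)}$ one cardinal at a time directly out of Fact \ref{good-frame-weak-tameness}, and then to verify the three clauses of Definition \ref{shrinking-frame-def}. Morally this is the same construction that underlies the global good frames used earlier (e.g.\ in Lemma \ref{main-lem}), except that we do \emph{not} transfer a single frame upward via Fact \ref{frame-upward-transfer} (which would need non-weak tameness); instead each $\s_\mu$ is produced independently, and this is exactly what the ``shrinking'' formalism is designed to accommodate. We may assume $\lambda > \LS (\K)^+$, since otherwise the interval $[\LS (\K)^+, \lambda)$ is empty and there is nothing to prove.

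Fix $\mu \in [\LS (\K)^+, \lambda)$ and apply Fact \ref{good-frame-weak-tameness} with its parameter ``$\lambda$'' equal to $\mu$. Its hypotheses hold: $\K$ is superstable in every $\nu \in [\LS (\K), \mu]$ because $[\LS (\K), \mu] \subseteq [\LS (\K), \lambda)$ (as $\mu < \lambda$); $\K$ has $\mu$-symmetry because $\mu \in [\LS (\K), \lambda)$; and $\K$ is $(\LS (\K), \mu)$-weakly tame because $\mu < \lambda$ and $\K$ is $(\LS (\K), <\lambda)$-weakly tame. So Fact \ref{good-frame-weak-tameness} produces a type-full good $\mu$-frame $\s_\mu$ with underlying class $\K_{\s_\mu} = \Ksatp{\mu}_\mu$, and this defines the whole sequence.

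It remains to pin down the AEC on which the sequence is a shrinking frame. Since categoricity in $\LS (\K)^+$ is not among the hypotheses, the correct AEC is $\Ksatp{\LS (\K)^+}$, not $\K$ itself (this is the force of the ``on $\K$'' in the statement once categoricity in $\LS (\K)^+$ is missing). To see that $\Ksatp{\LS (\K)^+}$ is an AEC with $\LS (\Ksatp{\LS (\K)^+}) = \LS (\K)^+$, I would apply Fact \ref{sym-unionsat} with $\chi := \LS (\K)^+$: its hypotheses collapse to superstability in $\LS (\K)$ and in $\LS (\K)^+$ together with $\LS (\K)^+$-symmetry, all of which are assumed, the amalgamation below $\lambda$ it requires being furnished by $\mu$-superstability for $\mu \in [\LS (\K), \lambda)$. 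Uniqueness of saturated models then makes $\Ksatp{\LS (\K)^+}$ categorical in $\LS (\K)^+$, which is clause (2) of Definition \ref{shrinking-frame-def}; and for $\mu \in (\LS (\K)^+, \lambda)$ the $\mu$-saturated models of $\Ksatp{\LS (\K)^+}$ of size $\mu$ are precisely the members of $\Ksatp{\mu}_\mu$, which is clause (3). For clause (1), $\Ksatp{\LS (\K)^+}$ is $(\LS (\K)^+, <\lambda)$-weakly tame: given $p \neq q$ in $\gS (M)$ with $M$ saturated of size $\mu \in [\LS (\K)^+, \lambda)$, $(\LS (\K), <\lambda)$-weak tameness yields $M_0' \lea M$ of size $\LS (\K)$ with $p \rest M_0' \neq q \rest M_0'$, and enlarging $M_0'$ inside $M$ to a saturated $M_0 \lea M$ of size $\LS (\K)^+$ keeps $p$ and $q$ distinct on $M_0$.

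The part I expect to be the real friction is not any single deep argument but the bookkeeping around this last point: confirming that the notion of $\mu$-saturation computed inside $\Ksatp{\LS (\K)^+}$, as used in Definition \ref{shrinking-frame-def}, coincides with the $\K$-notion used in Fact \ref{good-frame-weak-tameness}; tracking exactly which amalgamation instances (and in which cardinals) are needed to run Fact \ref{sym-unionsat}; and being careful that the shrinking frame is stated over $\Ksatp{\LS (\K)^+}$ rather than over $\K$, since categoricity in $\LS (\K)^+$ is genuinely absent from the hypotheses.
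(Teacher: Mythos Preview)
Your approach is essentially the paper's: apply Fact~\ref{good-frame-weak-tameness} once for each $\mu \in [\LS(\K)^+, \lambda)$ to obtain a type-full good $\mu$-frame on $\Ksatp{\mu}_\mu$, and declare the resulting sequence to be the shrinking frame. The paper's proof is the two-line version of exactly this, ending with ``the result follows.''

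Where you go beyond the paper is in taking the definition seriously. You correctly observe that clauses~(2) and~(3) of Definition~\ref{shrinking-frame-def} together force categoricity in the bottom cardinal of the interval, which is not among the hypotheses for $\K$ itself; hence the shrinking frame really lives on $\Ksatp{\LS(\K)^+}$ rather than on $\K$. The paper glosses over this (writing ``on $\K$'' both in the statement and in the later application in Lemma~\ref{main-lem-2}, where the conclusion is nonetheless stated for $\Ksatp{\LS(\K)^+}$), so your care here is warranted and your use of Fact~\ref{sym-unionsat} to certify that $\Ksatp{\LS(\K)^+}$ is an AEC is the right move. Your verification of weak tameness for $\Ksatp{\LS(\K)^+}$ and of the saturation bookkeeping is routine but correct. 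In short: same proof, with the imprecision in the paper's phrasing resolved rather than ignored.
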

\begin{proof}
  Let $\F := [\LS (\K)^+, \lambda)$. By Fact \ref{good-frame-weak-tameness}, for each $\mu \in \F$, there exists a type-full good $\mu$-frame on $\Ksatp{\mu}_{\mu}$. The result follows.
\end{proof}

We now study how forking in two different cardinals interact in a shrinking frame. The following notion is key:

\begin{defin}\label{compat-def}
  Let $\K$ be an AEC. Let $\LS (\K) \le \lambda < \mu$. Let $\s_\lambda$ be a type-full good $\lambda$-frame on $\Ksatp{\lambda}_\lambda$ and $\s_{\mu}$ be a type-full good $\mu$-frame on $\Ksatp{\mu}_\mu$. We say that $\s_\lambda$ and $\s_\mu$ are \emph{compatible} if for any $M \lea N$ in $\K_{\s_{\mu}}$, and $p \in \gS (N)$, $p$ does not $\s_{\mu}$-fork over $M$ if and only if there exists $M_0 \lea M$ with $M_0 \in \K_{\s_{\lambda}}$ so that $p \rest N_0$ does not $\s_{\lambda}$-fork over $M_0$ for every $N_0 \in \K_{\s_{\lambda}}$ with $M_0 \lea N_0 \lea N$.
\end{defin}

Intuitively, compatibility says that that forking in $\s_{\mu}$ can be computed using forking in $\s_{\lambda}$. In fact, it can be described in a canonical way (i.e.\ using Shelah's description of the extended frame, see \cite[Section II.2]{shelahaecbook}). The following result is key:

\begin{thm}\label{shrinking-compat}
  Let $\seq{\s_\mu : \mu \in \F}$ be a shrinking type-full good $\F$-frame on the AEC $\K$. Let $\lambda < \mu$ be in $\F$. Then $\s_{\lambda}$ and $\s_{\mu}$ are compatible.
\end{thm}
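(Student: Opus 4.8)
The plan is to give an abstract description of $\s_\mu$-forking and then recognize it, via canonicity of forking in good frames, as the relation induced by $\s_\lambda$. For $M \lea N$ in $\K_{\s_\mu}$ and nonalgebraic $p \in \gS (N)$, let me say that $p$ \emph{does not $\mathcal{NF}$-fork over $M$} if there is $M_0 \lea M$ with $M_0 \in \K_{\s_\lambda}$ such that $p \rest N_0$ does not $\s_\lambda$-fork over $M_0$ for every $N_0 \in \K_{\s_\lambda}$ with $M_0 \lea N_0 \lea N$ (and, as usual, algebraic types never $\mathcal{NF}$-fork over their domain). The conclusion of the theorem is precisely that $\mathcal{NF}$-nonforking and $\s_\mu$-nonforking coincide. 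Note that a priori $\s_\lambda$ and $\s_\mu$ are unrelated, so neither inclusion is obvious; what makes the statement work is that the nonforking relation of a good frame is canonical.

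First I would record the structural facts used throughout. Since $\mu > \lambda$, every $M \in \K_{\s_\mu}$ is $\lambda^+$-saturated; hence every subset of $M$ of size at most $\lambda$ is contained in some $M_0 \lea M$ with $M_0 \in \K_{\s_\lambda}$, and any two such are contained in a common one. Thus the members of $\K_{\s_\lambda}$ that are $\lea M$ form a directed system with union $M$ (we work with directed systems rather than chains precisely because $\mu$ may be singular). Combined with the uniqueness property of $\s_\lambda$ and the coherence of Galois types along directed systems --- the routine generalization of Fact \ref{limit-type} --- this lets us transport $\s_\lambda$-nonforking extensions to $\mu$-sized models and reason about them.

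The heart of the argument is then to check that $(\K_{\s_\mu}, \mathcal{NF})$ is a type-full good $\mu$-frame. The purely AEC-theoretic requirements (amalgamation, stability, no maximal models in $\mu$, and $\Ksatp{\mu}$ being an AEC with Löwenheim--Skolem--Tarski number $\mu$) are free: $\s_\mu$ already witnesses them, and $\Ksatp{\mu}$ is an AEC by Fact \ref{satfact}. Invariance, monotonicity (including base monotonicity), disjointness, and transitivity of $\mathcal{NF}$ reduce routinely to the corresponding properties of $\s_\lambda$, and local character follows from local character of $\s_\lambda$ by the resolution-and-Fodor argument used in the proof of Lemma \ref{lc-lem}. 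The two real points are existence/extension and uniqueness. For extension: given $p_0 \in \gS (M)$, use local character of $\s_\lambda$ to find $M_0 \lea M$ in $\K_{\s_\lambda}$ over which every small restriction of $p_0$ does not $\s_\lambda$-fork; by uniqueness of $\s_\lambda$ the $\s_\lambda$-nonforking extensions of $p_0 \rest M_0$ over the various $N_0 \in \K_{\s_\lambda}$ with $M_0 \lea N_0 \lea N$ cohere, so by the directed-limit fact they assemble into a single $q \in \gS (N)$ which does not $\mathcal{NF}$-fork over $M$ and extends $p_0$ (the equality $q \rest M = p_0$ uses weak tameness, via the $\lambda^+$-saturation of $M$). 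For uniqueness: if $p, q \in \gS (N)$ both do not $\mathcal{NF}$-fork over $M$ and $p \rest M = q \rest M$, pass to a common witness $M_0 \in \K_{\s_\lambda}$; then for each $N_0 \in \K_{\s_\lambda}$ with $M_0 \lea N_0 \lea N$ the types $p \rest N_0$ and $q \rest N_0$ are both $\s_\lambda$-nonforking over $M_0$ and agree on $M_0$, hence equal by uniqueness of $\s_\lambda$; since every subset of $N$ of size at most $\lambda$ lies in such an $N_0$, weak tameness yields $p = q$. Symmetry of $\mathcal{NF}$ then follows from symmetry of $\s_\lambda$ together with the axioms just established.

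Once $(\K_{\s_\mu}, \mathcal{NF})$ is known to be a type-full good $\mu$-frame on the same AEC underlying $\s_\mu$, the canonicity of good frames (\cite{bgkv-apal}) forces $\mathcal{NF}$-nonforking to coincide with $\s_\mu$-nonforking, which is exactly the assertion that $\s_\lambda$ and $\s_\mu$ are compatible. The main obstacle is the verification that $\mathcal{NF}$ is a good $\mu$-frame --- specifically extension, uniqueness, and symmetry --- where one must carefully use directed systems of saturated $\lambda$-sized submodels (to accommodate singular $\mu$) and where weak tameness, rather than full tameness, is precisely the tool available for climbing back from $\le\lambda$-sized restrictions to types over $\mu$-sized saturated models.
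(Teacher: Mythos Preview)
Your approach is broadly workable but takes a longer route than the paper, and one step --- symmetry --- is not adequately justified.

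The paper does \emph{not} attempt to verify that your relation $\mathcal{NF}$ is a full good $\mu$-frame. Instead it proceeds as follows. First, the canonicity theorem (Fact~\ref{canon-fact}) gives an explicit description of $\s_\mu$-forking in terms of $\mu$-splitting over a model the base is limit over. The paper then checks only \emph{one} property of $\mathcal{NF}$: uniqueness (exactly your argument, via weak tameness and uniqueness of $\s_\lambda$). From uniqueness alone, \cite[Lemma~4.2]{bgkv-apal} yields that $\mathcal{NF}$-nonforking is extended by $\mu$-nonsplitting; combined with local character along a limit-witnessing chain for $M$ (Shelah's Claim~II.2.11), this gives the containment ``$\mathcal{NF}$-nonforking implies $\s_\mu$-nonforking''. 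Finally, since $\mathcal{NF}$ has uniqueness and $\s_\mu$-forking has extension, \cite[Lemma~4.1]{bgkv-apal} upgrades that one-sided containment to equality. No verification of extension or symmetry for $\mathcal{NF}$ is ever needed.

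Your plan, by contrast, commits to showing $\mathcal{NF}$ is a full good frame before invoking canonicity. The extension and uniqueness arguments you sketch are correct, but the symmetry step is where the argument thins out. Saying symmetry ``follows from symmetry of $\s_\lambda$ together with the axioms just established'' does not indicate how: a direct reduction to $\s_\lambda$-symmetry would have to manufacture a $\mu$-sized saturated $M_2$ containing $a_2$ from a $\lambda$-sized witness while preserving $\mathcal{NF}$-nonforking of $\gtp(a_1/M_2)$ over $M_0$, and that is not a one-line consequence of what you have. One \emph{can} recover symmetry abstractly from extension, uniqueness, local character, transitivity, and stability in $\mu$ (via the failure-of-order-property argument), but that is itself a theorem to cite and prove --- and at that point you have done strictly more work than the paper for the same conclusion. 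The paper's route sidesteps symmetry entirely by comparing $\mathcal{NF}$ directly against the splitting description of $\s_\mu$.
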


For the proof, we will use the following result which gives an explicit description of forking in any categorical good frame:

\begin{fact}[The canonicity theorem, 9.6 in \cite{indep-aec-apal}]\label{canon-fact}
  Let $\s$ be a type-full good $\lambda$-frame with underlying class $\K_\lambda$.
  If $M \lea N$ are limit models in $\K_{\lambda}$, then for any $p \in \gS (N)$, $p$ does not $\s$-fork over $M$ if and only if there exists $M' \in \K_{\lambda}$ such that $M$ is limit over $M'$ and $p$ does not $\lambda$-split over $M'$.
\end{fact}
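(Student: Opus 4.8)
The plan is to reduce both sides of the compatibility condition (Definition \ref{compat-def}) to statements about splitting by applying the canonicity theorem (Fact \ref{canon-fact}) to each of $\s_\lambda$ and $\s_\mu$, and then to compare $\mu$-splitting with $\lambda$-splitting. Fix $M \lea N$ in $\K_{\s_\mu}$ and $p \in \gS (N)$. Since $\K$ is categorical in $\lambda$ and the underlying class of $\s_\mu$ is $\Ksatp{\mu}_\mu$, Fact \ref{satfact} shows that $M$ and $N$ are limit models in $\K_\mu$ and that every member of $\K_{\s_\lambda} = \K_\lambda$ is a limit model in $\K_\lambda$; hence Fact \ref{canon-fact} applies to both frames. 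It gives: \emph{$p$ does not $\s_\mu$-fork over $M$} iff there is $M^\ast \lea M$ of size $\mu$ with $M$ limit over $M^\ast$ and $p$ not $\mu$-splitting over $M^\ast$; and, for $M_0 \lea N_0$ in $\K_{\s_\lambda}$, \emph{$p \rest N_0$ does not $\s_\lambda$-fork over $M_0$} iff there is $M_0^\ast \lea M_0$ of size $\lambda$ with $M_0$ limit over $M_0^\ast$ and $p \rest N_0$ not $\lambda$-splitting over $M_0^\ast$.

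Next I would simplify the right-hand side of compatibility. First, since $\lambda$-splitting of a type is witnessed by isomorphic submodels of size $\lambda$, and any two such can be amalgamated inside some $N_0 \in \K_{\s_\lambda}$ with $N_0 \lea N$ (using that $N$ is $\lambda$-saturated, Fact \ref{satfact}), the clause ``$p \rest N_0$ does not $\lambda$-split over $M_0^\ast$ for every $N_0 \in \K_{\s_\lambda}$ with $M_0 \lea N_0 \lea N$'' is equivalent to ``$p$ does not $\lambda$-split over $M_0^\ast$''. Second, I would show the base can be chosen uniformly in $N_0$: this is a bookkeeping argument which resolves $N$ as an increasing continuous union of small saturated models and invokes the local character and uniqueness of non-$\lambda$-splitting over limit models, both of which hold because $\K$ is $\lambda$-superstable (Fact \ref{ss-good-frame} applied to $\s_\lambda$). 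Combined with the observation that a limit submodel of $M$ of size $\lambda$ over any prescribed size-$\lambda$ submodel always exists (as $M$ is saturated of size $\mu > \lambda$), this reduces the right-hand side of compatibility to: \emph{there is $M_0^\ast \lea M$ of size $\lambda$ such that $p$ does not $\lambda$-split over $M_0^\ast$}.

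The crux is then the equivalence: \emph{there is $M^\ast \lea M$ of size $\mu$ with $M$ limit over $M^\ast$ and $p$ not $\mu$-splitting over $M^\ast$} iff \emph{there is $M_0^\ast \lea M$ of size $\lambda$ with $p$ not $\lambda$-splitting over $M_0^\ast$}. For the forward direction I would apply local character of non-$\mu$-splitting (from $\mu$-superstability, Fact \ref{ss-good-frame} applied to $\s_\mu$) together with the fact that $\K$ is $(\lambda, \mu)$-weakly tame — available since $\K$ is $(\lambda, <\theta)$-weakly tame (Definition \ref{shrinking-frame-def}) and $\mu \in [\lambda, \theta)$ — to locate a size-$\lambda$ base over which $p$ does not $\lambda$-split. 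For the converse, picking $M^\ast$ with $M_0^\ast \lea M^\ast \lea M$, $\|M^\ast\| = \mu$, and $M$ limit over $M^\ast$, I would argue that a $\mu$-splitting of $p$ over $M^\ast$ would, via weak tameness, be detected on size-$\lambda$ \emph{saturated} submodels; these live inside the $\lambda$-saturated model $N$ and contradict non-$\lambda$-splitting over $M_0^\ast$. Chaining the three equivalences yields that $p$ does not $\s_\mu$-fork over $M$ iff the right-hand side of Definition \ref{compat-def} holds, i.e.\ $\s_\lambda$ and $\s_\mu$ are compatible.

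The step I expect to be the main obstacle is this last $\mu$-versus-$\lambda$-splitting equivalence: because weak tameness only controls types over \emph{saturated} models, one must manufacture saturated witnesses of the correct sizes inside $N$ and carefully keep track of which models are limit over which — in effect an auxiliary lemma on the interaction of $\mu$-splitting and $\lambda$-splitting in a superstable, weakly tame AEC (in the spirit of results in \cite{ss-tame-jsl, vv-symmetry-transfer-v3}), which may already be available there. An alternative packaging of the whole argument, which I would use if it turns out shorter to write out, is: the right-hand side of compatibility defines a type-full good $\mu$-frame on $\Ksatp{\mu}_\mu$ (this is essentially what the construction behind Fact \ref{good-frame-weak-tameness} produces), and then Fact \ref{canon-fact}, giving a frame-independent description of forking on the categorical class $\Ksatp{\mu}_\mu$, forces that frame to coincide with $\s_\mu$.
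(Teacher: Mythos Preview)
Your proposal is not a proof of the stated Fact~\ref{canon-fact} at all: that fact is the canonicity theorem, quoted from \cite{indep-aec-apal} with no proof in the paper. What you have written is a proof of Theorem~\ref{shrinking-compat} (compatibility of the frames in a shrinking good frame), which \emph{uses} Fact~\ref{canon-fact} as a black box. I will compare your argument to the paper's proof of Theorem~\ref{shrinking-compat}.

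Your main approach is genuinely different from the paper's. You apply canonicity to \emph{both} $\s_\lambda$ and $\s_\mu$, reducing everything to splitting, and then try to prove directly an equivalence between ``$p$ does not $\mu$-split over some $M^\ast$ with $M$ limit over $M^\ast$'' and ``$p$ does not $\lambda$-split over some small $M_0^\ast$''. You correctly identify this splitting-level comparison as the crux and the main obstacle; it can be made to work, but it requires carefully producing saturated witnesses and tracking limit-over relations, and is exactly the kind of computation the paper avoids.

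The paper's route is shorter and more abstract. It applies canonicity only once (to $\s_\mu$), and treats the right-hand side of compatibility as defining an independence relation ``$(\ge \s_\lambda)$-forking'' on $\K_{\s_\mu}$. It then argues at the level of abstract independence relations: weak tameness gives $(\ge \s_\lambda)$-forking the \emph{uniqueness} property, and the canonical $\mu$-forking inherits \emph{extension} from $\s_\mu$. A short local-character argument (via \cite[II.2.11]{shelahaecbook}) plus \cite[Lemma~4.2]{bgkv-apal} shows $(\ge \s_\lambda)$-nonforking implies $\mu$-nonforking; then \cite[Lemma~4.1]{bgkv-apal} (uniqueness on one side, extension on the other, plus one containment $\Rightarrow$ equality) finishes. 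This bypasses entirely the direct $\lambda$-splitting versus $\mu$-splitting comparison you flagged as delicate. Your ``alternative packaging'' in the last paragraph is close in spirit to this, but the paper does not rebuild a good frame from the right-hand side; it just uses the two abstract comparison lemmas from \cite{bgkv-apal}.
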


\begin{proof}[Proof of Theorem \ref{shrinking-compat}]
  Note that by uniqueness of limit models, every model in $\K_{\s_{\mu}}$ is limit.

  For $M, N \in \K_{\s_{\mu}}$ with $M \lea N$, let us say that $p \in \gS (N)$ \emph{does not ($\ge \s_{\lambda})$-fork over $M$} if it satisfies the condition in Definition \ref{compat-def}, namely there exists $M_0 \lea M$ with $M_0 \in \K_{\s_{\lambda}}$ so that $p \rest N_0$ does not $\s_{\lambda}$-fork over $M_0$ for every $N_0 \in \K_{\s_{\lambda}}$ with $M_0 \lea N_0 \lea N$. Let us say that $p$ \emph{does not $\mu$-fork over $M$} if it satisfies the description of the canonicity theorem, namely there exists $M' \in \K_{\s_{\mu}}$ such that $M$ is limit over $M'$ and $p$ does not $\mu$-split over $M'$. Notice that by the canonicity theorem (Fact \ref{canon-fact}), $p$ does not $\s_{\mu}$-fork over $M$ if and only if $p$ does not $\mu$-fork over $M$. Thus it is enough to show that $p$ does not $(\ge \s_{\lambda})$-fork over $M$ if and only if $p$ does not $\mu$-fork over $M$. We first show one direction:
  
  \paragraph{\underline{Claim}} Let $M \lea N$ both be in $\K_{\s_{\mu}}$ and let $p \in \gS (N)$. If $p$ does not $(\ge \s_{\lambda})$-fork over $M$, then $p$ does not $\mu$-fork over $M$.
  
  \paragraph{\underline{Proof of Claim}} We know that $M$ is limit, so let $\seq{M_i : i < \delta}$ witness it, i.e.\ $\delta$ is limit, for all $i < \delta$, $M_i \in \K_{\s_{\mu}}$, $M_{i + 1}$ is universal over $M_i$, and $\bigcup_{i < \delta} M_i = M$. By \cite[Claim II.2.11.(5)]{shelahaecbook}, there exists $i < \delta$ such that $p \rest M$ does not $(\ge \s_{\lambda})$-fork over $M_i$. By \cite[Claim II.2.11.(4)]{shelahaecbook}, $p$ does not $(\ge \s_{\lambda})$-fork over $M_i$. By weak tameness and the uniqueness property of $\s$, $(\ge \s_{\lambda})$-forking has the uniqueness property (see the proof of \cite[Theorem 3.2]{ext-frame-jml}). By \cite[Lemma 4.2]{bgkv-apal}, $(\ge \s_{\lambda})$-nonforking must be extended by $\mu$-nonsplitting, so $p$ does not $\mu$-split over $M_i$. Therefore $p$ does not $\mu$-fork over $M$, as desired. $\dagger_{\text{Claim}}$.

  Now as observed above, $(\ge \s_{\lambda})$-forking has the uniqueness property. Also, $\mu$-forking has the extension property (as $\s_{\mu}$-forking has it). The claim tells us that $\mu$-nonforking extends $(\ge \s_{\lambda})$-forking and hence by \cite[Lemma 4.1]{bgkv-apal}, they are the same.
\end{proof}

Thus we can define a global notion of forking inside the frame:

\begin{defin}\label{shrinking-forking}
  Assume that $\seq{\s_\mu : \mu \in \F}$ is a shrinking type-full good $[\lambda, \theta)$-frame. Let $\mu \le \mu'$ be in $\F$ and let $M \lea N$ be such that $M \in \K_{\s_{\mu}}$ and $M' \in \K_{\s_{\mu'}}$. Let $p \in \gS (N)$. We say that \emph{$p$ does not fork over $M$} if there exists $M_0 \lea M$ so that $M_0 \in \K_{\s_{\lambda}}$ and for every $N_0 \in \K_{\s_{\lambda}}$ with $M_0 \lea N_0 \lea N$, $p \rest N_0$ does not $\s_{\lambda}$-fork over $M_0$.
\end{defin}

\begin{thm}\label{shrinking-forking-props}
  Assume that $\seq{\s_\mu : \mu \in \F}$ is a shrinking type-full good $[\lambda, \theta)$-frame. Then forking (as defined in Definition \ref{shrinking-forking}) has the usual properties: invariance, monotonicity, extension, uniqueness, transitivity, local character, and symmetry.
\end{thm}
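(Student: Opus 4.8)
The plan is to reduce every clause of the theorem to the corresponding axiom of the individual good frames $\s_\mu$ ($\mu\in\F$), with the compatibility theorem (Theorem \ref{shrinking-compat}) as the bridge between different cardinals. The starting observation is that, by Theorem \ref{shrinking-compat}, the global notion of Definition \ref{shrinking-forking} restricts on each $\K_{\s_\mu}$ to ordinary $\s_\mu$-forking. Consequently, for $M \lea N$ with $M \in \K_{\s_\mu}$, $N \in \K_{\s_{\mu'}}$, $\mu \le \mu'$ in $\F$, and $p \in \gS (N)$, the relation ``$p$ does not fork over $M$'' --- which by Definition \ref{shrinking-forking} asks for a single size-$\lambda$ base $M_0 \lea M$ over which no restriction $p \rest N_0$ ($N_0 \in \K_{\s_\lambda}$, $M_0 \lea N_0 \lea N$) $\s_\lambda$-forks --- is also equivalent to ``$p \rest N_0$ does not $\s_\mu$-fork over $M$ for every $N_0 \in \K_{\s_\mu}$ with $M \lea N_0 \lea N$''. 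So the first step of the proof is to establish this characterization; the nontrivial content is that the size-$\lambda$ witness can be chosen \emph{uniformly}, which is where uniqueness of limit models, base monotonicity and local character of $\s_\lambda$-forking, and the canonicity theorem (Fact \ref{canon-fact}) are used, much as in the proof of Theorem \ref{shrinking-compat} itself.

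Granting the characterization, \emph{invariance} and \emph{monotonicity} (including base monotonicity) are immediate from the definition together with the corresponding axioms of $\s_\lambda$. For \emph{uniqueness}, given $p, q \in \gS (N)$ that do not fork over $M$ with $p \rest M = q \rest M$, one passes to a common size-$\lambda$ witness $M_0 \lea M$ (enlarging the separate witnesses and applying base monotonicity of $\s_\lambda$); since $p \rest M_0 = q \rest M_0$, uniqueness for $\s_\lambda$ forces $p \rest N_0 = q \rest N_0$ for every $N_0 \in \K_{\s_\lambda}$ with $M_0 \lea N_0 \lea N$, hence $p \rest M' = q \rest M'$ for every size-$\lambda$ submodel $M' \lea N$, and weak tameness (which is built into the definition of a shrinking frame, Definition \ref{shrinking-frame-def}) gives $p = q$. \emph{Local character} and \emph{extension} are handled by the usual resolution-and-direct-limit device: write the relevant model as an increasing continuous union of saturated submodels of size $\lambda$ (possible since it is saturated), apply local character, resp.\ the extension axiom, of $\s_\lambda$ level by level, and glue the resulting coherent family of types using Fact \ref{limit-type} and the continuity axiom of the good frames; continuity of $\s_\lambda$-forking then shows the chosen size-$\lambda$ base persists along the chain. \emph{Transitivity} reduces to transitivity of $\s_\lambda$ once all three models are replaced by compatible size-$\lambda$ bases, and \emph{symmetry} reduces, via the characterization and Fact \ref{sym-indep}, to symmetry of $\s_\lambda$.

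I expect the main obstacle to be the uniform choice of the size-$\lambda$ base model across a whole configuration: in transitivity and in extension one must ensure that a \emph{single} $M_0 \in \K_{\s_\lambda}$ simultaneously witnesses nonforking below every model in play, and this requires juggling base monotonicity, local character, and uniqueness of limit models for $\s_\lambda$ in tandem, with the canonicity theorem (Fact \ref{canon-fact}) being what makes these compatible (it says $\s_\lambda$-nonforking agrees with a nonsplitting-based notion, so the witnesses are essentially canonical). The argument runs parallel to the proof that Shelah's extended nonforking relation (\cite[Section II.2]{shelahaecbook}) and Boney's globally extended good frame (\cite{ext-frame-jml}, cf.\ Fact \ref{frame-upward-transfer} and \cite[Section 6]{tame-frames-revisited-v5}) inherit the good-frame axioms; the only genuinely new point is that the underlying classes $\K_{\s_\mu}$ may shrink as $\mu$ increases, which is absorbed by systematically pushing every computation down to the level $\lambda$, where the class $\K_{\s_\lambda} = \K_\lambda$ is fixed.
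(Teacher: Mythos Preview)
Your proposal is correct and follows the same overall architecture as the paper's sketch: reduce each axiom, via the compatibility theorem, to the corresponding axiom of the individual frames $\s_\mu$, with weak tameness supplying uniqueness. The paper's proof is itself only a sketch, and for invariance, monotonicity, transitivity, local character, and uniqueness your outline matches it (you simply give more detail).

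There are two places where your tactics diverge slightly from the paper's. For \emph{extension}, the paper does not build a coherent chain of $\s_\lambda$-nonforking extensions and glue via a direct-limit argument; instead it cites \cite[Proposition 5.1]{vv-symmetry-transfer-v3} (Fact \ref{ext-splitting} here), i.e., the extension property for non-$\lambda$-splitting types, and uses canonicity (Fact \ref{canon-fact}) to translate between $\s_\lambda$-forking and $\lambda$-splitting. That route avoids the bookkeeping of resolutions and the need to separately verify that the glued type restricts to $p$ on $M$ (which in your approach requires uniqueness to be proved first). Your approach also works and is closer in spirit to Shelah's extension of the good frame in \cite[Section II.2]{shelahaecbook}. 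For \emph{symmetry}, the paper does not push down to $\s_\lambda$ via Fact \ref{sym-indep} (which concerns independent sequences rather than the frame symmetry axiom directly); instead it observes that once the base $M_0$ and the domain $M$ lie in the same $\K_{\s_\mu}$, symmetry is just the $\s_\mu$-axiom, and the general case reduces to this by inserting an intermediate $\mu'$-saturated $M_0' \lea M$ above $M_0$ and applying base monotonicity. Your reduction would work too, but the reference to Fact \ref{sym-indep} is a slight misdirection.
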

\begin{proof}[Proof sketch]
  Invariance, monotonicity, transitivity, and local character are straightforward. Symmetry is also straightforward (once we have it when the domain and the base have the same size, it is a simple use of monotonicity). Uniqueness is by weak tameness, and extension is as in \cite[Proposition 5.1]{vv-symmetry-transfer-v3}.
\end{proof}

We can now state a generalization of Theorem \ref{good-categ-transfer} and sketch a proof:

\begin{thm}\label{good-categ-transfer-weak}
  Let $\seq{\s_{\mu} : \mu \in [\lambda, \theta)}$ be a shrinking type-full good $[\lambda, \theta)$-frame on $\K$. Let $\mu \in [\lambda, \theta)$. If $\Ksatp{\mu}$ is categorical in $\mu^+$, then $\K$ is categorical in every $\mu \in [\lambda, \theta]$.
\end{thm}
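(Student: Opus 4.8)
The plan is to mimic the proof of Theorem~\ref{good-categ-transfer}, replacing the global good $[\lambda,\theta)$-frame used there by the shrinking frame $\seq{\s_\mu : \mu \in [\lambda,\theta)}$ and replacing the forking relation of the former by the global forking notion of Definition~\ref{shrinking-forking}. To avoid a clash of notation, rename the fixed cardinal in the statement to $\mu^\ast$ (so the hypothesis is that $\Ksatp{\mu^\ast}$ is categorical in $(\mu^\ast)^+$), and prove by induction on $\mu \in [\lambda,\theta]$ that $\K$ is categorical in $\mu$. The key structural input is that, by Theorems~\ref{shrinking-compat} and~\ref{shrinking-forking-props}, the global forking notion of Definition~\ref{shrinking-forking} satisfies invariance, monotonicity, extension, uniqueness, transitivity, local character, and symmetry --- precisely the properties that were used to develop the orthogonality calculus and unidimensionality earlier in the paper. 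Moreover each $\s_\mu$ is, by itself, a type-full good $\mu$-frame whose underlying class $\Ksatp{\mu}_\mu$ has a unique model up to isomorphism (by uniqueness of limit, resp.\ saturated, models, Fact~\ref{uq-limit}); hence every result proven for a single good $\lambda$-frame --- in particular Theorem~\ref{unidim-categ} --- applies to $\s_\mu$ with $\Ksatp{\mu}$ in the role of $\K$, giving: \emph{$\s_\mu$ is unidimensional if and only if $\Ksatp{\mu}$ is categorical in $\mu^+$}.

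The heart of the argument is the analog of Theorem~\ref{unidim-transfer}: for all $\mu_0,\mu_1 \in [\lambda,\theta)$, $\s_{\mu_0}$ is unidimensional if and only if $\s_{\mu_1}$ is. Only the ``global orthogonality'' results (Lemmas~\ref{wkperp-cont}, \ref{perp-cont}, \ref{perp-nf-1} and the characterization Lemma~\ref{min-unidim}) actually use the global frame, and each goes through once ``forking'' is read as the global notion of Definition~\ref{shrinking-forking}, using Theorem~\ref{shrinking-forking-props} for its properties. The downward direction copies the first part of the proof of Theorem~\ref{unidim-transfer}: over a saturated $M_0 \in \Ksatp{\mu_0}_{\mu_0}$ choose a minimal $p \in \gS(M_0)$, find $q \in \gS(M_0)$ with $p \perp q$ via Lemma~\ref{technical-multidim-equiv}, take nonforking extensions $p',q'$ to a saturated $M \in \Ksatp{\mu_1}_{\mu_1}$ containing $M_0$, deduce $p' \perp q'$ from Lemma~\ref{perp-nf-1}, and conclude by Lemma~\ref{min-unidim}. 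The upward direction copies the second part: unidimensionality of $\s_{\mu_0}$ gives (downward) unidimensionality of $\s_\lambda$, hence categoricity of $\K$ in $\lambda^+$ by Theorem~\ref{unidim-categ}, hence categoricity of $\K$ in $\mu_1^+$ by the Grossberg--VanDieren upward transfer under weak tameness (as in Fact~\ref{upward-transfer-2}, using Remark~\ref{gv-upward-rmk} and the fact that a shrinking frame supplies amalgamation, superstability, and $(\lambda,<\theta)$-weak tameness), hence unidimensionality of $\s_{\mu_1}$ by Theorem~\ref{unidim-categ} once more.

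The induction itself is then exactly as in the proof of Theorem~\ref{good-categ-transfer}. The base case $\mu = \lambda$ is categoricity of $\K$ in $\lambda$, which is part of the definition of a shrinking frame. If $\mu$ is a limit cardinal, the induction hypothesis forces every model of size $\mu$ to be saturated, so $\K$ is categorical in $\mu$. If $\mu = \mu_0^+$ with $\mu_0 \in [\lambda,\theta)$: from the hypothesis that $\Ksatp{\mu^\ast}$ is categorical in $(\mu^\ast)^+$ and Theorem~\ref{unidim-categ}, $\s_{\mu^\ast}$ is unidimensional; by the transfer above, $\s_{\mu_0}$ is unidimensional; by Theorem~\ref{unidim-categ} again, $\Ksatp{\mu_0}$ is categorical in $\mu_0^+$; and the induction hypothesis gives categoricity of $\K$ in every cardinal in $[\lambda,\mu_0]$, whence $\Ksatp{\mu_0} = \K_{\ge\mu_0}$ and $\K$ is categorical in $\mu_0^+ = \mu$.

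The main obstacle is the verification that the auxiliary facts about length-two independent sequences carry over to a shrinking frame: the global orthogonality arguments rely on the continuity property of independent sequences (in Lemmas~\ref{wkperp-cont}, \ref{perp-cont}) and on their symmetry (Fact~\ref{sym-indep}), and these were imported from \cite{tame-frames-revisited-v5} assuming a genuine global good frame, so here they must be re-derived from the properties listed in Theorem~\ref{shrinking-forking-props} together with the explicit description of forking in Theorem~\ref{shrinking-compat} (one checks that the original proofs use nothing more). A secondary point is to confirm that the weak tameness built into the definition of a shrinking frame, together with the good $\mu$-frames it provides, suffices to run the Grossberg--VanDieren upward categoricity transfer used in the upward half of the unidimensionality transfer; this is implicit in \cite{tamenesstwo, tamenessthree} and recorded in Remark~\ref{gv-upward-rmk}.
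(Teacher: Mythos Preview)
Your proposal is correct and follows essentially the same approach as the paper's brief sketch: mimic the proof of Theorem~\ref{good-categ-transfer} using the global forking of Definition~\ref{shrinking-forking} and Theorem~\ref{shrinking-forking-props}, and invoke Remark~\ref{gv-upward-rmk} to run the Grossberg--VanDieren upward transfer under weak tameness. The one point the paper singles out explicitly (and that you should make precise when re-running the orthogonality arguments, e.g.\ Lemmas~\ref{wkperp-cont} and~\ref{perp-cont}) is that whenever a resolution of a model is taken, all components must be chosen saturated at their own cardinality so that they lie in the domain of the relevant~$\s_\mu$.
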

\begin{proof}[Proof sketch]
  First note that in the upward transfer of Grossberg and VanDieren (Fact \ref{upward-transfer-2}), it is implicit that tameness can be weakened to weak tameness (Remark \ref{gv-upward-rmk}). The rest of the proof of Theorem \ref{good-categ-transfer} (the downward part) is as before: we use Theorem \ref{shrinking-forking-props} and make sure that anytime a resolution is taken, all the components are saturated.
\end{proof}

\section{More on weak tameness}\label{more-weak-tameness}

We use Theorem \ref{good-categ-transfer-weak} to replace tameness by weak tameness in some of the results of the second part of this paper. Everywhere in this section, we assume:

\begin{hypothesis}
  $\K$ is an AEC with amalgamation.
\end{hypothesis}

First, we state a stronger version of the main lemma (Lemma \ref{main-lem}):

\begin{lem}\label{main-lem-2}
  Assume that $\K$ has no maximal models. Let $\theta \ge \lambda > \LS (\K)^+$ be such that $\K$ is $(\LS (\K), <\theta)$-weakly tame. Assume that $\lambda$ is a successor cardinal. If $\K$ is categorical in $\lambda$, then $\Ksatp{\LS (\K)^+}$ is categorical in all $\mu \in [\LS (\K)^+, \theta]$.
\end{lem}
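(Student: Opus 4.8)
The plan is to follow the strategy of Lemma~\ref{main-lem}, but replace the input "type-full good $\F$-frame" by a shrinking type-full good $\F$-frame, so that only weak tameness is needed. First I would use Fact~\ref{shvi} to get that $\K$ is $\LS(\K)$-superstable (here $\K$ is categorical in $\lambda > \LS(\K)$). Next I would transfer superstability and symmetry upward: by Fact~\ref{sym-from-categ} (or, more carefully, by Fact~\ref{shvi} combined with the fact that $\K$ is stable in every $\mu \in [\LS(\K), \lambda)$ together with Fact~\ref{sym-from-categ}), $\K$ is $\mu$-superstable and has $\mu$-symmetry for every $\mu \in [\LS(\K), \theta)$. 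Note that since $\lambda$ is a successor cardinal, the model of size $\lambda$ is trivially $\LS(\K)^+$-saturated (indeed saturated), which is what one needs to feed into Fact~\ref{sym-from-categ}; one should also recall that stability below $\lambda$ comes from $\LS(\K)$-superstability via \cite[Theorem 5.6]{ss-tame-jsl} or directly from superstability in each cardinal. Since $\theta \le \lambda$ is not assumed, one has to be slightly careful about $\mu \in [\lambda, \theta)$ as well, but here we are only asserting superstability and symmetry \emph{below} $\theta$, and these follow for $\mu < \lambda$ from categoricity in $\lambda$ and for $\mu \ge \lambda$ from categoricity in $\lambda$ and the upward transfers (or, if $\theta = \lambda$, there is nothing more to check).

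Having superstability and symmetry in every $\mu \in [\LS(\K), \theta)$, together with the hypothesis that $\K$ is $(\LS(\K), <\theta)$-weakly tame, I would apply Theorem~\ref{shrinking-frame-weak-tameness} to obtain a shrinking type-full good $[\LS(\K)^+, \theta)$-frame $\seq{\s_\mu : \mu \in [\LS(\K)^+,\theta)}$ on $\K$. (If $\theta = \LS(\K)^+$ the conclusion is immediate since the interval is degenerate; so assume $\theta > \LS(\K)^+$.) The underlying class $\K_{\s_{\LS(\K)^+}}$ is $\Ksatp{\LS(\K)^+}_{\LS(\K)^+}$, which by amalgamation, superstability, and symmetry (Fact~\ref{sym-unionsat}) generates the AEC $\Ksatp{\LS(\K)^+}$, and $\K$ is categorical in $\LS(\K)^+$ in the sense that this class has a unique limit model (uniqueness of limit models, Fact~\ref{uq-limit}). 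Thus the shrinking frame is "categorical in $\LS(\K)^+$" in the sense required by Theorem~\ref{good-categ-transfer-weak}.

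Now I would feed this into Theorem~\ref{good-categ-transfer-weak} with $\K, \lambda, \theta$ there standing for $\Ksatp{\LS(\K)^+}, \LS(\K)^+, \theta$ here. The remaining point is to verify the categoricity hypothesis of that theorem, namely that $\Ksatp{\mu}$ is categorical in $\mu^+$ for some $\mu$ in the interval: take $\mu := \LS(\K)^+$ and note that since $\lambda > \LS(\K)^+$ is a successor, $\lambda = \nu^+$ for some $\nu \ge \LS(\K)^+$, and $\K$ categorical in $\lambda$ together with the upward transfer of Grossberg--VanDieren in its weak-tameness form (Fact~\ref{upward-transfer-2} together with Remark~\ref{gv-upward-rmk}) gives that $\Ksatp{\LS(\K)^+}$ is categorical in $\LS(\K)^{++}$ — wait, more directly: the model of size $\lambda$ is saturated (since $\lambda$ is a successor and the relevant stability/superstability holds below $\lambda$), so $\Ksatp{\LS(\K)^+}_\lambda = \K_\lambda$, hence $\Ksatp{\LS(\K)^+}$ is categorical in $\lambda = \nu^+$, and we may apply Theorem~\ref{good-categ-transfer-weak} with $\mu := \nu$ (which lies in $[\LS(\K)^+, \theta)$ provided $\lambda \le \theta$, which is our hypothesis $\theta \ge \lambda$, so $\nu < \lambda \le \theta$). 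Theorem~\ref{good-categ-transfer-weak} then yields categoricity of $\Ksatp{\LS(\K)^+}$ in every cardinal in $[\LS(\K)^+, \theta]$, which is exactly the conclusion.

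The main obstacle I anticipate is bookkeeping around the boundary cases and the precise meaning of "categorical in $\mu^+$" for the class $\Ksatp{\mu}$ versus $\K$: one must be careful that $\mu^+ = \nu^+ = \lambda$ genuinely lies in the range where the saturated-model class coincides with $\K$, and that $\nu \in \F = [\LS(\K)^+, \theta)$, which uses $\theta \ge \lambda = \nu^+ > \nu$. There is also a minor subtlety when $\theta = \LS(\K)^+$ or $\theta = \LS(\K)^{++}$ (degenerate or nearly-degenerate intervals), which should be dispatched by hand. Everything else is a direct citation of Theorem~\ref{shrinking-frame-weak-tameness}, Theorem~\ref{good-categ-transfer-weak}, Fact~\ref{shvi}, Fact~\ref{sym-from-categ}, Fact~\ref{sym-unionsat}, and Fact~\ref{upward-transfer-2}, so no genuinely new argument is required beyond what is already in the paper; this lemma is really the weak-tameness analogue of Lemma~\ref{main-lem} obtained by substituting the shrinking-frame machinery for the global-frame machinery.
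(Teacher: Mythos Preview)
Your approach uses the same ingredients as the paper's proof (Fact~\ref{shvi}, Fact~\ref{sym-from-categ}, Theorem~\ref{shrinking-frame-weak-tameness}, Theorem~\ref{good-categ-transfer-weak}, and the Grossberg--VanDieren upward transfer in its weak-tameness form), but you organize them differently, and this creates a small gap. You attempt to build a shrinking frame on the whole interval $[\LS(\K)^+,\theta)$, which requires superstability and symmetry in every $\mu\in[\LS(\K),\theta)$. For $\mu<\lambda$ this is fine, but for $\mu\in[\lambda,\theta)$ you appeal to unspecified ``upward transfers''; however, the upward transfer of superstability and symmetry stated in Fact~\ref{tame-sym-ss} requires full tameness, not just weak tameness, so this step is not justified as written. (It could be patched by first invoking Grossberg--VanDieren to get categoricity in $[\lambda,\theta]$ and then applying Fact~\ref{shvi} level by level, but you do not say this.)

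The paper sidesteps the issue entirely: it first disposes of the upward part $[\lambda,\theta]$ directly via Fact~\ref{gv-upward} (with Remark~\ref{gv-upward-rmk}), and then for the downward part builds the shrinking frame only on $[\LS(\K)^+,\lambda)$, where superstability and symmetry follow immediately from categoricity in $\lambda$ via Facts~\ref{shvi} and~\ref{sym-from-categ}. Theorem~\ref{good-categ-transfer-weak} then gives categoricity of $\Ksatp{\LS(\K)^+}$ in $[\LS(\K)^+,\lambda]$. This split avoids any need to discuss superstability or symmetry at or above $\lambda$.
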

\begin{proof}
  By the upward transfer of Grossberg and VanDieren (Fact \ref{gv-upward}), $\K$ (and therefore $\Ksatp{\LS (\K)^+})$ is categorical in every $\lambda' \in [\lambda, \theta]$. It remains to show the downward part. By Fact \ref{shvi}, $\K$ is superstable in every $\mu \in [\LS (\K), \lambda)$. Since $\lambda$ is a successor, the model of size $\lambda$ is saturated. By Fact \ref{sym-from-categ}, $\K$ has symmetry in every $\mu \in [\LS (\K), \lambda)$. By Theorem \ref{shrinking-frame-weak-tameness}, there is a shrinking type-full good $[\LS (\K)^+, \lambda)$-frame on $\K$. By Theorem \ref{good-categ-transfer-weak}, $\Ksatp{\LS (\K)^+}$ is categorical in all $\lambda' \in [\LS (\K)^+, \lambda]$. 
\end{proof}

We can improve on Corollary \ref{main-cor}:

\begin{cor}\label{main-cor-2}
  Assume that $\K$ has arbitrarily large models. Let $\LS (\K) < \lambda_0 < \lambda$. Assume that $\K$ is $(\LS (\K), <\lambda)$-weakly tame. If $\lambda$ is a successor cardinal and $\K$ is categorical in $\lambda_0$ and $\lambda$, then $\K$ is categorical in all $\lambda' \in (\lambda_0, \lambda)$.
\end{cor}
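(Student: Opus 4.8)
The plan is to mirror the proof of Corollary~\ref{main-cor}, feeding in the weak-tameness main lemma (Lemma~\ref{main-lem-2}) instead of Lemma~\ref{main-lem}. First I would use Fact~\ref{jep-decomp} to reduce to the case where $\K$ has no maximal models: categoricity in $\lambda_0$ together with amalgamation gives joint embedding in $\lambda_0$, and the subclass produced by Fact~\ref{jep-decomp} agrees with $\K$ on all models of size $\ge \lambda_0$, so categoricity in $\lambda_0$ and $\lambda$ and $(\LS (\K), <\lambda)$-weak tameness are all inherited. Since $\LS (\K) < \lambda_0 < \lambda$ forces $\lambda \ge \LS (\K)^{++} > \LS (\K)^+$, Lemma~\ref{main-lem-2} applies with $\theta := \lambda$, giving that $\Ksatp{\LS (\K)^+}$ is categorical in every $\mu \in [\LS (\K)^+, \lambda]$.

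The substance of the argument is then to show that every model of $\K$ of size in $(\lambda_0, \lambda)$ is $\LS (\K)^+$-saturated, so that the categoricity of $\Ksatp{\LS (\K)^+}$ transfers back to $\K$. The ingredient not present in Corollary~\ref{main-cor} is to exploit categoricity in the \emph{smaller} cardinal $\lambda_0$. By the proof of Lemma~\ref{main-lem-2} (via Facts~\ref{shvi} and~\ref{sym-from-categ}, using that the model of size $\lambda$ is saturated because $\lambda$ is a successor), $\K$ is superstable and has symmetry in every $\mu \in [\LS (\K), \lambda)$; hence by Fact~\ref{sym-unionsat}, $\Ksatp{\LS (\K)^+}$ is an AEC with $\LS (\Ksatp{\LS (\K)^+}) = \LS (\K)^+$. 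Being nonempty and containing a model of size $\lambda \ge \lambda_0$, it contains a model of size $\lambda_0$, which is then an $\LS (\K)^+$-saturated member of $\K_{\lambda_0}$; so by categoricity of $\K$ in $\lambda_0$, the unique model of $\K$ of size $\lambda_0$ is $\LS (\K)^+$-saturated. A submodel argument now finishes: given $M \in \K_\mu$ with $\mu \in (\lambda_0, \lambda)$ and $M_0 \lea M$ of size $\le \LS (\K)$, extend $M_0$ to some $M_1 \lea M$ of size exactly $\lambda_0$; then $M_1$ is $\LS (\K)^+$-saturated by categoricity in $\lambda_0$, so every type over $M_0$ is realized in $M_1$, hence in $M$. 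Thus $M$ is $\LS (\K)^+$-saturated, $\K_\mu = \Ksatp{\LS (\K)^+}_\mu$, and the latter class is categorical, so $\K$ is categorical in $\mu$.

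I do not expect a serious obstacle once Lemma~\ref{main-lem-2} is in hand; the remainder is essentially bookkeeping. The point that needs the most care is the reduction to no maximal models, where one must check that weak tameness survives passing to the subclass of Fact~\ref{jep-decomp} --- this is handled exactly as in the proofs of Corollaries~\ref{main-cor} and~\ref{improved-prime-categ}. The conceptual heart of the proof is simply that, because the good frame lives on the class $\Ksatp{\LS (\K)^+}$ of saturated models rather than on $\K$ itself, one needs categoricity in $\lambda_0$ to rule out non-saturated models of size in $(\lambda_0, \lambda)$ entering $\K$.
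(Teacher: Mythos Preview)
Your proposal is correct and follows essentially the same approach as the paper: the paper's proof is the one-liner ``As in the proof of Corollary~\ref{main-cor}, using Lemma~\ref{main-lem-2} (with $\lambda, \theta$ there standing for $\lambda, \lambda$ here),'' and you have correctly unpacked what that entails. The only cosmetic difference is that in Corollary~\ref{main-cor} the paper argues the model of size $\lambda_0$ is (fully) saturated via stability in every $\mu \in [\LS(\K)^+, \lambda)$, whereas you obtain $\LS(\K)^+$-saturation by noting $\Ksatp{\LS(\K)^+}$ is an AEC with a model of size $\lambda_0$; both routes are standard and either suffices for the submodel argument you give.
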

\begin{proof}
  As in the proof of Corollary \ref{main-cor}, using Lemma \ref{main-lem-2} (with $\lambda, \theta$ there standing for $\lambda, \lambda$ here).
\end{proof}

Corollary \ref{main-thm} can similarly be generalized:

\begin{cor}\label{main-thm-2}
  Let $\lambda \ge H_1$ be a successor cardinal and assume that $\K$ is $(\LS (\K), <\lambda)$-weakly tame. If $\K$ is categorical in $\lambda$, then there exists $\chi < H_1$ such that $\K$ is categorical in all $\lambda' \in [\chi, \lambda)$.
\end{cor}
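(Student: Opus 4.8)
The plan is to mirror the proof of Corollary~\ref{main-thm}, with Lemma~\ref{main-lem-2} playing the role of the tameness input. First I would reduce to the case where $\K$ has no maximal models: categoricity in $\lambda$ gives joint embedding in $\lambda$, so Fact~\ref{jep-decomp} yields a cardinal $\chi_0 < \beth_{\left(2^{\LS (\K)}\right)^+} = H_1$ and a sub-AEC $\K^\ast \subseteq \K$ with the same ordering, $\LS (\K^\ast) = \LS (\K)$, amalgamation, joint embedding, no maximal models, and $\K^\ast_{\ge \chi_0} = \K_{\ge \chi_0}$ (using $\chi_0 < H_1 \le \lambda$). Since $\K$ and $\K^\ast$ agree above $\chi_0$, the class $\K^\ast$ is still categorical in $\lambda$ and remains $(\LS (\K), <\lambda)$-weakly tame (Galois types and saturation over models of size $\ge \chi_0$ are computed identically in the two classes, and a witnessing submodel of size $\LS (\K)$ can always be absorbed into one lying in $\K^\ast$ by its L\"owenheim--Skolem--Tarski property), and a conclusion of the form ``$\K^\ast$ is categorical in all $\lambda' \in [\chi, \lambda)$'' with $\chi \ge \chi_0$ transfers back to $\K$. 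So I would henceforth work with $\K^\ast$ in place of $\K$.

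Next I would apply Lemma~\ref{main-lem-2} with $\theta$ there equal to $\lambda$; all its hypotheses hold ($\lambda \ge H_1 > \LS (\K)^+$, $\lambda$ a successor, $(\LS (\K),<\lambda)$-weak tameness, categoricity in $\lambda$), so $\Ksatp{\LS (\K)^+}$ is categorical in every $\mu \in [\LS (\K)^+, \lambda]$. Separately, as in the proof of Lemma~\ref{main-lem-2} (via Fact~\ref{shvi}), $\K$ is superstable, hence stable, in every $\mu \in [\LS (\K), \lambda)$; since $\lambda$ is a successor cardinal this produces a saturated model of size $\lambda$, and so by categoricity in $\lambda$ the unique model of size $\lambda$ is saturated, in particular $\LS (\K)^+$-saturated. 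Thus every model in $\K_\lambda$ is $\LS (\K)^+$-saturated, and Fact~\ref{omitting-type} (Morley's omitting type theorem for AECs) produces $\chi < H_1$, which I may enlarge so that $\chi \ge \max (\LS (\K)^+, \chi_0)$, with the property that every model in $\K_{\ge \chi}$ is $\LS (\K)^+$-saturated.

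Finally, for any $\lambda' \in [\chi, \lambda)$ every model of size $\lambda'$ is $\LS (\K)^+$-saturated, so $\K_{\lambda'} = \Ksatp{\LS (\K)^+}_{\lambda'}$; since $\lambda' \in [\LS (\K)^+, \lambda)$ and $\Ksatp{\LS (\K)^+}$ is categorical there, $\K$ is categorical in $\lambda'$. Unwinding the reduction to $\K^\ast$ (and using $\chi \ge \chi_0$) gives the statement for the original class. I expect the only genuinely delicate point to be that first reduction --- verifying that $(\LS (\K), <\lambda)$-weak tameness, together with the relevant type and saturation computations, is preserved on passing to $\K^\ast$; this is the kind of bookkeeping already implicit in the proofs of Corollaries~\ref{main-cor} and~\ref{main-cor-2}, and everything else is a routine assembly of Lemma~\ref{main-lem-2}, Fact~\ref{shvi}, and Fact~\ref{omitting-type}.
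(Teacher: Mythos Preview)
Your proposal is correct and follows essentially the same route as the paper's proof, which simply says ``As in the proof of Corollary~\ref{main-thm}, using Lemma~\ref{main-lem-2} (with $\lambda, \theta$ there standing for $\lambda, \lambda$ here).'' You are more explicit than the paper about the reduction to no maximal models via Fact~\ref{jep-decomp} (which is indeed needed, since Lemma~\ref{main-lem-2} requires it and Corollary~\ref{main-thm-2}, unlike Corollary~\ref{main-thm}, does not assume arbitrarily large models), and you correctly flag the bookkeeping about preservation of weak tameness under this passage as the only non-mechanical point; the paper suppresses this just as it does in the proofs of Corollaries~\ref{main-cor} and~\ref{main-cor-2}.
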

\begin{proof}
  As in the proof of Corollary \ref{main-thm}, using Lemma \ref{main-lem-2} (with $\lambda, \theta$ there standing for $\lambda, \lambda$ here).
\end{proof}
\begin{remark}
  It is unclear how to generalize the results using primes: the proof of Fact \ref{prime-fact} uses tameness (for all models) heavily, and we do not know how to generalize it to weakly tame AECs.
\end{remark}

We can use Corollary \ref{main-thm-2} to give an alternate proof to the main theorem of \cite{sh394}.

\begin{cor}\label{sh394-alternate}
  If $\K$ is categorical in \emph{some successor} $\lambda \ge \beth_{H_1}$, then there exists $\mu < \beth_{H_1}$ such that $\K$ is categorical in \emph{all} $\lambda' \in [\mu, \lambda)$.
\end{cor}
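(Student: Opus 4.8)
The plan is to deduce the statement from Corollary \ref{main-thm-2} applied to a suitable class of saturated models. First I would invoke Fact \ref{jep-decomp} to assume without loss of generality that $\K$ has no maximal models, and note that since $\lambda$ is a successor cardinal while $\beth_{H_1}$ is a limit cardinal we in fact have $\lambda > \beth_{H_1}$. By Fact \ref{shvi} (applied to $\K_{\ge \chi}$ for each $\chi$) the class $\K$ is superstable, hence stable, in every $\chi \in [\LS (\K), \lambda)$, and since $\cf{\lambda} = \lambda$ this forces the model of size $\lambda$ to be $\mu$-saturated for every $\mu < \lambda$; that is, the model of size $\lambda$ is saturated (this is the content of the ``$\cf{\lambda} \ge \mu$'' clause in Corollary \ref{downward-transfer}).

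In particular that model is $H_1$-saturated, so Theorem \ref{technical-lem}.(2) yields some $\chi_0 < H_1$ for which $\K$ is $(\chi_0, <\mu)$-weakly tame for every $\mu \in [H_1, \lambda)$ (using that the model of size $\lambda$ is $\mu$-saturated for each such $\mu$), hence $(\chi_0, <\lambda)$-weakly tame. I would then replace $\chi_0$ by a larger successor cardinal, still below $H_1$, writing $\chi_0 = \sigma_0^+$ with $\LS (\K) \le \sigma_0$ and $\LS (\K) < \chi_0 < H_1$; since $H_1$ is a strong limit cardinal this keeps $\hanf{\sigma_0}$ and $\hanf{\chi_0}$ strictly below $\beth_{H_1}$. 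Now set $\K^\ast := \Ksatp{\chi_0}$. Using that the model of size $\lambda$ is saturated, hence $\chi_0^+$-saturated, Corollary \ref{cor-sat-categ} gives that $\K^\ast$ is an AEC with $\LS (\K^\ast) = \chi_0$; moreover it inherits amalgamation, arbitrarily large models, and no maximal models, it is categorical in $\lambda$ (the saturated model of size $\lambda$ lies in $\K^\ast$), and it is $(\LS (\K^\ast), <\lambda)$-weakly tame: given distinct $p, q \in \gS (M)$ over a saturated $M \in \K^\ast$ of size $\mu$ with $\chi_0 < \mu < \lambda$, weak tameness of $\K$ produces a size-$\chi_0$ submodel of $M$ separating them, and this may be enlarged to a saturated one inside $M$ because $M$ is $\chi_0^+$-saturated.

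At this point Corollary \ref{main-thm-2} applies to $\K^\ast$ --- we have $\lambda > \beth_{H_1} > \hanf{\chi_0} = \hanf{\LS (\K^\ast)}$ and $\lambda$ is a successor --- and produces $\chi^\ast < \hanf{\LS (\K^\ast)} < \beth_{H_1}$ such that $\K^\ast$ is categorical in every $\lambda' \in [\chi^\ast, \lambda)$. The remaining step, and the one I expect to be the main obstacle, is transferring this categoricity back to $\K$: it suffices to show that every model of $\K$ of size $\lambda'$ is $\chi_0$-saturated (hence lies in $\K^\ast$) once $\lambda'$ is large enough, with the threshold kept strictly below $\beth_{H_1}$. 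I would obtain this by the pigeonhole argument used in the proof of Morley's omitting type theorem for AECs (Fact \ref{omitting-type}): a model of $\K$ that is not $\chi_0 = \sigma_0^+$-saturated omits a Galois type over some submodel of size $\sigma_0$, and there are at most $2^{\sigma_0} < \cf{\hanf{\sigma_0}} = \left(2^{\sigma_0}\right)^+$ isomorphism types of such pairs; so if such models existed in cofinally many sizes below $\hanf{\sigma_0}$, then fixing one pair $(N, p)$ that occurs cofinally, the AEC of models omitting $p$ (with constants for $N$) would have models of cofinally many sizes below $\hanf{\sigma_0}$ and Löwenheim--Skolem number $\sigma_0$, hence arbitrarily large models by Shelah's presentation theorem and the first-order omitting type theorem, contradicting that the model of $\K$ of size $\lambda \ge \hanf{\sigma_0}$ is saturated. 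This yields $\mu_0 < \hanf{\sigma_0} < \beth_{H_1}$ with $\K_{\lambda'} = \K^\ast_{\lambda'}$ for all $\lambda' \in [\mu_0, \lambda)$, so $\mu := \max (\chi^\ast, \mu_0) < \beth_{H_1}$ works.

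The need to keep the saturation-boosting bound below $\beth_{H_1}$ is precisely why the argument must detour through $\Ksatp{\chi_0}$ with $\chi_0 < H_1$ (rather than through $\Ksatp{H_1}$, whose Hanf number $H_2$ already exceeds $\beth_{H_1}$), and why $\chi_0$ is arranged to be a successor cardinal --- this makes the final ``pull down the saturation'' step a single application of the omitting type machinery instead of a supremum over levels, which could otherwise push the threshold back up to $\beth_{H_1}$.
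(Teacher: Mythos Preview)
Your argument is correct, but it takes a longer detour than the paper does. The paper's proof is four lines: after reducing to no maximal models and observing the model of size $\lambda$ is saturated (exactly as you do), it invokes Theorem \ref{technical-lem} to get $\chi < H_1$ with $(\chi, <\lambda)$-weak tameness, and then applies Corollary \ref{main-thm-2} \emph{directly to $\K_{\ge \chi}$} rather than to $\Ksatp{\chi_0}$. Since $\K_{\ge \chi}$ has $\LS(\K_{\ge \chi}) = \chi$ and agrees with $\K$ on all models of size $\ge \chi$, the categoricity conclusion for $\K_{\ge \chi}$ is immediately a categoricity conclusion for $\K$, and the threshold is below $\hanf{\chi} < \beth_{H_1}$.

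Your route through $\K^\ast = \Ksatp{\chi_0}$ forces the extra ``transfer back'' step via the omitting-type argument, plus the verification that $\K^\ast$ is an AEC with the right weak tameness (which, as you note, requires some care about whether saturation and Galois types in $\K^\ast$ match those in $\K$). None of this is wrong, but it is unnecessary: the point is that weak tameness is already phrased relative to saturated models, so restricting the \emph{ambient class} to $\K_{\ge \chi}$ (rather than to its saturated models) preserves the hypothesis of Corollary \ref{main-thm-2} without changing the models at all. The paper's approach thus avoids both the saturation bookkeeping and the final omitting-type step.
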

\begin{proof}
  Without loss of generality (Fact \ref{jep-decomp}), $\K$ has no maximal models. By Fact \ref{shvi}, $\K$ is stable below $\lambda$, so the model of size $\lambda$ is saturated. By Theorem \ref{technical-lem}, there exists $\chi < H_1$ such that $\K$ is $(\chi, <\lambda)$-weakly tame. By Corollary \ref{main-thm-2} (applied to $\K_{\ge \chi}$), there exists $\mu < \hanf{\chi} < \beth_{H_1}$ such that $\K$ is categorical in all $\lambda' \in [\mu, \lambda)$.
\end{proof}

Generalizing Corollary \ref{abstract-thm-3-proof} is harder. The problem is how to ensure that the model in the categoricity cardinal has enough saturation. We give a consistency result in case $\lambda \ge H_1$.

\begin{cor}
  Assume that $2^{\LS (\K)} = 2^{\LS (\K)^+}$, $\WGCH ([\LS (\K)^+, \LS (\K)^{+\omega}))$, and Claim \ref{claim-xxx} holds. Assume that $\K$ is $(\LS (\K), <H_1)$-weakly tame. If $\K$ is categorical in \emph{some} $\lambda \ge H_1$, then there exists $\chi < H_1$ such that $\K$ is categorical in \emph{all} $\lambda' \ge \chi$.
\end{cor}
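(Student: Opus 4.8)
The plan is to follow the template of Corollaries \ref{abstract-thm-3-proof}, \ref{main-thm-2} and \ref{sh394-alternate}: reduce to an $\omega$-$\succp$ good $\LS (\K)^+$-frame, invoke Claim \ref{claim-xxx}, and then come back down below $H_1$ via weak tameness. The one genuinely new ingredient is the use of the hypothesis $2^{\LS (\K)} = 2^{\LS (\K)^+}$ to get enough saturation out of the categoricity cardinal $\lambda$. First I would use Fact \ref{jep-decomp} to reduce to the case where $\K$ has no maximal models. Then I would observe that $2^{\LS (\K)} = 2^{\LS (\K)^+}$ gives $\hanf{\LS (\K)^+} = \beth_{(2^{\LS (\K)^+})^+} = \beth_{(2^{\LS (\K)})^+} = H_1$, so since $\lambda \ge H_1$, Fact \ref{sym-from-categ}.(2) shows the model of size $\lambda$ is $\LS (\K)^+$-saturated, hence by categoricity in $\lambda$ and Fact \ref{omitting-type} there is $\chi_0 < H_1$ with every model in $\K_{\ge \chi_0}$ being $\LS (\K)^+$-saturated. (This is exactly where $2^{\LS (\K)} = 2^{\LS (\K)^+}$ is needed: without it one only knows $\hanf{\LS (\K)^+}$-saturation, which is why primes or a larger cardinal threshold were required before.)

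Next I would bootstrap this saturation. By a simultaneous induction on $\chi \in [\LS (\K), H_1)$ I would prove that the model of size $\lambda$ is $\chi^+$-saturated and that $\Ksatp{\chi'}$ is an AEC with $\LS (\Ksatp{\chi'}) = \chi'$ for all $\chi' \in [\LS (\K)^+, \chi]$. The inductive step uses Lemma \ref{omitting-type-technical}, whose hypothesis $\beth_{(2^{\LS (\K)})^+}(\chi) \le \lambda$ is automatic here because $H_1$ is a strong limit cardinal, so $\beth_{(2^{\LS (\K)})^+}(\chi) = H_1 \le \lambda$ for every $\chi < H_1$; to verify its hypothesis (3) and to upgrade to the AEC statement I would use Corollary \ref{cor-sat-categ} together with Fact \ref{sym-unionsat}, the symmetry inputs for the latter coming from Fact \ref{sym-from-categ}.(1) applied to $\K_{\ge \chi'}$ (which needs $\lambda \ge \hanf{\chi'}$ or sufficient saturation of the model of size $\lambda$ — provided respectively by strong-limit-ness together with the hypothesis on $\LS (\K)$, or by the induction hypothesis). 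In particular the model of size $\lambda$ is $\LS (\K)^{+\omega}$-saturated and $\Ksatp{\LS (\K)^+}$ is an AEC with Löwenheim–Skolem number $\LS (\K)^+$.

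Now I would run the proof of Lemma \ref{key-lem-categ-limit} with $\Ksatp{\LS (\K)^+}$ in place of $\K$: since $\LS (\Ksatp{\LS (\K)^+}) = \LS (\K)^+$ and $(\LS (\K)^+)^{+\omega} = \LS (\K)^{+\omega}$, only $\WGCH ([\LS (\K)^+, \LS (\K)^{+\omega}))$ is needed, which is assumed. So Fact \ref{good-frame-weak-tameness} yields a type-full good $\LS (\K)^+$-frame $\s$ on $\Ksatp{\LS (\K)^+}_{\LS (\K)^+}$ (superstability from Fact \ref{shvi}, $\LS (\K)^+$-symmetry from the previous paragraph, $(\LS (\K), \LS (\K)^+)$-weak tameness from $(\LS (\K), <H_1)$-weak tameness); repeated applications of Corollary \ref{getting-succp} at the $\LS (\K)^{+n}$ (using $\WGCH (\LS (\K)^{+n})$ and $(\LS (\K), \LS (\K)^{+(n+1)})$-weak tameness, valid as $\LS (\K)^{+\omega} < H_1$) make $\s$ $\omega$-$\succp$; and since the model of size $\lambda$ is $\LS (\K)^{+\omega}$-saturated, $\Ksatp{\LS (\K)^{+\omega}}$ is categorical in $\lambda > \LS (\K)^{+\omega}$, so Claim \ref{claim-xxx} gives that $\Ksatp{\LS (\K)^{+\omega}}$ is categorical in all $\lambda' > \LS (\K)^{+\omega}$. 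A final use of Lemma \ref{omitting-type-technical} (with $\chi := \LS (\K)^{+n}$) shows every model in $\K_{\ge H_1}$ is $\LS (\K)^{+\omega}$-saturated, so $\K$ is categorical in a tail of cardinals, in particular in some successor $\ge H_1$. To descend below $H_1$ I would then argue as in Corollary \ref{main-thm-2}: use Theorem \ref{technical-lem}.(2) (the model in a large categoricity cardinal is saturated) to promote $(\LS (\K), <H_1)$-weak tameness to $(\LS (\K), <\mu)$-weak tameness for large $\mu$ (combining at each step with the fact, proved above, that $\Ksatp{\chi}$ is an AEC for $\chi < H_1$), and then apply Corollary \ref{main-thm-2} to a successor categoricity cardinal $\ge H_1$ to obtain $\chi < H_1$ with $\K$ categorical in all $\lambda' \in [\chi, H_1)$; together with categoricity in the tail this gives categoricity in all $\lambda' \ge \chi$.

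The main obstacle is the saturation bootstrap of the second paragraph: the naïve approach stalls because knowing the model of size $\lambda$ is $\chi^+$-saturated seems to require $\Ksatp{\chi}$ to be an AEC, which via Corollary \ref{cor-sat-categ} seems to require $\chi^{++}$-saturation. Untangling this is exactly what forces the simultaneous induction, and it goes through only because $H_1$ is a strong limit cardinal (making the $\beth$-bound in Lemma \ref{omitting-type-technical} free) and because $2^{\LS (\K)} = 2^{\LS (\K)^+}$ collapses $\hanf{\LS (\K)^+}$ to $H_1$, which supplies the base case from the sole assumption $\lambda \ge H_1$.
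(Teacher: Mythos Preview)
Your overall strategy matches the paper's, but the saturation bootstrap in your second paragraph is circular and this breaks the argument. At a successor stage $\chi = \chi_0^+$, to conclude that $\Ksatp{\chi}$ is an AEC via Fact~\ref{sym-unionsat} you need $\chi$-symmetry, and Fact~\ref{sym-from-categ}.(1) applied to $\K_{\ge \chi}$ requires the model of size $\lambda$ to be $\chi^+$-saturated. But $\chi^+$-saturation is precisely what you are trying to prove at stage $\chi$ (via Lemma~\ref{omitting-type-technical}, whose hypothesis~(3) needs $\Ksatp{\chi}$ to be an AEC). The alternative route $\lambda \ge \hanf{\chi}$ you mention only works for $\chi \in \{\LS(\K), \LS(\K)^+\}$: the hypothesis $2^{\LS(\K)} = 2^{\LS(\K)^+}$ gives $\hanf{\LS(\K)^+} = H_1$, but says nothing about $\hanf{\LS(\K)^{+2}}$, which may well exceed $H_1$ (``strong-limit-ness'' of $H_1$ does not help, since $\hanf{\chi}$ is a $\beth$ of length $(2^{\chi})^+$, not of length $\chi$). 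So your induction advances exactly one step (to $\LS(\K)^{+2}$-saturation) and then stalls; in particular you do not get the $\LS(\K)^{+\omega}$-saturation you invoke before applying Claim~\ref{claim-xxx}, nor the AEC property of $\Ksatp{\LS(\K)^{+n}}$ you use in your final Lemma~\ref{omitting-type-technical} step.

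The paper avoids this by not bootstrapping saturation at all. It observes that $\lambda \ge H_1 = \hanf{\LS(\K)^+}$ gives $\LS(\K)^+$-symmetry directly (Fact~\ref{sym-from-categ}), hence $\Ksatp{\LS(\K)^+}$ is an AEC (Fact~\ref{sym-unionsat}), hence there is a good $\LS(\K)^+$-frame $\s$ (Fact~\ref{good-frame-weak-tameness}); it then iterates Corollary~\ref{getting-succp} to make $\s$ $\omega$-$\succp$. The key point is that this iteration is purely frame-theoretic: once $\s$ is $\succp$, the successor frame $\s^+$ is automatically a good $\LS(\K)^{+2}$-frame on the saturated models, and the $\succp$ condition ensures this class carries the $\K$-ordering and is an AEC, with no appeal to $\LS(\K)^{+2}$-symmetry for $\K$ or to additional saturation of the model of size $\lambda$. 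The circular dependency you identified is therefore broken by the frame machinery (which you do invoke in your third paragraph), not by a simultaneous induction; your second paragraph should simply be deleted and its conclusions recovered from the $\omega$-$\succp$ frame.
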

\begin{proof}
  Without loss of generality (Fact \ref{jep-decomp}), $\K$ has no maximal models. By Fact \ref{shvi}, $\K$ is superstable in every $\mu \in [\LS (\K), \lambda)$. Since $2^{\LS (\K)} = 2^{\LS (\K)^+}$, $H_1 = \hanf{\LS (\K)^+}$, so by Fact \ref{sym-from-categ}, $\K$ has symmetry in $\LS (\K)^+$. By Fact \ref{sym-unionsat}, $\Ksatp{\LS (\K)^+}$ is an AEC with $\LS (\Ksatp{\LS (\K)^+}) = \LS (\K)^+$. In particular, the model of size $\lambda$ is $\LS (\K)^+$-saturated. By Fact \ref{good-frame-weak-tameness}, there exists a type-full good $\LS (\K)^+$-frame $\s$ on $\Ksatp{\LS (\K)^+}_{\LS (\K)^+}$. By iterating Corollary \ref{getting-succp}, $\s$ is $\omega$-$\succp$. As in the proof of Corollary \ref{abstract-thm-3-proof}, we get that $\K$ is categorical on a tail of cardinals. By Theorem \ref{technical-lem}, $\K$ is $\chi$-weakly tame, so combining this with the hypothesis of $(\LS (\K), <H_1)$-tameness, $\K$ is $\LS (\K)$-weakly tame. Now apply Corollary \ref{main-thm-2}.
\end{proof}

\section{Superstability for long types}

We generalize Definition \ref{ss assm} to types of more than one element and use it to prove an extension property for $1$-forking (recall Definition \ref{1-forking-def}). This is used to give a converse to Lemma \ref{perp-nf-1} in the next appendix (but is not needed for the main body of this paper). Everywhere below, $\K$ is an AEC.

\begin{defin}\label{ss-parametrized}
  Let $\alpha \le \omega$ be a cardinal. $\K$ is \emph{$(<\alpha, \mu)$-superstable} (or \emph{$(<\alpha)$-superstable in $\mu$}) if it satisfies Definition \ref{ss-parametrized} except that in addition in condition (\ref{split assm}) there we allow $p \in \gS^{<\alpha} (M_\delta)$ rather than just $p \in \gS (M_\delta)$ (that is, $p$ need not have length one). \emph{$(\le \alpha, \mu)$-superstable} means $(<\alpha^+, \mu)$-superstable. When $\alpha = 2$, we omit it (that is, $\mu$-superstable means $(< 2, \mu)$-superstable which is the same as $(\le 1, \mu)$-superstable).
\end{defin}

While not formally equivalent (although we do not know of any examples separating the two),  $\mu$-superstability and $(<\omega, \mu)$-superstability are very close. For example, the proof of Fact \ref{shvi} also gives:

\begin{fact}\label{shvi2} 
  Let $\mu \ge \LS (\K)$. If $\K$ has amalgamation, no maximal models, and is categorical in a $\lambda > \mu$, then $\K$ is $(<\omega, \mu)$-superstable.
\end{fact}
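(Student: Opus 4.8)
The plan is to observe that the content of $(<\omega,\mu)$-superstability is almost the same as that of $\mu$-superstability (Definition \ref{ss assm}): the only difference is that the local character property for $\mu$-splitting must hold for types in $\gS^{<\omega}$ rather than only for types of length one. Clauses (1)--(3) of Definition \ref{ss assm} say nothing about the length of types, so they are supplied directly by Fact \ref{shvi}. Thus everything reduces to the following statement: for every limit $\delta<\mu^+$ and every increasing continuous $\seq{M_i:i\le\delta}$ in $\K_\mu$ with $M_{i+1}$ universal over $M_i$ for all $i<\delta$, and every $p\in\gS^{<\omega}(M_\delta)$, there is $i<\delta$ such that $p$ does not $\mu$-split over $M_i$.

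For this I would re-run the proof of Fact \ref{shvi} (the Shelah--Villaveces theorem; see \cite{shvi635}, or the accounts in \cite{gv-superstability-v3, shvi-notes-v1}), carrying a fixed finite tuple $\ba$ in place of a single element throughout. The argument is combinatorial: if $p=\gtp(\ba/M_\delta;N)$ were to $\mu$-split over every $M_i$, one uses the universality of $M_{i+1}$ over $M_i$ to build a tree of models and coherent embeddings and thereby produce, inside a single model of size $\lambda$ obtained from categoricity in $\lambda$, too many pairwise distinct Galois types, contradicting stability in $\lambda$. None of these steps — the definition of splitting, its monotonicity, the amalgamations dictated by the universality hypotheses, the weak-diamond-free bookkeeping in $\mu^+$ — uses that $p$ has length one; replacing $a$ by $\ba$ leaves the construction unchanged, and since $\ell(\ba)<\omega\le\mu$ every model enlarged by the coordinates of $\ba$ still has size $\mu$ (resp.\ $\lambda$), so no cardinality bound is disturbed.

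The one external input that needs a routine upgrade is stability: the contradiction is with the statement ``$|\gS^{n}(M)|\le\lambda$ for all $M\in\K_\lambda$ and $n<\omega$''. This follows from stability in $\lambda$ for $1$-types — which in turn follows from categoricity in $\lambda$ together with amalgamation and no maximal models, exactly as in the proof of Fact \ref{shvi} — by the standard counting argument: resolving a realization of an $n$-type coordinate by coordinate through an increasing chain of models of size $\lambda$ gives $|\gS^{n}(M)|\le\lambda^{n}=\lambda$.

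The main thing to check carefully is that the specific bookkeeping in the Shelah--Villaveces construction (the black-box / weak-diamond-free combinatorics in $\mu^+$ and the transfer of the many-types contradiction down to the categoricity cardinal $\lambda$) is genuinely length-agnostic; I expect this to be the only real point of friction, and it is resolved as above by the observation that enlarging a model by finitely many elements keeps us in the same cardinality class. Hence $\K$ is $(<\omega,\mu)$-superstable.
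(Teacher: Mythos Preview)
Your proposal is correct and matches the paper's approach exactly: the paper simply states that ``the proof of Fact \ref{shvi} also gives'' this result, i.e., the Shelah--Villaveces argument goes through unchanged for types of finite length. Your elaboration of why (the combinatorics is length-agnostic, and stability for finite tuples follows from stability for singletons by the usual $\lambda^n = \lambda$ counting) is a faithful unpacking of that one-line justification.
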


Even without categoricity, we can obtain eventual $(<\omega)$-superstability from just $(\le 1)$-superstability and tameness. This uses another equivalent definition of superstability: solvability:

\begin{thm}\label{tame-long-ss}
  Assume $\K$ has amalgamation, no maximal models, and is $\LS (\K)$-tame. If $\K$ is $\LS (\K)$-superstable, then there exists $\mu_0 < H_1$ such that $\K$ is $(<\omega)$-superstable in every $\mu \ge \mu_0$.
\end{thm}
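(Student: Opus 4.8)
The plan is to route through the notion of \emph{solvability} (see \cite[Definition IV.1.4]{shelahaecbook}), which, unlike $\mu$-splitting, is insensitive to the length of the types it governs and is known to be equivalent to superstability in the tame setting.

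First I would upgrade the hypothesis from $\LS (\K)$-superstability (for types of length one) to $\mu$-superstability in \emph{every} $\mu \ge \LS (\K)$: this is Fact~\ref{tame-sym-ss}, which moreover yields $\mu$-symmetry and hence, via the construction of good frames on saturated models (Fact~\ref{good-frame-weak-tameness}), stability in every $\mu \ge \LS (\K)$. Next I would invoke the equivalence between superstability and solvability for tame AECs with amalgamation (see \cite{gv-superstability-v3} and \cite[Chapter IV]{shelahaecbook}): from $\mu$-superstability holding for all $\mu \ge \LS (\K)$ one extracts a threshold $\mu_0 < H_1$ together with a single Ehrenfeucht--Mostowski blueprint $\Phi$, proper for linear orders and with $|\tau (\Phi)| = \LS (\K)$, such that $\text{EM}_{\tau (\K)} (I, \Phi)$ is (isomorphic to) the saturated superlimit model in $\K_{|I|}$ for every linear order $I$ with $|I| \ge \mu_0$; that is, $\K$ is $\mu$-solvable for every $\mu \ge \mu_0$. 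The reason $\mu_0$ can be taken below $H_1$ is the usual Morley omitting-types / Hanf number phenomenon (compare Facts~\ref{omitting-type} and \ref{saturation-fact}): once $\K$ is stable and tame and the EM models on small index sets already carry enough saturation, the EM model on a ``long enough'' index set is forced to be saturated, and ``long enough'' here lands below $H_1$.

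Finally I would check that $\mu$-solvability implies $(<\omega, \mu)$-superstability for each $\mu \ge \mu_0$ --- the point at which type length ceases to matter. In $\text{EM}_{\tau (\K)} (I, \Phi)$ the Galois type of a finite tuple over a submodel of the form $\text{EM}_{\tau (\K)} (J, \Phi)$ is determined, uniformly and independently of the length of the tuple, by the $\tau (\Phi)$-terms involved, the old skeleton parameters, and the order-theoretic position of the finitely many new skeleton members occurring in the tuple. This gives stability for finite tuples in $\K_\mu$, and, along a resolution of the superlimit by such sub-EM-models, that the finitely many cuts relevant to a given finite-tuple type get ``decided'' from some stage on --- hence the local-character requirement of $(<\omega, \mu)$-superstability (Definition~\ref{ss-parametrized}). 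After the usual reductions (uniqueness of limit models lets one replace an arbitrary increasing continuous resolution with universal extensions by one consisting of sub-EM-models, since each intermediate model is limit, hence saturated), this yields that $\K$ is $(<\omega, \mu)$-superstable for every $\mu \ge \mu_0$, which is the claim. (Alternatively, one can extract from solvability enough ``shortness'' for finite types over saturated models of size $\mu$ to extend the good $\mu$-frame to types of finite length, and read off local character from there.)

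The step I expect to be the main obstacle is the middle one: extracting solvability with a threshold genuinely \emph{below} $H_1$ --- rather than merely on a tail of cardinals --- from $\LS (\K)$-superstability, which requires care about how $\Phi$ is produced from the good frames of Fact~\ref{good-frame-weak-tameness} and about which Hanf number controls the saturation of $\text{EM}_{\tau (\K)} (I, \Phi)$. The last step, although it carries the genuinely new content here (tuples rather than singletons, compare the categorical Fact~\ref{shvi2}), is essentially routine once solvability is available --- precisely because solvability does not distinguish lengths.
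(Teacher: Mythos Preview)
Your proposal is essentially correct and follows the same route as the paper: both arguments pass through solvability, obtaining a threshold $\mu_0 < H_1$ with $\K$ $(\mu_0,\mu)$-solvable for all $\mu \ge \mu_0$ (this is a black box, \cite[Corollary 5.10]{gv-superstability-v3}), and then deduce $(<\omega,\mu)$-superstability from solvability.

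The only difference is in the final step. The paper does not argue directly with EM-model combinatorics as you sketch; instead it simply observes (citing \cite[Remark 6.2]{gv-superstability-v3}) that solvability is a weak form of categoricity strong enough for the \emph{proof} of the Shelah--Villaveces theorem to go through verbatim, and then invokes Fact~\ref{shvi2}. Your direct argument --- analyzing Galois types of finite tuples via skeleton cuts and replacing an arbitrary universal chain by one of sub-EM-models --- is really a re-derivation of the relevant part of the Shelah--Villaveces proof, so the content is the same. One small correction to your self-assessment: the step you flag as the ``main obstacle'' (getting $\mu_0 < H_1$) is in fact a clean citation, not something you need to re-prove; the place where care is actually needed is your final reduction, where the replacement of the given universal chain by EM-submodels must preserve the chain's union (so that the type $p$ is still over the same model), and this is exactly what the Shelah--Villaveces argument handles.
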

\begin{proof}[Proof sketch]
  By \cite[Corollary 5.10]{gv-superstability-v3}, there exists $\mu_0 < H_1$ such that $\K$ is $(\mu_0, \mu)$-solvable for every $\mu \ge \mu_0$. This means \cite[Definition IV.1.4.(1)]{shelahaecbook} that for every $\mu \ge \mu_0$, there exists an EM Blueprint $\Phi$ so that $\text{EM}_{\tau (\K)} (I, \Phi)$ is a superlimit in $\K$ for every linear order $I$ of size $\mu$. Intuitively, it gives a weak version of categoricity in $\mu$. As observed in \cite[Remark 6.2]{gv-superstability-v3}, this weak version is enough for the proof of the Shelah-Villaveces theorem to go through, hence by Fact \ref{shvi2}, $\K$ is $(<\omega)$-superstable in $\mu$ for every $\mu \ge \mu_0$.
\end{proof}
\begin{remark}
  If $\K$ has amalgamation, is $\LS (\K)$-tame for types of length less than $\omega$, and is $(<\omega, \LS (\K))$-superstable, then (by the proof of \cite[Proposition 10.10]{indep-aec-apal}) $\K$ is $(<\omega)$-superstable in every $\mu \ge \LS (\K)$. However here we want to stick to using regular tameness (i.e.\ tameness for types of length one).
\end{remark}

To prove the extension property for $1$-forking, we will use:

\begin{fact}[Extension property for splitting, Proposition 5.1 in \cite{vv-symmetry-transfer-v3}]\label{ext-splitting}
  Let $\K$ be an AEC, $\theta > \LS (\K)$. Let $\alpha \le \omega$ be a cardinal and assume that $\K$ is $(<\alpha)$-superstable in every $\mu \in [\LS (\K), \theta)$. Let $M_0 \lea M \lea N$ be in $\K_{[\LS (\K), \theta)}$, with $M$ limit over $M_0$. Let $p \in \gS^{<\alpha} (M)$ be such that $p$ does not $\LS (\K)$-split over $M_0$. Then there exists an extension $q \in \gS^{<\alpha} (N)$ of $p$ which does not $\LS (\K)$-split over $M_0$.
\end{fact}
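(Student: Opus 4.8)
The plan is to reduce the statement, by resolving $N$, to a transfinite recursion in which each successor step is the classical one-step extension of a non-splitting type across a universal extension, and each limit step is controlled using the local character of $\LS(\K)$-splitting supplied by $(<\alpha)$-superstability.

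\emph{Reductions.} Since $\LS(\K)$-non-splitting over a fixed base survives restriction of the domain model, it suffices to produce a non-$\LS(\K)$-splitting-over-$M_0$ extension of $p$ over some $N'\gea N$; as $(<\alpha)$-superstability gives stability and no maximal models in every $\mu\in[\LS(\K),\theta)$, I would take $N'$ to be a limit model over $M$ and fix a continuous increasing resolution $\seq{N_i:i\le\delta}$ with $N_0=M$, $N_\delta=N'$, and every $N_{i+1}$ universal over $N_i$ (and, by inserting intermediate models, of the same cardinality as $N_i$), so that cardinality increases only at limit stages of cofinality $>\LS(\K)$. Each $N_i$ extends $M$, hence is universal over $M_0$.

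\emph{The recursion.} I would construct coherent increasing types $q_i\in\gS^{<\alpha}(N_i)$ with $q_0=p$, each not $\LS(\K)$-splitting over $M_0$, and set $q:=q_\delta$. The successor step is the standard extension property for $\LS(\K)$-non-splitting over universal extensions (the length-one case is in the Grossberg--VanDieren/Shelah machinery; with $N_i$ universal over $M_0$ the mechanism is uniqueness of non-$\LS(\K)$-splitting extensions over universal extensions, cf.\ \cite[Section 4]{bgkv-apal}), and it goes through uniformly for types of length $<\alpha$ with $\alpha\le\omega$. At a limit $i$, the coherent chain $\seq{q_j:j<i}$ has an upper bound $q_i\in\gS^{<\alpha}(N_i)$ by the usual direct-limit argument for chains of Galois types (Fact~\ref{limit-type} extends to arbitrary length). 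If $\cf{i}>\LS(\K)$, any witness to $q_i$ $\LS(\K)$-splitting over $M_0$ already lies inside some $N_j$, contradicting the inductive hypothesis on $q_j=q_i\rest N_j$.

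\emph{Main obstacle.} The delicate case is $\cf{i}\le\LS(\K)$, where a splitting witness may be spread across the entire chain. Here I would invoke the local character of $\LS(\K)$-splitting from $(<\alpha)$-superstability to find $j_0<i$ with $q_i$ not $\LS(\K)$-splitting over $N_{j_0}$, and then transitivity of $\LS(\K)$-non-splitting --- valid since $N_{j_0}$ is universal over $M_0$ --- to combine this with ``$q_i\rest N_{j_0}=q_{j_0}$ does not $\LS(\K)$-split over $M_0$'' and conclude that $q_i$ does not $\LS(\K)$-split over $M_0$. This short-cofinality limit step is the crux and the only place where the superstability hypothesis is essential; the reductions, the successor step, and the gluing of coherent chains are soft and insensitive to the length $\alpha$.
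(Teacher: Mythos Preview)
The paper does not prove this statement: it is cited as a fact (Proposition 5.1 in \cite{vv-symmetry-transfer-v3}) and no proof is given here, so there is nothing in this paper to compare your proposal against.

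That said, your outline is the standard approach and matches in spirit what the cited reference does: resolve $N$ into a continuous universal chain above $M$, extend the type step by step, and use uniqueness of non-splitting extensions over universal models for the successor step. Your identification of the small-cofinality limit as the crux is correct, and your plan there is sound: local character from $(<\alpha,\mu)$-superstability (with $\mu=\|N_i\|$) gives $q_i$ not $\mu$-splitting over some $N_{j_0}$; since $\mu\ge\LS(\K)$, this implies $q_i$ does not $\LS(\K)$-split over $N_{j_0}$; and then weak transitivity of non-splitting (valid because $N_{j_0}$ is universal over $M_0$) combines this with the inductive hypothesis on $q_{j_0}$ to finish. One small point: be explicit that the local character you invoke is $\mu$-splitting local character for $\mu=\|N_i\|$ (this is what Definition~\ref{ss assm} gives you), and that you are then passing from $\mu$-non-splitting to $\LS(\K)$-non-splitting via monotonicity in the size of witnesses before applying transitivity. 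With that clarification the argument is complete.
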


\begin{thm}\label{piecewise-ext}
  Let $\theta > \LS (\K)$. Write $\F := [\LS (\K), \theta)$. Let $\s$ be a type-full good $\F$-frame with underlying class $\K_{\F}$. Let $\alpha \le \omega$ be a cardinal and assume that $\K$ is $(<\alpha, \mu)$-superstable for every $\mu \in \F$. Let $M \lea N$ be in $\K_{\F}$ with $M$ a limit model. Let $p \in \gS^{<\alpha} (M)$. Then there exists $q \in \gS^{<\alpha} (N)$ that extends $p$ so that $q$ does not $1$-$\s$-fork over $M$ (recall Definition \ref{1-forking-def}).
\end{thm}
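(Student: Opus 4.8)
The plan is to extend $p$ while keeping it non-splitting over a single small base, and then to read off from this that each coordinate of the resulting type does not $\s$-fork over $M$. First, since $M$ is a limit model, fix a resolution $\seq{M_j : j \le \delta}$ of $M$ with $\delta$ a limit ordinal, all $M_j \in \K_{\|M\|}$, $M_{j+1}$ universal over $M_j$, and $M_\delta = M$. Applying $(<\alpha,\|M\|)$-superstability (condition~(\ref{split assm}) of Definition~\ref{ss assm}, in the parametrized form of Definition~\ref{ss-parametrized}) to $p \in \gS^{<\alpha}(M)$ and this chain, I obtain $j_0 < \delta$ such that $p$ does not split over $M_{j_0}$; note that $M$ is limit over $M_{j_0}$. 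Then, by the extension property for non-splitting (Fact~\ref{ext-splitting}, applicable since $\K$ is $(<\alpha)$-superstable in every $\mu \in \F$, since $M_{j_0} \lea M \lea N$ are all in $\K_\F$, since $M$ is limit over $M_{j_0}$, and since $p$ does not split over $M_{j_0}$), there is $q \in \gS^{<\alpha}(N)$ extending $p$ and not splitting over $M_{j_0}$.

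It remains to check that $q$ does not $1$-$\s$-fork over $M$, i.e.\ (Definition~\ref{1-forking-def}) that $q^i \in \gS(N)$ does not $\s$-fork over $M$ for every singleton $i \in \ell(q)$. Fix such an $i$ and set $r := q^i$; then $r$ extends $p^i$ and, since non-splitting passes to subtuples, $r$ does not split over $M_{j_0}$, while $\|M_{j_0}\| = \|M\|$ and $M$ is limit over $M_{j_0}$. When $\|N\| = \|M\|$ this already gives the conclusion by the canonicity theorem (Fact~\ref{canon-fact}) applied in $\s \rest \K_{\|N\|}$: one first extends $N$ and $r$ to a limit model of size $\|N\|$ using Fact~\ref{ext-splitting} again, invokes Fact~\ref{canon-fact} with $M_{j_0}$ itself as the model that $M$ is limit over, and then restricts back using monotonicity of $\s$-forking. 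For the general case $\|M\| < \|N\|$, one resolves $N$ over $M$ and proceeds by induction on the resolution, maintaining the invariant ``does not split over $M_{j_0}$'' (using Fact~\ref{ext-splitting} at successor stages) and using continuity of $\s$-forking at limit stages; at each stage the same-cardinal argument above applies. Either way $q^i$ does not $\s$-fork over $M$, so $q$ is as required. Conceptually, this step is the assertion that, over the limit model $M$, $\s$-nonforking is computed by non-splitting over a suitable small submodel — the content of the compatibility of the restrictions $\s \rest \K_\mu$ (Definition~\ref{compat-def}, Theorem~\ref{shrinking-compat}).

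The main obstacle is precisely this last step: Fact~\ref{canon-fact} is stated only for $M \lea N$ both limit models of the \emph{same} size, so passing from ``$q^i$ does not split over the small model $M_{j_0}$'' to ``$q^i$ does not $\s$-fork over the possibly much larger limit model $M$'' requires the cross-cardinal description of forking in the extended good $\F$-frame, supplied either by the resolution/continuity induction sketched above or by the compatibility machinery. The remaining ingredients — the choice of base in the first paragraph and the non-splitting extension — are routine consequences of the superstability hypothesis and of Fact~\ref{ext-splitting}.
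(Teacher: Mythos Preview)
Your first paragraph (choosing $M_{j_0}$ by $(<\alpha)$-superstability and extending $p$ to $q$ via Fact~\ref{ext-splitting}) is exactly the paper's opening. The divergence is in the second step, where the paper's argument is considerably shorter than your resolution/induction.

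The paper begins by reducing, without loss of generality, to the case where $N$ is a limit model (if not, replace $N$ by a limit extension and restrict $q$ at the end). You omit this, and it causes a gap in your same-cardinal case: to ``extend $N$ and $r$ to a limit model of size $\|N\|$ using Fact~\ref{ext-splitting} again'' you would need the middle model $N$ to be limit over the base $M_{j_0}$, which is not given. Once $N$ is assumed limit, the paper avoids any induction: by local character of the good $\F$-frame, there is a single limit $N_0 \in \K_\mu$ with $M \lea N_0 \lea N$ such that $q^I$ does not $\s$-fork over $N_0$; by monotonicity of splitting, $q^I \rest N_0$ does not $\mu$-split over $M_{j_0}$; since $M$ and $N_0$ are both limit in $\K_\mu$, canonicity (Fact~\ref{canon-fact}) gives that $q^I \rest N_0$ does not $\s$-fork over $M$; and transitivity of $\s$-forking finishes. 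This handles all cardinalities of $N$ at once.

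Your induction on a resolution of $N$ (with continuity at limits) can be made to work once the ``$N$ limit'' reduction is in place, but the local-character-plus-transitivity trick replaces the whole induction by three lines. Note also that the reference to Theorem~\ref{shrinking-compat} is slightly off: that result concerns shrinking frames over saturated models, whereas here you are working directly inside a good $\F$-frame, so the relevant input is just local character and transitivity of $\s$ itself.
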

\begin{proof}
  Without loss of generality, $N$ is a limit model. Let $\mu := \|M\|$. By $(<\alpha)$-superstability, there exists $M_0 \in \K_{\mu}$ such that $M$ is limit over $M_0$ and $p$ does not $\mu$-split over $M_0$. By Fact \ref{ext-splitting}, there exists $q \in \gS (N)$ extending $p$ so that $q$ does not $\mu$-split over $M_0$. We claim that $q$ does not $1$-$\s$-fork over $M$. Let $I \subseteq \ell (p)$ have size one. By monotonicity of splitting, $q^I$ does not $\mu$-split over $M_0$. By local character, let $N_0 \lea N$ be such that $M \lea N_0$, $N_0 \in \K_\mu$, $N_0$ is limit, and $q^I$ does not $\s$-fork over $N_0$. By monotonicity of splitting again, $q^I \rest N_0$ does not $\mu$-split over $M_0$. By the canonicity theorem (Fact \ref{canon-fact}) applied to the frame $\s \rest \K_\mu$, $q^I \rest N_0$ does not $\s$-fork over $M$. By transitivity, $q^I$ does not $\s$-fork over $M$, as desired.
\end{proof}

\section{More on global orthogonality}

Assuming superstability for types of length two, we prove a converse to Lemma \ref{perp-nf-1}, partially answering Question \ref{perp-nf-question}. We then prove a few more facts about global orthogonality and derive an alternative proof of the upward categoricity transfer of Grossberg and VanDieren (Fact \ref{gv-upward}). This material is not needed for the main body of this paper.

\begin{hypothesis}\label{appendix-3-hyp} \
  \begin{enumerate}
    \item $\K$ is an AEC.
    \item $\theta > \LS (\K)$ is a cardinal or $\infty$. We set $\F := [\LS (\K), \theta)$.
    \item $\s = (\K_\F, \nf)$ is a type-full good $\F$-frame.
    \item $\K$ is $(\le 2)$-superstable in every $\mu \in \F$.
  \end{enumerate}
\end{hypothesis}
\begin{remark}
  Compared to Hypothesis \ref{sec-5-hyp}, we have added $(\le 2)$-superstability. Note that this would follow automatically if $\s$ was a type-full good frame for types of length two, hence it is a minor addition. It also holds if $\K$ is categorical above $\F$ (Fact \ref{shvi2}) or even if it is just tame (Theorem \ref{tame-long-ss}).
\end{remark}

\begin{lem}\label{perp-nf-2}
  Let $M_0 \lea M$ be both in $\K_{\F}$ with $M_0 \in \K_{\LS (\K)}$ limit. Let $p, q \in \gS (M)$ be nonalgebraic so that both do not fork over $M_0$. If $p \wkperp q$, then $p \rest M_0 \wkperp q \rest M_0$.
\end{lem}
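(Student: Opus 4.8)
The plan is to reduce to the weak orthogonality of $p$ and $q$ that we already have over $M$, by \emph{lifting} an arbitrary pair of realizations of $p \rest M_0$ and $q \rest M_0$ to a pair of realizations of $p$ and $q$ via the extension property for $1$-forking, and then to \emph{descend} the resulting independence of that pair from base $M$ to base $M_0$.

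First I would fix an arbitrary $N \in \K_{\LS (\K)}$ with $M_0 \lea N$ and $a, b \in |N|$ realizing $p \rest M_0$ and $q \rest M_0$, so that the goal becomes: $\seq{ab}$ is independent in $(M_0, N)$. Set $r_0 := \gtp (ab / M_0; N) \in \gS^2 (M_0)$. Since $M_0$ is a limit model and $\K$ is $(\le 2)$-superstable in every $\mu \in \F$, Theorem~\ref{piecewise-ext} (with $\alpha = 3$, applied to $M_0 \lea M$) produces $r \in \gS^2 (M)$ extending $r_0$ that does not $1$-$\s$-fork over $M_0$. Realizing $r = \gtp (a'b' / M; N')$, and shrinking $N'$ via Löwenheim--Skolem so that $\|N'\| = \|M\|$, we get $\gtp (a'b'/M_0;N') = \gtp(ab/M_0;N)$; moreover $\gtp(a'/M;N')$ and $\gtp(b'/M;N')$ do not fork over $M_0$ and extend $p \rest M_0$, $q \rest M_0$ respectively, so by uniqueness of nonforking extensions $\gtp(a'/M;N') = p$ and $\gtp(b'/M;N') = q$ (the relevant types are nonalgebraic, since $p$ and $q$ are).

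Since $p \wkperp q$, the pair $\seq{a'b'}$ is independent in $(M, N')$, hence $\seq{b'a'}$ is too by Fact~\ref{sym-indep}; unwinding the definition of an independent sequence yields a model $M^{b'} \gea M$ with $b' \in |M^{b'}|$, together with $N'' \gea N'$ containing $a'$ and $M^{b'}$, such that $\gtp(a'/M^{b'}; N'')$ does not $\s$-fork over $M$. As $\gtp(a'/M^{b'};N'') \rest M = p$ does not fork over $M_0$, transitivity gives that $\gtp(a'/M^{b'};N'')$ does not fork over $M_0$. Now pick $M_0^{b'} \in \K_{\LS(\K)}$ with $M_0 \lea M_0^{b'} \lea M^{b'}$ and $b' \in |M_0^{b'}|$; by monotonicity $\gtp(a'/M_0^{b'};N'')$ does not fork over $M_0$, and re-realizing this type over $M_0^{b'}$ inside a model $P \in \K_{\LS(\K)}$ exhibits an independent sequence $\seq{b'a^\ast} \smallfrown \seq{M_0, M_0^{b'}, P}$ over $M_0$. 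Since $a^\ast$ and $a'$ have the same type over $M_0^{b'}$ and $b' \in |M_0^{b'}|$, invariance transfers this to independence of $\seq{b'a'}$ in $(M_0, N')$, hence (Fact~\ref{sym-indep}) of $\seq{a'b'}$, and finally --- using $\gtp(a'b'/M_0;N') = \gtp(ab/M_0;N)$ --- of $\seq{ab}$ in $(M_0, N)$, which is what we wanted.

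The main obstacle is this last descent step: independence of a pair over $M$ does not descend to $M_0$ in general, and the argument genuinely exploits that \emph{both} length-one types over $M$ are nonforking extensions of types over $M_0$ (so transitivity applies), together with the fact that independent sequences are witnessed by models of a single fixed size --- which is why one must re-realize $\gtp(a'/M_0^{b'};N'')$ over a small model before reassembling a length-two independent sequence over $M_0$. Care is also needed to keep all auxiliary models ($M^{b'}$, $M_0^{b'}$, $N''$, $P$) within $\F$, which holds because their sizes all lie in $\{\LS(\K), \|M\|\} \subseteq \F$.
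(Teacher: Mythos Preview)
Your proof is correct and follows essentially the same approach as the paper's: lift the two-type via Theorem~\ref{piecewise-ext}, identify the lifted realizations as realizations of $p$ and $q$ by uniqueness of nonforking extensions, and then descend independence from base $M$ to base $M_0$ using transitivity. The paper argues by contrapositive (assuming $\seq{ab}$ is \emph{not} independent over $M_0$ and concluding $\seq{a'b'}$ is not independent over $M$), but the logical content of the descent step is identical to yours.

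One remark: your extra care in the descent step --- passing to $M_0^{b'} \in \K_{\LS(\K)}$, re-realizing to get $a^\ast$ inside a small $P$, and then transferring back by invariance --- is not needed. Since $\s$ is a good $\F$-frame (not just a good $\lambda$-frame), the witnessing models for an independent sequence may lie anywhere in $\K_\F$; see the Remark following Fact~\ref{indep-facts}. Thus once you have $M \lea M^{b'}$ with $b' \in |M^{b'}|$ and $\gtp(a'/M^{b'};N'')$ not forking over $M_0$, you can directly conclude that $\seq{b'a'}$ is independent in $(M_0, N'')$, and hence (by invariance, using $\gtp(a'b'/M_0) = \gtp(ab/M_0)$) that $\seq{ab}$ is independent in $(M_0, N)$. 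This is exactly how the paper proceeds, and it shortens your argument by a paragraph.
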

\begin{proof}
  Assume that $p \rest M_0 \not \wkperp q \rest M_0$. We show that $p \not \wkperp q$. Fix $N \in \K_{\F}$ with $M_0 \lea N$ and let $a, b \in |N|$ realize in $N$ $p \rest M_0$ and $q \rest M_0$ respectively. Assume that $\seq{ab}$ is \emph{not} independent in $(M_0, N)$. Let $r := \gtp (ab / M_0; N)$. By Theorem \ref{piecewise-ext}, there exists $r' \in \gS^2 (M)$ that extends $r$ and so that $r'$ does not $1$-$\s$-fork over $M_0$ (recall Definition \ref{1-forking-def}). Let $N' \gea M$ and let $\seq{a'b'}$ realize $r'$ in $N'$. Then $\gtp (a' / M; N')$ does not fork over $M_0$ and extends $p \rest M_0$, hence $a'$ must realize $p$ in $N'$. Similarly, $b'$ realizes $q$. We claim that $\seq{a'b'}$ is \emph{not} independent in $(M, N')$, hence $p \not \wkperp q$. If $\seq{a'b'}$ were independent in $(M, N')$, there would exist $N'' \gea N'$ and $M' \lea N''$ so that $M \lea M'$, $b \in |M'|$, and $\gtp (a' / M'; N'')$ does not fork over $M$. By transitivity, $\gtp (a' / M'; N'')$ does not fork over $M_0$. This shows that $\seq{a' b'}$ is independent in $(M_0, N'')$, so since $\gtp (a' b' / M_0; N'') = \gtp (a b / M_0; N)$, we must have that $\seq{ab}$ is independent in $(M_0, N)$, a contradiction.
\end{proof}

We obtain:

\begin{thm}\label{perp-global}
  Let $M \in \K_{\F}$ and $p, q \in \gS (M)$. Then:

  \begin{enumerate}
    \item If $M \in \Ksatp{\LS (\K)}_{\F}$, then $p \perp q$ if and only if $p \wkperp q$.
    \item If $M \in \Ksatp{\LS (\K)}_{\F}$, then $p \perp q$ if and only if $q \perp p$.
    \item If $M_0 \in \Ksatp{\LS (\K)}_{\F}$ is such that $M_0 \lea M$ and both $p$ and $q$ do not fork over $M_0$, then $p \perp q$ if and only if $p \rest M_0 \perp q \rest M_0$.
  \end{enumerate}
\end{thm}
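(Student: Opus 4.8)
The plan is to deduce all three parts from the two facts about global orthogonality already proved here — Lemma~\ref{perp-nf-1} (lifting $\perp$ along a nonforking extension) and Lemma~\ref{perp-nf-2} (pushing $\wkperp$ down to an $\LS(\K)$-sized limit base) — together with one bookkeeping reduction to limit models of size $\LS(\K)$. The only substantive input beyond Hypothesis~\ref{sec-5-hyp}, namely $(\le 2)$-superstability, has already been spent inside Theorem~\ref{piecewise-ext} and hence inside Lemma~\ref{perp-nf-2}, so the work below is essentially combinatorial.

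The key step I would isolate first is the following reduction: \emph{if $N^* \lea N$ are in $\K_{\F}$ with $N^* \in \K_{\LS(\K)}$ limit, and $p, q \in \gS(N)$ are nonalgebraic and neither forks over $N^*$, then $p \perp q$ if and only if $p \rest N^* \perp q \rest N^*$.} The ``if'' direction is exactly Lemma~\ref{perp-nf-1}. For ``only if'', note that $p \perp q$ gives $p \wkperp q$ by instantiating the definition of orthogonality with the model $N$ itself (the nonforking extensions of $p,q$ to $N$ are $p,q$); then Lemma~\ref{perp-nf-2} yields $p \rest N^* \wkperp q \rest N^*$, and since $N^*$ is limit, Lemma~\ref{perp-fund}.(\ref{perp-fund-1}) — applied to the good $\LS(\K)$-frame $\s \rest \K_{\LS(\K)}$, which is how global orthogonality of $\LS(\K)$-sized models is defined — upgrades this to $p \rest N^* \perp q \rest N^*$. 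Alongside this I would record the routine observation that for any $M \in \Ksatp{\LS(\K)}_{\F}$ and nonalgebraic $p,q \in \gS(M)$ there is a \emph{limit} $N^* \lea M$ in $\K_{\LS(\K)}$ over which both $p$ and $q$ do not fork: local character supplies an $\LS(\K)$-sized $N_0 \lea M$ over which both do not fork, then Definition~\ref{ksat-def}.(2) supplies a limit $N^* \lea M$ in $\K_{\LS(\K)}$ with $|N_0| \subseteq |N^*|$, and monotonicity does the rest. The same construction works inside $M_0$ when $M_0 \in \Ksatp{\LS(\K)}_{\F}$.

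With these in hand the three parts fall out. For (1): $p \perp q \Rightarrow p \wkperp q$ is immediate from the definition; conversely, given $p \wkperp q$, choose a limit base $N^* \lea M$ as above, apply Lemma~\ref{perp-nf-2} to get $p \rest N^* \wkperp q \rest N^*$, then Lemma~\ref{perp-fund}.(\ref{perp-fund-1}) to get $p \rest N^* \perp q \rest N^*$, then Lemma~\ref{perp-nf-1} to get $p \perp q$. For (2): combine (1) with symmetry of weak orthogonality, i.e.\ Lemma~\ref{perp-fund}.(\ref{perp-fund-2}) applied in $\s \rest \K_{\|M\|}$, to get $p \perp q \iff p \wkperp q \iff q \wkperp p \iff q \perp p$. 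For (3): using $M_0 \in \Ksatp{\LS(\K)}_{\F}$, find a limit $N^* \lea M_0$ in $\K_{\LS(\K)}$ over which $p \rest M_0$ and $q \rest M_0$ — hence, by transitivity with the hypothesis that $p,q$ do not fork over $M_0$, also $p$ and $q$ — do not fork; then apply the reduction lemma once with $(N,N^*) = (M,N^*)$ and once with $(N,N^*) = (M_0,N^*)$, and chain the two equivalences to conclude $p \perp q \iff p \rest M_0 \perp q \rest M_0$.

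I do not expect a genuine obstacle here, since the hard analytic content is already encapsulated in the cited results; the points requiring care are (a) arranging that the base $N^*$ can be taken simultaneously limit and a nonforking base for \emph{both} types, which is precisely where the $\Ksatp{\LS(\K)}$ hypothesis enters, and (b) remembering throughout that orthogonality and weak orthogonality for $\LS(\K)$-sized models are to be computed in the restricted frame $\s \rest \K_{\LS(\K)}$, so that Lemma~\ref{perp-fund} is applicable. This argument also settles Question~\ref{perp-nf-question} affirmatively under the mild extra hypothesis of $(\le 2)$-superstability.
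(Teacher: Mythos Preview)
Your proposal is correct and follows essentially the same approach as the paper's own proof: reduce to a limit $\LS(\K)$-sized base via local character and the definition of $\Ksatp{\LS(\K)}$, then chain Lemma~\ref{perp-nf-2}, Lemma~\ref{perp-fund}, and Lemma~\ref{perp-nf-1}. Your explicit isolation of the reduction lemma and careful note on coherence (getting $N_0 \lea N^*$ from $|N_0| \subseteq |N^*|$) are in fact slightly more detailed than the paper's terse presentation, but the substance is identical.
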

\begin{proof} \
  \begin{enumerate}
  \item If $p \perp q$, then $p \wkperp q$ by definition. Conversely, assume that $p \wkperp q$. Fix a limit $M_0 \in \K_{\LS (\K)}$ such that $M_0 \lea M$ and both $p$ and $q$ do not fork over $M_0$. By Lemma \ref{perp-nf-2}, $p \rest M_0 \wkperp q \rest M_0$. By Lemma \ref{perp-fund}.(\ref{perp-fund-1}), $p \rest M_0 \perp q \rest M_0$. By Lemma \ref{perp-nf-1}, $p \perp q$.
  \item A similar proof, using (\ref{perp-fund-2}) instead of (\ref{perp-fund-1}) in Lemma \ref{perp-fund}.
  \item By local character and transitivity, we can fix a limit $M_0' \in \K_{\LS (\K)}$ such that $M_0' \lea M_0$ and both $p$ and $q$ do not fork over $M_0'$. Now by what has been proven above and Lemmas \ref{perp-nf-1} and \ref{perp-nf-2}, $p \perp q$ if and only if $p \rest M_0' \perp q \rest M_0'$ if and only if $p \rest M_0 \perp q \rest M_0$.
  \end{enumerate}
\end{proof}

We can now give another proof of the upward transfer of unidimensionality (the second part of the proof of Theorem \ref{unidim-transfer}). This does not use Fact \ref{upward-transfer-2}.

\begin{lem}\label{unidim-upward}
  Let $\mu < \lambda$ be in $\F$. If $\s$ is $\mu$-unidimensional, then $\s$ is $\lambda$-unidimensional.
\end{lem}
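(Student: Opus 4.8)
The plan is to argue the contrapositive: assuming $\s$ is not $\lambda$-unidimensional, I will show $\s$ is not $\mu$-unidimensional. By the remark following the definition of $\lambda$-unidimensionality, $\s$ not $\lambda$-unidimensional means that $\s \rest \Ksatp{\lambda}_\lambda$ is not unidimensional, so by Lemma \ref{technical-multidim-equiv} applied to this good $\lambda$-frame (cf.\ also Lemma \ref{min-unidim}), for every saturated $N \in \K_\lambda$ and every minimal $p \in \gS (N)$ there is a nonalgebraic $q \in \gS (N)$ with $p \perp q$. Dually, to conclude that $\s$ is not $\mu$-unidimensional it suffices (again by the remark and Lemma \ref{technical-multidim-equiv}, now at $\mu$) to fix an \emph{arbitrary} saturated $M^\ast \in \K_\mu$ and an arbitrary minimal $p^\ast \in \gS (M^\ast)$ and to exhibit a nonalgebraic $q^\dagger \in \gS (M^\ast)$ with $p^\ast \perp q^\dagger$. (The boundary case $\mu = \LS (\K)$ is handled in the same way, reading the characterization over limit models of size $\LS (\K)$ and using that a saturated model of size $\LS (\K)$ is universal over its $\lea$-submodels of size $\LS (\K)$.)

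Given such $M^\ast$ and $p^\ast$, I would first pass up to $\lambda$. Pick a saturated $N \in \K_\lambda$ with $M^\ast \lea N$ and let $p \in \gS (N)$ be the nonforking extension of $p^\ast$; since any nonalgebraic extension of a minimal type is again minimal, $p$ is minimal, so the previous paragraph provides a nonalgebraic $q \in \gS (N)$ with $p \perp q$. Next, by local character, choose a limit model $B \in \K_{\LS (\K)}$ with $B \lea M^\ast$ over which $p^\ast$ (hence also $p$) does not fork, and then a limit $B_1 \in \K_{\LS (\K)}$ with $B \lea B_1 \lea N$ over which $q$ does not fork (so $p$ does not fork over $B_1$ either). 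Using the model-homogeneity of the saturated model $M^\ast$, embed $B_1$ into $M^\ast$ over $B$, and extend this embedding --- via the model-homogeneity of $N$ --- to an automorphism $g$ of $N$; then $g \rest B = \id$ and $B_2 := g[B_1] \lea M^\ast$. Because $p$ does not fork over $B$ and $g$ fixes $B$ pointwise, uniqueness of nonforking extensions forces $g (p) = p$, so by invariance of orthogonality $p = g (p) \perp g (q)$. Now $g (q)$ does not fork over $B_2 \lea M^\ast$, so $q^\dagger := g (q) \rest M^\ast$ is a nonalgebraic type whose nonforking extension to $N$ is $g (q)$. Applying Theorem \ref{perp-global}(3) over $N$ with base $B_2$ gives $p \rest B_2 \perp g (q) \rest B_2$; since $p \rest B_2 = p^\ast \rest B_2$ and $g (q) \rest B_2 = q^\dagger \rest B_2$, a second application of Theorem \ref{perp-global}(3), this time over $M^\ast$ with base $B_2$, yields $p^\ast \perp q^\dagger$, as required.

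The step I expect to be the crux --- and the reason Theorem \ref{perp-global} is invoked rather than a bare restriction argument --- is the transport of the orthogonal mate $q$ from $N$ down to $M^\ast$. Orthogonality is only known to descend along a submodel over which \emph{both} types do not fork (Theorem \ref{perp-global}(3)), and $q$ need not be the nonforking extension of its own restriction to $M^\ast$, so one cannot simply restrict $q$. Replacing $q$ by the conjugate $g (q)$, where $g$ is an automorphism of $N$ fixing a small base $B$ of $p$, repairs precisely this obstruction: it leaves $p$ unchanged while moving the forking base of $q$ inside $M^\ast$. A minor bookkeeping point throughout is to take every model appearing in a resolution to be saturated, so that the restricted frames $\s \rest \Ksatp{\lambda}_\lambda$ and $\s \rest \Ksatp{\mu}_\mu$ are available, and to treat the boundary case $\mu = \LS (\K)$ as indicated above.
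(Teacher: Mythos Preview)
Your argument is correct, and the overall architecture---contrapositive, go up to a saturated $N \in \K_\lambda$, find an orthogonal $q$ there, then push orthogonality back down via Theorem \ref{perp-global}(3)---matches the paper's. The difference is in \emph{which} clause of Lemma \ref{technical-multidim-equiv} you target at $\mu$. You aim for clause (\ref{equiv-3}): an orthogonal mate over the \emph{same} model $M^\ast$. This forces the automorphism maneuver (embed the small forking-base $B_1$ of $q$ into $M^\ast$ over $B$, extend to $g \in \operatorname{Aut}(N)$, replace $q$ by $g(q)$), and then two applications of Theorem \ref{perp-global}(3). The paper instead targets clause (\ref{equiv-2}), which allows the base to grow: it simply picks a limit $M_0' \in \K_\mu$ with $M_0 \lea M_0' \lea N$ over which $q$ does not fork (by local character such $M_0'$ exists inside $N$), notes that $p$ automatically does not fork over $M_0' \supseteq M_0$, and applies Theorem \ref{perp-global}(3) once. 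So the paper's route is shorter and avoids both the model-homogeneity/automorphism step and the second application of Theorem \ref{perp-global}. Your approach buys nothing extra here, though the conjugation trick you use is a standard and useful one elsewhere. One minor caveat: in the boundary case $\mu = \LS(\K)$, the claim that a limit model is ``universal over its $\lea$-submodels'' is not literally true in general; the paper's approach sidesteps this entirely since it never needs to embed anything into $M_0$.
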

\begin{proof}
  Assume that $\s$ is \emph{not} $\lambda$-unidimensional. Let $M_0 \in \K_{\mu}$ be limit and let $p_0 \in \gS (M_0)$ be minimal. We show that there exists a limit $M_0' \in \K_{\mu}$, $p_0', q_0' \in \gS (M_0')$ such that $p_0'$ extends $p_0$ and $p_0' \perp q_0'$. This will show that $\K$ is not $\mu$-unidimensional by Lemma \ref{technical-multidim-equiv}. Let $M \in \K_\lambda$ be saturated such that $M_0 \lea M$ and let $p \in \gS (M)$ be the nonforking extension of $p_0$. By non-$\lambda$-unidimensionality (and Lemma \ref{technical-multidim-equiv}), there exists $q \in \gS (M)$ so that $p \perp q$. Let $M_0' \in \K_\mu$ be limit such that $M_0 \lea M_0' \lea M$ and $q$ does not fork over $M_0'$. Let $p_0' := p \rest M_0'$, $q_0' := q \rest M_0'$. By Theorem \ref{perp-global}, $p_0' \perp q_0'$, as desired.
\end{proof}

We obtain the promised alternate proof to Grossberg-VanDieren. For this corollary, we drop Hypothesis \ref{appendix-3-hyp}. 

\begin{cor}\label{gv-alternate}
  Let $\K$ be an AEC with amalgamation and arbitrarily large models. If $\K$ is $\LS (\K)$-tame and categorical in a successor $\lambda > \LS (\K)^+$, then $\K$ is categorical in all $\mu \ge \lambda$.
\end{cor}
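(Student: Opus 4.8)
The plan is to re-run the argument of Corollary~\ref{main-thm} (and, below the categoricity cardinal, of Lemma~\ref{main-lem}), but to replace every appeal to the Grossberg--VanDieren transfer --- i.e.\ Fact~\ref{gv-upward}, and the copy of Fact~\ref{upward-transfer-2} buried in the proof of Theorem~\ref{unidim-transfer} and hence of Theorem~\ref{good-categ-transfer} --- by the self-contained material of this appendix, chiefly Lemma~\ref{unidim-upward}. First I would reduce, via Fact~\ref{jep-decomp} (using that $\K$ has joint embedding in $\lambda$ by categoricity), to the case that $\K$ has no maximal models, which changes neither $\K_{\ge\lambda}$ nor $\LS(\K)$-tameness. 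Then, as in the proof of Lemma~\ref{main-lem}: by Fact~\ref{shvi} $\K$ is $\LS(\K)$-superstable, by Fact~\ref{tame-sym-ss} it is superstable and has symmetry in every $\mu\ge\LS(\K)$, so by Fact~\ref{sym-unionsat} the class $\K':=\Ksatp{\LS(\K)^+}$ is an AEC with $\LS(\K')=\LS(\K)^+$, and by Facts~\ref{good-frame-weak-tameness} and~\ref{frame-upward-transfer} there is a type-full good $(\ge\LS(\K)^+)$-frame $\s$ on $\K'$; moreover $\K'$ is categorical in $\LS(\K')$ by uniqueness of limit models.

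The next step is to put Hypothesis~\ref{appendix-3-hyp} in force, the only extra ingredient being $(\le 2)$-superstability of $\K'$ in every $\mu\ge\LS(\K')$. Here I would first invoke Theorem~\ref{omitting-categ-transfer} (which needs only $\LS(\K)$-tameness, amalgamation, arbitrarily large models and categoricity in some $\lambda>\LS(\K)$) to see that $\K$ --- and, since Fact~\ref{omitting-type} gives $\chi_0<H_1$ with $\K'_{\ge\chi_0}=\K_{\ge\chi_0}$, also $\K'$ --- is categorical in a proper class of cardinals; as $\K'$ has amalgamation and no maximal models, Fact~\ref{shvi2} then yields that $\K'$ is $(<\omega,\mu)$-superstable, in particular $(\le2,\mu)$-superstable, for every $\mu\ge\LS(\K')$. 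So Hypothesis~\ref{appendix-3-hyp} holds for $\s$ (with $\theta=\infty$).

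Now I would ignite unidimensionality. Write $\lambda=\mu_0^+$; then $\mu_0\ge\LS(\K)^+=\LS(\K')$ and $\K$ is superstable in $\mu_0$, so a limit model of $(\K')^{\mu_0\text{-sat}}$ of size $\mu_0^+$ is saturated (Fact~\ref{satfact}) and, being the unique model of $\K$ of size $\lambda$, witnesses that $(\K')^{\mu_0\text{-sat}}$ is categorical in $\mu_0^+$; it is also categorical in $\mu_0$. By Theorem~\ref{unidim-categ} applied to $\s\rest(\K')^{\mu_0\text{-sat}}_{\mu_0}$, $\s$ is $\mu_0$-unidimensional. The downward direction of Theorem~\ref{unidim-transfer} (whose proof does not use Fact~\ref{upward-transfer-2}) gives $\nu$-unidimensionality for all $\nu\in[\LS(\K'),\mu_0]$, and Lemma~\ref{unidim-upward} --- now applicable --- gives it for all $\nu\ge\mu_0$; hence $\s$ is $\nu$-unidimensional for every $\nu\ge\LS(\K')$. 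Finally I would run the induction of the proof of Theorem~\ref{good-categ-transfer} with $\K'$ in place of $\K$ (the appeal there to Theorem~\ref{unidim-transfer} being now supplied by the two facts just used): at a successor $\nu^+$ the unique model of $\K'$ of size $\nu$ is a limit model hence $\nu$-saturated, so $(\K')^{\nu\text{-sat}}_{\ge\nu}=\K'_{\ge\nu}$, and $\nu$-unidimensionality together with Theorem~\ref{unidim-categ} gives categoricity of $(\K')^{\nu\text{-sat}}$, hence of $\K'$, in $\nu^+$; at a limit cardinal every model of that size is saturated once categoricity holds below it. Thus $\K'$ is categorical in every $\mu\ge\LS(\K)^+$, and since $\K_{\ge\lambda}=\K'_{\ge\lambda}$ (by Fact~\ref{omitting-type}, as $\lambda\ge\min(\chi_0,\lambda)$), $\K$ is categorical in every $\mu\ge\lambda$.

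The main obstacle I expect is the two-way bookkeeping between $\K$ and its classes of saturated models: one must keep checking that each AEC to which a good-frame result is applied is categorical in its own $\LS$, and --- crucially --- that categoricity established inside the class of $\nu$-saturated models can be pushed back to $\K$ itself, which is where the hypothesis that $\lambda$ is a \emph{successor} (so the categoricity model is saturated) together with Morley's omitting type theorem (Fact~\ref{omitting-type}) does the work. A secondary point to get right is that $(\le 2)$-superstability is genuinely available on the \emph{entire} interval needed by Lemma~\ref{unidim-upward}; this is handled above by first extracting categoricity in unboundedly many cardinals from Theorem~\ref{omitting-categ-transfer} and only then quoting Fact~\ref{shvi2}, which carries no cofinality restriction.
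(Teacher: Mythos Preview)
Your proposal is correct and follows essentially the same route as the paper: reduce to no maximal models, obtain superstability and symmetry everywhere, use Theorem~\ref{omitting-categ-transfer} to get categoricity in a proper class and hence (via Fact~\ref{shvi2}) $(\le 2)$-superstability everywhere so that Hypothesis~\ref{appendix-3-hyp} is in force, build a type-full good $(\ge\!\cdot)$-frame on a class of saturated models, and then run the induction of Theorem~\ref{good-categ-transfer} with Lemma~\ref{unidim-upward} substituted for the appeal to Fact~\ref{upward-transfer-2}.

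The only organizational difference is that the paper restricts the frame further to $\Ksatp{\lambda_0}$ (where $\lambda=\lambda_0^+$) rather than staying with $\K'=\Ksatp{\LS(\K)^+}$; since $\Ksatp{\lambda_0}$ is already categorical in $\lambda_0$ and in $\lambda_0^+=\lambda$, only the \emph{upward} unidimensionality transfer (Lemma~\ref{unidim-upward}) is needed, and the downward half of Theorem~\ref{unidim-transfer} can be skipped. Your variant works too and even yields the stronger intermediate statement that $\K'$ is categorical on $[\LS(\K)^+,\infty)$. One small point: your final justification ``$\K_{\ge\lambda}=\K'_{\ge\lambda}$ by Fact~\ref{omitting-type}, as $\lambda\ge\min(\chi_0,\lambda)$'' is not quite the right reason (the $\chi_0$ from Fact~\ref{omitting-type} could exceed $\lambda$); the clean argument, which is what the paper means by ``by categoricity in $\lambda$'', is that $\K_\lambda=\K'_\lambda$ (the model of size $\lambda$ is saturated), and any $M\in\K_{>\lambda}$ has all its $\lambda$-sized $\K$-substructures in $\K'$, hence is itself $\LS(\K)^+$-saturated.
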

\begin{proof}
  By Fact \ref{jep-decomp}, we can assume without loss of generality that $\K$ has no maximal models. By Fact \ref{shvi}, $\K$ is $\LS (\K)$-superstable. By Fact \ref{tame-sym-ss}, $\K$ is superstable and has symmetry in every $\mu \ge \LS (\K)$. By Theorem \ref{omitting-categ-transfer}, $\K$ is categorical in a proper class of cardinals. By Fact \ref{shvi2}, $\K$ is $(<\omega)$-superstable in every $\mu \ge \LS (\K)$. Now say $\lambda = \lambda_0^+$. By Facts \ref{good-frame-weak-tameness} and \ref{frame-upward-transfer}, there exists a type-full good $(\ge \lambda_0)$-frame $\s$ with underlying class $\Ksatp{\LS (\K)^+}_{\ge \LS (\K)^+}$. By Fact \ref{satfact}, we can restrict the frame further to have underlying class $\Ksatp{\lambda_0}_{\ge \lambda_0}$. By Corollary \ref{good-categ-transfer} (using Lemma \ref{unidim-upward} to transfer unidimensionality up), $\Ksatp{\lambda_0}$ is categorical in every $\mu \ge \lambda_0$. Now $\Ksatp{\lambda_0}_{\ge \lambda} = \K_{\ge \lambda}$ (by categoricity in $\lambda$), so the result follows.
\end{proof}
\begin{remark}
  Similarly to what was said in Remark \ref{main-cor-rmk}, it is also possible to use this argument to prove that a $\LS (\K)$-tame AEC with amalgamation and arbitrarily large models categorical in $\LS (\K)$ and $\LS (\K)^+$ is categorical everywhere \cite[Theorem 6.3]{tamenessthree}. However we do not know that we have a good frame in $\LS (\K)$, so the proof is more complicated.
\end{remark}
\begin{remark}
  As opposed to Grossberg and VanDieren's proof, our proof of Corollary \ref{gv-alternate} is global: it cannot be turned into an argument for Fact \ref{upward-transfer-2} (e.g.\ if $\theta$ there is $\LS (\K)^{+\omega}$ we cannot use Theorem \ref{omitting-categ-transfer}). It is also not clear how to replace tameness with weak tameness in the proof.
\end{remark}

\section{A proof of Fact \ref{weakly-succ-fact}}\label{weakly-succ-appendix}

\begin{hypothesis}
  $\K$ is an AEC, $\lambda \ge \LS (\K)$.
\end{hypothesis}

We give a full proof of Fact \ref{weakly-succ-fact}. Shelah's proof in $\odot_4$ of the proof of \cite[Theorem IV.7.12]{shelahaecbook} skips some steps (for example it is not clear how we can make sure there that $f_{\eta \smallfrown 0}[M_{\ell (\eta) + 1}] = f_{\eta \smallfrown 1}[M_{\ell (\eta) + 1}]$). The proof we give here is similar in spirit to Shelah's, but we use a stronger blackbox (relying on weak diamond) which avoids having to deal with all of Shelah's renaming steps.

We will use the combinatorial principle $\Theta_{\lambda^+}$ introduced (for $\lambda = \aleph_0$) in \cite{dvsh65}. 

\begin{defin}\label{theta-def}
  $\Theta_{\lambda^+}$ holds if for every $\seq{f_\eta \in \fct{\lambda^+}{\lambda^+} : \eta \in \fct{\lambda^+}{2}}$, there exists $\eta \in \fct{\lambda^+}{2}$ such that the set

  $$S_\eta := \{\delta < \lambda^+ \mid \exists \nu \in \fct{\lambda^+}{2} : f_\eta \rest \delta = f_\nu \rest \delta \land \eta \rest \delta = \nu \rest \delta \land \eta (\delta) \neq \nu (\delta)\}$$

  is stationary.
\end{defin}
\begin{remark}\label{theta-rmk}
  Instead of requiring that $f_\eta \in \fct{\lambda^+}{\lambda^+}$, we can assume only that $f_\eta$ is a partial function from $\lambda^+$ to $\lambda^+$ (we can always extend $f_\eta$ arbitrarily to an actual function).
\end{remark}

\begin{fact}[6.1 in \cite{dvsh65}]\label{theta-fact}
  If $2^{\lambda} < 2^{\lambda^+}$, then $\Theta_{\lambda^+}$ holds.
\end{fact}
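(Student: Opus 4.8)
This is the Devlin--Shelah weak diamond theorem, and I would follow their argument. Assume toward a contradiction that $2^\lambda < 2^{\lambda^+}$ but $\Theta_{\lambda^+}$ fails, witnessed by a sequence $\seq{f_\eta : \eta \in \fct{\lambda^+}{2}}$; by Remark \ref{theta-rmk} we may take each $f_\eta$ to be a total function $\lambda^+ \to \lambda^+$. Thus $S_\eta$ is nonstationary for every $\eta$, and we fix a closed unbounded set $C_\eta \subseteq \lambda^+$ of limit ordinals with $C_\eta \cap S_\eta = \emptyset$. The plan is to manufacture an injection of $\fct{\lambda^+}{2}$ into a set of size $\le 2^\lambda$, contradicting the cardinal hypothesis.

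The key object is the tree $T := \{(\eta\rest\delta, f_\eta\rest\delta) : \eta \in \fct{\lambda^+}{2},\ \delta < \lambda^+\}$, ordered by coordinatewise extension. A cardinality count using $|\delta| \le \lambda$ for $\delta < \lambda^+$ and $\lambda^+ \le 2^\lambda$ shows every level of $T$ has size $\le 2^\lambda$, hence $|T| \le 2^\lambda$, and likewise the collection of all $<\lambda^+$-sequences of nodes of $T$ has size $\le 2^\lambda$. Call a node $(s,t)\in T$ at level $\delta$ \emph{rigid} if all of its immediate successors in $T$ agree at coordinate $\delta$ of the first coordinate, and let $b(s,t)\in 2$ be that common value. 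The content of the failure of $\Theta_{\lambda^+}$ is precisely that for each $\eta$ and each $\delta \in C_\eta$ the node $(\eta\rest\delta, f_\eta\rest\delta)$ is rigid with $b(\eta\rest\delta,f_\eta\rest\delta) = \eta(\delta)$: the statement $\delta \notin S_\eta$ says exactly that no branch of $T$ through this node takes the value $1-\eta(\delta)$ at coordinate $\delta$.

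Thus, reading off $\eta$ recursively, $\eta(\delta)$ is determined (via the fixed function $b$ and $f_\eta\rest\delta$) at every $\delta \in C_\eta$; the difficulty is the values of $\eta$ off the club $C_\eta$. This is handled by a recursive construction, by induction on $i < \lambda^+$, of a perfect subtree $\seq{g_s : s \in \fct{<\lambda^+}{2}}$ of $\fct{<\lambda^+}{2}$ whose $2^{\lambda^+}$ branches $\eta_\sigma := \bigcup_{i<\lambda^+} g_{\sigma\rest i}$ (for $\sigma \in \fct{\lambda^+}{2}$) are arranged so that the splitting levels of the subtree are absorbed into the (a priori unknown) clubs $C_{\eta_\sigma}$, while the values of $\eta_\sigma$ at the non-splitting levels are forced by the fixed data $T$, $b$, $\seq{g_s}$. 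One then checks that $\sigma \mapsto$ (a bounded amount of side information determining $\eta_\sigma$ together with the fixed objects) is injective, so that $2^{\lambda^+}$ is bounded by the number of such determinations, which is $\le 2^\lambda$. I expect the recursive construction of $\seq{g_s}$ to be the main obstacle: one must simultaneously guarantee genuine splitting (to keep all $2^{\lambda^+}$ branches) and arrange the splitting levels to land inside clubs that depend on the branches being built, which calls for a careful interleaving, continuity at limit stages, and a thinning (Fodor-style) argument. The complete construction is carried out in \cite{dvsh65}.
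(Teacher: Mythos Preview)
The paper does not prove this statement; it is recorded as a \emph{Fact} with a bare citation to \cite{dvsh65} and no argument. Your sketch therefore goes well beyond what the paper provides, and there is no ``paper's own proof'' to compare against. As an outline of the Devlin--Shelah argument your sketch is on the right track: the tree $T$ of pairs $(\eta\rest\delta, f_\eta\rest\delta)$ with $|T|\le 2^\lambda$, and the observation that $\delta\notin S_\eta$ means precisely that the node $(\eta\rest\delta, f_\eta\rest\delta)$ determines $\eta(\delta)$, are the correct core ideas. The final step is where the sketch becomes vague: you describe a perfect-subtree construction whose splitting levels are ``absorbed into clubs that depend on the branches being built,'' but you do not actually carry this out and explicitly defer to \cite{dvsh65}. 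That is fine for a sketch, and matches the paper's own stance of treating the result as a black box.
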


In \cite[Claim 1.4.(2)]{sh576}, Shelah shows assuming the weak diamond that given a tree witnessing failure of amalgamation in $\lambda$, there cannot be a universal model of cardinality $\lambda^+$. A similar proof gives a two-dimensional version:

\begin{lem}\label{weak-diamond-lem}
  Assume $2^{\lambda} < 2^{\lambda^+}$. Let $\seq{M_\eta : \eta \in \fct{\le \lambda^+}{2}}$ be a strictly increasing continuous tree with $M_\eta \in \K_\lambda$ for all $\eta \in \fct{<\lambda^+}{2}$. Let $\seq{M_\alpha : \alpha \le \lambda^+}$ be an increasing continuous chain and let $\seq{f_\eta : \eta \in \fct{\le \lambda^+}{2}}$ be such that for any $\eta \in \fct{\le \lambda^+}{2}$, $f_\eta : M_{\ell (\eta)} \rightarrow M_\eta$ and for any $\alpha < \ell (\eta)$, $f_\eta \rest M_{\alpha} = f_{\eta \rest \alpha}$.

  Assume that there exists $N \in \K_{\lambda^+}$ with $M_{\lambda^+} \lea N$ such that for all $\eta \in \fct{\lambda^+}{2}$, there exists $g_\eta : M_\eta \rightarrow N$ such that the following diagram commutes:

  \[
  \xymatrix{
    N  & \\
    M_{\lambda^+} \ar[u] \ar[r]_{f_\eta} & M_{\eta} \ar@{.>}[ul]_{g_\eta} \\
  }
  \]

  Then there exists $\rho \in \fct{<\lambda^+}{2}$, $\eta, \nu \in \fct{\lambda^+}{2}$, $\delta < \lambda^+$ such that $\rho = \eta \rest \delta = \nu \rest \delta$, $0 = \eta (\delta) \neq \nu (\delta) = 1$, and the following diagram commutes:

    \[
  \xymatrix{ & M_{\rho \smallfrown 0} \ar@{.>}[r]^{g_{\eta} \rest M_{\rho \smallfrown 0}} & N \\
    M_{\delta + 1} \ar[ru]^{f_{\rho \smallfrown 0}} \ar[rr]|>>>>>>{f_{\rho \smallfrown 1}} & & M_{\rho \smallfrown 1} \ar@{.>}[u]_{g_{\nu} \rest M_{\rho \smallfrown 1}} \\
    M_{\delta} \ar[u] \ar[r]_{f_\rho} & M_\rho \ar[uu] \ar[ur] & \\
  }
  \]
\end{lem}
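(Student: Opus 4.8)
The plan is to follow the strategy of \cite[Claim 1.4.(2)]{sh576}, converting the tree of models and embeddings into the data needed to invoke $\Theta_{\lambda^+}$ (Fact \ref{theta-fact}), and then extracting the desired commuting square from a single $\delta$ in the stationary set $S_\eta$. First I would set up the bookkeeping: for each $\eta \in \fct{\lambda^+}{2}$ fix $g_\eta : M_\eta \to N$ as in the hypothesis. Since $\|N\| = \lambda^+$ and each $M_\eta \in \K_{\lambda^+}$ (being the union of the $M_{\eta \rest \alpha}$), I would choose, after fixing an enumeration of $|N|$ in order type $\lambda^+$ and of each $|M_\eta|$ compatibly along the tree, a partial function $F_\eta : \lambda^+ \to \lambda^+$ coding the map $g_\eta$ (more precisely, coding $g_\eta$ together with enough of the isomorphism type of $M_\eta$ and of the $f_{\eta\rest\alpha}$'s so that agreement of $F_\eta$ and $F_\nu$ below $\delta$ forces agreement of the relevant embeddings below stage $\delta$). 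Using Remark \ref{theta-rmk} these partial functions are legitimate input to $\Theta_{\lambda^+}$.

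Next I would apply $\Theta_{\lambda^+}$ to $\seq{F_\eta : \eta \in \fct{\lambda^+}{2}}$ to get $\eta \in \fct{\lambda^+}{2}$ with $S_\eta$ stationary, where $S_\eta$ is the set of $\delta$ such that there is $\nu$ with $F_\eta \rest \delta = F_\nu \rest \delta$, $\eta \rest \delta = \nu \rest \delta$, and $\eta(\delta) \neq \nu(\delta)$. By a standard closure argument (intersecting $S_\eta$ with the club of $\delta$ at which the enumerations of $|M_{\eta\rest\delta}|$, $|M_{\delta}|$, $|N|$ and the coding have stabilized, i.e. $\delta$ is a ``closure point'' of all the relevant functions), I would pick $\delta \in S_\eta$ in that club, with witness $\nu$. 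Replacing $\eta,\nu$ if necessary and recalling $\eta\rest\delta = \nu\rest\delta =: \rho$, I may assume $\eta(\delta) = 0$ and $\nu(\delta) = 1$. The point of choosing $\delta$ in the club is that the coded information below $\delta$ is exactly the restriction $g_\eta \rest M_{\rho\smallfrown 0}$ on the one branch and $g_\nu \rest M_{\rho\smallfrown 1}$ on the other; agreement of $F_\eta\rest\delta$ and $F_\nu\rest\delta$ then yields that $g_\eta\rest M_\rho = g_\nu\rest M_\rho$ and that $g_\eta \circ f_{\rho\smallfrown 0} = g_\nu \circ f_{\rho\smallfrown 1}$ on $M_{\delta}$ (hence, by continuity of the tree at level $\delta$ and the compatibility $f_\eta\rest M_\alpha = f_{\eta\rest\alpha}$, on $M_{\delta+1}$ after observing $M_{\rho\smallfrown 0}$ and $M_{\rho\smallfrown 1}$ both sit above $M_\rho$ and $f_{\rho\smallfrown i}$ extends $f_\rho$). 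This is precisely the commutativity asserted in the conclusion, so I would then just read off the final diagram: $f_{\rho\smallfrown 0}, f_{\rho\smallfrown 1} : M_{\delta+1} \to M_{\rho\smallfrown 0}, M_{\rho\smallfrown 1}$, and $g_\eta\rest M_{\rho\smallfrown 0}$, $g_\nu\rest M_{\rho\smallfrown 1}$ completing it into $N$.

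The main obstacle, and the step I would spend the most care on, is the coding step: choosing the partial functions $F_\eta$ so that (a) they genuinely depend only on the branch $\eta$ in a way that makes ``$F_\eta\rest\delta = F_\nu\rest\delta$'' meaningful and computable from the models below stage $\delta$, and (b) agreement of $F_\eta$ and $F_\nu$ below a closure point $\delta$ really does force the two triangles over $M_{\rho\smallfrown 0}$ and $M_{\rho\smallfrown 1}$ to glue along $M_\rho$ and along $f_{\rho\smallfrown i}\rest M_\delta$. This requires pinning down coherent enumerations of the whole tree $\seq{M_\eta : \eta \in \fct{\le\lambda^+}{2}}$, of $N$, and of the embeddings $f_\eta, g_\eta$, and checking that the generic $\delta$ in the club is a simultaneous closure point of all of them — a routine but slightly delicate Löwenheim--Skolem / Fodor-style argument. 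Everything else (invoking Fact \ref{theta-fact} and Remark \ref{theta-rmk}, extracting $\delta$ from the stationary set, and assembling the final diagram) is bookkeeping once the coding is in place.
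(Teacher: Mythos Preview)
Your approach is correct and is essentially the paper's strategy: apply $\Theta_{\lambda^+}$ to the $g_\eta$'s, intersect the resulting stationary set with a club of closure points, and read off the diagram. The paper's execution is considerably simpler than what you sketch, however. Rather than building an elaborate coding $F_\eta$, it just renames so that $|N| \subseteq \lambda^+$ and $|M_\eta| \subseteq \lambda^+$ for every $\eta$, and then feeds the maps $g_\eta$ \emph{themselves} (now partial functions from $\lambda^+$ to $\lambda^+$) into $\Theta_{\lambda^+}$ via Remark~\ref{theta-rmk}; the club is simply $C = \{\delta < \lambda^+ : |M_{\eta\rest\delta}| \subseteq \delta\}$. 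You should also note that the only nontrivial commutativity to check is $g_\eta \rest M_\rho = g_\nu \rest M_\rho$ (which follows from $g_\eta \rest \delta = g_\nu \rest \delta$ and $|M_\rho| \subseteq \delta$): the equality $g_\eta \circ f_{\rho\smallfrown 0} = g_\nu \circ f_{\rho\smallfrown 1}$ on $M_{\delta+1}$ that you worry about is automatic from the hypothesis, since both composites equal the inclusion $M_{\delta+1} \hookrightarrow N$. So your concern about coding ``enough of the isomorphism type of $M_\eta$ and of the $f_{\eta\rest\alpha}$'s'' is unnecessary --- only $g_\eta$ matters.
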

\begin{proof}
  Fix $N \in \K_{\lambda^+}$, $f_\eta$, $g_\eta$ as in the statement of the lemma. By renaming everything, we can assume without loss of generality that $|N| \subseteq \lambda^+$ and $|M_\eta| \subseteq \lambda^+$ for all $\eta \in \fct{\le \lambda^+}{2}$. 

  By Fact \ref{theta-fact}, $\Theta_{\lambda^+}$ holds. We use it with the sequence $\seq{g_\eta : \eta \in \fct{\lambda^+}{2}}$ (see Remark \ref{theta-rmk}). We obtain $\eta \in \fct{\lambda^+}{2}$ such that the set $S_\eta$ of Definition \ref{theta-def} is stationary. Let $C := \{\delta < \lambda^+ \mid |M_{\eta \rest \delta}| \subseteq \delta\}$. Clearly, $C$ is club so let $\delta \in S_\eta \cap C$ and let $\nu \in \fct{\lambda^+}{2}$ be as given by the definition of $S_\eta$ (i.e.\ $g_\eta \rest \delta = f_\nu \rest \delta$, $\eta \rest \delta = \nu \rest \delta$, and without loss of generality $0 = \eta (\delta) \neq \nu (\delta) = 1$). Let $\rho := \eta \rest \delta = \nu \rest \delta$. Then $\rho, \eta, \nu, \delta$ are as desired. The main point is that since $|M_\rho| \subseteq \delta$ (by definition of $\delta$), $g_{\eta} \rest M_{\rho} = g_\nu \rest M_{\rho}$.

\end{proof}

The next technical property is of great importance in Chapter II and III of \cite{shelahaecbook}. The definition below follows \cite[Definition 4.1.5]{jrsh875} (but as usual, we work only with type-full frames).

\begin{defin}\label{weakly-succ-def} \
  \begin{enumerate}
  \item\index{amalgam} For $M_0 \lea M_\ell$ all in $\K_\lambda$, $\ell = 1,2$, an \emph{amalgam of $M_1$ and $M_2$ over $M_0$} is a triple $(f_1, f_2, N)$ such that $N \in \K_\lambda$ and $f_\ell : M_\ell \xrightarrow[M_0]{} N$.
  \item\index{equivalence of amalgam} Let $(f_1^x, f_2^x, N^x)$, $x = a,b$ be amalgams of $M_1$ and $M_2$ over $M_0$. We say $(f_1^a, f_2^a, N^a)$ and $(f_1^b, f_2^b, N^b)$ are \emph{equivalent over $M_0$} if there exists $N_\ast \in \K_\lambda$ and $f^x : N^x \rightarrow N_\ast$ such that $f^b \circ f_1^b = f^a \circ f_1^a$ and $f^b \circ f_2^b = f^a \circ f_2^a$, namely, the following commutes:

  \[
  \xymatrix{ & N^a \ar@{.>}[r]^{f^a} & N_\ast \\
    M_1 \ar[ru]^{f_1^a} \ar[rr]|>>>>>{f_1^b} & & N^b \ar@{.>}[u]_{f^b} \\
    M_0 \ar[u] \ar[r] & M_2 \ar[uu]|>>>>>{f_2^a}  \ar[ur]_{f_2^b} & \\
  }
  \]

  Note that being ``equivalent over $M_0$'' is an equivalence relation (\cite[Proposition 4.3]{jrsh875}).
\item Let $\s$ be a type-full good $\lambda$-frame on $\K$.
  \begin{enumerate}
    \item Let $\Ktuqp{\s}$ denote the set of triples $(a, M, N)$ such that $M \lea N$ are in $\K_\lambda$, $a \in |N| \backslash |M|$ and for any $M_1 \gea M$ in $\K_\lambda$, there exists a \emph{unique} (up to equivalence over $M$) amalgam $(f_1, f_2, N_1)$ of $N$ and $M_1$ over $M$ such that $\gtp (f_1 (a) / f_2[M_1] ; N_1)$ does not fork over $M$. We call the elements of $\Ktuqp{\s}$ \emph{uniqueness triples}. When $\s$ is clear from context, we just write $\Ktuq$.
    \item$\Ktuqp{\s}$ has the \emph{existence property} if for any $M \in \K_{\lambda}$ and any nonalgebraic $p \in \gS (M)$, one can write $p = \gtp (a / M; N)$ with $(a, M, N) \in \Ktuqp{\s}$.
    \item We say that \emph{$\s$ has the existence property for uniqueness triples} or \emph{$\s$ is weakly successful} if $\Ktuqp{\s}$ has the existence property.
  \end{enumerate}
  \end{enumerate}
\end{defin}

\begin{remark}\label{amalgam-rmk}
  Let $M_0 \lea M_1, M_0 \lea M_2$ all be in $\K_\lambda$.
  \begin{enumerate}
  \item If $(f_1, f_2, N)$ is an amalgam of $M_1$ and $M_2$ over $M_0$, there exists an equivalent amalgam $(g_1, g_2, N')$ of $M_1$ and $M_2$ over $M_0$ with $g_2 = \operatorname{id}_{M_2}$.
  \item For $x = a, b$, assume that $(f_1^x, f_2^x, N^x)$ are \emph{non-equivalent} amalgams of $M_1$ and $M_2$ over $M_0$. We have the following monotonicity properties:
    \begin{enumerate}
    \item For $x = a,b$, if $N^x \lea N_*^x$, then $(f_1^x, f_2^x, N_*^x)$ are non-equivalent amalgams of $M_1$ and $M_2$ over $M_0$.
    \item If $M_1 \lea M_1'$, $M_2 \lea M_2'$ (with $M_1', M_2' \in \K_\lambda$), and for $x = a, b$, there exists $g_1^x \supseteq f_1^x, g_2^x \supseteq f_2^x$ such that $(g_1^x, g_2^x, N^x)$ is an amalgam of $M_1'$ and $M_2'$ over $M_0$, then $(g_1^a, g_2^a, N^a)$ and $(g_1^b, g_2^b, N^b)$ are not equivalent over $M_0$.
    \end{enumerate}
  \end{enumerate}
\end{remark}

We are now ready to prove the desired result.

\begin{thm}[Shelah]\label{weakly-succ-thm}
  Assume $2^{\lambda} < 2^{\lambda^+}$. Let $\s$ be a good $\lambda$-frame on $\K$. If $\K$ is categorical in $\lambda$ and for any saturated $M \in \K_{\lambda^+}$ there exists $N \in \K_{\lambda^+}$ universal over $M$, then $\s$ is weakly successful.
\end{thm}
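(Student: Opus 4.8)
The plan is to argue by contradiction: assuming $\s$ is not weakly successful, I would build a binary tree of models of size $\lambda$ together with a tower of length $\lambda^+$, and then use Lemma \ref{weak-diamond-lem} to contradict the existence of a model of $\K_{\lambda^+}$ that is universal over the (saturated) top of the tower. So suppose $\s$ is not weakly successful; then there are $M \in \K_\lambda$ and a nonalgebraic $p \in \gS(M)$ such that \emph{no} triple $(a,M,N)$ with $p = \gtp(a/M;N)$ lies in $\Ktuqp{\s}$. Fix such $M$, $p$, and a representative $(a,M,N)$. Using that $\K$ is stable in $\lambda$, has amalgamation in $\lambda$, and has no maximal models in $\lambda$ (all part of being a good $\lambda$-frame), I would construct by recursion on $\delta < \lambda^+$: a $\lea$-increasing continuous tower $\seq{M_\alpha : \alpha \le \lambda^+}$ with $M_0 = M$, each $M_\alpha \in \K_\lambda$ for $\alpha < \lambda^+$, and $M_{\alpha+1}$ universal over $M_\alpha$; a strictly increasing continuous tree $\seq{M_\eta : \eta \in \fct{\le\lambda^+}{2}}$ with $M_\emptyset = N$ and $M_\eta \in \K_\lambda$ for $\eta \in \fct{<\lambda^+}{2}$; and $\lea$-embeddings $f_\eta : M_{\ell(\eta)} \to M_\eta$ (here $M_{\ell(\eta)}$ is the tower model) with $f_\emptyset = \id_M$ and $f_\eta \rest M_\alpha = f_{\eta\rest\alpha}$ for $\alpha < \ell(\eta)$, maintained so that at every node the triple $(a, f_\eta[M_{\ell(\eta)}], M_\eta)$ is, up to isomorphism of triples, a representative of $p$. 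At limit stages one takes unions along branches; local character gives that the limiting type does not fork over some earlier model, and the conjugation property (Fact \ref{conj-prop}, applicable because categoricity in $\lambda$ makes every model of $\K_\lambda$ limit) then shows the ``$p$-situation'' persists.

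The heart of the construction is the successor step. Given $\rho \in \fct{\delta}{2}$ with $(a, f_\rho[M_\delta], M_\rho) \cong (a,M,N)$, the hypothesis $(a,M,N) \notin \Ktuqp{\s}$ supplies some extension of the base over which there are two non-equivalent amalgams satisfying the non-forking clause; by Remark \ref{amalgam-rmk}(2) (inflating the witness $M_1$ to a universal extension while preserving non-equivalence) together with the extension property of forking, I can take this extension to be the chosen $M_{\delta+1}$. Thus, identifying $M_\delta$ with an extension mapped by $f_\rho$ into $M_\rho$, I obtain two non-equivalent amalgams of $M_\rho$ and $M_{\delta+1}$ over $M_\delta$ in which $\gtp(a/\text{(image of }M_{\delta+1}); \cdot)$ does not fork over $f_\rho[M_\delta]$. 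Using Remark \ref{amalgam-rmk}(1) I normalize them to have identity leg on the $M_\rho$-side, so they take the form $(\id_{M_\rho}, f_{\rho\smallfrown 0}, M_{\rho\smallfrown 0})$ and $(\id_{M_\rho}, f_{\rho\smallfrown 1}, M_{\rho\smallfrown 1})$ with $M_\rho \lta M_{\rho\smallfrown\ell}$ and $f_{\rho\smallfrown\ell}\rest M_\delta = f_\rho$; this defines the two children and respects the coherence condition. Keeping the distinguished element equal to $a$, the non-forking clause plus Fact \ref{conj-prop} shows $(a, f_{\rho\smallfrown\ell}[M_{\delta+1}], M_{\rho\smallfrown\ell})$ is again a representative of $p$ up to isomorphism, closing the recursion; and since $M_{\rho\smallfrown 0}, M_{\rho\smallfrown 1}$ are never modified afterwards, the two amalgams at node $\rho$ remain non-equivalent over $M_\delta$ for the rest of the construction.

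Finally, $M_{\lambda^+} := \bigcup_{\alpha<\lambda^+} M_\alpha$ is a $(\lambda,\lambda^+)$-limit model, hence saturated of size $\lambda^+$, so by hypothesis there is $N^\ast \in \K_{\lambda^+}$ universal over $M_{\lambda^+}$. For $\eta \in \fct{\lambda^+}{2}$ set $M_\eta := \bigcup_{\delta<\lambda^+} M_{\eta\rest\delta}$ and $f_\eta := \bigcup_{\delta<\lambda^+} f_{\eta\rest\delta} : M_{\lambda^+} \to M_\eta$; after renaming $M_\eta$ as an honest $\lea$-extension of $M_{\lambda^+}$, universality of $N^\ast$ yields $g_\eta : M_\eta \to N^\ast$ with $g_\eta \circ f_\eta$ equal to the inclusion $M_{\lambda^+} \lea N^\ast$. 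The hypotheses of Lemma \ref{weak-diamond-lem} are thus met, and it produces $\rho \in \fct{<\lambda^+}{2}$, distinct $\eta, \nu \in \fct{\lambda^+}{2}$, and $\delta < \lambda^+$ whose commuting diagram exhibits $(g_\eta\rest M_{\rho\smallfrown 0}, g_\nu\rest M_{\rho\smallfrown 1}, N^\ast)$ as an equivalence over $M_\delta$ of precisely the two amalgams built at node $\rho$ (the two restrictions agree on $M_\rho$, as the diagram commutes) --- contradicting their non-equivalence. Hence $\s$ is weakly successful.

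I expect the main obstacle to be the bookkeeping in the tree construction: maintaining the coherence $f_\eta\rest M_\alpha = f_{\eta\rest\alpha}$ while repeatedly invoking the failure of uniqueness, which requires normalizing each pair of amalgams via Remark \ref{amalgam-rmk}(1) to have an identity leg and then checking (via Remark \ref{amalgam-rmk}(2)) that non-equivalence survives both the inflation of the witness $M_1$ to $M_{\delta+1}$ and the passage to the final diagram. Verifying that the ``$p$-situation'' genuinely propagates through successor \emph{and} limit stages (via Fact \ref{conj-prop} and local character, exploiting categoricity in $\lambda$) and the routine-but-careful renaming needed to turn the universal-model hypothesis into the input of Lemma \ref{weak-diamond-lem} are the other delicate points; by contrast, that $M_{\lambda^+}$ is saturated is standard once the tower is chosen universal-over-increasing.
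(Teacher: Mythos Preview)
Your proposal is correct and follows essentially the same approach as the paper's proof: contradict weak successfulness by building a binary tree of models via the failure of uniqueness triples (using conjugation to propagate the failure, Remark \ref{amalgam-rmk} to normalize and inflate the witnessing amalgams, and local character plus transitivity at limits), then apply Lemma \ref{weak-diamond-lem} to the universal-over-saturated model to contradict the non-equivalence built in at each splitting. The only cosmetic difference is that the paper takes $M_{\alpha+1}$ \emph{limit} over $M_\alpha$ (rather than merely universal) so that uniqueness of limit models yields an actual isomorphism $g: M_{\alpha+1} \cong M_\alpha'$ extending $f_\eta$, which makes the coherence bookkeeping slightly cleaner than the embedding-and-identify step you describe.
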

\begin{proof}
  Below, we assume to simplify the notation that $\s$ is type-full but the same proof goes through in the general case. Suppose that the conclusion of the theorem fails. Fix $\seq{M_\alpha : \alpha < \lambda^+}$ increasing continuous in $\K_\lambda$ such that $M_{\alpha + 1}$ is limit over $M_\alpha$ for all $\alpha < \lambda^+$. Let $M_{\lambda^+} := \bigcup_{\alpha < \lambda^+} M_\alpha$. Since $\s$ is not weakly successful, there exists a nonalgebraic type $p \in \gS (M_0)$ which cannot be represented by a uniqueness triple. Say $p = \gtp (a / M_0; M) $.
  
  We build a strictly increasing continuous tree $\seq{M_\eta : \eta \in \fct{\le \lambda^+}{2}}$ with $M_\eta \in \K_\lambda$ for $\eta \in \fct{<\lambda^+}{2}$, as well as a strictly increasing continuous tree of embeddings $\seq{f_\eta : \eta \in \fct{\le \lambda^+}{2}}$ such that for any $\eta \in \fct{<\lambda^+}{2}$:

  \begin{enumerate}
  \item $f_\eta : M_{\ell (\eta)} \rightarrow M_\eta$.
  \item\label{weak-succ-2} $M_{<>} = M$ and $f_{<>} = \operatorname{id}_{M_0}$.
  \item\label{weak-succ-4} $\gtp (a / f_\eta[M_{\ell (\eta)}]; M_\eta)$ does not fork over $M_0$.
  \item\label{weak-succ-5} There is \emph{no} $N \in \K_\lambda$ and $g_\ell : M_{\eta \smallfrown l} \rightarrow N$, $\ell = 0,1$, such that the following diagram commutes:

  \[
  \xymatrix{ & M_{\eta \smallfrown 0} \ar@{.>}[r]^{g_0} & N \\
    M_{\ell (\eta) + 1} \ar[ru]^{f_{\eta \smallfrown 0}} \ar[rr]|>>>>>{f_{\eta \smallfrown 1}} & & M_{\eta \smallfrown 1} \ar@{.>}[u]_{g_1} \\
    M_{\ell (\eta)} \ar[u] \ar[r]_{f_\eta} & M_\eta \ar[uu] \ar[ur] & \\
  }
  \]    
  \end{enumerate}

  \underline{This is enough}: We have that $M_{\lambda^+}$ is saturated and we know by assumption that there is a universal model $N$ over $M_{\lambda^+}$ in $\K_{\lambda^+}$. In particular, for every $\eta \in \fct{\lambda^+}{2}$, there exists $g_\eta : M_\eta \rightarrow N$ such that $f_\eta^{-1} \subseteq g_\eta$. By Lemma \ref{weak-diamond-lem}, requirement (\ref{weak-succ-5}) must fail somewhere in the construction, contradiction.

  \underline{This is possible}: The construction is by induction on the length of $\eta \in \fct{\le \lambda^+}{2}$. If $\ell (\eta) = 0$, then (\ref{weak-succ-2}) specifies what to do and if the length is limit, we take unions (and use the local character and transitivity properties of forking to see that (\ref{weak-succ-4}) is preserved). Assume now that $\alpha < \lambda^+$ and that $\eta \in \fct{\alpha}{2}$ is such that $M_\eta$, $f_\eta$ have been defined. We want to build $M_{\eta \smallfrown \ell}$, $f_{\eta \smallfrown \ell}$ for $\ell = 0,1$. Let $q := \gtp (a / f_{\eta}[M_{\alpha}]; M_\eta)$. We know that $q$ is the nonforking extension of $p$ (by (\ref{weak-succ-4}) and the definition of $p$), so by the conjugation property (Fact \ref{conj-prop}, note that by assumption $\K$ is categorical in $\lambda$) $p$ and $q$ are conjugates, hence $q$ cannot be represented by a uniqueness triple. Therefore $(a, f_{\eta}[M_{\alpha}], M_\eta)$ is not a uniqueness triple. This means that there exists $M_\alpha' \in \K_\lambda$ with $f_\eta[M_\alpha] \le M_{\alpha}'$ and two non-equivalent amalgams $(f_1^\ell, f_2^\ell, M_{\eta \smallfrown \ell})$ such that $\gtp (f_2^\ell (a) / f_1^\ell[M_{\alpha}']; M_{\eta \smallfrown \ell})$ does not fork over $M_\eta$ for $\ell = 0, 1$. Without loss of generality (Remark \ref{amalgam-rmk}) $f_2^\ell$ is the identity for $\ell = 0,1$:

  \[
  \xymatrix{ & M_{\eta \smallfrown 0}  &  \\
    M_{\alpha}' \ar[ru]^{f_1^0} \ar[rr]|>>>>>{f_{1}^1} & & M_{\eta \smallfrown 1}  \\
    f_\eta[M_{\alpha}] \ar[u] \ar[r] & M_\eta \ar[uu] \ar[ur] & \\
  }
  \]

  By the monotonicity property of being non-equivalent amalgams (Remark \ref{amalgam-rmk}) and the extension property of forking, we can increase $M_\alpha'$, $M_\eta^0$, and $M_\eta^1$ to assume without loss of generality that $M_{\alpha}'$ is limit over $f_\eta[M_{\alpha}]$. In particular, there exists $g: M_{\alpha + 1} \cong M_\alpha'$ with $f_\eta \subseteq g$. For $\ell = 0,1$, let $f_{\eta \smallfrown \ell} := f_1^\ell \circ g$.  
\end{proof}

\begin{proof}[Proof of Fact \ref{weakly-succ-fact}]
  Note that amalgamation and stability in $\lambda^+$ imply that over every $M \in \K_{\lambda^+}$ there exists $N \in \K_{\lambda^+}$ universal over $M$. Thus the hypotheses of Theorem \ref{weakly-succ-thm} hold.
\end{proof}

\bibliographystyle{amsalpha}
\bibliography{categ-good-aecs}

\end{document}